	\let\c@equation\c@theorem
\numberwithin{equation}{section}
\newcommand{\makebibliography}{
    \newpage
    \bibliographystyle{plain}
    \bibliography{main.bib}
}
\newcommand{\AF}{\mathrm{AF}}
\newcommand{\ESS}[1]{\prescript{#1\!}{}{E}}
\newcommand{\ZESS}[1]{\prescript{#1\!}{}{Z}}
\newcommand{\BESS}[1]{\prescript{#1\!}{}{B}}
\newcommand{\SynSS}{\prescript{\mathrm{syn}\!}{}{E}}
\newcommand{\HF}{\mathrm{H}\bF_2}
\definecolor{lightgray}{RGB}{200,200,200}
\definecolor{llgray}{RGB}{235,235,235}
\newcommand{\tikzgrid}[4]{\foreach \x in {#1,...,#2} {
        \draw[black!20] (\x-0.5, #3-0.5) -- (\x-0.5, #4-0.5);
    }
    \foreach \y in {#3,...,#4} {
        \draw[black!20] (#1-0.5, \y-0.5) -- (#2-0.5, \y-0.5);
    }
}
\newcommand{\drawarrow}{\draw[-{Stealth[length=1.5mm]}, shorten >=(0.08cm)]}
\title{On the Last Kervaire Invariant Problem}
\author{Weinan Lin, Guozhen Wang, and Zhouli Xu}
\address{Shanghai Center for Mathematical Sciences, Fudan University, Shanghai, China, 200433}
\email{linweinan@fudan.edu.cn}
\address{Shanghai Center for Mathematical Sciences, Fudan University, Shanghai, China, 200433}
\email{wangguozhen@fudan.edu.cn}
\address{UCLA Department of Mathematics, Los Angeles, CA 90095-1555, USA}
\email{xuzhouli@ucla.edu}
\dedicatory{Dedicated to Mark Mahowald}
\date{}
\begin{document}
\begin{abstract}
We prove that the element $h_6^2$ is a permanent cycle in the Adams spectral sequence. As a result, we establish the existence of smooth framed manifolds with Kervaire invariant one in dimension 126, thereby resolving the final case of the Kervaire invariant problem.

Combining this result with the theorems of Browder, Mahowald--Tangora, Barratt--Jones--Mahowald, and Hill--Hopkins--Ravenel, we conclude that smooth framed manifolds with Kervaire invariant one exist in and only in dimensions  $2, 6, 14, 30, 62$, and $126$.
\end{abstract}

\maketitle

\section{Introduction}








For a smooth framed manifold in dimension $4k+2$, the Kervaire invariant, taking values in $\mathbb{F}_2$, determines whether the manifold could be converted into a homotopy sphere via surgery -- it takes value 0 if the manifold can be converted to a homotopy sphere and 1 otherwise. Formally speaking, the Kervaire invariant is defined as the Arf invariant of the quadratic refinement of the intersection pairing in the cohomology of the manifold with $\mathbb{F}_2$ coefficients. Using this invariant, Kervaire \cite{Kervaire} discovered a PL-manifold in dimension 10 that does not admit any smooth structure. 

The Kervaire invariant problem seeks to identify the dimensions in which there exist  framed smooth manifolds with Kervaire invariant one. In these dimensions, exactly half of the cobordism classes of framed manifolds have Kervaire invariant one, and the other half have Kervaire invariant 0. The Kervaire invariant problem is closely related to many problems in differential topology, especially Kervaire--Milnor's classification theorem \cite{KervaireMilnor} on exotic smooth structures on spheres. 


In this paper we prove the following Theorem~\ref{thm:main}.

\begin{theorem} \label{thm:main}
    There exist framed manifolds of Kervaire invariant one in dimension 126.
\end{theorem}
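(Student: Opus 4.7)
The plan is to reduce Theorem~\ref{thm:main} to a purely algebraic statement via the classical theorem of Browder, and then verify that statement by analyzing the Adams spectral sequence for the sphere spectrum. Browder's theorem asserts that a framed manifold of Kervaire invariant one exists in dimension $2^{j+1}-2$ if and only if the class $h_j^2$ in $\mathrm{Ext}_{\mathcal{A}}^{2,2^{j+1}}(\mathbb{F}_2,\mathbb{F}_2)$ survives to the $E_\infty$-page of the $\HF$-based Adams spectral sequence. Setting $j=6$, the theorem reduces to showing that $h_6^2$, which lies in stem $126$ and Adams filtration $2$, is a permanent cycle.

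The task is therefore to show $d_r(h_6^2)=0$ for every $r\geq 2$; each such potential target lies in stem $125$ and filtration $r+2$. To organize this, I would work inside the synthetic Adams spectral sequence $\SynSS$ of \Pstragowski{}, which lifts the classical computation to the category of synthetic spectra and records a motivic weight. This refinement has proved decisive for the Kervaire problem at $j=5$ and for recent stem-by-stem determinations of the stable stems, since it converts many classical differentials into statements that are forced purely by weight considerations and by comparison with the Adams--Novikov spectral sequence.

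Within this framework I plan three ingredients. First, I would extend the existing machine-assisted computation of the minimal $\mathcal{A}$-resolution into a neighborhood of stem $125$ large enough to enumerate every possible target of $d_r(h_6^2)$. Second, I would transport information from the Adams--Novikov spectral sequence and from already-known permanent cycles through the synthetic comparison map, eliminating most targets on weight and filtration grounds. Third, I would resolve the remaining candidates using multiplicative structure, Massey product identifications, and hidden extensions that the synthetic viewpoint makes accessible.

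The main obstacle, as in every previous case of the Kervaire problem, is the size and delicacy of the computation in stem $125$. A single unresolved differential of moderate length can obstruct the entire argument, and I expect the hardest work to concentrate on mid-range differentials such as $d_3$, $d_4$, and $d_5$, where the relevant $\mathrm{Ext}$ groups are densely populated and where neither weight nor Adams--Novikov comparison immediately forces vanishing. Overcoming this will likely require new Massey product and hidden extension computations discovered through the synthetic machinery, together with careful bookkeeping to ensure that every candidate target is accounted for.
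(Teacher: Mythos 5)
Your reduction is the same as the paper's: Theorem~\ref{thm:main} follows from Browder's theorem once one shows that $h_6^2$ survives the Adams spectral sequence, and the paper's proof of Theorem~\ref{thm:main} is exactly ``combine Theorem~\ref{thm:browder} with Theorem~\ref{thm:h62}.'' The issue is that everything after that reduction in your proposal is a research plan rather than an argument, and the plan as stated is missing the idea that actually makes the computation close.

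Concretely, two gaps. First, the paper does not prove $d_r(h_6^2)=0$ by enumerating targets in stem $125$ and killing them one at a time with weight, Adams--Novikov comparison, and Massey products; that strategy is known to bottom out, because several candidate differentials (most stubbornly a potential $d_{12}(h_6^2)=h_1h_4x_{109,12}$) are not decidable by those tools. The indispensable input is the Barratt--Jones--Mahowald inductive criterion in its synthetic form (Theorem~\ref{thm:bjmbx}): $h_6^2$ is a permanent cycle if and only if $\lambda\eta\theta_5^2=0$ in $\pi_{125,129}S^{0,0}$. This converts the problem into computing the Adams filtration of $\eta\theta_5^2$, and the final case is settled not by $\mathrm{Ext}$ bookkeeping but by an ad hoc argument combining a synthetic Toda bracket $\langle\lambda^3\alpha_1,[h_0],\eta\rangle$, a $\nu$-extension computed in the cofiber $S^0/\nu$, and detection in $\mathit{tmf}$. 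Without this criterion your plan cannot terminate. Second, your reliance on Adams--Novikov comparison is misplaced in this range: the algebraic Novikov data that powers that method is only available up to roughly stem $90$, and the paper instead feeds in machine-computed $d_2$-differentials from the secondary Steenrod algebra and propagates them with the Generalized Leibniz Rule and Generalized Mahowald Trick, whose no-crossing hypotheses are essential (the paper exhibits a counterexample to the naive version). So the proposal correctly frames the problem but does not contain the proof.
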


Together with previous results by Browder \cite{Browder}, Mahowald--Tangora \cite{MahowaldTangora}, Barratt--Jones--Mahowald \cite{BJMtheta5}, and Hill--Hopkins--Ravenel \cite{HHR}, this is the last case of the Kervaire invariant problem.

\begin{corollary}
  The dimensions that there exist framed manifolds of Kervaire invariant one are 2, 6, 14, 30, 62, and 126.  
\end{corollary}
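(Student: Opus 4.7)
The plan is to combine Theorem~\ref{thm:main} with the existing results on the Kervaire invariant problem cited in the preceding paragraph. The starting point is Browder's theorem: framed manifolds of Kervaire invariant one can only exist in dimensions of the form $2^{j+1}-2$ for $j \geq 1$, and moreover existence in dimension $2^{j+1}-2$ is equivalent to the element $h_j^2 \in \mathrm{Ext}^{2, 2^{j+1}}_{\mathcal{A}}(\mathbb{F}_2, \mathbb{F}_2)$ being a permanent cycle in the classical Adams spectral sequence converging to the stable homotopy groups of spheres. This immediately reduces the corollary to determining for exactly which integers $j \geq 1$ the class $h_j^2$ is a permanent cycle.

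I would then walk through the cases. For $j \in \{1, 2, 3\}$ the dimensions are $2, 6, 14$, and the classes $h_j^2$ represent the squares $\eta^2, \nu^2, \sigma^2$ of the Hopf invariant one elements, which are well-known nonzero stable classes, so $h_1^2, h_2^2, h_3^2$ are permanent cycles for trivial reasons. For $j = 4$ (dimension $30$), Mahowald--Tangora showed that $h_4^2$ is a permanent cycle. For $j = 5$ (dimension $62$), Barratt--Jones--Mahowald showed that $h_5^2$ is a permanent cycle. For $j = 6$ (dimension $126$), Theorem~\ref{thm:main} provides the required framed manifold, equivalently $h_6^2$ is a permanent cycle. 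Finally, for $j \geq 7$ (dimensions $\geq 254$), the Hill--Hopkins--Ravenel theorem rules out the existence of any framed manifold of Kervaire invariant one.

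Concatenating these statements, the set of dimensions in which framed manifolds of Kervaire invariant one exist is exactly $\{2, 6, 14, 30, 62, 126\}$, which is the content of the corollary. The only genuinely new ingredient in this assembly is Theorem~\ref{thm:main}; everything else is a matter of invoking previously published theorems along the dichotomy provided by Browder's theorem, so there is no substantive mathematical obstacle in the corollary itself beyond the main theorem it cites.
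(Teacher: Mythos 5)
Your proposal is correct and matches the paper's reasoning: the paper deduces this corollary by combining Browder's dichotomy with exactly the same list of prior results (small $j$ via the Hopf maps/May, Mahowald--Tangora for $j=4$, Barratt--Jones--Mahowald for $j=5$, Hill--Hopkins--Ravenel for $j\ge 7$) plus the new Theorem~\ref{thm:h62} for $j=6$, as spelled out in the proof of Corollary~\ref{cor:2line}. No gaps.
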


In dimensions 2, 6, and 14, the product of spheres $S^1 \times S^1, \ S^3 \times S^3, S^7 \times S^7$ can be framed to have Kervarie invariant one.
In dimension 30, an explicit framed manifold of Kervarie invariant one was constructed by J.Jones in \cite{Jones30}. For dimensions 62 and 126, we would like to comment that no explicit manifold of Kervarie invariant one was known, although $50\%$ of all framed manifolds in these two dimensions have Kervarie invariant one.

Our Theorem \ref{thm:main} is a consequence of Browder's theorem (Theorem~\ref{thm:browder}) and the following main theorem of this paper, Theorem~\ref{thm:h62}. 

\begin{theorem}[Browder \cite{Browder}] \label{thm:browder}
   There exist framed manifolds in dimension $n$ of Kervaire invariant one if and only if
   \begin{enumerate}
       \item $n=2^{j+1}-2$, and
       \item the element $h_j^2$ in the Adams spectral sequence survives to the $E_\infty$-page.
   \end{enumerate}
\end{theorem}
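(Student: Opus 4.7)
The plan is to translate the geometric problem into stable homotopy theory via the Pontryagin--Thom construction, and then detect the Kervaire invariant algebraically using the Adams spectral sequence.

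First, I would use Pontryagin--Thom to identify the $n$-dimensional framed cobordism group with the stable stem $\pi_n^S$, and check that the Kervaire invariant descends to a homomorphism $\Phi_n \colon \pi_n^S \to \mathbb{F}_2$ whenever $n \equiv 2 \pmod{4}$. Browder's theorem then reduces to pinning down exactly when $\Phi_n$ is nonzero.

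The key step is to realize $\Phi_n$ as a secondary cohomology operation, following the template of Adams' Hopf invariant one argument. One analyzes a framed manifold $M^n$ through the attaching map of the top cell of the Thom complex of its stable normal bundle, and reinterprets the Arf invariant of the quadratic refinement on $H^{n/2}(M; \mathbb{F}_2)$ as the evaluation of a secondary operation associated with a relation among Steenrod squares of the form $\sum \alpha_i \mathrm{Sq}^{2^i}\mathrm{Sq}^{2^i} \equiv 0$. Such a secondary operation is nontrivial on a stable class exactly when that class is detected on the $E_\infty$-page of the Adams spectral sequence in Adams filtration two by a cycle witnessing this Steenrod relation.

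Finally, one invokes Adams' description of $\mathrm{Ext}_A^{2,*}(\mathbb{F}_2, \mathbb{F}_2)$ as spanned by the products $h_i h_j$ with $i \le j$, subject to $h_i h_{i+1} = 0$. Matching the relevant Steenrod relation against this Ext computation shows that only the diagonal squares $h_j^2$ in bidegree $(2, 2^{j+1})$ can carry the Kervaire invariant, forcing the stem to be $n = 2^{j+1} - 2$. Conversely, if $h_j^2$ is a permanent cycle, any stable class it detects is realized by a framed manifold with Kervaire invariant one, by choosing a representative and running the Pontryagin--Thom correspondence backwards. The principal obstacle is the careful matching of the geometric Arf invariant against the specific secondary operation associated to $h_j^2$; this identification is what underpins the entire dictionary between $\Phi_n$ and the low-filtration structure of the Adams $E_2$-page, and is where the dimensional restriction $n = 2^{j+1}-2$ emerges.
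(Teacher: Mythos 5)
The paper does not prove this statement: it is Browder's 1969 theorem, quoted verbatim with the citation \cite{Browder} and used as a black box to pass from Theorem~\ref{thm:h62} to Theorem~\ref{thm:main}. There is therefore no in-paper proof to compare against, and your proposal has to be judged as a reconstruction of Browder's original argument.

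As such it has the right overall shape (Pontryagin--Thom, the Kervaire invariant as a homomorphism on stems congruent to $2$ mod $4$, detection in Adams filtration two, Adams' computation of $\Ext_A^{2,*}(\mathbb{F}_2,\mathbb{F}_2)$), but the two steps carrying all the content are asserted rather than argued. First, the dimensional restriction $n=2^{j+1}-2$ does not ``emerge'' from matching a Steenrod relation against the $\Ext^2$ computation: that comparison alone would not exclude stems such as $18$ or $34$, where $\Ext_A^{2,*}$ is nonzero but generated by off-diagonal classes $h_ih_j$, and the Kervaire invariant nevertheless vanishes. In Browder's argument the quadratic refinement $q(x)$, for $x$ of middle degree $n/2$, is the functional operation $\mathrm{Sq}^{\,n/2+1}_{f}$ (defined because $\mathrm{Sq}^{m}$ annihilates classes of degree less than $m$), and the restriction comes from the fact that $\mathrm{Sq}^{\,n/2+1}$ is decomposable unless $n/2+1=2^{j}$; decomposability forces the functional operation, hence $q$ and its Arf invariant, to vanish identically via the Peterson--Stein formulas. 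Your displayed relation $\sum_i \alpha_i\,\mathrm{Sq}^{2^i}\mathrm{Sq}^{2^i}\equiv 0$ is not the relation underlying the relevant operation. Second, the step you defer as ``the principal obstacle'' --- identifying the geometrically defined Arf invariant with this homotopy-theoretic operation on the Thom spectrum, and then with the coefficient of $h_j^2$ in the filtration-two class detecting the corresponding element of $\pi_n^S$ --- is not a technicality to be noted in passing; it is the entire content of the theorem and occupies most of Browder's paper (Spivak normal fibrations, Wu-class formulae, and the additivity and well-definedness of $q$). As a roadmap the proposal is serviceable; as a proof it is not.
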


The Adams spectral sequence \cite{Adams58} converges to the stable homotopy groups of spheres, which, by Pontryagin's theorem, correspond to framed manifolds up to framed cobordism. When the element $h_j^2$ survives (see below for an introduction to the Adams spectral sequence and the element $h_j^2$), the homotopy classes it detects are denoted by $\theta_j \in \pi_{2^{j+1}-2}$. Browder showed that these classes correspond to framed manifolds with Kervaire invariant one. 

\begin{theorem}[Theorem~\ref{thm:126survives}] \label{thm:h62}
    The element $h_6^2$ survives to the $E_\infty$-page in the Adams spectral sequence.
\end{theorem}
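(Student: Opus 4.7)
The plan is to show that $h_6^2 \in \mathrm{Ext}^{2,128}_{\mathcal{A}}(\mathbb{F}_2,\mathbb{F}_2)$ supports no nonzero Adams differential. Since $h_6^2$ lies in filtration $s = 2$ and an Adams differential $d_r$ raises filtration by $r$, any class that could hit $h_6^2$ would live in non-positive filtration of the $127$-stem, where only $1 \in E_2^{0,0}$ is present; hence permanence of $h_6^2$ is equivalent to $d_r(h_6^2) = 0$ for every $r \ge 2$. Each such potential differential lands in the $125$-stem at filtration $r + 2 \ge 4$, so the task reduces to a detailed understanding of the $125$- and $126$-stems of the Adams spectral sequence.

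The first step is to compute $\mathrm{Ext}^{s,t}_{\mathcal{A}}(\mathbb{F}_2,\mathbb{F}_2)$ in the relevant range together with enough multiplicative and Massey-product structure to identify each basis element as a named class. This is inaccessible by hand, so I would rely on machine resolutions in the tradition of Bruner and on the chart-generation methodology developed by Wang and Xu in lower ranges. The second step is to install as many Adams differentials in the neighborhood of the $125$-stem as possible, so that most potential targets of $d_r(h_6^2)$ are known to be boundaries or to vanish by page $r$. A central tool here is the synthetic spectrum formalism of \Pstragowski: the classical Adams spectral sequence is the homotopy of a cellular synthetic deformation, and differentials can be imported from the $\mathbb{C}$-motivic or $\mathbb{F}_2$-synthetic sphere following the program of Isaksen--Wang--Xu. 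Comparison with the Adams--Novikov spectral sequence and with $\mathrm{tmf}$ supplies further constraints.

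After these routine propagations, only a small number of specific classes in the $125$-stem should remain as genuinely possible targets, and each must be ruled out individually. The main obstacle is precisely this residual list: each surviving candidate requires a bespoke argument, typically by expressing $h_6^2$ as a Massey product $\langle a,b,c \rangle$ and invoking Moss's convergence theorem, or by constructing an auxiliary map of spectra whose induced map on Adams $E_r$-pages forces the candidate target to vanish. Because $h_6^2$ sits high in the chart and is not the leading term of any relation visible at lower stems, I expect this final stage to carry the true mathematical content of the proof; the algebraic input must be rich enough, and the synthetic lifting sharp enough, to close every remaining gap simultaneously, after which $h_6^2$ survives to $E_\infty$ and Theorem~\ref{thm:h62} follows.
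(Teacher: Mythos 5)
Your proposal is a strategy outline rather than a proof, and it omits the decisive structural input that makes the problem tractable. The paper does not attack $d_r(h_6^2)$ by enumerating targets in the $125$-stem and killing them one by one with Massey products and Moss's theorem. Instead it first invokes the Barratt--Jones--Mahowald inductive approach in the form proved synthetically by Burklund--Xu: $h_6^2$ is a permanent cycle if and only if $\lambda\eta\theta_5^2 = 0$ in $\pi_{125,129}$ of the synthetic sphere, where $\theta_5$ is any class detected by $h_5^2$. This converts the question about differentials on $h_6^2$ into a question about the specific product $\eta\theta_5^2$, which can then be bounded in Adams filtration using the known structure of the $62$-stem and the machine-computed $E_2$-page near stem $124$--$126$. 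That reduction is absent from your sketch, and without it the ``residual list'' you defer to bespoke arguments cannot be shrunk to a manageable size: the paper records $105$ additive generators in stem $125$, and the inductive criterion plus the Generalized Leibniz Rule and Generalized Mahowald Trick are what cut this down to the single candidate $d_{12}(h_6^2) = h_1h_4x_{109,12}$.

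Moreover, you explicitly acknowledge that the final stage ``carries the true mathematical content of the proof'' and leave it unexecuted. In the paper that content is a concrete chain: the candidate differential forces an $\eta$-extension from $h_0^2x_{124,8}$ to $h_1h_4x_{109,12}$ (Proposition~\ref{prop:possibleh62}(5)); this is converted, via the class $\alpha_1 = [x_{123,9}+h_0x_{123,8}]$ and the Toda bracket $\langle \lambda^3\alpha_1,[h_0],\eta\rangle$, into a $2$-extension on $\lambda^4 h_0^2x_{125,9,2}$ (Corollary~\ref{cor:2ext125}); and that $2$-extension is shown incompatible with a $\nu$-extension from $h_1x_{121,7}$ established by the Generalized Mahowald Trick, using the Adams spectral sequence of $S^0/\nu$ in stem $126$ (Lemma~\ref{lem:nuext125} and Proposition~\ref{prop:state5false}). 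None of this is derivable from the general toolbox you list; in particular $h_6^2$ is not usefully expressible as a Massey product, so the Moss-theorem route you propose for the endgame would not close the remaining case. As written, the proposal identifies the right ambient machinery but does not constitute a proof of the theorem.
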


In addition to Theorem \ref{thm:main}, further implications of Theorem~\ref{thm:h62}, particularly in manifold topology and unstable homotopy theory, can be found in \cite{MillerHHR, MahowaldRemark, HHR, BJMinduction}, among others.

Recall that the 2-primary Adams spectral sequence has the following form.
$$E_2^{s,t} = \Ext^{s, t}_A(\mathbb{F}_2, \mathbb{F}_2) \Longrightarrow \pi_{t-s} {S^0},$$
$$d_r: E_r^{s,t} \longrightarrow E_r^{s+r, t+r-1}.$$
Here the $E_2$-page is the cohomology of the mod 2 Steenrod algebra A, $S^0$ is the 2-completed sphere spectrum, and the spectral sequence converges to the 2-completed stable homotopy groups of spheres. In general, for a spectrum $X$, its $\HF$-Adams spectral sequence is denoted by $E_r^{*,*}(X)$, converging to the 2-completed homotopy groups of $X$.

For the sphere, Adams \cite{Adams58} computed the Adams $1$-line: 
$$\Ext^{1,t}_A(\mathbb{F}_2, \mathbb{F}_2) = \begin{cases}
 	\mathbb{F}_2, \ \text{if} \ t = 2^j \ \text{for some} \ j \geq 0, \\
 	0, \ \text{otherwise}.
\end{cases}
$$
Denote by $h_j$ the generator of $\mathbb{F}_2$ in the bidegree $(s, t)=(1, 2^j)$. Adams \cite{Adams60} proved that the element $h_j$ survives in the Adams spectral sequence if and only if $j \geq 3$, resolving the famous Hopf invariant problem. In fact, Adams proved that
$$d_2(h_j) = h_0h_{j-1}^2 \neq 0, \ \text{for} \ j \geq 4.$$

The Kervaire invariant element $h_j^2$ lives on the Adams 2-line, which is generated by elements of the form $\{h_ih_j\}$ that are subject to the relations $h_ih_{i+1} = 0$. Notably, $h_6^2$ was the last unknown element on the Adams 2-line whose survival in the Adams spectral sequence remained uncertain prior to our work. As a corollary of Theorem~\ref{thm:h62}, we have:

\begin{corollary} \label{cor:2line}
    On the Adams 2-line, the only non-trivial elements that survive in the Adams spectral sequence are:
    $$h_0h_2, \ h_0h_3, \ h_2h_4, \ h_1h_j \ (j \geq 3), \ h_j^2 \ (j \leq 6).$$
\end{corollary}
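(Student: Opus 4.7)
My plan is to split the corollary into the positive direction (the listed classes survive) and the negative direction (every other non-zero 2-line class dies), and reduce each to a known computation together with Theorem~\ref{thm:h62}.

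By the Adams relations $h_i h_{i+1} = 0$, the non-zero 2-line of Ext is spanned by the squares $h_j^2$ together with the products $h_i h_j$ with $j \geq i+2$. For the positive direction: $h_0 h_2$, $h_0 h_3$, and $h_2 h_4$ are classical permanent cycles, detecting $2\nu$, $2\sigma$, and an element in the $18$-stem respectively; the $h_1 h_j$ for $j \geq 3$ detect Mahowald's $\eta_j$ family and are known to be permanent cycles. The Kervaire squares $h_j^2$ are permanent cycles for $j \leq 3$ by elementary considerations (detecting $4$, $\eta^2$, $\nu^2$, and $\sigma^2$), for $j=4$ by Mahowald--Tangora \cite{MahowaldTangora}, for $j=5$ by Barratt--Jones--Mahowald \cite{BJMtheta5}, and for $j=6$ by Theorem~\ref{thm:h62} of this paper. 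For $j \geq 7$, Hill--Hopkins--Ravenel \cite{HHR} proved that $h_j^2$ is not a permanent cycle.

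For the negative direction, I use Adams's differential $d_2(h_j) = h_0 h_{j-1}^2$ for $j \geq 4$ and the Leibniz rule to compute
\[
d_2(h_i h_j) \;=\; h_0\bigl(h_{i-1}^2\, h_j \,+\, h_i\, h_{j-1}^2\bigr) \quad \text{for } j \geq i+2,
\]
where the first summand is absent when $i \leq 3$. The remaining non-zero 2-line classes to rule out are $h_0 h_j$ for $j \geq 4$, $h_2 h_j$ and $h_3 h_j$ for $j \geq 5$, and $h_i h_j$ for $i \geq 4$ with $j \geq i+2$. For each I check that the right-hand side is a non-zero class on the 4-line of Ext, killing the class on $E_2$. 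For example $d_2(h_0 h_j) = h_0^2 h_{j-1}^2$, non-zero for all $j \geq 4$ since $h_{j-1}^2$ is non-zero on the 2-line and $h_0^2$ acts injectively in the relevant bidegree; the cases $h_2 h_j$ and $h_3 h_j$ with $j \geq 5$ are analogous, using $h_2 h_{j-1}^2 \neq 0$ and $h_3 h_{j-1}^2 \neq 0$. The mixed cases with $i, j \geq 4$ require verifying that the two-term sum does not cancel, a routine cobar or May-spectral-sequence calculation already in the literature.

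The single genuinely hard step is the $j=6$ case of the Kervaire square, which is precisely Theorem~\ref{thm:h62} and the substance of this paper; granting that, Corollary~\ref{cor:2line} is bookkeeping, combining Adams's $d_2$-differential, the classical Kervaire survival results, and a short list of non-vanishing checks on the 4-line of Ext.
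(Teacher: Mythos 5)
Your positive direction matches the paper's (the same results of May, Mahowald, Mahowald--Tangora, Barratt--Jones--Mahowald, Hill--Hopkins--Ravenel, plus Theorem~\ref{thm:h62}), but your negative direction has a genuine gap: you claim that every remaining non-zero 2-line class is killed by the Leibniz $d_2$, and the ``non-vanishing checks on the 4-line'' you assert are false in exactly the hard cases. The relations $h_ih_{i+1}=0$ and $h_ih_{i+2}^2=0$ in $\Ext_A$ kill your proposed $d_2$-targets: $d_2(h_2h_5)=h_0h_2h_4^2=0$ since $h_2h_4^2=0$; $d_2(h_3h_5)=h_0h_3h_4^2=0$ since $h_3h_4=0$; $d_2(h_3h_6)=h_0h_3h_5^2=0$ since $h_3h_5^2=0$; and even $d_2(h_0h_4)=h_0^2h_3^2=0$ (the paper itself records $d_3(h_0h_4)=h_0d_0$ in Example~\ref{exam:synEinfty}, which requires $h_0h_4$ to be a $d_2$-cycle, so $h_0^2$ does not act injectively there). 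So $h_2h_5$, $h_3h_5$, and $h_3h_6$ genuinely survive to the $E_3$-page, and ruling them out is not bookkeeping.

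This is precisely where the paper's proof invokes substantive theorems rather than the Leibniz rule: Mahowald--Tangora's $d_3(h_2h_5)\neq 0$ and $d_4(h_3h_5)\neq 0$, and Isaksen--Wang--Xu's $d_3(h_3h_6)\neq 0$ in the 70-stem --- the last of which is a recent, decidedly non-routine computation. (Compare also Example~\ref{exam:Leibnizyes}, where the paper derives $d_3(h_2h_5)=h_0p$ as a $d_3$, not a $d_2$.) To repair your argument you must separate the classes that die on $E_2$ (e.g.\ $h_0h_j$ for $j\ge 5$, and the $h_ih_j$ with $i\ge 4$ after consulting the 4-line) from the $d_2$-cycles $h_0h_4$, $h_2h_5$, $h_3h_5$, $h_3h_6$, and cite the higher differentials for the latter.
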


\begin{proof}
    From Adams's Hopf invariant one differentials, Lin's computations of the Adams 4-line \cite{LinExt4}, and the Leibniz rule, we know that the only elements that survive to the Adams $E_3$-page are
    $$h_0h_2, \ h_0h_3, \ h_2h_4, \ h_2h_5, \ h_3h_5, \ h_3h_6, \ h_1h_j \ (j \geq 3), \ h_j^2 \ (j \leq 6).$$
May \cite{Maythesis} proved that the first three elements and $h_j^2$ for $j \leq 3$ survive. Mahowald--Tangora \cite{MahowaldTangora} proved that $h_2h_5$ supports a nonzero $d_3$-differential, $h_3h_5$ supports a nonzero $d_4$-differential, and $h_4^2$ survives. Isaksen--Wang--Xu \cite{IWX} proved that $h_3h_6$ supports a nonzero $d_3$-differential. Mahowald \cite{Mahowaldetaj} proved that $h_1h_j$ for $j \geq 3$ survive. Barratt--Jones--Mahowald \cite{BJMtheta5} proved that $h_5^2$ survives (see \cite{Xu, IWX} for alternative proofs). Hill--Hopkins--Ravenel \cite{HHR} proved that $h_j^2$ supports nonzero differentials for $j \geq 7$ (the targets of these differentials are still unknown). Finally, by Theorem~\ref{thm:h62}, $h_6^2$ survives.   
\end{proof}

Before Hill--Hopkins--Ravenel's proof on the non-existence of $\theta_j$ for $j \geq 7$, Barratt--Jones--Mahowald \cite{BJMinduction} had an inductive argument trying to establish the existence of all $\theta_j$: They proved that if there exists a $\theta_j$ satisfying $2\cdot \theta_j = 0, \ \theta_j^2 = 0$, then there exists a $\theta_{j+1}$ satisfying $2\cdot \theta_{j+1} = 0$. In particular, the problem of whether a $\theta_{j+1}$ of order 2 exists is known as the strong Kervaire invariant problem. It was known that $2\cdot \theta_j = 0$ for $j \leq 5$ (see \cite{MahowaldTangora} for $j=4$ and \cite{Xu, IWX} for $j=5$). Although we prove that $\theta_6$ exists in this paper, the following questions are still open:

\begin{question} \label{que:2theta6}
    Does there exist a $\theta_6$ that has order 2?
\end{question}

\begin{question} \label{que:theta5sq}
    Does there exist a $\theta_5$ such that $\theta_5^2 = 0$?
\end{question}

By Barratt--Jones--Mahowald's theorem \cite{BJMinduction}, if the answer to Question~\ref{que:theta5sq} is positive, then it would imply our Theorem~\ref{thm:h62} and a positive answer to Question~\ref{que:2theta6}. We plan to study these questions in a future project.\\

As suggested by the proof of Corollary~\ref{cor:2line}, computation of differentials in the Adams spectral sequence has a long history. See Section 2 of \cite{WX61} for a brief summary at all primes and computations at the prime 2 up to around 60-stem. For more recent computations up to around 90-stem, see \cite{IWX, IWXsurvey, WXems, WXicm}.

Many methods were introduced to compute differentials in the Adams spectral sequence. We highlight some of the major methods as follows.
\begin{enumerate}
    \item Multiplicative structure of the Adams $E_2$-page and the Leibniz rule. \\
    
     \noindent By computing the Adams $E_2$-page with its multiplicative structure via the May spectral sequence, and comparing with Toda's unstable computations, May \cite{Maythesis} computed all differentials up to the 28-stem.\\
    
    \item Higher product structure -- interactions between Massey products and Toda brackets through Moss's theorem \cite{Moss}.\\

    \item The Mahowald trick \cite{MahowaldTangora}, which translates between differentials and extension problems.\\

    \noindent By using both Moss's theorem and the Mahowald trick, Barratt--Mahowald--Tangora \cite{MahowaldTangora, BarrattMahowaldTangora} computed all but one differentials up to the 45-stem; The remaining one was computed by Bruner \cite{Brunerdiff} using power operations.\\

    \item Comparison with the motivic Adams spectral sequence via the Betti realization functor \cite{DuggerIsaksen}.\\

    \noindent By comparison via the Betti realization functor, Isaksen \cite{Isaksen} gave rigorous arguments for all but one differentials in both the motivic and \emph{classical} Adams spectral sequence up to the 59-stem; The remaining one was computed by Xu \cite{IsaksenXu} using the higher Leibniz rule for motivic Massey products. \\

    \item Wang--Xu's $\mathbb{R}P^\infty$-method \cite{WX61}.\\

    \noindent Using Lin's algebraic Kahn--Priddy theorem \cite{LinalgKP}, Wang--Xu introduced a technique to prove Adams differentials inductively using differentials in subquotients of $\mathbb{R}P^\infty$, and completed computations of Adams differentials up to the 61-stem. (See also \cite{WangXu51ext} for application of this method on extension problems.) \\

    \noindent There is a nice geometric application of these differentials in stems 60 and 61. The sphere $S^{61}$ has a unique smooth structure and it is the last odd dimensional case: The only ones are $S^1, \ S^3, \ S^5$ and $S^{61}$.\\

    \item Gheorghe--Isaksen--Wang--Xu's motivic cofiber of tau method \cite{GWX, IWX}. \\

    \noindent By identifying the algebraicity of the special fiber of a motivic deformation, Gheorghe--Wang--Xu \cite{GWX} proved that the motivic Adams spectral sequence of the cofiber of tau is isomorphic to the algebraic Novikov spectral sequence for BP$_*$, which can be completely computed in a large range by Wang's program \cite{Wang}. Then differentials in the classical Adams spectral sequence follow from naturality.\\ 
    
    \noindent Using this method, together with some others, Isaksen--Wang--Xu \cite{IWX} computed Adams differentials up to the 90-stem, with only a few exceptions. \\

    \item $\HF$-synthetic/filtered spectra method \cite{Pst, Burklund, BurklundXu, BurklundIsaksenXu}.\\

    \noindent The homotopy ring of the $\HF$-synthetic sphere \cite{Pst, BHS} can be viewed as a tool to encode homotopical information of the classical Adams spectral sequence, in analog to the relation between the homotopy ring of the $\mathbb{C}$-motivic sphere \cite{Isaksen} and the Adams--Novikov spectral sequence. \\

    \noindent By studying the $\HF$-synthetic sphere, Burklund--Isaksen--Xu \cite{BurklundIsaksenXu} proved a few Adams differentials up to the 90-stem that were left out by Isaksen--Wang--Xu \cite{IWX}.\\

\end{enumerate}

Our strategy for proving our main Theorem \ref{thm:h62} that $h_6^2$ survives is three-fold. 

\begin{itemize}
    \item Lin's Program.

    \noindent By establishing the theory of noncommutative Gr\"obner bases for Steenrod algebras \cite{LinGrobner}, Lin made computer programs that compute Ext groups in a very effective way. We use Lin's program to compute the Adams $E_2$-pages for a vast collection of finite spectra and the maps between them. For a detailed account of the range of Adams $E_2$ data computed, see \cite{LWXMachine}.\\
    
    \noindent Building on the theory of secondary Steenrod algebras \cite{BJ, Nassau, Chua}, we also leverage Lin’s program to compute Adams $d_2$-for certain finite spectra. Details on the $d_2$-differentials we have computed can also be found in \cite{LWXMachine}.\\
    
    \noindent Additionally, Lin’s program facilitates the propagation of differentials and extensions by employing the Leibniz rule and the naturality of Adams spectral sequences and extension spectral sequences (Definition \ref{def:ess}). \\

    \item The Generalized Leibniz Rule and the Generalized Mahowald Trick. \\

    \noindent The use of $\HF$-synthetic/filtered spectra allows us to discuss elements on the Adams $E_n$-page as classes in the homotopy groups of certain synthetic spectra \cite{Pst, BurklundXu}. In particular, naturality on synthetic homotopy groups allows us to discuss jump-of-filtration phenomena on any Adams $E_n$-page in a rigorous way. \\
    
    \noindent Using $\HF$-synthetic spectra, we develop the Generalized Leibniz Rule (Theorem~\ref{thm:e73f481e}) and the Generalized Mahowald Trick (Theorem~\ref{thm:158d451a}), which can be viewed as generalizations of the classical Mahowald's trick and geometric boundary theorem studies by Ravenel, Behrens and others \cite{RavenelGreenbook, Behrens, Ma, BurklundCookware}. We then use them to further propagate differentials from the known ones.\\

    \noindent With the Generalized Leibniz Rule and the Generalized Mahowald Trick, Lin’s program becomes even more powerful, enabling the derivation of additional differentials. The program rigorously verifies the conditions of the relevant theorems and generates human-readable proofs \cite{LWXZenodo} for all differentials computed by the machine. In the Appendix, readers will find tables of Adams differentials that are used in the proof of the main Theorem \ref{thm:h62}.\\

    \item The inductive approach and adhoc arguments near stem 126.\\

    \noindent By studying Barratt--Jones--Mahowald's inductive approach \cite{BJMinduction} in the $\HF$-synthetic context, Burklund--Xu (Proposition 7.19 of \cite{BurklundXu}) proved that $h_6^2$ is a permanent cycle in the classical Adams spectral sequence if and only if $\lambda \eta \theta_5^2 = 0$ in the homotopy group of the synthetic sphere. Here $\lambda$ is our notation for the synthetic deformation parameter, and $\theta_5$ is any synthetic homotopy class that is detected by $h_5^2$ on the Adams $E_2$-page.   \\

    \noindent There are 105 additive generators in stem 125 of the classical Adams $E_2$-page -- these are the potential targets that could be hit by a nonzero differential from $h_6^2$. Using Lin's program and the inductive approach, we can rule out 101 out of the 105 elements.\\
    
    \noindent By further applying the Generalized Leibniz Rule and the Generalized Mahowald Trick, we can reduce the possibility of a nonzero differential supported by $h_6^2$ to only one case: A nonzero $d_{12}$ hits a certain element in stem 125, filtration 14 (see Proposition~\ref{prop:possibleh62}(2)). A necessary condition for this nonzero $d_{12}$ is an $\eta$-extension from stem 124, filtration 10 to stem 125, filtration 14 (see Proposition~\ref{prop:possibleh62}(5)). \\
    
    \noindent By a careful inspection of the homotopy groups of the synthetic sphere (and on certain Adams $E_n$-pages) near stem 126, we use adhoc arguments to prove that the $\eta$-extension in the previous paragraph cannot happen (see Proposition~\ref{prop:state5false}) and conclude that $h_6^2$ survives in the Adams spectral sequence. \\
\end{itemize} 

\noindent\textbf{Organization.} \ In Section 2, we setup and discuss properties of an extension spectral sequence for a map $f: X\to Y$ between two spectra, whose $E_0$-page is isomorphic to the direct sum of $E_\infty$-pages of the Adams spectral sequences of $X$ and $Y$, and differentials correspond to $f_*: \pi_*X \to \pi_*Y$, filtered by the Adams filtration. This setup allows us to discuss the jump-of-filtration phenomena rigorously. In Section 3, we recall and discuss certain properties of $\HF$-synthetic spectra. In Section 4, we discuss the extension spectral sequence from Section 2 in the context of $\HF$-synthetic spectra. In Section 5, we discuss extensions on a classical Adams $E_n$-page, defined in terms of $\HF$-synthetic spectra. In Section 6, using the language of Sections 4 and 5, we develop the Generalized Leibniz Rule and the Generalized Mahowald Trick. In Section 7, we use the inductive approach, and ad hoc arguments prove that $h_6^2$ survives in the Adams spectral sequence. Necessary information of the classical Adams spectral sequence for the final ad hoc arguments is included in the Appendix.\\

\noindent\textbf{Acknowledgement.} \ The second author is partially supported by grants NSFC-12325102, NSFC-12226002, the New Cornerstone Science Foundation, and Shanghai Pilot Program for Basic Research–Fudan University 21TQ1400100 (21TQ002). The third author is partially supported by NSF Grant DMS 2105462 and the AMS Centennial Research Fellowship. 

The authors express their deepest gratitude to Mark Behrens and Peter May for their invaluable guidance, insightful advice, and continuing support throughout this journey.

We also extend our heartfelt thanks to Soren Galatius, Lars Hesselholt, Mike Hill, Mike Hopkins, Dan Isaksen, Ciprian Manolescu, Haynes Miller, Yi Ni, Doug Ravenel, Gang Tian, Chenyang Xu, and Weiping Zhang for their encouragement and support of this project. The authors also thank Yuchen Wu and Shangjie Zhang for helpful comments on the early drafts of this paper. 

This work is dedicated to the memory of Mark Mahowald, whose groundbreaking contributions to algebraic topology continue to inspire generations of mathematicians. His deep insights and unwavering passion for the field remain a guiding light in our endeavors.

\section{A Spectral Sequence for Extensions}\label{sec:e6be737c}
In this paper, we assume that all symbols $X,Y,Z,W,\dots$ for spectra represent 2-completed connective spectra of finite type. In particular, their $\HF$-Adams spectral sequences converge strongly.

Consider a map $f: X\to Y$ between two spectra and we can construct the following chain complex
$$0\to \pi_*X\fto{\pi_*f} \pi_*Y\to 0$$
whose homology is 
$$\Ker(\pi_*f)\oplus \Coker(\pi_*f).$$
We encounter $f$-extension problems when analyzing the homotopy groups through the $E_\infty$-pages of the $\HF$-based Adams spectral sequences. To address these extensions, we introduce the following spectral sequence.

\begin{definition}\label{def:ess}
  For a map $f: X\to Y$, we define an $f$-extension spectral sequence (denoted $f$-ESS) as the spectral sequence derived from filtering the above chain complex according to the Adams filtration on $\pi_*$. The $E_0$-page of this spectral sequence is isomorphic to the direct sum of $E_\infty$-pages of the Adams spectral sequences of $X$ and $Y$: 
  $$\ESS{f}_0^{s,t} \cong E_\infty^{s,t}(X) \oplus E_\infty^{s,t}(Y) \Longrightarrow \Ker(\pi_*f)\oplus \Coker(\pi_*f)$$
  where the $d_0$-differential is induced by $f$ on the $E_\infty$-pages.
  
  Differentials in this spectral sequence has the form
  $$d_n^{f}: \ESS{f}_n^{s,t}\to \ESS{f}_n^{s+n, t+n}.$$
\end{definition}

To prevent confusion between differentials $d_n$ in the Adams spectral sequence of a spectrum $X$ and those in the $f$-extension spectral sequence, we denote the latter with a superscript, writing them as $d_n^{f}$. 

In the remainder of the paper, we denote by $\AF(f)$ the Adams filtration of a map $f$. Recall that $\AF(f) \ge k$ if and only if there exist $k$ composable maps $f_1, \ldots, f_k$, each inducing a trivial map in $\HF$-homology, such that $f$ is homotopic to the composition $f_k \circ \cdots \circ f_1$ (see \cite[Theorem~2.2.14]{RavenelGreenbook} for example).

\begin{notation}\label{nota:6fb333a2}
    Let $\ZESS{f}^{s,t}_n(X)\subset E^{s,t}_\infty(X)$ be the subgroup consisting of elements for which the $f$-extension differentials $d^f_0,\dots,d^f_n$ vanish.
    Let $\BESS{f}^{s,t}_n(Y)\subset E^{s,t}_\infty(Y)$ be the subgroup generated by the sum of images of the $f$-extension differentials $d^f_0,\dots,d^f_n$. We define $\BESS{f}^{s,t}_{-1}(Y)=0$ and $\ZESS{f}^{s,t}_{-1}(X)=E^{s,t}_\infty(X)$.\\
    
    By definition we have
    $$\ESS{f}^{s,t}_n\iso \ZESS{f}^{s,t}_{n-1}(X)\oplus \big(E^{s,t}_\infty(Y)/\BESS{f}^{s,t}_{n-1}(Y)\big).$$
    
    A nonzero $d^f_n$ differential takes the form
    $$d^f_n: \ZESS{f}^{s,t}_{n-1}(X)\to E^{s+n,t+n}_\infty(Y)/\BESS{f}^{s+n,t+n}_{n-1}(Y).$$
     When we write 
    $$d^f_n(x)=y$$
    for $x\in E_\infty^{s,t}(X)$ and $y\in E_\infty^{s+n,t+n}(Y)$, it is understood that $x$ belongs to $\ZESS{f}^{s,t}_{n-1}(X)$, and 
    $$d^f_n(x)=y+\BESS{f}^{s+n,t+n}_{n-1}(Y).$$
    Furthermore, we may sometimes write
    $$d^f_n(x)\equiv y\mod M$$
    for some subgroup $M\subset E_\infty^{s+n,t+n}(Y)$. This means that 
    $$d^f_n(x)=y+m+\BESS{f}^{s+n,t+n}_{n-1}(Y)$$
    for some $m\in M$.
\end{notation}

\begin{definition}\label{def:768fba8a}
    We say there is an $f$-extension from $x\in E_\infty^{s,t}(X)$ to $y\in E_\infty^{s+n,t+n}(Y)$ if $d_n^{f}(x)=y$ in the $f$-ESS. We say this $f$-extension is $\textbf{essential}$ if $y$ is nontrivial in the $E_n$-page of the $f$-ESS, or equivalently $$y\notin \BESS{f}^{s+n,t+n}_{n-1}(Y).$$
    Otherwise we refer to it as inessential.
\end{definition}

\begin{notation}
    For $x\in E_\infty^{s,t}(X)$, we denote by $\{x\}$  the set of all classes in $\pi_{t-s}X$ that are detected by $x$. We use $[x]$ to refer to a specific class or a general class in $\{x\}$, with the choice understood from the context.
\end{notation}

\begin{proposition}\label{prop:8154e6f1}
Consider $f: X\to Y$, $x\in E_\infty^{s,t}(X)$, $y\in E_\infty^{s+n,t+n}(Y)$ and $y'\in E_\infty^{s+m,t+m}(Y)$ for $m,n\ge 0$.
\begin{enumerate}
    \item There is an $f$-extension from $x$ to $y$, i.e., $d_n^{f}(x)=y$ in the $f$-ESS if and only if there is a class $[x]\in \{x\}$ such that $f[x]\in \{y\}$.
    \item An $f$-extension from $x$ to $y$ is inessential, i.e., $y$ is trivial in the $E_n$-page of the $f$-ESS if and only if there exists an element $x'\in E_\infty^{s+a,t+a}(X)$ for $0<a\le n$ such that we have an essential differential $d_{n-a}^{f}(x')=y$. Equivalently, there exists a class $[x'] \in \{x'\} \subset \pi_*X$ with AF$(x')>$AF$(x)$, such that $f[x'] \in \{y\}$.
    
    \item Suppose we have an $f$-extension from $x$ to both $y$ and $y'$.
    \begin{enumerate}
        \item If $m=n$, then $y-y'\in \BESS{f}^{s+n,t+n}_{n-1}(Y)$ and there exists an element $x'\in E_\infty^{s+a,t+a}(X)$ for $0<a\le n$ such that we have an essential differential 
        $$d_{n-a}^{f}(x') = y-y'$$
        in $f$-ESS. Equivalently, there exists a class $[x'] \in \{x'\} \subset \pi_*X$ with AF$(x')>$AF$(x)$, such that $f[x'] \in \{y-y'\}$. 
        \item If $m>n$, then the $f$-extension from $x$ to $y$ is inessential.
     \end{enumerate}
\end{enumerate}
\end{proposition}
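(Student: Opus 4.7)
The plan is to reduce everything to a careful unpacking of the $f$-ESS, which arises from filtering the two-term chain complex $0 \to \pi_*X \xrightarrow{f} \pi_*Y \to 0$ by Adams filtration. The basic fact to record is that the relation $d_n^f(x) = y$ on the $E_n$-page is equivalent to the existence of a lift $[x] \in F^s\pi_*X$ of $x$ whose image $f[x]$ lies in $F^{s+n}\pi_*Y$ and reduces to $y$ in $E_\infty^{s+n,t+n}(Y) = F^{s+n}/F^{s+n+1}$. With this identification in hand, part (1) is immediate: by definition $[x]\in\{x\}$ means precisely that $[x]$ is such a lift, and $f[x]\in\{y\}$ means precisely that $f[x] \in F^{s+n}\pi_*Y$ reduces to $y$.

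For part (2), I would first establish a homotopy-level reformulation of the $\BESS$ subgroup: namely, $y\in\BESS{f}^{s+n,t+n}_{n-1}(Y)$ if and only if there exists $[x''] \in F^{s+1}\pi_*X$ (so $\AF([x'']) > s$) with $f[x''] \in F^{s+n}\pi_*Y$ reducing to $y$. The forward direction combines individual lifts of the summands in an expression of $y$ as a sum of images of $d_0^f, \ldots, d_{n-1}^f$, using that the Adams filtration of a sum is at least the minimum of the filtrations of the summands; the converse is direct from the definition of the differentials. Given this, the equivalence of both conditions in (2) with inessentiality is immediate. To upgrade such a witness to a class $x'$ that gives an \emph{essential} differential $d_{n-a}^f(x') = y$, I would choose $[x'']$ of maximal Adams filtration among all valid witnesses; this forces $y \notin \BESS{f}^{s+n,t+n}_{n-a-1}(Y)$, since otherwise a strictly higher-filtration witness would exist, contradicting maximality.

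For part (3a), since $d_n^f(x)$ is only well-defined modulo $\BESS{f}^{s+n,t+n}_{n-1}(Y)$, the two assignments $d_n^f(x) = y$ and $d_n^f(x) = y'$ force $y - y' \in \BESS{f}^{s+n,t+n}_{n-1}(Y)$, and the existence of an essential $d^f_{n-a}(x') = y - y'$ follows from the same maximal-AF argument as in (2). For part (3b), I would pick lifts $[x]$ and $[x]'$ of $x$ witnessing the respective extensions, so that $f[x] \in F^{s+n}\pi_*Y$ reduces to $y$, while $f[x]' \in F^{s+m}\pi_*Y \subset F^{s+n+1}\pi_*Y$ reduces to $0$ in $E_\infty^{s+n,t+n}(Y)$. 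Then $[x] - [x]' \in F^{s+1}\pi_*X$, and $f([x] - [x]') \in F^{s+n}\pi_*Y$ reduces to $y$; the homotopy reformulation of $\BESS$ from part (2) then shows $y\in\BESS{f}^{s+n,t+n}_{n-1}(Y)$, i.e., the extension is inessential. The potentially delicate case $y = 0$ is trivially inessential, which handles the subtlety that $\AF([x]-[x]')$ could in principle exceed $s+n$.

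The main obstacle I anticipate is the essentiality upgrade in part (2): an arbitrary expression of $y$ as a sum of boundaries coming from different filtrations does not by itself yield a single source supporting an essential differential, so one must carefully invoke a maximal-AF witness (or equivalently a downward induction on filtration) to condense the data into one genuinely essential $d^f_{n-a}$. The remainder of the argument is careful bookkeeping translating between the notation of Notation~\ref{nota:6fb333a2} and the homotopy-level statements about filtration-preserving lifts.
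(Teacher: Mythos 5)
Your proposal is correct and follows the same route the paper intends: the paper's proof simply asserts that part (1) is immediate from the construction of the $f$-ESS as the spectral sequence of the Adams-filtered two-term complex and that (2) and (3) follow from (1), and your argument is exactly the careful unpacking of that assertion (the homotopy-level characterization of $\BESS{f}_{n-1}$ and the maximal-filtration witness being the details the paper leaves implicit). No gaps.
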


\begin{proof}
    Part (1) is straightforward from the setup of the extension spectral sequence and Definition~\ref{def:768fba8a}. Parts (2) and (3) follow from Part (1).
\end{proof}

\begin{corollary}
    If the Adams filtration of $f$ is $k$, then
    $d^f_i=0$ for $i<k$.
\end{corollary}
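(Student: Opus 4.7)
The plan is to combine the characterization of Adams filtration recalled just after Definition \ref{def:ess} with the detection criterion in Proposition \ref{prop:8154e6f1}(1). Recall $\AF(f)\ge k$ means $f$ factors as a composition $f_k\circ\cdots\circ f_1$ of maps each trivial on $\HF$-homology. Every such factor raises the Adams filtration of any homotopy class by at least one (its cofiber sequence realizes the first step of an Adams resolution on the image), so composing $k$ of them shows that for every $[x]\in\pi_*X$ we have $\AF(f[x])\ge \AF([x])+k$.

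First, I would argue by contradiction: suppose $d_n^{f}\neq 0$ for some $n<k$, so there exist $x\in E_\infty^{s,t}(X)$ and $y\in E_\infty^{s+n,t+n}(Y)$ with $d_n^{f}(x)=y$ and $y\notin \BESS{f}^{s+n,t+n}_{n-1}(Y)$. By Proposition \ref{prop:8154e6f1}(1), this differential is equivalent to the existence of a homotopy class $[x]\in\{x\}\subset\pi_{t-s}X$ whose image $f[x]$ lies in $\{y\}\subset \pi_{t-s}Y$.

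Next, I would extract the filtration constraints from this equivalence: since $x\in E_\infty^{s,t}(X)$, the detecting class $[x]$ satisfies $\AF([x])=s$ exactly, and since $f[x]\in\{y\}$ with $y\in E_\infty^{s+n,t+n}(Y)$, we have $\AF(f[x])=s+n$ exactly. Combined with the filtration-shifting property $\AF(f[x])\ge \AF([x])+k = s+k$ from the first paragraph, this gives $s+n\ge s+k$, i.e. $n\ge k$, contradicting $n<k$. Hence $d_i^{f}=0$ for all $i<k$.

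There is essentially no hard step here; the only thing to keep straight is the distinction between "$\AF([x])\ge s$" and "$\AF([x])=s$", which is precisely what the notation $[x]\in\{x\}$ encodes via Proposition \ref{prop:8154e6f1}(1). In particular, one should not confuse the Adams filtration of a representative (which can only be bounded below from knowing $x\neq 0$) with that of a class genuinely detected by $x$ (which is exact).
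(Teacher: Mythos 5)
Your proof is correct and follows essentially the same route as the paper: factor $f$ as $k$ maps each trivial on $\HF$-homology, note that each such factor raises Adams filtration by at least one so that $\AF(f[x])\ge \AF([x])+k$, and then invoke Proposition~\ref{prop:8154e6f1}(1) to rule out any nonzero $d^f_i$ with $i<k$. The only cosmetic difference is that you phrase the final step as a contradiction while the paper states it directly.
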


\begin{proof}
    When $k>0$, it is clear that $d^f_0=0$, which means that for any $s$ and $[x]\in F_s\pi_*(X)$, we have $f([x])\in F_{s+1}\pi_*(Y)$. Since $\AF(f)=k$, we know that there exist $k$ maps
    $$X=X_0\fto{f_1}X_1\fto{f_2}\cdots\fto{f_k}X_k=Y$$
    such that each $f_i$ induces trivial map in $\HF$-homology (hence $\AF(f_i)>0$) and $f$ is homotopic to the composition $f_k\circ \cdots\circ f_1.$
    Then for any $s$ and $[x]\in F_s\pi_*(X)$, inductively we have
    $$f_1([x])\in F_{s+1}\pi_*(X_1)$$
    $$f_2\circ f_1([x])\in F_{s+2}\pi_*(X_2)$$
    $$\cdots$$
    $$f([x])=f_k\circ\cdots\circ f_1([x])\in F_{s+k}\pi_*(X_k)=F_{s+k}\pi_*(Y).$$
    By Proposition \ref{prop:8154e6f1}.(1) this means that $d_i^fx=0$ for any $x\in E_\infty^{*,*}(X)$ and $i<k$.
\end{proof}

\begin{definition} \label{def:98skj23}
\begin{enumerate}[leftmargin=*]
    \item For $x\in E_\infty^{s,t}(X)$, we say that the an $f$-extension $d_n^{f}(x)=y$ has a crossing that hits Adams filtration $p$ for some $p \leq s+n$, if there exists an element $x^\prime\in E_\infty^{s+a,t+a}(X)$ with $a>0$ and an $f$-extension $d_m^{f}(x^\prime)=y^\prime$ for $0\neq y^\prime\in E^{s+a+m, t+a+m}_\infty(Y)$ such that
    $$p = \textup{AF}(y^\prime) = s+a+m\le \textup{AF}(y) = s+n.$$
    \item We say that an $f$-extension $d_n^{f}(x)=y$ has no crossing that hits the range AF $\ge p$ if there does not exist an element $x^\prime\in E_\infty^{s+a,t+a}(X)$ with $a>0$ and an $f$-extension $d_m^{f}(x^\prime)=y^\prime$ for $0\neq y^\prime\in E^{s+a+m, t+a+m}_\infty(Y)$ such that
    $$p\le \textup{AF}(y^\prime) = s+a+m\le \textup{AF}(y) = s+n.$$
    \item We say that an $f$-extension $d_n^{f}(x)=y$ has no crossing if it has no crossing that hits the range AF$\ge \AF(x) + 1$.
\end{enumerate}
\end{definition}

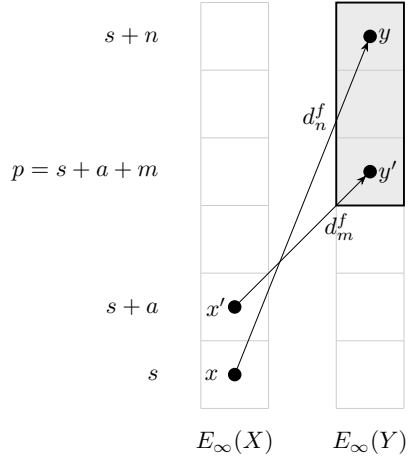
\begin{figure}[h]
\centering
\scalebox{0.9}{\begin{tikzpicture}
    \fill[llgray] (2-0.5, 5+0.5) rectangle ++(1, -3);
    \tikzgrid{0}{1}{0}{6}
    \tikzgrid{2}{3}{0}{6}
    \draw[thick] (2-0.5, 5+0.5) rectangle ++(1, -3);

    \coordinate (x) at (0, 0);
    \coordinate (y) at (2, 5);
    \coordinate (x1) at (0, 1);
    \coordinate (y1) at (2, 3);
    
    \fill (x) circle (0.1);
    \fill (y) circle (0.1);
    \fill (x1) circle (0.1);
    \fill (y1) circle (0.1);

    \drawarrow (x) node[left] {$x~$} -- (y) node[right] {$y$};
    \drawarrow (x1) node[left] {$x^\prime$} -- (y1) node[right] {$y^\prime$};
    
    \node[anchor=south east] at (1.5, 3.5) {$d^f_n$};
    \node[anchor=north west] at (1.2, 2.5) {$d^f_m$};

    \node[anchor=east] at (-1, 0) {$s$};
    \node[anchor=east] at (-1, 5) {$s+n$};
    \node[anchor=east] at (-1, 1) {$s+a$};
    \node[anchor=east] at (-1, 3) {$p=s+a+m$};

    \node at (0, -1.0) {$E_\infty(X)$};
    \node at (2, -1.0) {$E_\infty(Y)$};
\end{tikzpicture}}
\caption{A crossing of $d_n^f$ that hits Adams filtration $p$}
\end{figure}
   
\begin{remark}   

The following statements are equivalent.
\begin{itemize}
    \item An $f$-extension $d_n^{f}(x)=y$ has no crossing that hits the range AF$\ge p$. 
    \item If for any $a>0$ there is an $f$-extension from $x^\prime\in E_\infty^{s+a,t+a}(X)$ to some nontrivial $y^\prime$, then AF$(y^\prime)<p$ or AF$(y^\prime)>$AF$(y)$.
\end{itemize} 
\end{remark}
    
\begin{remark}   
We do not require the crossings in Definition~\ref{def:98skj23}~(1)(2) to be essential. A crossing implies the existence of an essential crossing, possibly a shorter one.
\end{remark}

\begin{proposition} \label{prop:i8r47oe}
    An $f$-extension from $x$ to $y$ has no crossing in the range $\AF\ge p$ if and only if for all $[x]\in \{x\}$ such that $\AF(f[x])\ge p$ we have $f[x]\in \{y\}$. In particular, we have
    \begin{enumerate}
        \item An $f$-extension from $x$ to $y$ has no crossing if and only if for all $[x]\in \{x\}$ we have $f[x]\in \{y\}$.
        \item An $f$-extension $d_n^{f}(x)=0$ has no crossing if and only if for all $[x]\in \{x\}$,
        $$\AF(f[x]) >\AF(x)+n.$$
        \item If $\AF(f)=n$, then all $d^f_n$-differentials have no crossing in the $f$-ESS.
    \end{enumerate}
\end{proposition}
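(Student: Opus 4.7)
The plan is to prove the main biconditional first, and then derive parts (1)--(3) by specializing to $p = s+1$. The essential device is Proposition~\ref{prop:8154e6f1}(1), which converts $f$-extension differentials into statements about lifts $[x] \in \{x\}$ and the Adams filtrations of their images. Throughout, I would fix a reference lift $[x_0]$ with $f[x_0] \in \{y\}$ guaranteed by the hypothesis $d_n^f(x) = y$.

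For the forward direction, suppose no crossing hits $\AF \geq p$ and take $[x] \in \{x\}$ with $\AF(f[x]) \geq p$. The difference $\delta := [x] - [x_0]$ lies in $F_{s+1}\pi_*X$, and the only problematic scenario is $\delta \neq 0$ with $\AF(f\delta) \leq s+n$. In that scenario $\delta$ is detected by some $x' \in E_\infty^{s+a, t+a}(X)$ with $a > 0$ and $f\delta$ by a nonzero $y' \in E_\infty^{s+a+m, t+a+m}(Y)$ with $s+a+m \leq s+n$, which by Proposition~\ref{prop:8154e6f1}(1) produces an $f$-extension $d_m^f(x') = y'$. Using $\AF(f[x]) \geq p$, one verifies $p \leq s+a+m$, so $y'$ witnesses a crossing hitting filtration in $[p, s+n]$, contradicting the hypothesis. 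In all other scenarios $f[x] \in \{y\}$ follows directly.

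For the backward direction, suppose a crossing exists, given by $x' \in E_\infty^{s+a, t+a}(X)$ with $a > 0$ and a nonzero $d_m^f(x') = y'$ satisfying $p \leq s+a+m \leq s+n$. I would choose $[x'] \in \{x'\}$ with $f[x'] \in \{y'\}$ via Proposition~\ref{prop:8154e6f1}(1), and consider the lift $[x] := [x_0] + [x']$. A computation of $\AF(f[x])$ from $f[x] = f[x_0] + f[x']$, splitting on whether $s+a+m = s+n$ or strictly less, exhibits $\AF(f[x]) \geq p$ together with $f[x] \notin \{y\}$, completing the argument.

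The three consequences then follow by applying the main statement with $p = s+1$, after verifying that under each hypothesis every lift satisfies $\AF(f[x]) \geq s+1$. For (1) and (2), the conditions $d_n^f(x) = y$ (resp.\ $d_n^f(x) = 0$) implicitly force $d_0^f(x) = 0$, yielding some $[x_0]$ with $\AF(f[x_0]) \geq s+1$; any other lift differs by $\delta \in F_{s+1}\pi_*X$ with $f\delta \in F_{s+1}\pi_*Y$, hence satisfies the same bound. For (3), the definition of $\AF(f) = n$ gives $\AF(f\alpha) \geq \AF(\alpha) + n$, so any $f$-extension from $x'$ at filtration $s+a$ with $a > 0$ lands at filtration $\geq s+a+n > s+n$, ruling out crossings entirely. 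The main subtlety I anticipate is the equality case $s+a+m = s+n$ in the forward direction: one must confirm that even when $f[x_0]$ and $f\delta$ share filtration $s+n$ and could cancel on the associated graded, the bare existence of a nonzero $y'$ at that filtration already constitutes a crossing hitting $s+n \in [p, s+n]$, irrespective of whether $y + y' = 0$.
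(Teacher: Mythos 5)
Your proposal is correct and follows essentially the same route as the paper: the backward direction is the paper's argument verbatim (perturb a good lift $[x_0]$ by a lift $[x']$ of the crossing's source and compare filtrations of $f[x_0]+f[x']$), and your forward direction is just an explicit unwinding, via the decomposition $[x]=[x_0]+\delta$, of the appeal the paper makes to Proposition~\ref{prop:8154e6f1}(3). Your handling of the equality case $s+a+m=s+n$ and the derivation of (1)--(3) by taking $p=s+1$ (resp.\ degree reasons for (3)) match the paper's "special cases of the main statement" and "degree reasons".
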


\begin{proof}
    First we prove the only if part. Assume by contradiction that there exists $[x]\in \{x\}$ such that $f[x]=[y']\notin \{y\}$ is detected by $y'\in E_\infty(Y)$ with $\AF(y')\ge p$. Then there is an $f$ extension from $x$ to $y'$. By Proposition \ref{prop:8154e6f1}~(3), $y'$ (if $\AF(y)>\AF(y')$)  or $y-y'$ (if $\AF(y)=\AF(y')$) or $y$ (if $\AF(y)<\AF(y')$) should be hit by a shorter $d^f$ differential. This shorter differential is a crossing of the $f$-extension from $x$ to $y$ that hits $\AF\ge p$, which is a contradiction. Now we prove the if part. Assume that the $f$-extension has a crossing from $x'$ to $y'$ with $p\le \AF(y')\le \AF(y)$. Then there exists $[x']\in \{x'\}$ such that $f([x'])\in \{y'\}$. Note that $[x]+[x']\in \{x\}$ since $\AF(x')>\AF(x)$, we have
    $$f([x]+[x'])=f([x])+f([x'])=[y]+[y']\in\begin{cases}
        \{y'\} & \text{ if } \AF(y')<\AF(y)\\
        \{y+y'\} & \text{ if } \AF(y')=\AF(y).\\
    \end{cases}$$
    However, by assumption we should have $f([x]+[x'])\in \{y\}$ and it is contradictory to the equation above in either case.
    
    Hence the main statement is true.  Part (1) (2) are special cases of the main statement and Part (3) holds for degree reasons.
\end{proof}

\begin{example}
    Consider the Hopf map $\eta:S^1 \rightarrow S^0$ between 2-completed spheres. It is known that it induces a surjective map on $\pi_{46}$ (see [IWX] for example). 
    \begin{itemize}[leftmargin=1cm]
        \item In the $\eta$-ESS, we have the following nonzero differentials -- these are the essential $\eta$-extensions:
    \begin{enumerate}
        \item $d_1^\eta(h_5d_0) = h_1h_5d_0$ in AF$=6$,
        \item $d_2^\eta(\Delta h_1g) = d_0l$ in AF$=11$,
        \item $d_3^\eta(h_1g_2) = \Delta h_2c_1$ in AF$=8$,
        \item $d_4^\eta(h_3^2 h_5) = Mh_1$ in AF$=7$.\\
    \end{enumerate}
    
    \item The $\eta$-extension $d_2^\eta (h_0h_3^2h_5) = h_1h_5d_0$ is inessential.\\
    
        This is a zero differential on the $E_2$-page of the $\eta$-ESS due to the nonzero $d_1^\eta$-differential that hits $h_1h_5d_0$. Note that although this $\eta$-extension is inessential, we still have a class $[h_0h_3^2h_5] \in \{h_0h_3^2h_5\}$ such that $\eta \cdot [h_0h_3^2h_5] \in \{h_1h_5d_0\}$, due to the presence of an essential $\eta$-extension from $h_5d_0$ to $h_1h_5d_0$.\\
    
    \item The $\eta$-extension $d_4^\eta(h_3^2 h_5) = Mh_1$ has a crossing that hits AF$=6$.\\ 
    
    In fact, both the essential extension $d_1^\eta(h_5d_0) = h_1h_5d_0$ and the inessential extension $d_2^\eta (h_0h_3^2h_5) = h_1h_5d_0$ are such crossings. Due to the existence of a crossing, the following statement is NOT true:\\

    For all $[h_3^2 h_5] \in \{h_3^2 h_5\}$, we have $\eta \cdot [h_3^2 h_5] \in \{Mh_1\}$.\\

    In fact, there exists a class $[h_3^2 h_5] \in \{h_3^2 h_5\}$ such that $\eta \cdot [h_3^2 h_5] \in \{h_1h_5d_0\}$ due to the crossing $\eta$-extension from $h_5d_0$ to $h_1h_5d_0$.\\

    \item The $\eta$-extension $d_1^\eta(h_5d_0) = h_1h_5d_0$ has no crossing by part (3).
   
 \end{itemize}
\end{example}

\begin{theorem}\label{thm:4114f70c}
    Consider a homotopy commutative diagram of spectra
    $$\xymatrix{
    X \ar[r]^{f}\ar[d]_{p} & Y\ar[d]^{q}\\
    Z \ar[r]_{g} & W
    }$$
    Suppose $m,n,l\ge 0$, $0<k\le m+l-n$,
    $$\begin{array}{ll}
        x\in E^{s,t}_\infty(X) & y\in E^{s+n,t+n}_\infty(Y),\\
        z\in E^{s+m,t+m}_\infty(Z) & w\in E^{s+m+l,t+m+l}_\infty(W),
    \end{array}$$
    and
    \begin{enumerate}
        \item $d^{f}_{n}(x)=y$,
        \item $d^{p}_{m}(x)=z$,
        \item the differential in (1) or the differential in (2) has no crossing,
        \item $d^{g}_{l}(z)=w$, and has no crossing that hits the range AF $\ge s+n+k$,
        \item $d^{q}_{k-1} y=0$, and has no crossing.
    \end{enumerate}
    Then $d^{q}_{m+l-n}(y)=w$. (See Figure \ref{fig:4f154a8}.)
\end{theorem}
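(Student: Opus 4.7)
The strategy is to produce a single lift $[x]\in\{x\}$ that simultaneously witnesses the $f$-extension to $y$ and the $p$-extension to $z$, and then push this lift through the homotopy commutative square $q\circ f\simeq g\circ p$. Condition~(3) is what makes such simultaneous witnessing possible: by Proposition~\ref{prop:i8r47oe}(1), if the $f$-extension in (1) has no crossing, then every $[x]\in\{x\}$ satisfies $f[x]\in\{y\}$, so one can take for $[x]$ any lift realizing the $p$-extension in (2); the symmetric case, where (2) has no crossing, is analogous.

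Having fixed such an $[x]$, set $[y]:=f[x]\in\{y\}$ and $[z']:=p[x]\in\{z\}$. Homotopy commutativity of the square gives $q[y]=g[z']$ in $\pi_*W$. Condition~(5), combined with Proposition~\ref{prop:i8r47oe}(2) applied to $d^q_{k-1}(y)=0$, forces $\AF(q[y])\ge s+n+k$, and therefore $\AF(g[z'])\ge s+n+k$ as well. I then invoke Proposition~\ref{prop:i8r47oe} on condition~(4): the hypothesis that $d^g_l(z)=w$ has no crossing hitting the range $\AF\ge s+n+k$ means precisely that every $[z']\in\{z\}$ with $\AF(g[z'])\ge s+n+k$ satisfies $g[z']\in\{w\}$. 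Applied to our specific $[z']$ this yields $q[y]=g[z']\in\{w\}$, which by Proposition~\ref{prop:8154e6f1}(1) is exactly the statement $d^q_{m+l-n}(y)=w$. The hypothesis $m+l-n\ge k>0$ is what guarantees $s+m+l\ge s+n+k$, so the filtration bookkeeping is consistent and the target $w$ lies in the range governed by condition~(4).

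The main subtlety, and the conceptual heart of the argument, is the first step above. A priori, the extensions in (1) and (2) might be witnessed only by \emph{different} lifts of $x$, and a naive diagram chase would then yield equations between differences of four terms living in several different Adams filtrations, making it impossible to identify $q[y]$ with $w$ on the nose. The role of the no-crossing hypothesis in (3) is to convert one of these extensions from an existential statement (``some lift works'') into a universal one (``every lift works''), so that a common compatible lift is automatically available. Once this simultaneous lift is secured, the remainder of the proof is essentially a diagram chase on representatives, with conditions (4) and (5) used only to pin down the Adams filtration of $q[y]$ precisely at $s+m+l$ and to identify its leading term with $w$.
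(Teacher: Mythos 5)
Your proof is correct and follows essentially the same route as the paper's: you use condition (3) to secure a single lift $[x]$ witnessing both extensions simultaneously, condition (5) with Proposition~\ref{prop:i8r47oe}(2) to force $\AF(qf[x])\ge s+n+k$, and then condition (4) with Proposition~\ref{prop:i8r47oe} to conclude $gp[x]=qf[x]\in\{w\}$. The only cosmetic difference is that you phrase the middle step directly in terms of $q[y]=g[z']$ in $\pi_*W$, where the paper phrases it as equality of Adams filtrations of $gp[x]$ and $qf[x]$; these are the same argument.
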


\begin{proof}
    First we find a representative $[x]$ of $x$ such that
    \begin{equation}\label{eq:9d45feac}
        p[x]\in \{z\}
    \end{equation}
    and
    \begin{equation}\label{eq:36e72616}
        f[x]\in \{y\}.
    \end{equation}
    If (1) has no crossing, by (2) we can pick $[x]\in \{x\}$ such that (\ref{eq:9d45feac}) holds;
    If (2) has no crossing, by (1) we can pick $[x]\in \{x\}$ such that  (\ref{eq:36e72616}) holds.
    In both cases the other equation also holds because of the no crossing condition.
    
    By (5), (\ref{eq:36e72616}) and Proposition~\ref{prop:i8r47oe}~(2) we know that $qf[x]$ has AF $\ge s+n+k$. Since the diagram on the Adams $E_\infty$-pages commutes, we have
    $$AF(gp[x])=AF(qf[x])\ge s+n+k.$$
    Combining with (4), (\ref{eq:9d45feac}) and Proposition~\ref{prop:i8r47oe} we have
    $$gp[x]\in \{w\}.$$
    Therefore, $qf[x]\in \{w\}$ and by (\ref{eq:36e72616}) there is a $q$-extension from $y$ to $w$.
\end{proof}

\begin{figure}
\centering
\scalebox{0.9}{\begin{tikzpicture}[node/.style={circle,fill=black!0.1}]
    \def\ymax{6}
    \def\ymaxp{7}
    \definecolor{lightgray}{RGB}{200,200,200}
    
    \fill[llgray] (6-0.5, 6+0.5) rectangle ++(1, -2);
    \foreach \x in {0,...,7} {
        \draw[lightgray] (\x-0.5, -0.5) -- (\x-0.5, \ymax+0.5);
    }
    \foreach \x in {0,2,4,6} {
        \foreach \y in {0,...,\ymaxp} {
            \draw[lightgray] (\x-0.5, \y-0.5) -- (\x+0.5, \y-0.5);
        }
    }
    \draw[thick] (6-0.5, 6+0.5) rectangle ++(1, -2);

    \coordinate (x) at (0, 0);
    \coordinate (y) at (4, 1);
    \coordinate (z) at (2, 4);
    \coordinate (w) at (6, 6);
    
    \fill (x) circle (0.1) node[below left] {$x$};
    \fill (y) circle (0.1) node[below right] {$y$};
    \fill (z) circle (0.1) node[above left] {$z$};
    \fill (w) circle (0.1) node[above right] {$w$};

    \drawarrow (x) -- (y) node[midway,below] {$d^f_n$};
    \drawarrow (x) -- (z) node[midway,left] {$d^p_m$};
    \draw[dashed, -{Stealth[length=1.5mm]}, shorten >=(0.08cm)] (y) -- (6,6-1) node[midway,right] {$d^q_{\ge k}$};
    \drawarrow (z) -- (w) node[midway,above] {$d^g_{l}$};

    \node[anchor=east] (axis1) at (-.7, 0) {$s$};
    \node[anchor=east] (axis2) at (-.7, 1) {$s+n$};
    \node[anchor=east] (axis3) at (-.7, 4) {$s+m$};
    \node[anchor=east] (axis4) at (-.7, 5) {$s+n+k$};
    \node[anchor=east] (axis5) at (-.7, 6) {$s+m+l$};

    \node (EX) at (0, -1.0) {$E_\infty(X)$};
    \node (EY) at (4, -1.0) {$E_\infty(Y)$};
    \node (EZ) at (2, -1.0) {$E_\infty(Z)$};
    \node (EW) at (6, -1.0) {$E_\infty(W)$};
\end{tikzpicture}}
\caption{A demonstration of Theorem \ref{thm:4114f70c}}\label{fig:4f154a8}
\end{figure}
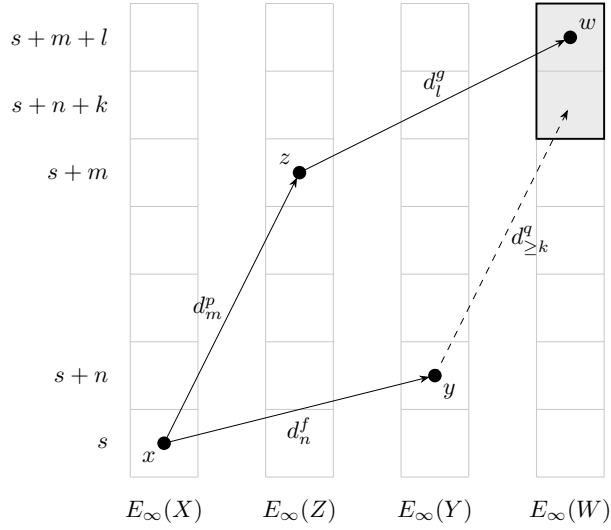

\begin{corollary} \label{cor:0012nik}
    Consider a homotopy commutative diagram of spectra
    $$\xymatrix{
    X \ar[r]^{f}\ar[d]_{p} & Y\ar[d]^{q}\\
    Z \ar[r]_{g} & W
    }$$
    Suppose $m,n,l\ge 0$, 
    $$\begin{array}{ll}
        x\in E^{s,t}_\infty(X) & y\in E^{s+n,t+n}_\infty(Y),\\
        z\in E^{s+m,t+m}_\infty(Z) & w\in E^{s+m+l,t+m+l}_\infty(W),
    \end{array}$$
    and
    \begin{enumerate}
        \item $d^{f}_{n}(x)=y$,
        \item $d^{p}_{m}(x)=z$,
        \item the differential in (1) or the differential in (2) has no crossing,
        \item $d^{g}_{l}(z)=w$, and has no crossing.
    \end{enumerate}
    Then $d^{q}_{m+l-n}(y)=w$.
\end{corollary}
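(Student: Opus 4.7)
The plan is to follow the representative-chasing proof of Theorem~\ref{thm:4114f70c} while exploiting the fact that the corollary's hypothesis~(4) (no crossing at all) is strictly stronger than the theorem's hypothesis~(4) (no crossing hitting only a specified AF range). This stronger assumption obviates the need for hypothesis~(5) of the theorem, which was used precisely to supply an Adams-filtration lower bound on $qf[x]$ that bridged the partial no-crossing information.

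First, using assumption~(3) together with Proposition~\ref{prop:i8r47oe}~(1), I would pick a simultaneous representative $[x]\in\{x\}$ satisfying both $p[x]\in\{z\}$ and $f[x]\in\{y\}$. Concretely, if the extension in~(1) has no crossing, then Proposition~\ref{prop:i8r47oe}~(1) guarantees $f[x]\in\{y\}$ for \emph{every} $[x]\in\{x\}$, so we select the representative witnessing~(2); symmetrically if~(2) is the crossing-free extension.

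Next, assumption~(4) combined with Proposition~\ref{prop:i8r47oe}~(1) gives $g[z]\in\{w\}$ for every $[z]\in\{z\}$; in particular, $g(p[x])\in\{w\}$. Homotopy commutativity of the square then yields
\[qf[x]=gp[x]\in\{w\}.\]
Setting $[y]:=f[x]\in\{y\}$, this exhibits a representative $[y]\in\{y\}$ with $q[y]\in\{w\}$, which by Proposition~\ref{prop:8154e6f1}~(1) is exactly the desired $q$-extension $d^q_{m+l-n}(y)=w$ in the $q$-ESS (the page being pinned down by the Adams filtrations of $y$ and $w$).

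I do not anticipate any significant obstacle: the corollary is essentially a clean packaging of Theorem~\ref{thm:4114f70c} in the common case where one has full no-crossing information on the bottom row, and the proof is a streamlined version of the theorem's argument in which the AF-bridge step simply drops out.
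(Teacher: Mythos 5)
Your proof is correct and is essentially the paper's argument: the paper states this corollary without a separate proof, treating it as the streamlined case of Theorem~\ref{thm:4114f70c}, and your representative-chasing — pick $[x]$ via (3) so that both $f[x]\in\{y\}$ and $p[x]\in\{z\}$, then use the full no-crossing hypothesis (4) and commutativity to get $qf[x]=gp[x]\in\{w\}$ — is exactly that argument with the AF-bridge step (condition (5)) correctly identified as unnecessary. No issues.
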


\begin{corollary} \label{cor:166dc180}
    Consider a homotopy commutative diagram of spectra
    $$\xymatrix{
    X \ar[r]^{f}\ar[dr]_{p} & Y\ar[d]^{q}\\
    & Z
    }$$
    Suppose $n,m\ge 0$, 
    $$x\in E^{s,t}_\infty(X), ~y\in E^{s+n,t+n}_\infty(Y), ~z\in E^{s+m,t+m}_\infty(Z)$$
    and
    \begin{enumerate}
        \item $d^{f}_{n}(x)=y$,
        \item $d^{p}_{m}(x)=z$,
        \item the differential in (1) or the differential in (2) has no crossing.
    \end{enumerate}
    Then $d^{q}_{m-n}(y)=z$.
\end{corollary}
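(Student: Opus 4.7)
The plan is to deduce this corollary directly from the preceding Corollary~\ref{cor:0012nik} by specializing to the case $W = Z$ and $g = \mathrm{id}_Z$. The homotopy commutative triangle
$$\xymatrix{X \ar[r]^{f}\ar[dr]_{p} & Y\ar[d]^{q}\\ & Z}$$
is exactly the homotopy commutative square
$$\xymatrix{X \ar[r]^{f}\ar[d]_{p} & Y\ar[d]^{q}\\ Z \ar[r]_{\mathrm{id}_Z} & Z}$$
required as input to Corollary~\ref{cor:0012nik}, since the commutativity condition $q \circ f \simeq p$ is identical in both diagrams.

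Next I would check the hypotheses of Corollary~\ref{cor:0012nik} one by one. Hypotheses (1), (2), (3) of that corollary are literally hypotheses (1), (2), (3) of the present statement. It remains to verify hypothesis (4): taking $l = 0$ and $w = z$, we need $d^{\mathrm{id}_Z}_0(z) = z$ in the $\mathrm{id}_Z$-extension spectral sequence, with no crossing. The $d_0$ differential in the identity extension spectral sequence is literally the identity map on $E_\infty(Z)$, so $d^{\mathrm{id}_Z}_0(z) = z$. For the no-crossing condition, any purported crossing would be an $\mathrm{id}_Z$-extension from $z' \in E_\infty^{s+m+a,t+m+a}(Z)$ with $a>0$ to some $y'$ with $\mathrm{AF}(y') \le s+m$; but $d^{\mathrm{id}_Z}_{m'}(z') = z'$ is only nonzero when $m' = 0$, and then $\mathrm{AF}(y') = \mathrm{AF}(z') = s+m+a > s+m$, so no such crossing exists.

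With all hypotheses verified, Corollary~\ref{cor:0012nik} yields $d^q_{m + l - n}(y) = w$, i.e.\ $d^q_{m - n}(y) = z$, which is the desired conclusion. I do not expect any real obstacle here, since the argument is purely a degenerate-case reduction; the only point worth flagging is the trivial verification that the identity extension spectral sequence has no crossings, which is forced by the degree count above. If one prefers a self-contained direct proof, it is also immediate: by hypothesis (3) and Proposition~\ref{prop:i8r47oe}, one can choose a representative $[x] \in \{x\}$ with $f[x] \in \{y\}$ and $p[x] \in \{z\}$ simultaneously, and then $q f[x] = p[x] \in \{z\}$ exhibits the required $q$-extension from $y$ to $z$, of length $\mathrm{AF}(z) - \mathrm{AF}(y) = m - n$.
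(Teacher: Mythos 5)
Your proposal is correct and matches the paper's proof exactly: the paper also obtains this corollary as the special case of Corollary~\ref{cor:0012nik} with $Z=W$ and $g=\mathrm{id}_Z$. Your additional verification that the identity extension spectral sequence has no crossings, and the alternative direct argument via Proposition~\ref{prop:i8r47oe}, are both sound but not needed beyond what the paper records.
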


\begin{proof}
This is a special case of Corollary \ref{cor:0012nik} when $Z=W$ and $g=id_Z$.
\end{proof}

\begin{corollary} \label{cor:290d35ce}
    Consider a homotopy commutative diagram of spectra
    $$\xymatrix{
    X \ar[d]_{p}\ar[dr]^{q} & \\
    Z\ar[r]_{g} & W
    }$$
    Suppose $n,m\ge 0$, 
    $$x\in E^{s,t}_\infty(X), ~z\in E^{s+m,t+m}_\infty(Y), ~w\in E^{s+m+l,t+m+l}_\infty(W)$$
    and
    \begin{enumerate}
        \item $d^{p}_{m}(x)=z$,
        \item $d^{g}_{l}(z)=w$, and has no crossing.
    \end{enumerate}
    Then $d^{q}_{m+l}(x)=w$.
\end{corollary}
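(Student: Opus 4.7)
The plan is to deduce this corollary as a special case of Corollary~\ref{cor:0012nik}, in direct analogy with how Corollary~\ref{cor:166dc180} was handled by collapsing a vertical edge of the square to the identity. Here I would collapse the top edge: set $Y := X$ and $f := \mathrm{id}_X$ in Corollary~\ref{cor:0012nik}. The square
$$\xymatrix{X \ar[r]^{\mathrm{id}}\ar[d]_{p} & X\ar[d]^{q}\\ Z \ar[r]_{g} & W}$$
commutes by the original triangle's commutativity. Taking $n=0$ and $y := x \in E_\infty^{s,t}(X)$, hypothesis (1) of Corollary~\ref{cor:0012nik} becomes the trivial statement $d_0^{\mathrm{id}}(x)=x$, which has no crossing, so hypothesis (3) is satisfied. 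Hypotheses (2) and (4) of Corollary~\ref{cor:0012nik} then specialize to exactly the two assumptions of the present corollary, and the conclusion $d_{m+l-n}^{q}(y) = w$ reads as $d_{m+l}^{q}(x) = w$.

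If instead one preferred a direct argument bypassing Corollary~\ref{cor:0012nik}, the same conclusion follows from a one-line diagram chase on representatives. By Proposition~\ref{prop:8154e6f1}(1) applied to hypothesis (1), choose $[x]\in\{x\}$ with $p[x]\in\{z\}$. Since the $g$-extension $d_l^{g}(z)=w$ has no crossing, Proposition~\ref{prop:i8r47oe}(1) gives $g[z']\in\{w\}$ for \emph{every} $[z']\in\{z\}$, in particular for $[z']=p[x]$. By homotopy commutativity $q[x] = g(p[x]) \in \{w\}$, and Proposition~\ref{prop:8154e6f1}(1) reinterprets this as $d_{m+l}^{q}(x)=w$.

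I do not anticipate any real obstacle. The only conceptual point worth flagging is the role of the no-crossing hypothesis on $g$: without it, the representative $p[x]\in\{z\}$ produced by hypothesis (1) might be a ``bad'' one for which $g(p[x])$ lands in higher Adams filtration than $w$, so the diagram chase would only yield a $q$-extension up to correction by a shorter $d^{q}$-differential (i.e.\ the conclusion would weaken to $d_{m+l}^{q}(x)\equiv w$ modulo $\BESS{q}^{*,*}_{m+l-1}(W)$). This is exactly the same subtlety that makes the no-crossing assumption essential in Theorem~\ref{thm:4114f70c} and Corollary~\ref{cor:0012nik}, so no new idea is needed.
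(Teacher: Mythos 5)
Your proposal is correct and its main argument is exactly the paper's proof: the paper dispatches this corollary as the special case $X=Y$, $f=\mathrm{id}_X$ of Corollary~\ref{cor:0012nik}, just as you do (with the trivial $d_0^{\mathrm{id}}$ supplying the no-crossing hypothesis). Your supplementary diagram chase on representatives is also valid and mirrors the ``second proof via homotopy groups'' style the paper uses for neighboring results, but it adds nothing beyond the reduction.
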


\begin{proof}
This is a special case of Corollary \ref{cor:0012nik} when $X=Y$ and $f=id_X$.
\end{proof}

\begin{corollary}\label{cor:e7b20ae2}
    Consider a homotopy commutative diagram of spectra
    $$\xymatrix{
    X \ar[r]^{f}\ar[d]_{p} & Y\ar[d]^{q}\\
    Z \ar[r]_{g} & W
    }.$$
    Assume $\ESS{p}_0^{*,*}=\ESS{p}_r^{*,*}$ and $\ESS{q}_0^{*,*}=\ESS{q}_r^{*,*}$ for some $r\ge 0$. Then 
    $$(d^p_r, d^q_r): E_\infty^{*,*}(X)\oplus E_\infty^{*,*}(Y)\to E_\infty^{*,*}(Z)\oplus E_\infty^{*,*}(W)$$
    induces a map from the $f$-ESS to the $g$-ESS.
\end{corollary}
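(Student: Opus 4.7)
The plan is to realize $(d^p_r, d^q_r)$ as the associated-graded of a filtered chain map and then invoke the standard construction of an induced morphism of spectral sequences. Homotopy commutativity of the square gives a chain map of two-term chain complexes
$$(p_*, q_*)\colon \bigl(\pi_*X \xrightarrow{f_*} \pi_*Y\bigr) \longrightarrow \bigl(\pi_*Z \xrightarrow{g_*} \pi_*W\bigr),$$
and by construction the $f$-ESS and $g$-ESS are the spectral sequences associated to these two chain complexes filtered by Adams filtration.

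The first step will be to translate the hypothesis $\ESS{p}_0^{*,*} = \ESS{p}_r^{*,*}$, i.e.\ the vanishing of $d^p_0,\dots,d^p_{r-1}$, into the pointwise filtration statement $p_*(F_s\pi_*X) \subseteq F_{s+r}\pi_*Z$ for every $s$. I would proceed by induction on $i$: assuming $d^p_0,\dots,d^p_{i-1}$ vanish on a class $x\in E_\infty^{s,t}(X)$, the global vanishing $\BESS{p}^{s+i,t+i}_{i-1}(Z) = 0$ makes $d^p_i(x)$ independent of the choice of lift $[x]\in \{x\}$, so by Proposition \ref{prop:8154e6f1}(1) its vanishing is equivalent to $p_*[x]\in F_{s+i+1}\pi_*Z$. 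Iterating up to $i = r-1$ and appealing to strong convergence for classes of Adams filtration strictly greater than $s$ yields the desired uniform filtration shift. The same argument applied to $q$ then shows that $(p_*, q_*)$ is a chain map of filtered complexes shifting filtration by $r$.

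The second step will be the standard observation that such a filtered chain map induces a morphism of the associated spectral sequences that commutes with every differential and shifts bidegree by $(r, r)$ on each page. On the $E_0$-page, this induced morphism is the map on associated graded: a class $x\in E_\infty^{s,t}(X)$, represented by $[x]\in F_s\pi_{t-s}X$, is sent to the class of $p_*[x]$ in $F_{s+r}\pi_{t-s}Z/F_{s+r+1} = E_\infty^{s+r,t+r}(Z)$. By the definition of $d^p_r$ together with the vanishing of $\BESS{p}^{*,*}_{r-1}(Z)$, this coincides with $d^p_r(x)$; the analogous identification holds on the $Y$-component for $d^q_r$.

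No conceptual obstacle is expected; the argument is essentially filtration book-keeping. The main point of care lies in the first step, namely confirming that the vanishing of $d^p_0,\dots,d^p_{r-1}$ on the whole $E_0$-page (not merely on a single filtration layer) translates into the uniform filtration shift $p_*(F_s\pi_*X) \subseteq F_{s+r}\pi_*Z$ on all of $\pi_*X$.
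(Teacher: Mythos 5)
Your proposal is correct, and it reaches the conclusion by a genuinely different route from the paper. The paper's proof stays inside its own extension-spectral-sequence formalism: it observes that the hypothesis $\ESS{p}_0=\ESS{p}_r$ (resp.\ for $q$) forces $d^p_r$ and $d^q_r$ to have no crossings, and then applies Corollary~\ref{cor:0012nik} inductively, page by page, to obtain $d^g_n\circ d^p_r=d^q_r\circ d^f_n$ for all $n$. You instead convert the hypothesis into the uniform filtration statement $p_*(F_s\pi_*X)\subseteq F_{s+r}\pi_*Z$ (and likewise for $q$), so that $(p_*,q_*)$ becomes a chain map of filtered two-term complexes raising filtration by $r$, and then invoke the classical fact that such a map induces a morphism of the associated spectral sequences with a $(r,r)$ bidegree shift, identified on $E_0$ with $(d^p_r,d^q_r)$. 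The two arguments have the same core: your inductive verification that vanishing of $d^p_0,\dots,d^p_{r-1}$ kills the lift indeterminacy and yields the uniform shift is precisely the content of ``$d^p_r$ has no crossings'' via Proposition~\ref{prop:i8r47oe}(1), and your appeal to strong convergence correctly handles classes of higher or infinite filtration. What your route buys is self-containedness and transparency --- it reduces the corollary to standard homological algebra of filtered complexes and makes clear that the hypothesis is exactly equivalent to the filtration shift --- at the cost of not exercising the crossing machinery; the paper's route buys uniformity, reusing Theorem~\ref{thm:4114f70c} so that this corollary is a formal consequence of results already established.
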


\begin{proof}
    Since $\ESS{p}_0^{*,*}=\ESS{p}_r^{*,*}$ and $\ESS{q}_0^{*,*}=\ESS{q}_r^{*,*}$, we know that $d^p_r$, $d^q_r$ have no crossings. By Corollary \ref{cor:0012nik}, we have $d^g_0\circ d^p_r = d^q_r\circ d^f_0$ and therefore $(d^p_r, d^q_r)$ induces a map from $\ESS{f}_1^{*,*}$ to $\ESS{g}_1^{*,*}$. Inductively we can apply Corollary \ref{cor:0012nik} again and show that $(d^p_r, d^q_r)$ induces a map from $\ESS{f}_n^{*,*}$ to $\ESS{g}_n^{*,*}$ and $d^g_n\circ d^p_r = d^q_r\circ d^f_n$ for all $n$.
\end{proof}

\begin{corollary}\label{cor:aed3d1a4}
    If the composition $g\circ f$ of two maps $f:X\to Y$, $g:Y\to Z$ is trivial, and $d_n^f(x)=y$ for $x\in E_\infty^{s,t}(X)$ and $y\in E_\infty^{s+n,t+n}(Y)$, then $y$ is a permanent cycle in the $g$-ESS, i.e., we have
    $d^g_m(y)=0$ for all $m\ge 0$.
\end{corollary}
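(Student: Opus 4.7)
The plan is to produce a single representative $[y] \in \{y\}$ that is annihilated by $g$, and then feed that representative into Proposition~\ref{prop:8154e6f1}(1) to conclude that every $g$-extension out of $y$ lands on $0$.

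First, since $d_n^f(x) = y$, Proposition~\ref{prop:8154e6f1}(1) supplies a class $[x] \in \{x\} \subset \pi_*X$ with $f[x] \in \{y\}$. I would set $[y] := f[x]$, so that $[y] \in \{y\}$. The null-homotopy $g \circ f \simeq 0$ then gives
$$g[y] \;=\; g(f[x]) \;=\; (g \circ f)[x] \;=\; 0 \;\in\; \pi_*Z.$$

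Next, I would argue, for each $m \geq 0$ in turn, that $d_m^g(y) = 0$. Inductively, the previous vanishings place $y$ in $\ZESS{g}_{m-1}(Y)$, so that $d_m^g(y)$ is defined as an element of $E^{s+n+m, t+n+m}_\infty(Z)/\BESS{g}_{m-1}(Z)$. Applying Proposition~\ref{prop:8154e6f1}(1) with the fixed representative $[y] = f[x]$ and target class $0$, the equality $g[y] = 0 \in \{0\}$ exhibits a $g$-extension from $y$ to $0$ on the $E_m$-page, so $d_m^g(y) = 0$. Hence $y$ is a permanent cycle in the $g$-ESS.

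There is no real obstacle here; the key observation is that the single representative $[y] = f[x]$ simultaneously witnesses the $f$-extension from $x$ to $y$ and kills every potential $g$-extension out of $y$, so one induction with a uniform witness handles all pages at once. One could alternatively deduce the same conclusion from Corollary~\ref{cor:290d35ce} applied to the composable pair $X \xrightarrow{f} Y \xrightarrow{g} Z$ with $g \circ f = 0$, but routing through Proposition~\ref{prop:8154e6f1}(1) avoids having to verify any no-crossing hypothesis.
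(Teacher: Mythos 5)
Your proof is correct and coincides with the paper's own (second) argument: the authors likewise take $[y]=f([x])$ from Proposition~\ref{prop:8154e6f1}, observe $g([y])=g(f([x]))=0$, and conclude that $y$ is a permanent $d^g$-cycle. Your closing remark about routing through Corollary~\ref{cor:290d35ce} instead mirrors the paper's first proof, which applies Corollary~\ref{cor:0012nik} to the square with $Z$ replaced by $0$.
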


\begin{proof}
    This follows immediately from the following commutative diagram
    $$\xymatrix{
    X \ar[r]^{f}\ar[d] & Y\ar[d]^{g}\\
    0 \ar[r] & Z
    }$$
    and Corollary \ref{cor:0012nik}.

    There is a second short proof using homotopy groups. By Proposition \ref{prop:8154e6f1}, we know that there exists $[x]\in \{x\}$ such that $f([x])\in \{y\}$. Let $[y]=f([x])\in \{y\}$ and we have $g([y])=g(f([x]))=0$. Hence $y$ is a permanent $d^g$ cycle.
\end{proof}

\begin{proposition}\label{prop:cfe810af}
    Suppose that the sequence of spectra
    $$X\fto{f}Y\fto{g}Z$$
    induces a sequence of homotopy groups
    $$\pi_*X\fto{\pi_*f}\pi_*Y\fto{\pi_*g}\pi_*Z$$
    that is exact in $\pi_*Y$. Then all permanent cycles in $E^{*,*}_\infty(Y)$ of the $g$-ESS are boundaries in the $f$-ESS.
\end{proposition}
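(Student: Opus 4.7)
The plan is to promote the permanent cycle to a genuine homotopy class in $\Ker(\pi_*g)$, lift it through $f$ via exactness, and then reinterpret the lift as a boundary in the $f$-ESS. Fix $y\in E_\infty^{s,t}(Y)$ a permanent cycle in the $g$-ESS.

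First I would establish the existence of a representative $[y]\in \{y\}\subset \pi_{t-s}Y$ with $g([y])=0$. This is the content of the convergence statement for the $g$-ESS: by Definition~\ref{def:ess} the spectral sequence converges to $\Ker(\pi_*g)\oplus \Coker(\pi_*g)$, and the surviving classes in the $E_\infty(Y)$ summand are precisely the associated graded pieces of $\Ker(\pi_*g)$ under the Adams filtration. Concretely, using Proposition~\ref{prop:8154e6f1}(1) together with the vanishing of each $d_n^g(y)$, one inductively modifies a starting representative so that $g$ of the new representative lies in successively higher Adams filtration $F_{s+n+1}\pi_{t-s}Z$; by Adams-completeness of $Z$ (guaranteed by the standing assumption that all spectra are $2$-completed, connective, finite type) a limiting representative satisfies $g([y])=0$.

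With such a representative $[y]$ in hand, exactness at $\pi_*Y$ supplies $[x]\in \pi_{t-s}X$ with $f([x])=[y]$. Let $x\in E_\infty^{s',t-s+s'}(X)$ detect $[x]$, where $s'=\AF([x])$. Since $f$ preserves Adams filtration, $s'\le \AF(f([x]))=\AF([y])=s$. Setting $n=s-s'\ge 0$, Proposition~\ref{prop:8154e6f1}(1) applied to $f([x])=[y]\in \{y\}$ yields the $f$-extension $d_n^f(x)=y$, so $y$ is a boundary in the $f$-ESS.

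The only substantive obstacle is the opening step, namely passing from ``$y$ is a formal permanent cycle'' to ``$y$ is represented by an honest element of $\Ker(\pi_*g)$''. This is the dual of the easier direction used in Corollary~\ref{cor:aed3d1a4}, and it relies on the strong convergence of the $\HF$-Adams spectral sequence under the paper's standing finiteness hypothesis; once this lifting step is granted, Steps~2 and~3 are essentially formal consequences of exactness and Proposition~\ref{prop:8154e6f1}(1).
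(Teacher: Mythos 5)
Your argument is correct and coincides with the paper's second proof: the paper likewise picks a representative $[y]\in\{y\}$ with $g([y])=0$ via convergence of the $g$-ESS, lifts it through $f$ by exactness, and reads off the resulting $f$-extension. The extra detail you supply on the convergence step (inductively pushing $\AF(g[y])$ up and passing to a limit using completeness) is exactly what the paper's appeal to "convergence of ESS" compresses, so there is no gap.
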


\begin{proof}
    If we consider the following sequence
    \begin{equation}\label{eq:8c7eec14}
        0\to \pi_*X\fto{\pi_*f}\pi_*Y\fto{\pi_*g}\pi_*Z\to 0
    \end{equation}
    and treat it as a chain complex filtered by the Adams filtration, we obtain an extension spectral sequence comprising $d^f$ and $d^g$ differentials. The abutment of this spectral sequence, when projected onto the $Y$ component, is zero. Therefor, all permanent $d^g$-cycles are killed by $d^f$-differentials.

    Again we offer another proof via homotopy groups. Assume that $y\in E^{*,*}_\infty(Y)$ is a permanent $d^g$ cycle. By the convergence of ESS we know that it detects an $g$-torsion $[y]\in \{y\}$ with $g([y])=0$. Hence there exists $[x]\in \pi_*X$ such that $f([x])=[y]$ by exactness. Assume that $[x]$ is detected by $x\in E^{*,*}_\infty(X)$. Then there exists an $f$ extension from $x$ to $y$.
\end{proof}

Corollary \ref{cor:aed3d1a4} and Proposition \ref{prop:cfe810af} are particularly useful because they provide numerous potential applications whenever we have a cofiber sequence of the form
$$X\fto{f}Y\fto{g}Cf\fto{h}\Sus X$$
These results can also be implemented in a computer program to facilitate their application.

\section{$\HF$-synthetic spectra} \label{sec:synthetic}

In Section~\ref{sec:e6be737c}, we explored classical extension problems in the context of commutative diagrams and cofiber sequences of spectra, framed in terms of the extension spectral sequence. The power of the extension spectral sequence can be further enhanced by generalizing and applying it to the category of $\HF$-synthetic spectra, a topic we will discuss in Section~\ref{sec:synext}.

In this section, we recall some recent work of Pstragowski \cite{Pst} and Appendix A of Burklund--Hahn--Senger \cite{BHS} on $E$-synthetic spectra, which is strongly motivated by the work of Hu--Kriz--Ormsby \cite{HuKrizOrmsby}, Isaksen \cite{Isaksen}, Gheorghe--Wang--Xu \cite{GWX}, and Gheorghe--Isaksen--Krause--Ricka \cite{GIKR} on the $\mathbb{C}$-motivic spectra. We will specialize some of these results to $\HF$-synthetic spectra, refining and highlighting specific properties.

The stable, presentable, symmetric monoidal $\infty$-category $\mathrm{Syn}_{\HF}$ of $\HF$-synthetic spectra is constructed in \cite{Pst}, along with a functor
$$\nu: \mathrm{Sp}\to \mathrm{Syn}_{\HF},$$
from the $\infty$-category of spectra $\mathrm{Sp}$. This functor is lax symmetric monoidal and preserves filtered colimits. However, $\nu$ does not generally preserve cofiber sequences. Instead, the following holds.

\begin{proposition}[{\cite[Lemma 4.23]{Pst}}]\label{prop:1f7950df}
    Suppose that
    $$X\fto{f} Y\fto{g} Z$$
    is a cofiber sequence of spectra. Then
    $$\nu X\fto{\nu f}\nu Y\fto{\nu g} \nu Z$$
    is a cofiber sequence of ${\HF}$-synthetic spectra if and only if
    $$0\to {\HF}_*X\fto{{\HF}_*f} {\HF}_*Y\fto{{\HF}_*g} {\HF}_*Z\to 0$$
    is a short exact sequence.
\end{proposition}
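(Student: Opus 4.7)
The plan is to analyze the natural comparison map $\mathrm{cof}(\nu f) \to \nu Z$ in $\mathrm{Syn}_{\HF}$ and to show it is an equivalence precisely under the short-exact-sequence hypothesis, by reducing modulo the synthetic deformation parameter $\tau$.

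First, for the cofiber sequence $X \xrightarrow{f} Y \xrightarrow{g} Z$, functoriality of $\nu$ shows that $\nu g \circ \nu f$ is null-homotopic, so $\nu g$ factors through the cofiber of $\nu f$ taken in $\mathrm{Syn}_{\HF}$, giving a canonical comparison map $c\colon \mathrm{cof}(\nu f) \to \nu Z$. The sequence $\nu X \to \nu Y \to \nu Z$ is a cofiber sequence in $\mathrm{Syn}_{\HF}$ if and only if $c$ is an equivalence. Since $X$, $Y$, and $Z$ are assumed connective and $2$-complete of finite type, their synthetic lifts $\nu X$, $\nu Y$, $\nu Z$, and hence $\mathrm{cof}(\nu f)$, are $\tau$-complete synthetic spectra; consequently $c$ is an equivalence if and only if its reduction $c/\tau$ is.

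Next, I would invoke the standard identification from \cite{Pst, BHS} of $\mathrm{Syn}_{\HF}/\tau$ with (a variant of) the derived category of $\HF_*\HF$-comodules, under which $\nu X/\tau$ corresponds to $\HF_* X$ concentrated in chain degree zero. Under this identification, $c/\tau$ becomes the natural map from the derived cofiber of $\HF_* f\colon \HF_* X \to \HF_* Y$ to $\HF_* Z$ (viewed as a complex concentrated in degree zero), and this map is induced by $\HF_* g$. The derived cofiber of $\HF_* f$ has homology $\mathrm{coker}(\HF_* f)$ in degree zero and $\ker(\HF_* f)$ in degree one, so $c/\tau$ is an equivalence exactly when $\HF_* f$ is injective and $\HF_* g$ descends to an isomorphism $\mathrm{coker}(\HF_* f) \cong \HF_* Z$, which is equivalent to the short exactness of $0 \to \HF_* X \to \HF_* Y \to \HF_* Z \to 0$.

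The main technical point to verify is that the two foundational inputs used above—the $\tau$-completeness of $\nu X$ for our class of spectra, and the identification of $\mathrm{Syn}_{\HF}/\tau$ with the derived category of $\HF_*\HF$-comodules taking $\nu X$ to $\HF_* X$ in degree zero—are both in force. Each is part of the foundational theory developed in \cite{Pst} and \cite{BHS}, so once these are invoked, the remainder of the argument is the elementary computation of derived cofibers of comodule maps described above.
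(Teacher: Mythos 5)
The paper offers no proof of this proposition; it is imported verbatim as \cite[Lemma 4.23]{Pst}, whose argument works directly with the sheaf-theoretic construction of $\nu$ (evaluating on finite $\HF$-projective spectra and checking exactness of the resulting sequences of sheaves). Your deformation-theoretic derivation is a genuinely different route and is essentially correct: the sequence is a cofiber sequence iff the comparison map $c\colon \mathrm{cof}(\nu f)\to \nu Z$ is an equivalence; reduction modulo the deformation parameter (which this paper writes as $\lambda$ rather than $\tau$) is exact and identifies $\nu(-)/\lambda$ with $\HF_*(-)$ placed in the heart of Hovey's derived category of comodules (Remark~\ref{rem:deformation} and Theorem~\ref{thm:17e90ac0}(1)), after which the homology of the derived cofiber of a map of discrete comodules ($\ker$ in degree one, $\mathrm{coker}$ in degree zero) translates ``$c/\lambda$ is an equivalence'' into short exactness of the homology sequence. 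The one point to state honestly is the asymmetry of the two directions: the ``only if'' direction needs no completeness, whereas the ``if'' direction rests entirely on $\lambda$-completeness of $\nu X$, $\nu Y$, $\nu Z$, i.e.\ on $\nu X\simeq \varprojlim_m \nu X/\lambda^m$ \cite[Proposition A.13]{BHS}, which is equivalent to strong convergence of the Adams spectral sequences involved. Under the paper's standing hypotheses ($2$-complete, connective, finite type, declared at the start of Section~\ref{sec:e6be737c}) this holds, so your proof covers every use made of the proposition here; but it does not recover Pstragowski's lemma in the generality in which it is stated, since $\nu X$ fails to be $\lambda$-complete for spectra whose Adams spectral sequence does not converge (for such $X$ the generic fiber $\lambda^{-1}\nu X\simeq X$ is invisible to the special fiber, so an equivalence mod $\lambda$ detects nothing). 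What your route buys in exchange is independence from the site-theoretic definition of $\mathrm{Syn}_{\HF}$: it uses only the generic-fiber/special-fiber/completeness package that the rest of the paper relies on anyway.
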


\begin{definition}[{\cite[Definition 4.6]{Pst}}]
The bigraded spheres $S^{s,w}$ are the synthetic spectra defined by $\Sigma^{s-w}\nu (S^w)$.
\end{definition}

\begin{definition}[{\cite[Definition 4.27]{Pst}}]
    The natural comparison map
    $$S^{0,-1}=\Sus \nu (S^{-1})\to \nu(\Sus S^{-1})=S^{0,0}$$
    in the category of $\HF$-synthetic spectra is denoted by $\lambda\in \pi_{0,-1}S^{0,0}$. We denote by $S^{0,0}/\lambda$ the cofiber of $\lambda$. In general for synthetic $X$ the following
    $$\lambda\wedge id_X: \Sus^{0,-1}X\to X.$$
    is a natural transformation and by abuse of notation this map is also denoted by $\lambda$.
    We denote by $X/\lambda$ the cofiber of $\lambda$
    which is equivalent to $(S^{0,0}/\lambda) \wedge X$.
\end{definition}

\begin{remark}
For every $n\ge 1$, $S^{0,0}/\lambda^n$ is an $E_\infty$-object in $\mathrm{Syn}_{\HF}$ (see \cite[Appendix B, C]{BHSmot} for example).
\end{remark}

\begin{remark} \label{rem:deformation}
The element $\lambda$ can be viewed as a parameter of a categorical deformation on $\mathrm{Syn}_{\HF}$ (see \cite{Pst}). 
$$\xymatrix{\textup{Sp}^\wedge_2 & & \mathrm{Syn}_{\HF}\ar[ll]_-{\lambda^{-1}}^-{\textup{generic fiber}} \ar[rr]^-{mod ~  \lambda}_-{\textup{special fiber}} & & \mathcal{D}(A_*\textup{-Comod})}$$
Inverting $\lambda$, the generic fiber is equivalent to the classical category of 2-completed spectra $\textup{Sp}^\wedge_2$. Modding out by $\lambda$, the special fiber is equivalent to Hovey's derived category \cite{Hovey} of comodules over the mod 2 dual Steenrod algebra $\mathcal{D}(A_*\textup{-Comod})$. Historically, this synthetic deformation is motivated by, and analogous to, the motivic deformation conjectured by Isaksen and proven by Gheorghe--Wang--Xu \cite{GWX}: Specifically, there exists a map
$$\tau: \widehat{S^{0,-1}} \rightarrow \widehat{S^{0,0}}$$
between p-completed motivic spheres. This element $\tau$
can be viewed as a parameter of a categorical deformation on the category of cellular objects over $\widehat{S^{0,0}}$. 
$$\xymatrix{\textup{Sp}^\wedge_p & & \widehat{S^{0,0}}\textup{-Mod}\ar[ll]_-{\tau^{-1}}^-{\textup{generic fiber}} \ar[rr]^-{mod ~  \tau}_-{\textup{special fiber}} & & \mathcal{D}(BP_*BP\textup{-Comod})}$$
Inverting $\tau$, the generic fiber is equivalent to the classical category of p-completed spectra $\textup{Sp}^\wedge_p$. Modding out by $\tau$, the special fiber is equivalent to Hovey's derived category of comodules over the Hopf algebroid $BP_*BP$. 
\end{remark}

\begin{theorem}[{\cite[Theorem A.8]{BHS}}] \label{thm:rigid}
    The synthetic Adams spectral sequence for $\nu X$
     $$\SynSS_2^{s,t,w}(\nu X)\Longrightarrow \pi_{t-s,w}(\nu X)$$
     has an $E_2$-page 
    $$\SynSS_2^{*,*,*}(\nu X) \cong E_2^{*,*}(X)\otimes \mathbb{F}_2[\lambda]$$
    with differentials
    $$d_r: \SynSS_r^{s,t,w}\to \SynSS_r^{s+r,t+r-1,w}.$$
    Here $E_2^{*,*}(X) \cong \Ext_A^{*,*}(X)$ is the classical Adams $E_2$-page of $X$. An element in $E_2^{s,t}(X)$ is viewed as having tridegree $(s, t, t)$ in $\SynSS_2^{s,t,t}(\nu X)$, and $\lambda$ is in tridegree $(0, 0, -1)$. Given a classical Adams differential $d_r^{\mathrm{cl}}(x)=y$, the corresponding synthetic differential is $d_r(x)=\lambda^{r-1}y$, which is $\lambda$-linear, and all synthetic Adams differentials arise in this way.
\end{theorem}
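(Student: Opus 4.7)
The plan is to unpack the construction of the synthetic Adams spectral sequence as the spectral sequence associated to the $\nu\HF$-based Adams tower on $\nu X$, and then to leverage the fact that $\lambda$ is a permanent cycle witnessing the categorical deformation from the classical Adams spectral sequence. First, I would identify the $E_1$-page. Since $\HF_*$ is a field, the mod-$2$ K\"unneth isomorphism holds, and the structural results on $\nu$ applied to iterated smash products of $\HF$ from \cite{Pst} and \cite[Appendix~A]{BHS} yield a tri-graded identification
\begin{equation*}
\pi_{*,*}\bigl((\nu\HF)^{\wedge k+1}\wedge \nu X\bigr) \cong \bigl((\HF_*\HF)^{\otimes k}\otimes \HF_*X\bigr)\otimes \mathbb{F}_2[\lambda],
\end{equation*}
with $\lambda$ of weight $-1$. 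The synthetic $E_1$-page is therefore the classical Adams cobar complex tensored with $\mathbb{F}_2[\lambda]$, and taking cohomology produces $E_2^{*,*}(X)\otimes \mathbb{F}_2[\lambda]$. The tridegree convention $(s,t,t)$ for a class in $\Ext^{s,t}(X)$ is forced by the fact that such a class originates in $\pi_{t-s,t}(\nu X)$.

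Next, I would establish the rigid form of the higher differentials. Because $\lambda\in \pi_{0,-1}(S^{0,0})$ lives in synthetic Adams filtration $0$, it is a permanent cycle, so the entire synthetic Adams spectral sequence is a spectral sequence of $\mathbb{F}_2[\lambda]$-modules and every $d_r$ is $\lambda$-linear. A tridegree calculation now uniquely pins down the coefficient: a summand $\lambda^k z$ with $z\in \Ext^{s',t'}(X)$ has tridegree $(s',t',t'-k)$, so in order to lie in the target tridegree $(s+r,t+r-1,t)$ of $d_r(x)$ one must have $s'=s+r$, $t'=t+r-1$, and consequently $k=r-1$. Hence $d_r(x)=\lambda^{r-1}y$ for a unique class $y\in E_2^{s+r,t+r-1}(X)$.

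Finally, I would identify $y$ with the classical differential $d_r^{\mathrm{cl}}(x)$ by inverting $\lambda$. Under the equivalence $\lambda^{-1}\mathrm{Syn}_{\HF}\simeq \mathrm{Sp}_2^\wedge$ recalled in Remark~\ref{rem:deformation}, the $\nu\HF$-based synthetic Adams tower on $\nu X$ is carried to the classical $\HF$-based Adams tower on $X$, so the synthetic spectral sequence with $\lambda$ inverted becomes the classical Adams spectral sequence tensored with $\mathbb{F}_2[\lambda^{\pm 1}]$. The equation $d_r(x)=\lambda^{r-1}y$ therefore descends to $d_r^{\mathrm{cl}}(x)=y$ in the classical spectral sequence, which both determines $y$ and shows that every classical differential lifts uniquely to a synthetic one. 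The main obstacle in this plan is the $E_1$-page identification, which requires the structural results on iterated smash products of $\nu\HF$ from \cite{Pst} and \cite[Appendix~A]{BHS}; once the grading-preserving isomorphism is in hand, the $\lambda$-linearity argument combined with the $\lambda^{-1}$-comparison is essentially formal.
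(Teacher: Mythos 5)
The paper does not prove this theorem: it is quoted from Burklund--Hahn--Senger \cite{BHS} (their Theorem A.8), so there is no internal proof to compare against. Your sketch follows what is essentially their line of argument, and the ingredients are the right ones: identify the $E_1$-page of the $\nu\HF$-based tower with the classical cobar complex tensored with $\mathbb{F}_2[\lambda]$ (using that $\nu$ of a mod-$2$ generalized Eilenberg--MacLane spectrum has $\lambda$-free bigraded homotopy, and that $\nu$ is strong monoidal on the relevant smash products), then use $\lambda$-linearity plus the weight count to force $d_r(x)=\lambda^{r-1}y$, and finally invert $\lambda$ to match $y$ with the classical differential.

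There is, however, one place where the sketch is thinner than a complete proof. Your tridegree argument treats every class on the page supporting $d_r$ as a sum of terms $\lambda^k z$ with $z$ a classical $\Ext$ class that is still alive classically; this is exactly right on $E_2$, but for $r\ge 3$ the synthetic $E_r$-page is no longer $\lambda$-free over the classical $E_r$-page. Shorter differentials $d_{r'}(a)=\lambda^{r'-1}b$ kill only $\lambda^{r'-1}b$, leaving $\lambda$-torsion classes $\lambda^j b$ for $j<r'-1$ on all later pages — this is visible in the paper's own Example~\ref{exam:synEinfty}, where $h_0h_3^2$, $h_0d_0$ and $\lambda h_0d_0$ survive to $E_\infty$ as $\lambda$-torsion. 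The assertion that \emph{all} synthetic differentials arise from classical ones therefore requires showing that these torsion classes support no further differentials, and the $\lambda$-inversion comparison cannot detect this, since such classes (and their potential torsion targets) die after inverting $\lambda$. Closing that gap is where the proof in \cite{BHS} does real work: it runs a simultaneous analysis of the quotients $\nu X/\lambda^k$, i.e.\ the $\lambda$-Bockstein comparison recorded here as Theorem~\ref{thm:17e90ac0}. As written, your argument establishes the stated form and the classical correspondence for differentials off classes lifted from $E_2$, but leaves the torsion classes on higher pages unaddressed.
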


\begin{remark}
    Here, we use a third grading of the synthetic Adams $E_2$-page that differs from \cite{BHS} but more compatible with \cite{BurklundIsaksenXu}. Specifically, we view elements in $E_2^{s,t}(X)$ as having tridegree $(s,t,t)$, instead of $(s,t,s)$ as in \cite{BHS}. This choice ensures that differentials and maps preserve the third degree, which proves convenient for the extension spectral sequence in the $\HF$-synthetic category in Section~\ref{sec:synext}.
\end{remark}

\begin{theorem}[{\cite[Theorem A.1]{BHS}}]\label{thm:17e90ac0}
    The synthetic Adams spectral sequence for $\nu X$ is isomorphic to the $\lambda$-Bockstein spectral sequence (with no sign difference because we are working over $\bF_2$ here). More specifically, we have
    \begin{enumerate}
        \item $E_2^{s,t}(X)\iso \pi_{t-s,t}(\nu X /\lambda)$.
        \item If we have a classical Adams differential $d_rx=y$ for 
        $$x\in E_2^{s,t}(X)\iso \pi_{t-s,t}(\nu X /\lambda),$$
        then $x$ admits a lift to $\pi_{t-s,t}(\nu X/\lambda^{r-1})$ whose image under the $\tau$-Bockstein
        $$\nu X/\lambda^{r-1}\to \Sus^{1,-r+1}\nu X/\lambda$$
        is equal to $d_r(x)$.
    \end{enumerate}
\end{theorem}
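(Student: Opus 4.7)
The plan is to build the $\lambda$-Bockstein spectral sequence from the tower $\{\nu X/\lambda^n\}_{n\ge 1}$, show that its $E_1$-page matches the synthetic Adams $E_2$-page of Theorem~\ref{thm:rigid}, and then identify differentials via that same theorem.

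For part (1), the identification $\pi_{t-s,t}(\nu X/\lambda) \cong E_2^{s,t}(X)$, I would invoke the equivalence of the special fiber $\mathrm{Syn}_{\HF}/\lambda \simeq \mathcal{D}(A_*\textup{-Comod})$ recalled in Remark~\ref{rem:deformation}. Under this equivalence, $\nu X/\lambda$ corresponds to the object representing $H_*X$ as an $A_*$-comodule, and its synthetic homotopy in bidegree $(t-s,t)$ computes $\mathrm{Ext}_{A}^{s,t}(\mathbb{F}_2, H_*X) = E_2^{s,t}(X)$. The careful bookkeeping is in the weight grading: with the convention from Theorem~\ref{thm:rigid} that $E_2^{s,t}(X)$ sits in tridegree $(s,t,t)$ and $\lambda$ has tridegree $(0,0,-1)$, modding out by $\lambda$ confirms the weight $w=t$ appearing in the claimed isomorphism. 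As an alternative route, one may bootstrap the identification from the long exact sequence attached to $\Sigma^{0,-1}\nu X\xrightarrow{\lambda}\nu X\to \nu X/\lambda$ together with the $\lambda$-freeness of the synthetic $E_2$-page shown in Theorem~\ref{thm:rigid}.

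For part (2), I would apply the octahedral axiom to the factorization $\lambda^r = \lambda^{r-1} \circ \Sigma^{0,-r+1}\lambda$ to produce, for each $r \ge 2$, a cofiber sequence
$$\Sigma^{0,-r+1}\nu X/\lambda\xrightarrow{\lambda^{r-1}}\nu X/\lambda^{r}\to \nu X/\lambda^{r-1}\xrightarrow{\beta_{r-1}}\Sigma^{1,-r+1}\nu X/\lambda,$$
whose connecting map $\beta_{r-1}$ is exactly the Bockstein in the theorem statement. Given a classical Adams differential $d_r^{\mathrm{cl}}(x)=y$, Theorem~\ref{thm:rigid} translates this to the synthetic relation $d_r(x)=\lambda^{r-1}y$. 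I would then verify that this relation is equivalent to the assertion that $x\in \pi_{t-s,t}(\nu X/\lambda)$ admits a lift $\tilde x \in \pi_{t-s,t}(\nu X/\lambda^{r-1})$ whose image under $\beta_{r-1}$ equals $y \in \pi_{t-s-1,t+r-1}(\nu X/\lambda) = E_2^{s+r,t+r-1}(X)$. In one direction, existence of the lift $\tilde x$ follows because $x$ is a cycle on synthetic pages $E_2,\dots,E_{r-1}$; in the other direction, the synthetic differential $d_r(x)=\lambda^{r-1}y$ records precisely the obstruction to lifting through one further power of $\lambda$.

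The main obstacle I expect is verifying that the $\lambda$-adic filtration on $\nu X$ used to form the Bockstein spectral sequence genuinely coincides with the filtration underlying the synthetic Adams spectral sequence of Theorem~\ref{thm:rigid}. This calls on \Pstragowski's construction of $\nu X$ via the synthetic Adams tower and the observation that the associated graded of the $\lambda$-filtration realizes $\pi_{*,*}(\nu X/\lambda) \otimes \mathbb{F}_2[\lambda]$ with the correct differentials. Once the two filtered structures are matched, the isomorphism of spectral sequences and their strong convergence follow from standard convergence results for synthetic spectra together with the connective and finite-type hypotheses on $X$.
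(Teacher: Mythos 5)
The paper offers no proof of this statement: it is imported verbatim as \cite[Theorem A.1]{BHS}, so there is no in-paper argument to compare yours against. As a reconstruction of the Burklund--Hahn--Senger argument, your outline identifies the right ingredients --- the special-fiber equivalence $\mathrm{Syn}_{\HF}/\lambda\simeq\mathcal{D}(A_*\textup{-Comod})$ for part (1), the octahedral cofiber sequences $\Sigma^{0,-r+1}\nu X/\lambda\to\nu X/\lambda^{r}\to\nu X/\lambda^{r-1}\to\Sigma^{1,-r+1}\nu X/\lambda$ for part (2), and the comparison of the $\lambda$-adic filtration with the synthetic Adams filtration as the real technical content --- and your degree bookkeeping (the target landing in $\pi_{t-s-1,t+r-1}(\nu X/\lambda)\cong E_2^{s+r,t+r-1}(X)$) is correct. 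One caution: your part (2) leans on Theorem~\ref{thm:rigid} to translate $d_r^{\mathrm{cl}}(x)=y$ into $d_r(x)=\lambda^{r-1}y$ and then proposes to ``verify that this relation is equivalent to'' the Bockstein lifting statement; but that equivalence \emph{is} the theorem, and in the source the logical order is reversed (the Bockstein identification, Theorem A.1, is the omnibus result from which the description of the synthetic Adams spectral sequence is extracted). So as written your argument for (2) is circular unless the filtration comparison you defer to the last paragraph is carried out independently of Theorem~\ref{thm:rigid}; you correctly flag that comparison as the main obstacle, and it is precisely where all the work in \cite{BHS} lives.
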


\begin{remark}
    The isomorphism with the $\lambda$-Bockstein spectral sequence in Theorem~\ref{thm:17e90ac0} and the correspondence of differentials between the classical and synthetic Adams spectral sequences in Theorem~\ref{thm:rigid} can be interpreted as manifestations of the rigidity of the synthetic Adams spectral sequence. This type of rigidity was first observed by Hu--Kriz--Ormsby \cite{HuKrizOrmsby} and Isaksen \cite{Isaksen} in the context of the motivic Adams--Novikov spectral sequence, and it was utilized extensively in the work of Gheorghe--Wang--Xu \cite{GWX}. Subsequently, Burklund--Xu \cite{BurklundXu} uncovered the rigidity of the motivic Cartan--Eilenberg spectral sequence and performed computations based on this property. See also \cite{mmf}, \cite{Burklund} for applications of this type of rigidity.

    Together with Remark~\ref{rem:deformation}, $\HF$-synthetic spectra can be seen as a categorification of the classical Adams spectral sequence, just as cellular motivic spectra can be viewed a categorification of the classical Adams--Novikov spectral sequence.
\end{remark}

\begin{notation}
    Consider the classical Adams spectral sequence of $X$. Let $B_r^{s,t}(X)$ denote the subgroups of $E_2^{s,t}(X)$ generated by the sum of images of Adams differentials $d_2,\dots,d_r$. Let $Z_r^{s,t}(X)$ denote the subgroup of $E_2^{s,t}(X)$ consisting of elements on which $d_2,\dots,d_r$ vanish. For $r=1$, we define $Z_1^{s,t}(X)=E_2^{s,t}(X)$ and $B_1^{s,t}(X)=0$. For $r=\infty$, let $Z_\infty^{s,t}(X)$ be the intersection of all $Z_r^{s,t}(X)$, and $B_\infty^{s,t}(X)$ be the union of all $B_r^{s,t}(X)$. We then have the following inclusions
    $$0 = B_1 \subset B_2\subset B_3\subset \cdots\subset B_\infty\subset Z_\infty \subset \cdots\subset Z_3\subset Z_2\subset Z_1 = E_2.$$
    In this article, we interpret a classical Adams differential $d_r$ as the map
    $$d_r: Z_{r-1}^{s,t}(X)\to E_2^{s+r,t+r-1}(X)/B_{r-1}^{s+r,t+r-1}(X),$$
    and follow the pattern in Notation \ref{nota:6fb333a2} for writing differentials.
\end{notation}

The following two propositions are taken from \cite[Corollary A.9, A.11]{BHS}. Note that our third grading, $w$, differs from that used in the reference.

\begin{proposition}\label{prop:30e8b746}
    The $E_\infty$-page of the synthetic Adams spectral sequence for $\nu X$ is given by
    $$E_\infty^{s,t,w}(\nu X)\iso\begin{cases}
        Z_\infty^{s,t}(X)/B_{1+t-w}^{s,t}(X) & \text{ if } t \ge w,\\
        0 & \text{ otherwise }.
    \end{cases}$$
\end{proposition}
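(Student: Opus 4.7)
The strategy is to use the rigidity of the synthetic Adams spectral sequence from Theorem~\ref{thm:rigid} to compute the pages $E_r^{s,t,w}(\nu X)$ inductively in $r$. That theorem identifies the $E_2$-page as $E_2^{*,*}(X)\otimes\mathbb{F}_2[\lambda]$, where a class $x\in E_2^{s,t}(X)$ sits in tridegree $(s,t,t)$, $\lambda$ has tridegree $(0,0,-1)$, and the differentials satisfy $d_r(x\lambda^k)=\lambda^{k+r-1}d_r^{\mathrm{cl}}(x)$ by $\lambda$-linearity. Because a monomial $x\lambda^k$ (with $k\ge 0$) lives in third degree $t-k$, the bigraded group $E_2^{s,t,w}(\nu X)$ is zero whenever $t<w$, which disposes of the second clause of the proposition.

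For $t\ge w$ set $k=t-w\ge 0$. I would prove by induction on $r\ge 2$ the formula
\[
E_r^{s,t,w}(\nu X)\;\cong\;Z_{r-1}^{s,t}(X)\big/B_{\min(r-1,\,k+1)}^{s,t}(X),
\]
with the conventions $Z_1=E_2$ and $B_1=0$. The base case is the rigidity identification. For the inductive step, the target $E_r^{s+r,t+r-1,w}$ has $\lambda$-weight $k+r-1\ge r-1$, so by induction it equals $Z_{r-1}^{s+r,t+r-1}/B_{r-1}^{s+r,t+r-1}$. Hence the $d_r$-differential vanishes on $x\lambda^k$ exactly when $d_r^{\mathrm{cl}}(x)\in B_{r-1}^{s+r,t+r-1}$, i.e., $x\in Z_r^{s,t}(X)$. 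The $d_r$-boundaries into $E_r^{s,t,w}$ come from $E_r^{s-r,t-r+1,w}$, whose $\lambda$-weight $k-r+1$ is nonnegative iff $k\ge r-1$; in that case the image is $B_r^{s,t}/B_{\min(r-1,\,k+1)}^{s,t}$, and otherwise it vanishes. A short case split on $k<r-1$, $k=r-1$, and $k\ge r$ then collapses both cases to $E_{r+1}^{s,t,w}(\nu X)\cong Z_r^{s,t}/B_{\min(r,\,k+1)}^{s,t}$, completing the induction.

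To conclude, I would pass to $r\to\infty$. Once $r\ge k+2$, the index $\min(r-1,\,k+1)$ stabilizes at $k+1$, so the denominator is fixed at $B_{k+1}^{s,t}=B_{1+t-w}^{s,t}$, while the numerators $Z_{r-1}^{s,t}$ decrease to $Z_\infty^{s,t}$. Since in each fixed tridegree the synthetic Adams spectral sequence has only finitely many possibly nonzero differentials into and out of that tridegree, it degenerates at some finite page, so $E_\infty^{s,t,w}(\nu X)=Z_\infty^{s,t}/B_{1+t-w}^{s,t}$, which is the first clause. The only subtlety is the three-way case split in the inductive step, matching the regimes in which the page index $r$ lies below, at, or above the $\lambda$-weight $k+1$; beyond this bookkeeping, no ingredient is needed besides Theorem~\ref{thm:rigid}.
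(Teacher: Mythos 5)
Your argument is correct. Note that the paper itself gives no proof of this statement: it is quoted directly from Burklund--Hahn--Senger \cite[Corollary A.9]{BHS}, and your induction is essentially the same $\lambda$-Bockstein bookkeeping that underlies their result, carried out from the rigidity statement (Theorem~\ref{thm:rigid}). The three-way case split on $k=t-w$ versus the page index is handled correctly (the key points being that the target of $d_r$ always has $\lambda$-weight $k+r-1\ge r-1$, so it is the classical $E_r$-group, while incoming differentials vanish once $k<r-1$), and the passage to $r=\infty$ is justified because for a connective spectrum of finite type each tridegree carries a finite group, so the non-increasing sequence of subquotients stabilizes and $Z_\infty$, $B_\infty$ are attained at a finite page in each bidegree.
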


\begin{proposition}\label{prop:59f111f}
    The $E_\infty$-page of the synthetic Adams spectral sequence for $\nu X/\lambda^r$ for $r\ge 2$ is given by
    $$E_\infty^{s,t,w}(\nu X/\lambda^r)\iso\begin{cases}
        Z_{r-t+w}^{s,t}(X)/B_{1+t-w}^{s,t}(X) & \text{if } 0\le t-w < r,\\
        0 & \text{otherwise}.
    \end{cases}$$
\end{proposition}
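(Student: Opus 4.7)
The plan is to deduce the formula directly from Theorem~\ref{thm:rigid}, which identifies $\SynSS_2^{*,*,*}(\nu X) \cong E_2^{*,*}(X) \otimes \mathbb{F}_2[\lambda]$ with $\lambda$-linear differentials $d_i(\lambda^k x) = \lambda^{k+i-1} d_i^{\mathrm{cl}}(x)$, together with the cofiber sequence
\[
\Sigma^{0,-r}\nu X \xrightarrow{\lambda^r} \nu X \longrightarrow \nu X/\lambda^r.
\]
The map $\nu X \to \nu X/\lambda^r$ induces a map of synthetic Adams spectral sequences, presenting $\SynSS(\nu X/\lambda^r)$ as the truncation of $\SynSS(\nu X)$ by the relation $\lambda^r = 0$. (In other words, the argument will run in close parallel to the proof of Proposition~\ref{prop:30e8b746}, with the single new feature being the ceiling imposed by $\lambda^r = 0$.)

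First I would identify the $E_2$-page. Using the tridegree convention of Theorem~\ref{thm:rigid}, namely that a class $x \in E_2^{s,t}(X)$ sits in tridegree $(s,t,t)$ and $\lambda$ in tridegree $(0,0,-1)$, truncating by $\lambda^r$ yields
\[
\SynSS_2^{s,t,w}(\nu X/\lambda^r) \cong E_2^{s,t}(X) \otimes \mathbb{F}_2[\lambda]/\lambda^r,
\]
where $\lambda^k \cdot x$ occupies tridegree $(s,t,t-k)$. This summand is non-zero exactly when $0 \le t-w = k < r$, which matches the case distinction in the statement.

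Next I would compute $E_\infty^{s,t,w}(\nu X/\lambda^r)$ at a fixed tridegree $(s,t,w)$ with $0 \le t-w < r$ by tracking incoming and outgoing differentials separately. For cycles: a class $\lambda^{t-w} x$ supports $d_i$ to $\lambda^{t-w+i-1} d_i^{\mathrm{cl}}(x)$, whose target is non-zero in $\nu X/\lambda^r$ iff $t-w+i-1 < r$, i.e.\ $i \le r-t+w$; hence the cycles are precisely $Z_{r-t+w}^{s,t}(X)$. For boundaries: an incoming $d_i$ lands in tridegree $(s,t,w)$ from a source $\lambda^{t-i+1-w} x'$ with $x' \in E_2^{s-i,t-i+1}(X)$, which exists iff $t-i+1-w \ge 0$, i.e.\ $i \le 1+t-w$; hence the boundary subgroup is $B_{1+t-w}^{s,t}(X)$. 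Taking the quotient yields the claimed formula.

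The main obstacle is careful bookkeeping rather than any new conceptual input. The two independent range restrictions -- $i \le r-t+w$ for outgoing differentials (limited by the ceiling $\lambda^r = 0$) and $i \le 1+t-w$ for incoming differentials (limited by the floor that $\lambda$-powers must be non-negative) -- need to be tracked separately, and one must keep the shifted third-grading convention (differing from \cite{BHS} as flagged after Theorem~\ref{thm:rigid}) consistent throughout. Beyond this, one should check that no \emph{extra} synthetic differentials appear in $\SynSS(\nu X/\lambda^r)$ beyond the reductions of those on $\SynSS(\nu X)$; this is immediate from $\lambda$-linearity in Theorem~\ref{thm:rigid} and naturality along the quotient map $\nu X \to \nu X/\lambda^r$.
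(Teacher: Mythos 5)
Your index bookkeeping is correct: in tridegree $(s,t,w)$ the only class is $\lambda^{t-w}x$ with $x\in E_2^{s,t}(X)$, its outgoing differential $d_i(\lambda^{t-w}x)=\lambda^{t-w+i-1}d_i(x)$ survives the truncation exactly when $i\le r-t+w$, and an incoming $d_i$ must originate from $\lambda^{t-i+1-w}x'$, which forces $i\le 1+t-w$; this is precisely how one arrives at $Z_{r-t+w}/B_{1+t-w}$. Note that the paper itself offers no internal proof here --- it cites \cite[Corollary A.11]{BHS} --- so the relevant comparison is with the argument in that reference, which your sketch follows in spirit.

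The genuine gap is your final sentence. The claim that $\SynSS(\nu X/\lambda^r)$ carries \emph{no} differentials beyond the reductions of those of $\SynSS(\nu X)$ is not ``immediate from $\lambda$-linearity and naturality.'' Naturality along $\rho:\nu X\to\nu X/\lambda^r$ only controls $d_i(\rho\tilde z)$ for as long as $\tilde z$ survives upstairs. The problematic case is exactly the one your formula is designed to capture: if $d_j(x)=\lambda^{j-1}y\ne 0$ upstairs but $t-w+j-1\ge r$, then $\lambda^{t-w}x$ dies on the $E_j$-page of $\SynSS(\nu X)$ while its image survives to $E_{j+1}$ of $\SynSS(\nu X/\lambda^r)$; from that page on, naturality from $\nu X$ says nothing about its further differentials, so you cannot conclude that it is a permanent cycle (surjectivity of a map of spectral sequences on $E_2$-pages does not determine the target spectral sequence). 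Closing this requires the rigidity statement for the quotients themselves --- the analogue of Theorem~\ref{thm:17e90ac0} identifying $\SynSS(\nu X/\lambda^r)$ with the truncated $\lambda$-Bockstein spectral sequence, or equivalently the full omnibus theorem of \cite{BHS} (whose parts concerning lifts of $d_r$-cycles to $\pi_{*,*}(\nu X/\lambda^{r-1})$ and the vanishing of Bocksteins of such lifts are exactly what pins down the remaining differentials). One can also argue via naturality in the other direction along $\delta:\nu X/\lambda^r\to\Sigma^{1,-r}\nu X$ together with the exactness of the resulting sequence of $E_2$-pages, but some such input beyond the quotient map is indispensable.
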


\begin{notation}
    For convenience, whenever $n\le 0$ or $m\le 0$, we will define $Z_n(X)/B_m(X)$ as a trivial group. This is a bit counterintuitive to the definition of $Z_n$ and $B_m$ but it allows us to consistently express the following
    $$E_\infty^{s,t,w}(\nu X)\iso Z_\infty^{s,t}(X)/B_{1+t-w}^{s,t}(X)$$
    and
    $$E_\infty^{s,t,w}(\nu X/\lambda^r)\iso Z_{r-t+w}^{s,t}(X)/B_{1+t-w}^{s,t}(X).$$
\end{notation}

\begin{remark}
    The isomorphisms in Propositions~\ref{prop:30e8b746} and \ref{prop:59f111f} are compatible with the synthetic maps $\lambda^{r'-r}: \Sus^{-r'+r}\nu X/\lambda^r\to \nu X/\lambda^{r'}$ and $\rho: \nu X/\lambda^r\to \nu X/\lambda^{r''}$ for $r'\ge r\ge r''$. The induced map
    $$\lambda^{r'-r}:E_\infty^{s,t,w}(\nu X/\lambda^r)\to E_\infty^{s,t,w-r'+r}(\nu X/\lambda^{r'})$$
    corresponds to the quotient map (when all subscripts are positive):
    $$\lambda^{r'-r}:Z_{r-t+w}^{s,t}(X)/B_{1+t-w}^{s,t}(X)\to Z_{r-t+w}^{s,t}(X)/B_{1+t-w+r'-r}^{s,t}(X)$$
    Similarly, the induced map
    $$\rho:E_\infty^{s,t,w}(\nu X/\lambda^r)\to E_\infty^{s,t,w}(\nu X/\lambda^{r''})$$
    corresponds to the embedding map (when all subscripts are positive):
    $$\rho:Z_{r-t+w}^{s,t}(X)/B_{1+t-w}^{s,t}(X)\to Z_{r''-t+w}^{s,t}(X)/B_{1+t-w}^{s,t}(X).$$
    These data do not extend beyond classical information. However, when we consider a map $f: X\to Y$, we can obtain interesting extensions, as will be discussed in Section~\ref{sec:synext}.
\end{remark}

\begin{example} \label{exam:synEinfty}
    Consider the first few nonzero differentials in the classical Adams spectral sequence from the 15-stem to the 14-stem: 
    $$d_2(h_4) = h_0h_3^2,  \ d_3(h_0h_4) = h_0d_0, \ d_3(h_0^2h_4) = h_0^2d_0,$$
    where $$h_0 \in \Ext_A^{1,1}, \ h_3 \in \Ext_A^{1,8}, \ h_4 \in \Ext_A^{1,16}, \ d_0 \in \Ext_A^{4,18}.$$
    The $E_2$-page of the synthetic Adams spectral sequence for $S^{0,0}$ has the form
    $$\SynSS_2^{*,*,*}(S^{0,0}) \cong \Ext_A^{*,*}\otimes \mathbb{F}_2[\lambda],$$
    and we have
    $$h_0 \in \SynSS_2^{1,1,1}, \ h_3 \in \SynSS_2^{1,8,8}, \ h_4 \in \SynSS_2^{1,16,16}, \ d_0 \in \SynSS_2^{4,18,18}.$$
    The corresponding nonzero synthetic Adams differentials for $S^{0,0}$ are
    $$d_2(h_4) = \lambda h_0h_3^2,  \ d_3(h_0h_4) = \lambda^2 h_0d_0, \ d_3(h_0^2h_4) = \lambda^2 h_0^2d_0,$$
    along with their $\lambda$-multiples. As a result, the 14-stem of the synthetic $E_\infty$-page is generated by $\lambda$-free elements $h_3^2$ and $d_0$, along with the following $\lambda$-torsion elements:
    $$h_0h_3^2, \ h_0d_0, \ \lambda h_0d_0, \ h_0^2d_0, \ \lambda h_0^2d_0.$$
    On the other hand, since $h_0h_3^2 \in \Ext_A^{3,17}$ is a $d_2$-cycle, we have 
    $$0 = B_1^{3,17} \subset B_2^{3,17} = Z_\infty^{3,17} = E_2^{3,17} \cong \mathbb{F}_2\{h_0h_3^2\},$$
    which matches the description:
    $$E_\infty^{3,17,w}(S^{0,0})\iso\begin{cases}
        Z_\infty^{3,17}/B_{18-w}^{3,17} \cong \mathbb{F}_2\{h_0h_3^2\} & \text{ if } w = 17,\\
        Z_\infty^{3,17}/B_{18-w}^{3,17} = 0 & \text{ if } w \le 16,\\
        0 & \text{ if } w \ge 18.
    \end{cases}$$
    The element $h_0h_3^2$ detects homotopy classes of $\AF=3$ in $\pi_{14,17}$. Due to the presence of the $\lambda$-free element $d_0$ in the synthetic $E_\infty$-page, the element $\lambda d_0$ also detects homotopy classes in $\pi_{14,17}$, but with $\AF=4$. This illustrates why it is more convenient to describe the $E_\infty$-page as in Proposition~\ref{prop:30e8b746}, rather than in terms of synthetic homotopy groups.
    
    As a comparison, the only nonzero synthetic Adams differential in the 14-stem for $S^{0,0}/\lambda^2$ is 
    $$d_2(h_4) = \lambda h_0h_3^2,$$
    and the elements $\lambda h_4, \ h_0h_4, \ h_0^2h_4$ are all permanent cycles. 

    For $S^{0,0}/\lambda^3$, the nonzero synthetic Adams differentials in the 14-stem are
    $$d_2(h_4) = \lambda h_0h_3^2, \  d_2(\lambda h_4) = \lambda^2 h_0h_3^2,$$ 
    $$d_3(h_0h_4) = \lambda^2 h_0d_0, \ d_3(h_0^2h_4) = \lambda^2 h_0^2d_0,$$
    and the elements $\lambda^2 h_4, \ \lambda h_0h_4, \ \lambda^2 h_0h_4, \ \lambda h_0^2h_4, \lambda^2 h_0^2h_4$ are all permanent cycles. 

    These results can be compared with the statements in Proposition~\ref{prop:59f111f}.
\end{example}

\begin{proposition}[{\cite[Lemma 9.15]{BHS}}]\label{prop:ef21f9bc}
    If a map $f:X\to Y$ has Adams filtration $\AF(f)=k$, then there exists a factorization
    $$\xymatrix{
     & \Sus^{0,-k}\nu Y\ar[d]^{\lambda^k}\\
     \nu X \ar[r]_{\nu f}\ar@{-->}[ru]^{\Sus^{0,-k}\tilde f} & \nu Y
    }$$
    where $\tilde f: \Sus^{0,k}\nu X\to \nu Y$ is called a synthetic lift of $f$.
\end{proposition}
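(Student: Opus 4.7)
The plan is to separate the argument into a key lemma and an assembly step. The key lemma is that for any map of spectra $g : X \to Y$ with $\HF_* g = 0$, the induced synthetic map $\nu g$ factors through $\lambda : \Sigma^{0,-1}\nu Y \to \nu Y$. The assembly step then composes $k$ such lifts and slides the $\lambda$'s together, using naturality of $\lambda$, to obtain the claimed factorization through $\lambda^k$.

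For the lemma, I would use the defining cofiber sequence
$$\Sigma^{0,-1}\nu Y \xrightarrow{\lambda} \nu Y \to \nu Y/\lambda,$$
which reduces the $\lambda$-divisibility of $\nu g$ to the nullity of the composite $\nu X \xrightarrow{\nu g} \nu Y \to \nu Y/\lambda$. By naturality of the quotient $(-)/\lambda$, this composite factors as $\nu X \to \nu X/\lambda \xrightarrow{(\nu g)/\lambda} \nu Y/\lambda$. Under the identification of the special fiber of the synthetic deformation (Remark~\ref{rem:deformation}), $\mathrm{Syn}_{\HF}/\lambda \simeq \mathcal{D}(A_*\text{-Comod})$, the functor $\nu(-)/\lambda$ corresponds to sending a spectrum to its $\HF$-homology viewed as a Steenrod comodule, and in particular $(\nu g)/\lambda$ corresponds to $\HF_* g$. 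Since $\HF_* g = 0$ by hypothesis, the composite is null, and a choice of null-homotopy produces the desired lift $\tilde g : \nu X \to \Sigma^{0,-1}\nu Y$ with $\lambda \circ \tilde g \simeq \nu g$.

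For the assembly step, I would invoke the characterization of Adams filtration recalled after Definition~\ref{def:ess}: $f$ is homotopic to a composite $f_k \circ \cdots \circ f_1$ of $k$ maps each inducing zero on $\HF$-homology. Applying the lemma to each $f_i$ yields a synthetic lift $\tilde f_i : \nu X_{i-1} \to \Sigma^{0,-1}\nu X_i$ with $\lambda \circ \tilde f_i \simeq \nu f_i$. Composing and repeatedly using naturality of $\lambda$ (so that each $\lambda$ slides past the later $\tilde f_j$'s, contributing a $\Sigma^{0,-1}$ to them, until all the $\lambda$'s collect on the right) produces a map $\bar f : \nu X \to \Sigma^{0,-k}\nu Y$ with $\lambda^k \circ \bar f \simeq \nu f_k \circ \cdots \circ \nu f_1 = \nu f$. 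Setting $\tilde f := \Sigma^{0,k}\bar f : \Sigma^{0,k}\nu X \to \nu Y$ gives the synthetic lift in the stated bidegree.

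The principal obstacle is the lemma, specifically the clean identification of $\nu(-)/\lambda$ with the $\HF$-homology functor landing in the special fiber $\mathcal{D}(A_*\text{-Comod})$. Once this structural fact about the synthetic deformation is in hand, the rest of the argument is essentially formal: a standard mapping-space manipulation with the cofiber sequence defining $\lambda$, together with routine bookkeeping of bidegree shifts when sliding the $\lambda$'s past the intermediate synthetic lifts.
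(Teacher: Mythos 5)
Your argument is correct, but it is not the route the paper takes: the paper does not prove this statement at all, citing \cite[Lemma~9.15]{BHS}, and the BHS-style argument (visible in the paper through Proposition~\ref{prop:1f7950df} and Proposition~\ref{prop:41561db2}, whose proof is said to be ``contained in the proof of \cite[Lemma 9.15]{BHS}'') runs through the interaction of $\nu$ with cofiber sequences rather than through the special fiber. Concretely, the standard proof factors an Adams filtration one map through the fiber of $Y \to \HF \wedge Y$ (or uses the cofiber sequence $X \xrightarrow{f} Y \to Cf$, where $\HF_*f = 0$ forces a short exact sequence on homology), applies Proposition~\ref{prop:1f7950df} to get a genuine cofiber sequence of synthetic spectra, and then observes that $\nu$ of the boundary map equals $\lambda$ times the synthetic boundary map --- exactly the content of Proposition~\ref{prop:41561db2}. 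Your proof instead reduces $\lambda$-divisibility to nullity of $\nu X \to \nu Y/\lambda$ and invokes the identification of the composite functor $\nu(-)/\lambda$ with $\HF_*(-)$ valued in the special fiber $\mathcal{D}(A_*\text{-Comod})$. That identification is a genuine theorem (\Pstragowski{}, and in the generality of module categories over $S^{0,0}/\lambda$, Patchkoria--\Pstragowski{}), and you correctly flag it as the crux; granting it, the rest of your argument --- the lift along the cofiber sequence defining $\lambda$, and the assembly step sliding the $\lambda$'s together by naturality --- is routine and the bidegree bookkeeping checks out. The trade-off is that your route leans on a heavier structural input about the synthetic deformation but makes the mechanism transparent (``the mod-$\lambda$ reduction of $\nu f$ \emph{is} $\HF_*f$''), whereas the cofiber-sequence route uses only Pstr\k{a}gowski's Lemma~4.23 and is the one that generalizes most directly to the refinements the paper actually needs, such as Proposition~\ref{prop:41561db2}.
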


In Proposition \ref{prop:ef21f9bc} the map $\nu f$ can be also factorized as follows:
$$\xymatrix{
 \nu X \ar[r]^{\nu f}\ar[d]_{\lambda^k} & \nu Y\\
 \Sus^{0,k}\nu X\ar@{-->}[ru]_{\tilde f}
}$$


\begin{proposition}\label{prop:41561db2}
    Suppose that we have a  distinguished triangle of spectra
    $$X\fto{f} Y\fto{g} Z\fto{h} \Sus X$$
    with $\AF(h)>0$, and consequently a short exact sequence on $\HF$-homology
    $$0\to H_*X\fto{H_*f} H_*Y\fto{H_*g} H_*Z\to 0.$$
    Then there exists a distinguished triangle of synthetic spectra
    $$\nu X\fto{\nu f}\nu Y\fto{\nu g} \nu Z\fto{\Sus^{0,-1}\hat h} \Sus^{0,-1}\nu \Sus X=\Sus^{1,0}\nu X$$
    such that $\nu h=\lambda \hat h$.
\end{proposition}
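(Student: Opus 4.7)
My plan is to invoke Pstragowski's cofiber preservation criterion (Proposition~\ref{prop:1f7950df}) and then identify the resulting synthetic boundary map with a $\lambda$-lift of $\nu h$.

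First, I would verify the short exact sequence hypothesis required by Proposition~\ref{prop:1f7950df}. The assumption $\AF(h)>0$ forces $h$ to induce the zero map on $\HF$-homology (any map of positive Adams filtration factors through a map that is zero on $\HF_*$, hence is itself zero on $\HF_*$). Consequently, the long exact sequence in $\HF$-homology associated to $X\fto{f} Y\fto{g} Z\fto{h} \Sus X$ collapses into
$$0\to \HF_*X \fto{\HF_* f}\HF_*Y \fto{\HF_* g}\HF_*Z \to 0.$$
Applying Proposition~\ref{prop:1f7950df}, I obtain a distinguished triangle of synthetic spectra
$$\nu X \fto{\nu f} \nu Y \fto{\nu g} \nu Z \fto{\partial} \Sus \nu X = \Sus^{1,0} \nu X$$
for some connecting map $\partial$.

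Next, I would identify $\partial$ with $\Sus^{0,-1}\hat h$ satisfying $\nu h=\lambda\hat h$. The key observation is that the natural transformation $\Sus\nu\Rightarrow\nu\Sus$ arising from the lax symmetric monoidal structure on $\nu$ is precisely multiplication by $\lambda$: this is built into the very definition of $\lambda$ as the comparison map $\Sus\nu S^{-1}\to\nu\Sus S^{-1}=S^{0,0}$, and extends to arbitrary spectra via the monoidal structure of $\nu$. Combining this with the naturality of the connecting map of a cofiber triangle with respect to the functor $\nu$, the boundary $\partial$ and the map $\nu h$ fit into a commuting square whose right vertical arrow is $\lambda\colon \Sus^{1,0}\nu X\to \Sus^{1,1}\nu X=\nu\Sus X$. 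This reads $\nu h=\lambda\circ\partial$. Setting $\hat h$ equal to $\partial$ (reinterpreted as landing in $\Sus^{0,-1}\nu\Sus X = \Sus^{1,0}\nu X$, equivalently, identified with the synthetic lift of $\nu h$ provided by Proposition~\ref{prop:ef21f9bc}) then gives the desired factorization $\nu h=\lambda\hat h$, and the third map in the synthetic triangle is $\Sus^{0,-1}\hat h$ as claimed.

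The main obstacle I anticipate is carefully verifying that the natural transformation $\Sus\nu\to\nu\Sus$ really coincides with $\lambda$-multiplication for arbitrary spectra, and not merely for spheres where it holds by definition. This is essentially a formal manipulation with the monoidal structure on $\nu$ and the bigrading conventions, but it demands some care; once in place, the rest of the argument follows immediately from Proposition~\ref{prop:1f7950df} and naturality.
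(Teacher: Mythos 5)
Your argument is correct and is essentially the paper's: the paper simply cites the proof of \cite[Lemma~9.15]{BHS}, and that proof proceeds exactly as you do, applying the cofiber-preservation criterion (Proposition~\ref{prop:1f7950df}) and identifying the synthetic boundary map with a $\lambda$-division of $\nu h$ via the natural comparison $\Sus\nu\Rightarrow\nu\Sus$. The one point you rightly flag---that this comparison is $\lambda$-multiplication for arbitrary spectra, not just spheres---is indeed the content supplied by Pstr\k{a}gowski's construction of $\nu$ and \cite[Appendix A]{BHS}, so there is no gap.
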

\begin{proof}
    The proof is contained in the proof of \cite[Lemma 9.15]{BHS}.
\end{proof}

\begin{remark}
For $h$ in Proposition \ref{prop:41561db2}, let $\tilde h$ be a synthetic map such that $\nu h=\lambda^k\tilde h$, then $\hat h$ is equal to $\lambda^{k-1}\tilde h$ up to some $\lambda$-torsion. 
\end{remark}

\begin{notation} \label{not:fhat}
     For a map $f: X\to Y$ which is part of a distinguished triangle
    $$X\fto{f} Y\fto{g}Cf\to\Sus X,$$
    we define
    $$e(f)=\begin{cases}
     0 & \text{ if }\AF(f)=0,\\
     1 & \text{ if }\AF(f)>0.
    \end{cases}$$
    When $\AF(f)=0$, we also denote $\nu f$ by $\hat f$. In both cases, we have 
    $$\hat f: \Sus^{0,e(f)}\nu X\to \nu Y$$
    and $\nu f=\lambda^{e(f)}\hat f$. Further more, $C\hat f\simeq \Sus^{0,-e(g)}\nu Cf$ or equivalently,
    \begin{itemize}
        \item if $f$ induces a trivial map or an injection on $\HF$-homology, then $C\hat f\simeq \nu Cf$;
        \item if $f$ induces a surjection on $\HF$-homology, then $C\hat f \simeq \Sus^{0,-1}\nu Cf$.
    \end{itemize}
\end{notation}

With the notation above, we can rewrite Proposition \ref{prop:41561db2} into the following form which will be used in Theorem \ref{thm:158d451a} and help us unify different cases.

\begin{proposition}
    Suppose that we have a distinguished triangle of spectra
    $$X\fto{f} Y\fto{g} Z\fto{h} \Sus X$$
    such that $e(f)+e(g)+e(h)=1$. Then we have a distinguished triangle of synthetic spectra
    $$\nu X\fto{\hat f} \Sus^{0,-e(f)}\nu Y\fto{\hat g} \Sus^{0,-e(f)-e(g)}\nu Z\fto{\hat h} \Sus^{0,-1}\nu \Sus X$$
    (For simplicity we often omit the suspensions before maps but keep the suspensions before spectra.)
\end{proposition}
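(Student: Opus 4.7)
The plan is to split into three cases according to which of $e(f)$, $e(g)$, $e(h)$ is $1$, since the hypothesis $e(f)+e(g)+e(h)=1$ forces exactly one of them to be nonzero. Each case will reduce to a single application of Proposition~\ref{prop:41561db2} to an appropriate cyclic rotation of the given distinguished triangle, followed by bookkeeping of bigraded suspensions.

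First, when $e(h)=1$, Proposition~\ref{prop:41561db2} applies directly: its hypothesis $\AF(h)>0$ holds, and the short exact sequence $0\to H_*X\to H_*Y\to H_*Z\to 0$ is automatic since the $H_*$-long exact sequence vanishes at $h$. The proposition then yields the synthetic distinguished triangle $\nu X\fto{\nu f}\nu Y\fto{\nu g}\nu Z\fto{\Sus^{0,-1}\hat h}\Sus^{0,-1}\nu\Sus X$. Since $e(f)=e(g)=0$, Notation~\ref{not:fhat} identifies $\nu f=\hat f$ and $\nu g=\hat g$, and the shifts $\Sus^{0,-e(f)}$ and $\Sus^{0,-e(f)-e(g)}$ are both trivial, so the resulting triangle coincides with the one asserted.

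Next, when $e(f)=1$, I would apply Proposition~\ref{prop:41561db2} to the once-rotated triangle $Y\fto{g}Z\fto{h}\Sus X\fto{-\Sus f}\Sus Y$, whose terminal map has positive Adams filtration. The resulting synthetic triangle $\nu Y\fto{\nu g}\nu Z\fto{\nu h}\nu\Sus X\fto{\Sus^{0,-1}\widehat{\Sus f}}\Sus^{0,-1}\nu\Sus Y$ can then be rotated back by one step and smashed with $\Sus^{0,-1}$ to obtain $\nu X\fto{\hat f}\Sus^{0,-1}\nu Y\fto{\Sus^{0,-1}\nu g}\Sus^{0,-1}\nu Z\fto{\Sus^{0,-1}\nu h}\Sus^{0,-1}\nu\Sus X$. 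The identifications $\Sus^{0,-1}\nu g=\hat g$ and $\Sus^{0,-1}\nu h=\hat h$ follow from $e(g)=e(h)=0$, while the first map is a synthetic lift of $f$ of the kind supplied by Proposition~\ref{prop:ef21f9bc}. The case $e(g)=1$ is symmetric: I would apply Proposition~\ref{prop:41561db2} to the twice-rotated triangle $Z\fto{h}\Sus X\fto{-\Sus f}\Sus Y\fto{-\Sus g}\Sus Z$ and rotate back twice, again matching the asserted suspension weights.

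The main obstacle is not conceptual but the bigraded-suspension bookkeeping: the identification $\Sus^{0,-1}\nu\Sus X\simeq \Sus^{1,0}\nu X$ recorded in Proposition~\ref{prop:41561db2} is what allows the conversion between classical suspensions of the form $\nu\Sus(-)$ and synthetic suspensions $\Sus^{1,0}(-)$, and it must be applied carefully at each rotation in order to read off the exact weights in the target triangle. A secondary subtlety in the cases $e(f)=1$ and $e(g)=1$ is verifying that the synthetic lift obtained by rotating back coincides with the $\hat f$ (respectively $\hat g$) of Notation~\ref{not:fhat}; both are synthetic lifts of a map of positive Adams filtration in the sense of Proposition~\ref{prop:ef21f9bc}, and any two such lifts differ only by a $\lambda$-torsion element, which is absorbed upon forming the distinguished triangle.
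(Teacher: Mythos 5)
Your proof is correct and is essentially the argument the paper leaves implicit: the statement is presented there as a direct rewriting of Proposition~\ref{prop:41561db2} using Notation~\ref{not:fhat}, and your case analysis by which of $e(f),e(g),e(h)$ equals $1$, applying Proposition~\ref{prop:41561db2} to the appropriate rotation and tracking the identification $\Sus^{0,-1}\nu\Sus X\simeq\Sus^{1,0}\nu X$, is exactly how that rewriting is made precise. Your closing remark about the $\lambda$-torsion ambiguity of the lifts is at the same level of rigor as the paper's own treatment (cf.\ the remark following Proposition~\ref{prop:41561db2} and the equivalence $C\hat f\simeq\Sus^{0,-e(g)}\nu Cf$ built into Notation~\ref{not:fhat}).
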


\begin{remark}
    When $\AF(f)>1$, it is often more advantageous to work with $C\tilde f$ rather than $C\hat f$, as the $\lambda$-Bockstein spectral sequence for $C\tilde f$ corresponds to a modified Adams spectral sequence, which differs from the classical one but may provide additional information. On the other hand, given the equivalence  $C\hat{f}\simeq \nu Cf$ from Notation~\ref{not:fhat}, the rigidity Theorems~\ref{thm:rigid} and \ref{thm:17e90ac0} imply that the synthetic Adams spectral sequence for $C\hat f$ is isomorphic to the $\lambda$-Bockstein spectral sequence, and aligns with the classical Adams spectral sequence for $Cf$.  Since the computational data used in this work is based on the classical Adams spectral sequence, we defer the exploration of $C\tilde f$ to future work.
    
    To illustrate this in more detail, consider the example of the synthetic lift $\tilde{\eta^3} \in \pi_{3,6}$ of $\eta^3 \in \pi_3$, which has $\AF = 3$. We can examine several $\lambda$-multiples of $\tilde{\eta^3}$:
    $$\lambda \tilde{\eta^3} \in \pi_{3,5}, \ \ \hat{\eta^3} = \lambda^2 \tilde{\eta^3} \in \pi_{3,4}, \ \ \nu(\eta^3) = \lambda^3 \tilde{\eta^3} \in \pi_{3,3}.$$

   Now, consider the synthetic 2-cell complexes formed by taking the cofibers of these maps:
    $$C(\tilde{\eta^3}), \ C(\lambda\tilde{\eta^3}), \ C(\lambda^2\tilde{\eta^3}) \simeq C(\hat{\eta^3}) \simeq \nu (C\eta^3), \ C(\lambda^3\tilde{\eta^3}) \simeq C\nu(\eta^3).$$

    The rigidity Theorems~\ref{thm:rigid} and \ref{thm:17e90ac0} apply only to $C(\hat{\eta^3})$, as it is equivalent to $\nu (C\eta^3)$. In this case, its synthetic Adams $E_2$-page is $\lambda$-free over $\Ext_A^{*,*}(C\eta^3)$; in particular the identity element from the top cell has tridegree $(s,t,w) = (0,4,4)$. There is a correspondence of differentials between the classical and synthetic Adams spectral sequence, and a synthetic differential generates $\lambda$-torsion corresponding to its length.

    For $C(\tilde{\eta^3}), C(\lambda\tilde{\eta^3})$ and $C(\lambda^3\tilde{\eta^3})$, their synthetic Adams $E_2$-pages remain $\lambda$-free, but the identity element from the top cell has tridegree 
    $$(s,t,w) =(0,4,6), (0,4,5), (0,4,7)$$ 
    respectively. As a result, each supports a nonzero Adams $d_3$-differential killing $h_1^3$ from the bottom cell when multiplied by $1, \lambda, \lambda^3$, respectively. This leaves no $\lambda$-torsion, $\lambda^1$-torsion and $\lambda^3$-torsion in $\pi_{4,6}$, respectively.

    On the other hand, the $\lambda$-Bockstein spectral sequences for $C(\tilde{\eta^3})$ and $ C(\lambda\tilde{\eta^3})$ correspond to a modified Adams spectral sequence in the sense of \cite[Section 3]{BHHM}, with the AF of $\eta^3$ ``modified" to $\AF=1$ and $\AF=2$, respectively. In particular, the $E_2$-page of the $\lambda$-Bockstein spectral sequence for $C(\tilde{\eta^3})$, i.e., $\pi_{*,*}C(\tilde{\eta^3})/\lambda$, may contain more multiplicative information than $\Ext_A^{*,*}(C\eta^3)$.

    

\end{remark}

\section{Synthetic Extensions} \label{sec:synext}
The extension spectral sequence introduced in Section \ref{sec:e6be737c} can be generalized for $\HF$-synthetic spectra. Consider a map $f: X\to Y$ between two connective finite $\HF$-synthetic spectra\footnote{In our solution to the Kervaire invariant problem, we only consider finite synthetic spectra. However, the arguments in this section apply to any synthetic spectra for which the synthetic Adams spectral sequence strongly converges.}. By filtering
$$0\to \pi_{*,*}X\fto{\pi_{*,*}f} \pi_{*,*}Y\to 0$$
using the Adams filtration, we obtain an $f$-extension spectral sequence:
$$\ESS{f}_0^{s,t,w} \cong E_\infty^{s,t,w}(X) \oplus E_\infty^{s,t,w}(Y) \Longrightarrow \Ker(\pi_{*,*}f)\oplus \Coker(\pi_{*,*}f)$$
with differential
$$d_n^f: \ZESS{f}_{n-1}^{s,t,w}(X)\to E_\infty^{s+n,t+n,w}(Y)/\BESS{f}_{n-1}^{s+n,t+n,w}(Y)$$
where $\ZESS{f}_*^{*,*,*}$ and $\BESS{f}_*^{*,*,*}$ are  analogous to the notations in Notation \ref{nota:6fb333a2}, with the addition of an extra degree.

In fact all the definitions and results in Section \ref{sec:e6be737c} remain valid in this synthetic setting, provided we include the extra weight degree in addition to the topological degree and ensure that all maps also preserve the weight degree. For convenience, we will not restate these results here but will occasionally refer to them and apply their synthetic versions when needed.

\begin{notation} \label{not:deltaandrho}
There are certain special maps between synthetic spectra that we want to consider. For any $n<m\le \infty$ and a spectrum $X$, we have the following  distinguished triangles of $\HF$-synthetic spectra
$$\Sus^{0,-n}\nu X/\lambda^{m-n}\fto{\lambda^n} \nu X/\lambda^m\fto{~\rho_{n,m}~} \nu X/\lambda^n\fto{~\delta_{n,m}~} \Sus^{1,-n}\nu X/\lambda^{m-n}.$$
We simply write $\rho=\rho_{n,m}$, $\delta=\delta_{n,m}$ by abuse of notation if $n,m$ is understood in the context.
When $m=\infty$, this sequence is interpreted  as
$$\Sus^{0,-n}\nu X\fto{\lambda^n} \nu X\fto{~\rho~} \nu X/\lambda^n\fto{~\delta~} \Sus^{1,-n}\nu X.$$

Many arguments for finite $m$ implies the corresponding statement when $m=\infty$ because of
$$\nu X=\varprojlim_{m} \nu X/\lambda^m$$
as shown in \cite[Proposition A.13]{BHS}.

We also note that our grading for the triangulation translation functor is smashing with $S^{1,0}$, which is consistent with \cite[Appendix A]{BHS} but is different from \cite[Section 7]{BurklundXu}.
\end{notation}


\begin{proposition}\label{prop:9770ae6e} 
The only nonzero differentials in the extension spectral sequences for the maps $\lambda^n$ and $\rho$ from Notation~\ref{not:deltaandrho} are the $d_0$'s:
$$d^{\lambda^n}_0=\lambda^n \ \text{and} \ d^{\rho}_0=\rho.$$ 
As a result, these $d_0$'s have no crossings.
\end{proposition}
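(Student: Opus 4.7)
The plan is to identify $d_0^{\lambda^n}$ and $d_0^{\rho}$ explicitly on the synthetic Adams $E_\infty$-pages via Propositions~\ref{prop:30e8b746} and~\ref{prop:59f111f}, and then to show by a short degree count that no higher differentials in the two extension spectral sequences can be nonzero.

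The compatibility remark immediately following Proposition~\ref{prop:59f111f} identifies the map induced on $E_\infty^{s,t,w}$ by $\lambda^n: \Sus^{0,-n}\nu X/\lambda^{m-n} \to \nu X/\lambda^m$ with the quotient
$$Z^{s,t}_{m-t+w}(X)/B^{s,t}_{1+t-w-n}(X) \twoheadrightarrow Z^{s,t}_{m-t+w}(X)/B^{s,t}_{1+t-w}(X),$$
which is surjective, and the map induced by $\rho: \nu X/\lambda^m \to \nu X/\lambda^n$ with the inclusion
$$Z^{s,t}_{m-t+w}(X)/B^{s,t}_{1+t-w}(X) \hookrightarrow Z^{s,t}_{n-t+w}(X)/B^{s,t}_{1+t-w}(X),$$
which is injective. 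Both identifications persist at $m=\infty$ by replacing $Z_{m-t+w}$ with $Z_\infty$. Surjectivity of $d_0^{\lambda^n}$ kills the $Y$-component of $\ESS{\lambda^n}_1$ wherever the source of $d_0^{\lambda^n}$ is nonzero, and injectivity of $d_0^{\rho}$ kills the $X$-component of $\ESS{\rho}_1$ wherever the target of $d_0^{\rho}$ is nonzero.

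It remains to handle the residual tridegrees via a short degree count. The cokernel of $d_0^{\lambda^n}$ at $(s,t,w)$ can be nonzero only in the range $0 \le t-w < n$, where the source vanishes; but for any $i \ge 1$, a higher differential $d_i^{\lambda^n}$ hitting such a target would require a nonzero source at $(s-i,t-i,w)$ with $n \le t-i-w$, forcing $t-w \ge n+i > n$, a contradiction. Dually, the kernel of $d_0^{\rho}$ at $(s,t,w)$ can be nonzero only when $t-w \ge n$, while a higher $d_i^{\rho}$ from there would target $(s+i,t+i,w)$ with $t+i-w \ge n+i > n$, at which bidegree the $\rho$-ESS target group vanishes. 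Hence every higher differential in either spectral sequence vanishes.

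The ``no crossings'' clause is then automatic: any $d_0^f$ preserves the Adams filtration, so the range of filtrations specified in Definition~\ref{def:98skj23} is empty, and equivalently Proposition~\ref{prop:i8r47oe}(3) applies with $\AF(\lambda^n) = \AF(\rho) = 0$. The main step of the argument, and the only one requiring any care, is checking the two explicit $E_\infty$-page identifications; this follows directly from the functoriality statement the paper records right after Proposition~\ref{prop:59f111f}, so no serious obstacle is anticipated.
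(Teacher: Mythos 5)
Your proposal is correct and follows essentially the same route as the paper: both rest on the same case-by-case identification (via Propositions~\ref{prop:30e8b746} and \ref{prop:59f111f}) of $\lambda^n$ as a surjection and $\rho$ as an injection on the synthetic $E_\infty$-pages in the appropriate ranges of $t-w$. The only difference is in the final step, where the paper packages this as exactness in the middle and invokes Corollary~\ref{cor:aed3d1a4}, while you rule out higher differentials by a direct degree count showing the surviving source and target tridegrees are disjoint; both are valid, and your "no crossings" justification (automatic for any $d_0$ since a crossing would need $\AF(y')>\AF(y)$) is also fine.
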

\begin{proof}
    First, we show that the the following sequence is exact in the middle:
    \begin{equation}\label{eq:d8ee7ffe}
        \scalebox{1}{$E_\infty^{s,t,w}(\Sus^{0,-n}\nu X/\lambda^{m-n})\fto{\lambda^n} E_\infty^{s,t,w}(\nu X/\lambda^{m})\fto{\rho} E_\infty^{s,t,w}(\nu X/\lambda^{n})$}.
    \end{equation}
    We apply Proposition \ref{prop:30e8b746} and prove this case by case.

    When $t-w<0$ or $t-w\ge m$, the middle group $E_\infty^{s,t,w}(\nu X/\lambda^{m}) = 0$, so the sequence is exact in the middle.
    
    When $0\le t-w<n$, the sequence is isomorphic to
    $$0\to Z^{s,t}_{m-t+w}(X)/B^{s,t}_{1+t-w}(X)\to Z^{s,t}_{n-t+w}(X)/B^{s,t}_{1+t-w}(X)$$
    which is exact in the middle because the second map is clearly injective.

    When $n\le t-w<m$, the sequence is isomorphic to
    $$Z^{s,t}_{m-t+w}(X)/B^{s,t}_{1+t-w-n}(X)\to Z^{s,t}_{m-t+w}(X)/B^{s,t}_{1+t-w}(X)\to 0$$
    which is exact in the middle because the first map is clearly surjective.

    Thus, we have shown that (\ref{eq:d8ee7ffe}) is always exact in the middle. Combined with Corollary \ref{cor:aed3d1a4} we conclude that all $d^{\lambda^n}_i$, $d^{\rho}_i$ are trivial for $i>0$.
\end{proof}

\begin{remark}
    From the proof above, we see that in (\ref{eq:d8ee7ffe}), $\lambda^n$ is surjective or trivial, while $\rho$ is injective or trivial.
\end{remark}

However, the $\delta$-extension spectral sequence for
$$\delta:\nu X/\lambda^n\to \Sus^{1,-n}\nu X/\lambda^{m-n}$$
is more complicated, as it encodes classical Adams differentials $d_2$ through $d_m$.

\begin{remark}
    For the convenience of readers to check gradings, whenever we write 
    $$d_{n}^{f}(x)=\lambda^k y$$
    for $f: \Sus^{m,w} \nu X\to \nu Y$,
    $x\in E_\infty^{s_1,t_1,w_1}(\nu X)$ and
    $y\in E_\infty^{s_2,t_2,w_2}(\nu Y)$,
    the following conditions must hold: 
    $$s_2=s_1+n, \ t_2-s_2=t_1-s_1+m, \ \text{and} \ w_2=w_1+w.$$
\end{remark}

\begin{proposition}\label{prop:6de7d130}
    Suppose in the classical Adams spectral sequence of $X$ we have $d_r(x)=y$, where $x\in Z_{r-1}^{s,t}(X)$ and $y\in Z_\infty^{s+r,t+r-1}(X)$. Consider the map
    $$\delta:\nu X/\lambda^n\to \Sus^{1,-n}\nu X/\lambda^{m-n}.$$
    \begin{enumerate}
        \item If $r\ge n+1$, then we view $x$ as an element of 
        $$E_\infty^{s,t,t}(\nu X/\lambda^n)\iso Z_n^{s,t}(X),$$
        and $\lambda^{r-n-1} y$ as an element of 
        $$E_\infty^{s+r,t+r-1, t+n}(\nu X/\lambda^{m-n})\iso Z_{m-r+1}^{s+r,t+r-1}(X)/B_{r-n}^{s+r,t+r-1}(X).$$
        We then have
        $$d^{\delta}_r(x)=\lambda^{r-n-1}y,$$
        which is trivial if $r>m$.
        \item If $r<n+1$, then we view $\lambda^{n+1-r}x$ as an element of 
        $$E_\infty^{s,t,t-n-1+r}(\nu X/\lambda^n)\iso Z^{s,t}_{r-1}(X)/B^{s,t}_{n+2-r}(X),$$
        and $y$ as an element of 
        $$E_\infty^{s+r,t+r-1, t+r-1}(\nu X)\iso Z_\infty^{s+r,t+r-1}(X).$$
        In this case, we have
        $$d^{\delta}_r(\lambda^{n+1-r}x)=y.$$
    \end{enumerate}
\end{proposition}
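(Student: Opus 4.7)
The plan is to deduce both cases from the rigidity of the synthetic Adams spectral sequence (Theorem~\ref{thm:17e90ac0}) combined with an octahedral axiom applied to the towers of quotient maps among $\nu X/\lambda^a$. First, I would use Theorem~\ref{thm:17e90ac0}(2) to produce a synthetic lift $\tilde x \in \pi_{t-s,t}(\nu X/\lambda^{r-1})$ of $x$ such that the Bockstein $\delta_{r-1,r}(\tilde x) = y$. By naturality of the Bockstein with respect to the quotient $\rho: \Sus^{1,-(r-1)}\nu X/\lambda^{m-r+1} \to \Sus^{1,-(r-1)}\nu X/\lambda$, this implies $\delta_{r-1,m}(\tilde x) = \tilde y$ for some lift $\tilde y$ of $y$ in $\Sus^{1,-(r-1)}\nu X/\lambda^{m-r+1}$.

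For Case (1), since $r-1 \ge n$, the quotient $\rho_{n,r-1}: \nu X/\lambda^{r-1} \to \nu X/\lambda^n$ is defined (the identity when $r = n+1$), and $[x] := \rho_{n,r-1}(\tilde x)$ is a homotopy representative of $x$. I would apply the octahedral axiom to the composition
\[
\nu X/\lambda^m \xrightarrow{\rho_{r-1,m}} \nu X/\lambda^{r-1} \xrightarrow{\rho_{n,r-1}} \nu X/\lambda^n
\]
to obtain a commutative square $\delta_{n,m} \circ \rho_{n,r-1} = \alpha \circ \delta_{r-1,m}$, where $\alpha: \Sus^{1,-(r-1)}\nu X/\lambda^{m-r+1} \to \Sus^{1,-n}\nu X/\lambda^{m-n}$ is the induced comparison map. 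The next step is to identify $\alpha$ with the canonical multiplication map $\lambda^{r-n-1}$ arising from the cofiber sequence in Notation~\ref{not:deltaandrho}. Evaluating on $\tilde x$ then gives $\delta_{n,m}([x]) = \lambda^{r-n-1}\tilde y$, detected by $\lambda^{r-n-1}y$ in $E_\infty^{s+r, t+r-1, t+n}(\nu X/\lambda^{m-n})$. The subcase $r > m$ is handled separately: $r-1 \ge m$ allows $\tilde x$ to project further to $\nu X/\lambda^m$ via $\rho_{m,r-1}$, forcing $\delta_{n,m}([x]) = 0$, consistent with $Z_{m-r+1} = 0$ by convention.

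For Case (2), the quotient is reversed: $\rho_{r-1,n}: \nu X/\lambda^n \to \nu X/\lambda^{r-1}$. I would apply the octahedral axiom to
\[
\nu X/\lambda^m \xrightarrow{\rho_{n,m}} \nu X/\lambda^n \xrightarrow{\rho_{r-1,n}} \nu X/\lambda^{r-1},
\]
yielding $\delta_{r-1,m} \circ \rho_{r-1,n} = \alpha' \circ \delta_{n,m}$ with $\alpha' = \lambda^{n+1-r}: \Sus^{1,-n}\nu X/\lambda^{m-n} \to \Sus^{1,-(r-1)}\nu X/\lambda^{m-r+1}$. The element $\lambda^{n+1-r}\tilde x \in \pi_{t-s, t-n-1+r}(\nu X/\lambda^{r-1})$ has image $\lambda^{n+1-r} \cdot \delta_{r-1,n}(\tilde x) = 0$ under $\delta_{r-1,n}$, because $\lambda^{n+1-r}$ vanishes in the target $\Sus^{1,-(r-1)}\nu X/\lambda^{n-r+1}$. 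Therefore $\lambda^{n+1-r}\tilde x$ lifts through $\rho_{r-1,n}$ to a homotopy representative $[\lambda^{n+1-r}x] \in \pi_{t-s, t-n-1+r}(\nu X/\lambda^n)$ of $\lambda^{n+1-r}x$. Applying the octahedral square to this lift and using $\alpha'(\delta_{n,m}([\lambda^{n+1-r}x])) = \lambda^{n+1-r}\tilde y$ together with the $\lambda$-structure of the $E_\infty$-page (Proposition~\ref{prop:59f111f}), one identifies $\delta_{n,m}([\lambda^{n+1-r}x])$ as being detected by $y$ at tridegree $(s+r, t+r-1, t+r-1)$, which is the claimed formula.

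The main technical obstacle is the identification of the octahedral comparison maps $\alpha$ and $\alpha'$ with the canonical $\lambda$-multiplication maps. This requires constructing an explicit equivalence between the octahedral-induced morphism of cofibers and the multiplication map from Notation~\ref{not:deltaandrho}, which in turn demands careful tridegree bookkeeping across the various suspensions to align with Propositions~\ref{prop:30e8b746} and \ref{prop:59f111f}. A secondary subtlety in Case (2) is verifying that $\lambda^{n+1-r}\tilde x$ indeed lifts through $\rho_{r-1,n}$; here the key point is that the obstruction, measured by $\delta_{r-1,n}$, is annihilated precisely because the $(n-r+1)$-truncation kills $\lambda^{n+1-r}$.
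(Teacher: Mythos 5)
Your overall strategy (produce a homotopy-level representative via the rigidity theorem, then chase it through compatibility squares between the various $\delta_{a,b}$) is a legitimate alternative to the paper's argument, but as written it has two genuine gaps. First, the entire weight of Case (1) rests on identifying the octahedral comparison map $\alpha$ with the multiplication $\lambda^{r-n-1}$ of Notation~\ref{not:deltaandrho}; you flag this as ``the main technical obstacle'' but do not resolve it, and it is precisely where the content lies. The paper avoids this identification entirely: it first reduces to $m=\infty$ by factoring $\delta_{n,m}=\rho\circ\delta_{n,\infty}$ and applying Corollary~\ref{cor:290d35ce}, and then runs an induction on $n$ whose inductive step only compares $\delta_{n-1}$ and $\delta_n$ through the single commutative \emph{triangle} $\lambda\colon \Sus^{0,-1}\nu X/\lambda^{n-1}\to\nu X/\lambda^{n}$ over the fixed target $\Sus^{1,-n}\nu X$, handled by Corollary~\ref{cor:166dc180}; the base case $n=1$ is exactly Theorem~\ref{thm:17e90ac0}. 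No octahedron and no map identification are needed.

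Second, your Case (2) construction is not sound. You form $\lambda^{n+1-r}\tilde x$ as an internal product in $\pi_{*,*}(\nu X/\lambda^{r-1})$ and then lift it through $\rho_{r-1,n}$; but whenever $n+1-r\ge r-1$ that internal product is already zero, so a ``lift'' of it carries no memory of $x$ and need not be detected by $\lambda^{n+1-r}x$ on $E_\infty(\nu X/\lambda^{n})$ (even when the product is nonzero, the lift has indeterminacy in the image of $\lambda^{r-1}$ and detection is not automatic). The correct object is the image of $\tilde x$ under the map $\lambda^{n+1-r}\colon \Sus^{0,r-n-1}\nu X/\lambda^{r-1}\to\nu X/\lambda^{n}$ coming from the cofiber sequence, whose effect on $E_\infty$-pages is the quotient $Z_{r-1}\to Z_{r-1}/B_{n+2-r}$. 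Finally, neither case of your argument addresses the indeterminacy: the equalities $d^{\delta}_r(x)=\lambda^{r-n-1}y$ are assertions about cosets of $\BESS{\delta}{}_{r-1}$, and the paper devotes a second induction (on $r$) to showing that the a priori indeterminacy $B_{r-1}^{s+r,t+r-1}(X)$ coincides with the sum of images of $d^{\delta_n}_0,\dots,d^{\delta_n}_{r-1}$; this identification is what makes the statement (and the converse recorded in Remark~\ref{rmk:qvoewfj}) usable downstream, and it is absent from your proposal.
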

\begin{proof}
    Since $\delta=\delta_{n,m}$ is the composition of $\rho$ and $\delta_{n,\infty}$ as the following
     $$\xymatrix{
    \nu X/\lambda^n\ar[d]_-{\delta_{n,\infty}} \ar[rd]^{\delta_{n,m}} & \\
    \Sus^{1,-n}\nu X\ar[r]_-{\rho} & \Sus^{1,-n}\nu X/\lambda^{m-n}
    }$$
    it suffices to prove the case when $m=\infty$ by Corollary \ref{cor:290d35ce}. In the rest of the proof we will write $\delta_n=\delta_{n,\infty}$.

    First, we prove by induction on $n$ that
    \begin{equation}\label{eq:24e1e3f6}
        d^{\delta_n}_r(x)\equiv \lambda^{r-n-1}y\mod B_{r-1}^{s+r,t+r-1}(X)
    \end{equation}
    when $r\ge n+1$, and
    \begin{equation}\label{eq:2f8e024e}
        d^{\delta_n}_r(\lambda^{n+1-r}x)\equiv y\mod B_{r-1}^{s+r,t+r-1}(X)
    \end{equation}
    when $r\le n+1$. (These two expressions coincide when $r=n+1$.) For $n=1$, the claim holds since the $\tau$-Bockstein spectral sequence is isomorphic to the classical Adams spectral sequence. Now, assume $n\ge 2$ and the claim holds for $n-1$.
    
    Consider the following commutative diagram.
    $$\xymatrix{
        \Sus^{0,-1}\nu X/\lambda^{n-1} \ar[rd]_-{\delta_{n-1}} \ar[r]^-\lambda &  \nu X/\lambda^{n}\ar[d]^-{\delta_{n}}\\
        & \Sus^{1,-n}\nu X\\
    }$$
    By Corollary \ref{cor:166dc180}, if $r\le n$, we have
    $$d^{\delta_{n}}_r(\lambda^{n+1-r}x)=d^{\delta_{n-1}}_r(\lambda^{n-r}x)\equiv y\mod B_{r-1}^{s+r,t+r-1}(X).$$
    If $r\ge n+1$, $x$ can be viewed as an element of the $E_\infty$-pages of $\nu X/\lambda^{n-1}$ or $\nu X/\lambda^n$. We then have
    $$\lambda d^{\delta_{n}}_r(x)=d^{\delta_{n}}_r(\lambda x)=d^{\delta_{n-1}}_r(x)\equiv \lambda^{r-n}y \mod B_{r-1}^{s+r,t+r-1}(X)$$
    which implies
    $$d^{\delta_{n}}_r(x)\equiv\lambda^{r-n-1}y\mod B_{r-1}^{s+r,t+r-1}(X),$$
    since $r-n+1\le r-1$ and hence the indeterminacy $B_{r-n+1}$ introduced by dividing $\lambda$ is contained in $B_{r-1}$. The induction for (\ref{eq:24e1e3f6}) and (\ref{eq:2f8e024e}) is now complete.

    We will show that
    $B_{r-1}^{s+r,t+r-1}(X)$ in both equations are actually equal to the sum of images of $d^{\delta_{n}}_0$ through $d^{\delta_{n}}_{r-1}$.
    Consider any $y'\in B_{r-1}^{s+r,t+r-1}(X)$ and assume that $d_{r'}x'=y'$ is an essential classical Adams differential for $2\le r'\le r-1$. If $r\ge n+1$, using (\ref{eq:24e1e3f6}), we have
    $$d^{\delta_n}_{r'}(\lambda^{r-r'}x)\equiv \lambda^{r-n-1}y'\mod B_{r'-1}^{s+r,t+r-1}(X),$$
    and if $r\leq r'+1$, using (\ref{eq:2f8e024e}), we have
    $$d^{\delta_n}_{r'}(\lambda^{n+1-r'}x')\equiv y'\mod B_{r'-1}^{s+r,t+r-1}(X).$$
    Notice that the extra indeterminacy here is $B_{r'-1}$ instead of $B_{r-1}$. By induction on $r$, this shows that $B_{r-1}^{s+r,t+r-1}(X)$ equals the sum of images of $d^{\delta_{n}}_0$ through $d^{\delta_{n}}_{r-1}$.
    
    Therefore, we can omit $B_{r-1}^{s+r,t+r-1}(X)$ and simply write
    $$d^{\delta_n}_r(x)= \lambda^{r-n-1}y$$
    when $r\ge n+1$, and
    $$d^{\delta_n}_r(\lambda^{n+1-r}x)= y$$
    when $r\le n+1$.
\end{proof}

\begin{remark}
    The $d^{\delta}_r(x)$ we calculated in Proposition \ref{prop:6de7d130} may be inessential.
\end{remark}

\begin{corollary}\label{cor:2a636737}
    For $x,y,\delta$ in Proposition \ref{prop:6de7d130} we always have
    \begin{equation}\label{eq:841425e9}
        d^{\delta}_r(\lambda^ax)=\lambda^{a+r-n-1}y
    \end{equation}
    if $0\le a\le n$ and $0\le a+r-n-1<m-n$ (the differential is trivial if $a$ exceeds this range).
\end{corollary}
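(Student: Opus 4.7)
The plan is to exploit the $\lambda$-linearity of the $\delta$-ESS differentials, which arises because multiplication by $\lambda^a$ commutes with every synthetic map. Concretely, I will consider the homotopy commutative square
$$\xymatrix{
\Sus^{0,-a}\nu X/\lambda^n \ar[r]^-{\delta} \ar[d]_{\lambda^a} & \Sus^{0,-a}\Sus^{1,-n}\nu X/\lambda^{m-n} \ar[d]^{\lambda^a} \\
\nu X/\lambda^n \ar[r]_-{\delta} & \Sus^{1,-n}\nu X/\lambda^{m-n},
}$$
which commutes because both compositions agree with $\delta \wedge \lambda^a$. By Proposition~\ref{prop:9770ae6e}, the $\lambda^a$-ESS is concentrated in $d_0$, so the vertical differentials $d^{\lambda^a}_0$ automatically have no crossings, and the synthetic version of Corollary~\ref{cor:0012nik} is available.

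For the case $r \ge n+1$, Proposition~\ref{prop:6de7d130}(1) gives the top-row differential $d^\delta_r(x)=\lambda^{r-n-1}y$, while the column differentials read $d^{\lambda^a}_0(x)=\lambda^a x$ and $d^{\lambda^a}_0(\lambda^{r-n-1}y)=\lambda^{a+r-n-1}y$. Invoking the square argument with Cor.~\ref{cor:0012nik} indices $n=0$, $m=r$, $l=0$ produces the desired $d^\delta_r(\lambda^a x)=\lambda^{a+r-n-1}y$. For the case $r \le n$, Proposition~\ref{prop:6de7d130}(2) provides instead $d^\delta_r(\lambda^{n+1-r}x)=y$; the hypothesis $0 \le a+r-n-1 < m-n$ forces $a \ge n+1-r$, so I factor $\lambda^a x = \lambda^{a-(n+1-r)}\cdot \lambda^{n+1-r}x$ and rerun the same square argument with $\lambda^{n+1-r}x$ in place of $x$ and $\lambda^{a-(n+1-r)}$ in place of $\lambda^a$, obtaining $d^\delta_r(\lambda^a x) = \lambda^{a-(n+1-r)}y = \lambda^{a+r-n-1}y$.

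The triviality claim outside the stated range is immediate from Propositions~\ref{prop:30e8b746} and \ref{prop:59f111f}: when $a \ge n$ the source $\lambda^a x$ already vanishes in $E_\infty^{*,*,*}(\nu X/\lambda^n)$, and when $a+r-n-1 \ge m-n$ the putative target $\lambda^{a+r-n-1}y$ lies in a vanishing weight of $E_\infty^{*,*,*}(\nu X/\lambda^{m-n})$. I expect the main obstacle to be purely bookkeeping: one must carefully track the synthetic weights under the shifts $\Sus^{0,-a}$ and $\Sus^{1,-n}$ and verify that everything lands in the correct subquotients $Z_\ast/B_\ast$ described by those propositions. Once these gradings are pinned down, the result is essentially automatic from the categorical $\lambda$-linearity of $\delta$-ESS differentials.
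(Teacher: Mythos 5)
Your argument is correct and is essentially the paper's own (implicit) proof: the corollary is stated without a separate proof precisely because it follows from the proof of Proposition~\ref{prop:6de7d130}, which establishes the cases $a=0$ (for $r\ge n+1$) and $a=n+1-r$ (for $r\le n+1$) and obtains the intermediate $\lambda$-multiples by exactly the naturality-along-$\lambda$ argument you describe. One small imprecision: Proposition~\ref{prop:9770ae6e} as stated covers the maps $\lambda^a\colon \Sus^{0,-a}\nu X/\lambda^{n-a}\to \nu X/\lambda^n$ from the distinguished triangles of Notation~\ref{not:deltaandrho}, not the self-map $\lambda^a$ on a fixed quotient $\nu X/\lambda^n$ that appears in your square; to justify that the latter's ESS is also concentrated in $d_0$ (hence crossing-free), you should factor it as $\rho_{n-a,n}$ followed by the triangle's $\lambda^a$ and invoke Corollary~\ref{cor:290d35ce} (or Corollary~\ref{cor:e7b20ae2}). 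You should also note, as in Remark~\ref{rmk:qvoewfj} and the last paragraph of the proof of Proposition~\ref{prop:6de7d130}, that the indeterminacy produced by Corollary~\ref{cor:0012nik} (images of shorter $d^{\delta}$-differentials) coincides with $B_{r-1}^{s+r,t+r-1}(X)$, so the conclusion really does carry the same indeterminacy as the classical differential $d_r(x)=y$.
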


\begin{remark}\label{rmk:qvoewfj}
    As indicated in the proof, the right-hand side of equation (\ref{eq:841425e9}) (considered as a subset of $Z_\infty^{s+r,t+r-1}(X)$) is a coset of
    $$B_{r-1}^{s+r,t+r-1}(X)$$
    which is the same as the value of the classical Adams differential $d_r(x)=y$. This implies that the equation (\ref{eq:841425e9}) holds if and only if $d_r(x)=y$. Therefore the $\delta$-ESS encodes the same information as the classical Adams spectral sequence.
\end{remark}

\begin{example}
    We continue the discussion from Example~\ref{exam:synEinfty} regarding the implications of the classical Adams differentials in the 14-stem:
    $$d_2(h_4) = h_0h_3^2, \ d_3(h_0h_4) = h_0d_0.$$
    The reader is advised to begin by identifying the elements on the $E_\infty$-pages of the synthetic Adams spectral sequences for $S^{0,0}$ and $S^{0,0}/\lambda^k$ for $k=1,2,3$ in the relevant tridegrees, as illustrated in Example~\ref{exam:synEinfty}.
    
    For $\delta_1: S^{0,0}/\lambda \rightarrow S^{1,-1}$, we have
    $$d_2^{\delta_1}(h_4) = h_0h_3^2, \ d_3^{\delta_1}(h_0h_4) = \lambda h_0d_0.$$
    For $\delta_2: S^{0,0}/\lambda^2 \rightarrow S^{1,-2}$, we have
    $$d_2^{\delta_2}(\lambda h_4) = h_0h_3^2, \ d_3^{\delta_2}(h_0h_4) = h_0d_0, \ d_3^{\delta_2}(\lambda h_0h_4) = \lambda h_0d_0$$
    For $\delta_3: S^{0,0}/\lambda^3 \rightarrow S^{1,-3}$, we have
    $$d_2^{\delta_3}(\lambda^2 h_4) = h_0h_3^2, \ d_3^{\delta_3}(\lambda h_0h_4) = h_0d_0, \ d_3^{\delta_3}(\lambda^2 h_0h_4) =\lambda h_0d_0.$$
\end{example}

\begin{definition} \label{def:classicalcrossdiff}
    Suppose $r\ge n+1$ and $d_r(x)=y$, where
    $$x\in E_2^{s,t}(X), \hspace{0.5cm} y\in E_2^{s+r,t+r-1}(X).$$
    A crossing of $d_r(x)=y$ on the $E_{n+1}$-page refers to an essential Adams differential
    $$d_{r-a-b}(x^\prime)=y^\prime,$$
    where
    $$x^\prime\in E_2^{s+a,t+a}(X), \hspace{.5cm} y^\prime\in E_2^{s+r-b,t+r-b-1}(X),$$
    with $0<a\le n-1$ and $0\le b\le r-n-1$. See Figure \ref{fig:f5eae42a}.
\end{definition}

\begin{remark}
    The crossing defined here is opposite to crossings in Moss's theorem.
\end{remark}
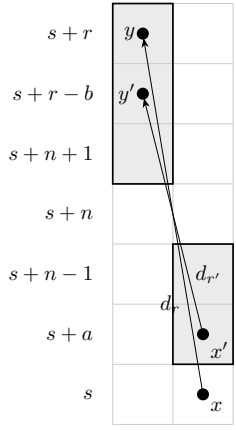
\begin{figure}[h]
\centering
\scalebox{0.8}{\begin{tikzpicture}
    \fill[llgray] (1-0.5, 1-0.5) rectangle ++(1, 2);
    \fill[llgray] (0-0.5, 4-0.5) rectangle ++(1, 3);
    \tikzgrid{0}{2}{0}{7}
    \draw[thick] (1-0.5, 1-0.5) rectangle ++(1, 2);
    \draw[thick] (0-0.5, 4-0.5) rectangle ++(1, 3);

    \coordinate (x) at (1, 0);
    \coordinate (y) at (0, 6);
    \coordinate (x1) at (1, 1); 
    \coordinate (y1) at (0, 5);
    
    \fill (x) circle (0.1) node[below right] {$x~$};
    \fill (x1) circle (0.1) node[below right] {$x'$};
    \fill (y) circle (0.1) node[left] {$y$};
    \fill (y1) circle (0.1) node[left] {$y'$};

    \drawarrow (x) -- (y) node[near start, left] {$d_r$};
    \drawarrow (x1) -- (y1) node[near start, right] {$d_{r'}$}; 

    \node[anchor=east] at (-.7, 0) {$s$};
    \node[anchor=east] at (-.7, 1) {$s+a$};
    \node[anchor=east] at (-.7, 2) {$s+n-1$};
    \node[anchor=east] at (-.7, 3) {$s+n$};
    \node[anchor=east] at (-.7, 4) {$s+n+1$};
    \node[anchor=east] at (-.7, 5) {$s+r-b$};
    \node[anchor=east] at (-.7, 6) {$s+r$};
\end{tikzpicture}}
\caption{A crossing of $d_r(x)=y$ on the $E_{n+1}$-page}\label{fig:f5eae42a}
\end{figure}

The emphasis on a crossing occurring ``on the $E_{n+1}$-page" in Definition~\ref{def:classicalcrossdiff} may seem counter-intuitive. However, this is clarified in Proposition~\ref{prop:cross-dr-En} and Example~\ref{exam:classcrossdiff} that follow.

\begin{proposition}\label{prop:cross-dr-En}
    The Adams differential $d_r(x)=y$ has a crossing on the $E_{n+1}$-page if and only if the corresponding $\delta_n$-extension
    $$d^{\delta_n}_r(x)=\lambda^{r-n-1}y$$
    for
    $$\delta_{n}:\nu X/\lambda^{n}\to \Sus^{1,-n}\nu X$$
    has a crossing.
\end{proposition}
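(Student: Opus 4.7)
The plan is to use the bijective correspondence between classical Adams differentials and $\delta_n$-extensions established in Proposition~\ref{prop:6de7d130} and Corollary~\ref{cor:2a636737}. By Remark~\ref{rmk:qvoewfj}, every nontrivial $\delta_n$-extension takes the form $d^{\delta_n}_{r'}(\lambda^c x') = \lambda^{c+r'-n-1} y'$ for a unique classical Adams differential $d_{r'}(x') = y'$, with $c$ restricted by the weight range of $\nu X/\lambda^n$. Under this dictionary, the proof reduces to matching the numerical bounds on $(a,b)$ in Definition~\ref{def:classicalcrossdiff} with the AF-crossing bounds in Definition~\ref{def:98skj23}.

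For the forward implication, suppose $d_{r-a-b}(x') = y'$ is a classical crossing of $d_r(x) = y$ on the $E_{n+1}$-page, with $x' \in E_2^{s+a,t+a}$, $y' \in E_2^{s+r-b,t+r-b-1}$, $0 < a \le n-1$, and $0 \le b \le r-n-1$. I would choose the $\lambda$-power $c$ in Corollary~\ref{cor:2a636737} to match the source weight of $x$, which forces $c = a$; this is legal precisely because $a \le n-1$ guarantees $\lambda^a x'$ is nonzero in $E_\infty(\nu X/\lambda^n)$ at weight $t$. The resulting $\delta_n$-extension has source at $s$-coordinate $s+a > s$ and target at $s$-coordinate $s+r-b \le s+r$, where the condition $b \le r-n-1$ ensures the $\lambda$-power on the target side is non-negative. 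This is exactly a crossing in the sense of Definition~\ref{def:98skj23}.

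For the converse, let $d^{\delta_n}_{r''}(x'') = y''$ be a crossing of $d^{\delta_n}_r(x) = \lambda^{r-n-1} y$, so $\AF(x'') = s + a$ with $a > 0$ and $\AF(y'') \le s + r$. Since $\delta_n$-ESS differentials preserve weight, $x''$ lies in $E_\infty(\nu X/\lambda^n)$ at the same weight $t$ as $x$. By Proposition~\ref{prop:59f111f}, an element of $E_\infty^{s+a, t+a, t}(\nu X/\lambda^n)$ is a coset in $Z^{s+a,t+a}_{n-a}(X)/B^{s+a,t+a}_{1+a}(X)$, which immediately forces $a \le n-1$; any lift $x'$ then satisfies $\lambda^a x' = x''$ in the $E_\infty$-page. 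Applying Corollary~\ref{cor:2a636737} yields a classical Adams differential $d_{r''}(x') = y'$ with $r'' = r - a - b$ for some $b \ge 0$, and the constraint that $\lambda^{a + r'' - n - 1} y'$ be a nontrivial element of $E_\infty(\nu X)$ gives $b \le r - n - 1$. Thus $d_{r-a-b}(x') = y'$ is a classical crossing of $d_r(x) = y$ on the $E_{n+1}$-page.

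The principal technical obstacle is the careful bookkeeping of the third (weight) grading through the correspondence, especially verifying that the bounds $a \le n-1$ and $b \le r-n-1$ arise exactly from the weight constraints on $E_\infty(\nu X/\lambda^n)$ and $E_\infty(\nu X)$ respectively, and that \emph{essential} in the synthetic $\delta_n$-ESS corresponds precisely to the nontriviality of the associated classical Adams differential modulo shorter $d$-images. One should also confirm that short Adams differentials with $r' < n+1$, covered by case (2) of Proposition~\ref{prop:6de7d130}, do not give rise to spurious synthetic crossings outside the range contemplated in Definition~\ref{def:classicalcrossdiff}.
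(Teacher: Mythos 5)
Your proposal is correct and follows essentially the same route as the paper: both identify, via Propositions~\ref{prop:30e8b746}, \ref{prop:59f111f} and Corollary~\ref{cor:2a636737}, that a crossing of the $\delta_n$-extension must take the form $d^{\delta_n}_{r-a-b}(\lambda^a x')=\lambda^{r-n-1-b}y'$ with the bounds $0<a\le n-1$ and $0\le b\le r-n-1$ forced by the weight constraints on $E_\infty(\nu X/\lambda^n)$ and $E_\infty(\nu X)$, and then invoke the dictionary with classical Adams differentials. The paper states this correspondence once and reads off both directions, whereas you spell out each implication separately; the content is the same.
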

\begin{proof}
    By Propositions \ref{prop:30e8b746}, \ref{prop:59f111f} and \ref{prop:6de7d130}, a crossing of $d^{\delta_n}_r(x)=\lambda^{r-n-1}y$ takes the form
    $$d^{\delta_n}_{r-a-b}(\lambda^a x')=\lambda^{r-n-1-b}y',$$
    where $0<a\le n-1$, $0\le b\le r-n-1$, 
    $$\lambda^a x'\in E_\infty^{s+a,t+a,t}(\nu X/\lambda^n)\iso Z_{n-a}^{s+a,t+a}(X)/B_{1+a}^{s+a,t+a}(X),$$
    and
    $$y'\in E_\infty^{s+r-b, t+r-b-1}(\nu X).$$
    By Corollary \ref{cor:2a636737}, we see that this crossing corresponds to the classical Adams differential
    $$d_{r-a-b}(x')=y'.$$
\end{proof}

\begin{remark} \label{nocrossE2}
    From Definition~\ref{def:classicalcrossdiff}, it immediately follows that there are no crossings of any differential on the $E_2$-page, as this would require $0< a \le n-1 = 0$. According to Proposition~\ref{prop:cross-dr-En}, this reflects the fact that $\delta_1$-extensions have no crossings for degree reasons.
\end{remark}

\begin{example} \label{exam:classcrossdiff}
    We examine the following classical Adams differentials from stem 38 to stem 37:
    $$d_3(e_1) = h_1 t, \ d_4(h_0h_3h_5) = h_0^2 x,$$
    where both targets, $h_1 t$ and $h_0^2 x$, reside within the same $(s,t)$-bidegree in $\Ext_A^{7,44}$.

    According to Definition~\ref{def:classicalcrossdiff}, this nonzero $d_3$-differential is a crossing of the $d_4$-differential on both the $E_3$-page and the $E_4$-page, but not on the $E_2$-page or any $E_r$-page for $r \ge 5$.

    This corresponds to the following facts: 
    \begin{enumerate}
        \item For the map $\delta_2: S^{0,0}/\lambda^2 \rightarrow S^{1,-2}$, the differential
        $$d_3^{\delta_2} (\lambda e_1) = \lambda h_1 t$$
        is a crossing for the differential
        $$d_4^{\delta_2} (h_0h_3h_5) = \lambda h_0^2 x.$$

        \item For the map $\delta_3: S^{0,0}/\lambda^3 \rightarrow S^{1,-3}$, the differential
        $$d_3^{\delta_3} (\lambda e_1) = h_1 t$$
        is a crossing for the differential
        $$d_4^{\delta_3} (h_0h_3h_5) = h_0^2 x.$$
        In both cases (1) and (2), the target elements have the same weight. 

        \item In comparison, for the map $\delta_1: S^{0,0}/\lambda \rightarrow S^{1,-1}$, we have the differential
        $$d_4^{\delta_1} (h_0h_3h_5) = \lambda^2 h_0^2 x,$$
        which does not have any crossing differential. The only differential whose target resides in the same $(s,t)$-bidegree is 
        $$d_3^{\delta_1} (e_1) = \lambda h_1 t,$$
        but its target has a different weight. 

        \item For later pages, specifically for $r \ge 4$, and considering the synthetic spectrum $S^{0,0}/\lambda^r$, there is still a crossing for the differential
        $$d_4^{\delta_r} (\lambda^{r-3} h_0h_3h_5) = h_0^2 x.$$
        However, the element $h_0h_3h_5$ itself does not survive to the synthetic Adams $E_\infty$-page, as it supports a nonzero Adams differential:
        $$d_4 (h_0h_3h_5) = \lambda^3 h_0^2 x.$$
    \end{enumerate}
\end{example}

\section{Extensions on a classical $E_r$-page} \label{sec:extenpage}

To state the Generalized Leibniz Rule in terms of the classical Adams spectral sequence, we need to define $f$-extensions not only on homotopy groups but \emph{on the $E_r$-page} as well.

\begin{notation}
For a map between classical spectra $f: X\to Y$, consider the associated synthetic map from Notation~\ref{not:fhat}
$$\hat f: \Sus^{0,e(f)}\nu X\to \nu Y.$$
For any $2\le r\le \infty$, we denote the following mod $\lambda^{r-1}$ reduction maps by $\hat f_{r-1}$:
$$\hat f_{r-1}: \Sus^{0,e(f)}\nu X/\lambda^{r-1}\to \nu Y/\lambda^{r-1}.$$
\end{notation}

The $E_0$-page of the $\hat f_{r-1}$-ESS is isomorphic to
$$\begin{aligned}
    \ESS{\hat f_{r-1}}_0^{s,t,t-k+e(f)}\iso & E_\infty^{s,t,t-k}(\nu X/\lambda^{r-1})\oplus E_\infty^{s,t,t-k+e(f)}(\nu Y/\lambda^{r-1})\\
    \iso & \big(Z_{r-1-k}^{s,t}(X)/B_{1+k}^{s,t}(X)\big)\oplus \big(Z_{r-1-k+e(f)}^{s,t}(Y)/B_{1+k-e(f)}^{s,t}(Y)\big).
\end{aligned}$$
A nontrivial $d^{\hat f_{r-1}}_n$ differential can be interpreted as a map from the subgroup
\begin{equation}\label{eq:eae450fe}
  \ZESS{\hat f_{r-1}}_{n-1}^{s,t,t-k}(X) \subset  Z_{r-1-k}^{s,t}(X)/B_{1+k}^{s,t}(X)
\end{equation}
to the quotient group 
\begin{equation}\label{eq:334248d2}
    \big(Z_{r-1-k-n+e(f)}^{s+n,t+n}(Y)/B_{1+k+n-e(f)}^{s+n,t+n}(Y) \big)/\BESS{\hat f_{r-1}}_{n-1}^{s+n,t+n,t-k}(Y).
\end{equation}
The differential $d^{\hat f_{r-1}}_n$ is trivial for degree reasons when 
$$n<\AF(f)\text{ or }n>r-2-k+e(f).$$

\begin{definition}\label{def:6c076a33}
    Let $x\in Z^{s,t}_{r-1}(X)$ and $y\in Z^{s+n,t+n}_{r-1-n+e(f)}(Y)$ for some 
    $$e(f)\le n\le r-2+e(f).$$
    We say that there is an $(f,E_r)$-extension from $x$ to $y$, denoted by
    \begin{equation}\label{eq:b98be76a}
    d_{n}^{f,E_r}(x)=y
    \end{equation}
    if there exists a synthetic $\hat f_{r-1}$-extension
    \begin{equation}\label{eq:a020bad3}
        d_{n}^{\hat f_{r-1}}(x)=\lambda^{n-e(f)} y.
    \end{equation}
    where $x$ is viewed as an element of the subgroup (\ref{eq:eae450fe}) with $k=0$, and $\lambda^{n-e(f)} y$ is viewed as an element of the quotient group (\ref{eq:334248d2}) with $k=0$. 

   We say that this $(f,E_r)$-extension in (\ref{eq:b98be76a}) is \emph{essential} if the corresponding synthetic $\hat f_{r-1}$-extension in (\ref{eq:a020bad3}) is an essential differential in the $\hat f_{r-1}$-ESS.
    
    For $r=\infty$, we similar define an $(f,E_\infty)$-extension using the corresponding synthetic $\hat f$-extension.
\end{definition}

\begin{figure}[h]
\scalebox{0.85}{\begin{tikzpicture}
    \foreach \x in {0,...,5} {
        \draw[lightgray] (\x-0.5, -0.5) -- (\x-0.5, 5.5);
    }
    \foreach \y in {0,...,6} {
        \draw[lightgray] (-0.5, \y-0.5) -- (1.5, \y-0.5);
        \draw[lightgray] (2.5, \y-0.5) -- (4.5, \y-0.5);
    }

    \coordinate (x) at (1, 0);
    \coordinate (dx) at (0, 5);
    \coordinate (y) at (4, 3);
    \coordinate (dy) at (3, 5);
    
    \fill (x) circle (0.1) node[below right] {$x$};
    \fill (y) circle (0.1) node[above right] {$y$};

    \draw[dashed, -{Stealth[length=1.5mm]}, shorten >=(0.08cm)] (x) -- (dx) node[midway,right] {$d_{\ge r}$};
    \draw[dashed, -{Stealth[length=1.5mm]}, shorten >=(0.08cm)] (y) -- (dy) node[midway,right] {$d_{\ge r-n+e(f)}$};
    \drawarrow (x) -- (y) node[midway,below right] {$d^{f,E_r}_{n}$};

    \node[anchor=east] (axis1) at (-0.6, 0) {$s$};
    \node[anchor=east] (axis2) at (-0.6, 3) {$s+n$};
    \node[anchor=east] (axis3) at (-0.6, 5) {$s+r$};

    \node (EX) at (.5, -1.0) {$E_2(X)$};
    \node (EY) at (3.5, -1.0) {$E_2(Y)$};
\end{tikzpicture}
\hspace{0.5cm}
\begin{tikzpicture}
    \foreach \x in {0,1,3,4} {
        \draw[lightgray] (\x-0.5, -0.5) -- (\x-0.5, 5.5);
    }
    \foreach \y in {0,...,6} {
        \draw[lightgray] (-0.5, \y-0.5) -- (0.5, \y-0.5);
        \draw[lightgray] (2.5, \y-0.5) -- (3.5, \y-0.5);
    }

    \coordinate (x) at (0, 0);
    \coordinate (y) at (3, 3);
    
    \fill (x) circle (0.1) node[below right] {$x$};
    \fill (y) circle (0.1) node[above right] {$\lambda^{n-e(f)} y$};

    \drawarrow (x) -- (y) node[midway,below right] {$d^{\hat f_{r-1}}_{n}$};

    \node[anchor=east] at (-0.6, 0) {$s$};
    \node[anchor=east] at (-0.6, 3) {$s+n$};
    \node[anchor=east] at (-0.6, 5) {$s+r$};
    
    \node[anchor=east] at (1.5, -1.0) {$E_\infty^{*,t-s+*,t}(\nu X/\lambda^{r-1})$};
    \node[anchor=west] at (1.5, -1.0) {$E_\infty^{*,t-s+*,t}(\nu Y/\lambda^{r-1})$};
\end{tikzpicture}
}
\caption{$(f,E_r)$-extension}\label{fig:3d966cc4}
\end{figure}
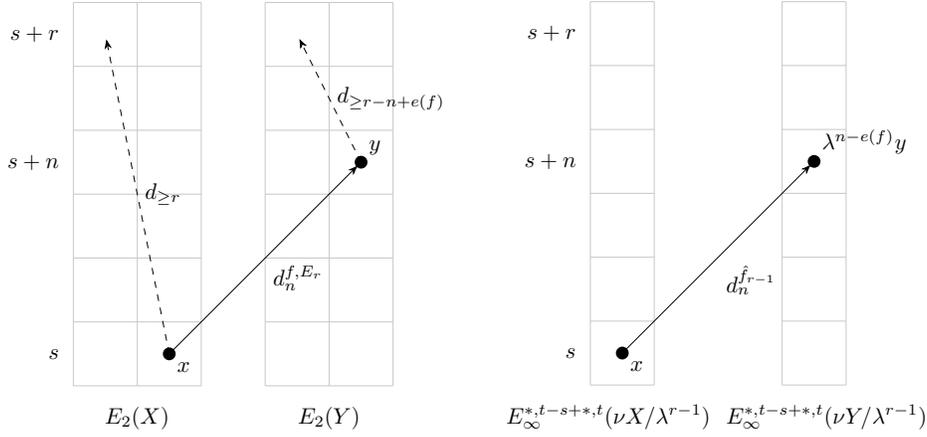

\begin{remark} \label{def:fErextess}
Consider an $(f,E_r)$-extension $d_{n}^{f,E_r}(x)=y$ in (\ref{eq:b98be76a}).
    The element $y$ should be interpreted as a coset with indeterminacy given by $B_{1+n-e(f)}^{s+n,t+n}(Y)$, plus the sum of images of $d^{\hat f_{r-1}}_{<n}$ differentials.   This $(f,E_r)$-extension is \emph{essential} if this coset does not contain 0.
\end{remark}

\begin{example} \label{exam:extonEn}
We present the following examples of $(f, E_r)$-extensions.
    \begin{enumerate}
        \item Consider $f = \nu: S^3 \rightarrow S^0$. We have an $(f, E_2)$-extension:
        $$d_1^{f, E_2} (h_5) = h_2 h_5,$$
        where $h_5 \in \Ext_A^{1,32}$ and $h_2 h_5 \in \Ext_A^{2,36}$.
        
        In fact, the classical homotopy class $\nu \in \pi_3$ is detected by $h_2$ with $\AF=1$ in Ext, and this $(f, E_2)$-extension simply states that, in Ext, the product of $h_5$ by $h_2$ is $h_2 h_5$.

        \item For the same map $f$ as in $(1)$, we have an $(f, E_3)$-extension:
        $$d_2^{f, E_3} (h_0h_4^2) = h_0 p,$$
        where $h_0 h_4^2 \in \Ext_A^{3,33}$ and $h_0 p \in \Ext_A^{5,38}$.
        
        By definition, $\hat{f} = [h_2]$, detected by $h_2$ in tridegree $(s,t,w) = (3,4,4)$ of the synthetic Adams $E_2$-page. This $(f, E_3)$-extension states that for the synthetic map 
        $$\hat{f_{2}} = [h_2]/\lambda^2: S^{3,4}/\lambda^2 \rightarrow S^{0,0}/\lambda^2,$$ 
        there exists a synthetic $\hat{f_{2}}$-extension
        $$d_2^{\hat{f_{2}}} (h_0h_4^2) = \lambda h_0 p.$$
        This is equivalent to the existence of a synthetic homotopy class $[h_0 h_4^2]$ in $\pi_{30+3, 33+4} S^{3,4}/\lambda^2$, detected by $h_0h_4^2$ on the synthetic Adams $E_\infty$-page, that maps to the unique class $[\lambda h_0 p]$ in $\pi_{33, 37} S^{0,0}/\lambda^2$, detected by $\lambda h_0 p$ on the synthetic Adams $E_\infty$-page. In other words, we have
        $$[h_0 h_4^2] \cdot [h_2] = [\lambda h_0 p] \ \text{in} \ \pi_{33, 37} S^{0,0}/\lambda^2.$$

        We will justify this $(f, E_3)$-extension later on in Example~\ref{exam:Mahowald} using the Generalized Mahowald Trick, Theorem~\ref{thm:158d451a}.

        \item For the same map $f$ as in $(1)$, we also have an $(f, E_\infty)$-extension:
        $$d_2^{f, E_\infty} (h_0h_4^2) = h_0 p.$$
        This is equivalent of saying that the relation in $\pi_{33, 37} S^{0,0}/\lambda^2$ from $(2)$, can be lifted to a relation
        $$[h_0 h_4^2] \cdot [h_2] = [\lambda h_0 p] \ \text{in} \ \pi_{33, 37} S^{0,0},$$
        for some classes $[h_0 h_4^2]$ in $\pi_{30+3, 33+4} S^{3,4}$ and $[\lambda h_0 p]$ in $\pi_{33, 37} S^{0,0}$.

        We will justify this $(f, E_\infty)$-extension later on in Example~\ref{exam:stretchext} using Corollary~\ref{cor:dfc6043e}, which allows us to stretch extensions across pages.

        \item Consider $f = 2: S^0 \rightarrow S^0$. We have an $(f, E_\infty)$-extension:
        $$d_2^{f, E_\infty} (h_0 h_3^2) = 0.$$
        By definition, $\hat{f} = [h_0]$, detected by $h_0$ in tridegree $(s,t,w) = (1,1,1)$ of the synthetic Adams $E_2$-page. We remark that there is a relation $\lambda \cdot [h_0] = 2$ in $\pi_{0,0}S^{0,0}$. For the synthetic map $[h_0]:S^{0,1} \rightarrow S^{0,0}$,
        this $(f, E_\infty)$-extension is equivalent to the existence of a synthetic homotopy class $[h_0 h_3^2]$ in $\pi_{14, 17} S^{0,0}$, such that
        $$[h_0 h_3^2] \cdot [h_0] = 0 \ \text{in} \ \pi_{14,18}S^{0,0}.$$
        In fact, by inspection, we learn that $\pi_{14,18}S^{0,0} \cong \mathbb{Z}/2$, generated by $[\lambda h_0 d_0]$, which is an $[h_0]$-multiple of a class $[\lambda d_0]$ in a higher Adams filtration than $h_0h_3^2$. Therefore, there exists a class $[h_0 h_3^2]$ whose $[h_0]$-multiple is zero.

        In other words, we have an essential $(f, E_\infty)$-extension:
        $$d_1^{f, E_\infty} (d_0) = h_0 d_0,$$
        which makes the following alternative $(f, E_\infty)$-extension inessential: 
        $$d_2^{f, E_\infty} (h_0 h_3^2) = h_0 d_0.$$
    \end{enumerate}
\end{example}

\begin{definition}\label{def:41d51149}
    A crossing of the $(f,E_r)$-extension $d_{n}^{f,E_r}(x)=y$ in (\ref{eq:b98be76a}) is defined as an essential $(f, E_{r-a})$-extension from some $x'\in Z_{r-1-a}^{s+a,t+a}(X)$ to 
    $$y'\in Z_{r-1-n+b+e(f)}^{s+n-b,t+n-b}(Y)\backslash B_{1+n-b-e(f)}^{s+n-b,t+n-b}(Y)$$
    (where $\backslash$ denotes the difference of sets and it means that $y'$ should survive to the classical Adams $E_{r-n+b+e(f)}$ page while it should not be hit by an Adams differential of length at most $1+n-b-e(f)$) for $0<a \le r-2$ and $0\le b\le n-a-e(f)$. See Figure \ref{fig:66241bf6}.
\end{definition}

\begin{figure}[h]
\scalebox{0.8}{\begin{tikzpicture}
    \tikzgrid{0}{2}{0}{6}
    \tikzgrid{3}{5}{0}{6}

    \coordinate (x) at (1, 0);
    \coordinate (x1) at (1, 1);
    \coordinate (dx) at (0, 5);
    \coordinate (y) at (4, 3);
    \coordinate (y1) at (4, 2);
    \coordinate (dy) at (3, 5);
    
    \fill (x) circle (0.1) node[below right] {$x$};
    \fill (y) circle (0.1) node[above right] {$y$};
    \fill (x1) circle (0.1) node[below right] {$x^\prime$};
    \fill (y1) circle (0.1) node[right] {$y^\prime$};

    \draw[dashed, -{Stealth[length=1.5mm]}, shorten >=(0.08cm)] (x) -- (dx) node[midway,left] {$d_{\ge r}$};
    \draw[dashed, -{Stealth[length=1.5mm]}, shorten >=(0.08cm)] (x1) -- (dx) node[midway,right] {$d_{\ge {r-a}}$};
    \draw[dashed, -{Stealth[length=1.5mm]}, shorten >=(0.08cm)] (y) -- (dy) node[midway,right] {$d_{\ge r-n+e(f)}$};
    \draw[dashed, -{Stealth[length=1.5mm]}, shorten >=(0.08cm)] (y1) -- (dy) node[midway,left] {$d_{\ge r-n+b+e(f)}$};
    \drawarrow (x) -- (y) node[near start,below right] {$d^{f,E_r}_{n}$};
    \drawarrow (x1) -- (y1) node[near start,above] {$d^{f,E_{r-a}}_{n-a-b}$};

    \node[anchor=east] at (-0.6, 0) {$s$};
    \node[anchor=east] at (-0.6, 1) {$s+a$};
    \node[anchor=east] at (-0.6, 2) {$s+n-b$};
    \node[anchor=east] at (-0.6, 3) {$s+n$};
    \node[anchor=east] at (-0.6, 5) {$s+r$};

    \node at (.5, -1.0) {$E_2(X)$};
    \node at (3.5, -1.0) {$E_2(Y)$};
\end{tikzpicture}
\hspace{0.5cm}
\begin{tikzpicture}
    \tikzgrid{0}{1}{0}{6}
    \tikzgrid{3}{4}{0}{6}

    \coordinate (x) at (0, 0);
    \coordinate (x1) at (0, 1);
    \coordinate (y) at (3, 3);
    \coordinate (y1) at (3, 2);
    
    \fill (x) circle (0.1) node[below left] {$x$};
    \fill (y) circle (0.1) node[above right] {$\lambda^{n-e(f)} y$};
    \fill (x1) circle (0.1) node[below] {$\lambda^ax^\prime$};
    \fill (y1) circle (0.1) node[above right] {$\lambda^{n-b-e(f)} y^\prime$};

    \drawarrow (x) -- (y) node[near start,below right] {$d^{\hat f_{r-1}}_{n}$};
    \drawarrow (x1) -- (y1) node[near start,above] {$d^{\hat f_{r-1}}_{n-a-b}$};

    \node[anchor=east] at (-0.6, 0) {$s$};
    \node[anchor=east] at (-0.6, 1) {$s+a$};
    \node[anchor=east] at (-0.6, 2) {$s+n-b$};
    \node[anchor=east] at (-0.6, 3) {$s+n$};
    \node[anchor=east] at (-0.6, 5) {$s+r$};
    
    \node[anchor=east] at (1, -1.0) {$E_\infty^{*,t-s+*,t}(\nu X/\lambda^{r-1})$};
    \node[anchor=west] at (1, -1.0) {$E_\infty^{*,t-s+*,t}(\nu Y/\lambda^{r-1})$};
\end{tikzpicture}
}
\caption{A crossing of an $(f,E_r)$-extension}\label{fig:66241bf6}
\end{figure}
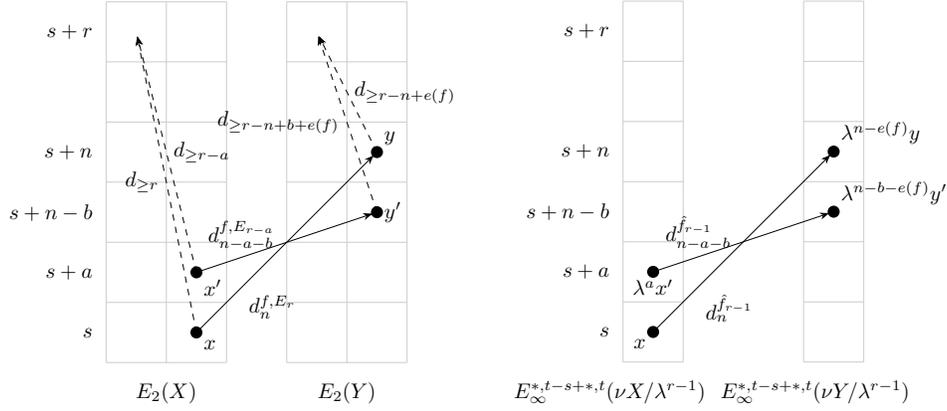

\begin{proposition}\label{prop:cross-f-Er}
    An $(f,E_r)$-extension $d_{n}^{f,E_r}(x)=y$ (\ref{eq:b98be76a}) has a crossing if and only if the synthetic $\hat f_{r-1}$-extension $d_{n}^{\hat f_{r-1}}(x)=\lambda^{n-e(f)} y$ (\ref{eq:a020bad3}) has a crossing.
\end{proposition}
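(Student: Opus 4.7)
The proposition is essentially a translation statement that I would prove by unpacking Definitions \ref{def:6c076a33} and \ref{def:41d51149} on both sides and applying naturality of the extension spectral sequence with respect to the multiplication maps
$$\lambda^a: \Sigma^{0,-a}\nu X/\lambda^{r-1-a} \to \nu X/\lambda^{r-1}$$
(and similarly on $Y$) from Notation \ref{not:deltaandrho}. By Definition \ref{def:41d51149}, a crossing of the $(f,E_r)$-extension $d_n^{f,E_r}(x)=y$ is an essential $(f,E_{r-a})$-extension $d_{n-a-b}^{f,E_{r-a}}(x')=y'$ with $0<a\le r-2$ and $0\le b\le n-a-e(f)$, which by Definition \ref{def:6c076a33} unpacks to an essential synthetic $\hat f_{r-1-a}$-extension $d_{n-a-b}^{\hat f_{r-1-a}}(x')=\lambda^{n-a-b-e(f)}y'$. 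On the other hand, a crossing of the synthetic $\hat f_{r-1}$-extension is an essential $\hat f_{r-1}$-extension in the same weight $t$ slice, whose source has Adams filtration strictly greater than $s$ and whose target has filtration at most $s+n$; from Proposition \ref{prop:59f111f}, the source in weight $t-a+e(f)$-part is exactly the image of multiplication by $\lambda^a$ from the weight-$(t-e(f))$ part of the $\hat f_{r-1-a}$-ESS, so it necessarily has the form $\lambda^a x'$, and similarly for the target.

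For the forward direction, I would form the commutative square in which $\hat f_{r-1-a}$ and $\hat f_{r-1}$ are connected by $\lambda^a$ on source and target, and apply the synthetic version of Corollary \ref{cor:0012nik}. The hypotheses are satisfied because Proposition \ref{prop:9770ae6e} guarantees that $\lambda^a$-extensions have no crossings. This transports the essential $\hat f_{r-1-a}$-extension $d_{n-a-b}^{\hat f_{r-1-a}}(x')=\lambda^{n-a-b-e(f)}y'$ to an essential $\hat f_{r-1}$-extension
$$d_{n-a-b}^{\hat f_{r-1}}(\lambda^a x')=\lambda^{n-b-e(f)}y',$$
which is exactly the shape of a synthetic crossing as depicted on the right-hand side of Figure \ref{fig:66241bf6}. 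For the reverse direction, given an essential synthetic crossing whose source is of the form $\lambda^a x'$ and whose target is $\lambda^{n-b-e(f)}y'$, I would divide by $\lambda^a$, using injectivity of $\lambda^a$ in the relevant weight (again Proposition \ref{prop:59f111f}) to recover an essential $\hat f_{r-1-a}$-extension, and then invoke Definition \ref{def:6c076a33} to repackage this as an essential $(f,E_{r-a})$-extension, i.e., a crossing of the original $(f,E_r)$-extension.

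The main obstacle is purely bookkeeping: I need to verify that the numerical constraints $0<a\le r-2$ and $0\le b\le n-a-e(f)$ in Definition \ref{def:41d51149} match precisely the Adams-filtration range $[s+1,s+n]$ required in Definition \ref{def:98skj23} for a synthetic crossing, and that the indeterminacy subgroups $B^{*,*}_*$ governing essentiality (per Remark \ref{def:fErextess}) transport faithfully under $\lambda^a$. Both checks reduce to the explicit formulas in Propositions \ref{prop:30e8b746} and \ref{prop:59f111f}, so although tedious, no new ideas are required beyond what has already been developed.
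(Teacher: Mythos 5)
Your proposal is correct and takes essentially the same route as the paper: both arguments hinge on the commutative square relating $\hat f_{r-1-a}$ and $\hat f_{r-1}$ via multiplication by $\lambda^a$, and on tracking the extra indeterminacy $B_{1+n-b-e(f)}^{s+n-b,t+n-b}(Y)$ that appears when dividing the target by $\lambda^a$, which is exactly the condition $y'\notin B_{1+n-b-e(f)}$ built into Definition~\ref{def:41d51149}. The paper phrases this as a single ``lift and divide'' step rather than two separate directions invoking Corollary~\ref{cor:0012nik} and Proposition~\ref{prop:9770ae6e} explicitly, but the content is the same.
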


\begin{proof}
    By definition a crossing of the synthetic extension (\ref{eq:a020bad3}) has form
    \begin{equation}\label{eq:f7a7a418}
        d_{n-a-b}^{\hat f_r}(\lambda^ax')=\lambda^{n-b-e(f)}y'
    \end{equation}
    for some $x'\in Z_{r-1-a}^{s+a,t+a}(X)$ and $y'\in Z_{r-1-n+b+e(f)}^{s+n-b,t+n-b}(Y)$. Consider the following commutative diagram
    $$\xymatrix{
        \Sus^{0,e(f)}\nu X/\lambda^{r-1-a} \ar[r]^-{\hat f_{r-1-a}} \ar[d]_{\lambda^a} & \nu Y/\lambda^{r-1-a} \ar[d]^{\lambda^a}\\
        \Sus^{0,e(f)+a}\nu X/\lambda^{r-1}\ar[r]_-{\hat f_{r-1}} & \Sus^{0,a}\nu Y/\lambda^{r-1}\\
    }$$
    We see that (\ref{eq:f7a7a418}) lifts to
    \begin{equation}\label{eq:5742281}
        d_{n-a-b}^{\hat f_{r-1-a}}(x')= \lambda^{n-a-b-e(f)}y''
    \end{equation}
    for some $y''\in Z_{r-1-n+b+e(f)}^{s+n-b,t+n-b}(Y)$ such that
    $$y''\equiv y'\mod B_{1+n-b-e(f)}^{s+n-b,t+n-b}(Y)$$
    where the indeterminacy $B_{1+n-b-e(f)}^{s+n-b,t+n-b}(Y)$ is introduced by dividing $\lambda^a$.
    Therefore, the differential (\ref{eq:f7a7a418}) is essential if and only if the differential (\ref{eq:5742281}) is essential and $y''\notin B_{1+n-b-e(f)}^{s+n-b,t+n-b}(Y)$. This completes the proof.
\end{proof}

The above Definition~\ref{def:41d51149}, and Proposition~\ref{prop:cross-f-Er} can be similarly defined and stated for the case $r=\infty$.  

\begin{example} \label{exam:Ercross}
    For the four $(f,E_r)$-extensions in Example~\ref{exam:extonEn}, we consider their crossings.
    \begin{enumerate}
        \item For $f = \nu: S^3 \rightarrow S^0$, with an $(f, E_2)$-extension:
        $$d_1^{f, E_2} (h_5) = h_2 h_5,$$
        there are no crossings for degree reasons. In fact, since the map $\hat{f}_1 = [h_2]/\lambda$ has $\AF=1$, $d_1^{\hat{f}_1}$-extensions have no crossings.

        \item For $f = \nu: S^3 \rightarrow S^0$, with an $(f, E_3)$-extension:
        $$d_2^{f, E_3} (h_0h_4^2) = h_0 p,$$
        we can verify that it has no crossings from two perspectives.
        
        First, by Definition~\ref{def:41d51149}, a crossing of this $(f, E_3)$-extension would have $n=2$, $e(f)=1$, which implies $a=1, b=0$. This corresponds to an essential $(f, E_2)$-extension from some 
        $$x' \in Z_1^{4,37}(S^3) \cong \Ext_A^{4, 37}(S^3) \cong \mathbb{F}_2,$$
        generated by $h_0^2h_4^2$, to some 
        $$y' \in Z_1^{5,38}(S^0) \cong \Ext_A^{5, 38}(S^0) \cong \mathbb{F}_2,$$
        generated by $h_0p$. However, since in Ext we have
        $$h_0^2 h_4^2 \cdot h_2 = 0 \neq h_0 p,$$
        this $(f, E_3)$-extension has no crossing.

        Alternatively, as seen in Example~\ref{exam:extonEn}(2), this is equivalent to a synthetic $\hat{f_{2}}$-extension
        $$d_2^{\hat{f_{2}}} (h_0h_4^2) = \lambda h_0 p,$$
        where 
        $$\hat{f_{2}} = [h_2]/\lambda^2: S^{3,4}/\lambda^2 \rightarrow S^{0,0}/\lambda^2.$$
        By Proposition~\ref{prop:cross-f-Er}, this $(f, E_3)$-extension has a crossing if and only if the synthetic $\hat{f_{2}}$-extension has a crossing, which is a non-trivial differential of the form
        $$d_1^{\hat{f_{2}}} (x') = y'.$$
        For degree reasons, the only possible candidates are $$x'= \lambda h_0^2 h_4^2, \ y' = \lambda h_0 p.$$ This again contradicts the relation in Ext, confirming that there is no crossing.

        \item For $f = \nu: S^3 \rightarrow S^0$, with an $(f, E_\infty)$-extension:
        $$d_2^{f, E_\infty} (h_0h_4^2) = h_0 p,$$
        we can similarly confirm, using both perspectives outlined in $(2)$, that it has no crossings.

        \item For $f = 2: S^0 \rightarrow S^0$, with an $(f, E_\infty)$-extension:
        $$d_2^{f, E_\infty} (h_0 h_3^2) = 0,$$
        the discussion in Example~\ref{exam:extonEn}(4) can be rephrased to indicate a crossing of the form
        $$d_1^{f, E_\infty} (d_0) = h_0 d_0.$$
        We comment that this $d_1$-differential is also a crossing of the inessential $(f, E_\infty)$-extension:
        $$d_2^{f, E_\infty} (h_0 h_3^2) = h_0 d_0.$$
    \end{enumerate}
\end{example}

\begin{proposition}
    Consider $f: X\to Y$, $x\in Z_\infty^{s,t}(X)$ and $y\in Z_\infty^{s+n,t+n}(Y)$.
    Then
    $$d^{f,E_\infty}_n(x)=y$$
    implies that
    $$d^{f}_n(x+B_\infty^{s,t}(X))=y+B_\infty^{s,t}(Y).$$
\end{proposition}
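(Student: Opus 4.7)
The plan is to unpack the synthetic definition of the $(f,E_\infty)$-extension, extract a genuine synthetic homotopy class, transport it to a classical homotopy class via the generic fiber functor, and repackage the result using Proposition~\ref{prop:8154e6f1}~(1). In other words, the proof is a pure naturality argument between the $\hat f$-ESS in the synthetic category and the classical $f$-ESS, connected by inverting $\lambda$; no new spectral sequence manipulation is needed.

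First, I would unpack the hypothesis: by Definition~\ref{def:6c076a33}, the assertion $d_n^{f,E_\infty}(x)=y$ is by definition the synthetic $\hat f$-extension
$$d_n^{\hat f}(x) = \lambda^{n-e(f)} y$$
in the $\hat f$-ESS for $\hat f:\Sigma^{0,e(f)}\nu X\to \nu Y$. The synthetic analog of Proposition~\ref{prop:8154e6f1}~(1) then provides a representative $[x]_{\mathrm{syn}} \in \{x\}\subset \pi_{t-s,t}(\nu X)$ whose image under $\hat f$ lies in $\{\lambda^{n-e(f)} y\}\subset \pi_{t-s,t+e(f)}(\nu Y)$.

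Next, I would invoke the generic fiber functor $(-)[\lambda^{-1}]:\mathrm{Syn}_{\HF}\to \mathrm{Sp}^{\wedge}_2$ from Remark~\ref{rem:deformation}. Under this functor, $\nu X\mapsto X$, $\nu Y\mapsto Y$, and using $\nu f=\lambda^{e(f)}\hat f$ the map $\hat f$ corresponds to $f$ (up to the now-invertible scalar $\lambda^{e(f)}$). The synthetic Adams filtration at weight $t$ on $\pi_{t-s,t}(\nu X)$ is compatible with the classical Adams filtration on $\pi_{t-s}(X)$ via Theorem~\ref{thm:rigid} and Proposition~\ref{prop:30e8b746}. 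Hence the classical image $[x]_{\mathrm{cl}}\in\pi_{t-s}(X)$ of $[x]_{\mathrm{syn}}$ is detected by $x + B_\infty^{s,t}(X)$ (possibly with higher classical Adams filtration if $x\in B_\infty^{s,t}(X)$), and similarly $f\cdot [x]_{\mathrm{cl}}$ is detected by $y + B_\infty^{s+n,t+n}(Y)$. Applying the classical Proposition~\ref{prop:8154e6f1}~(1) in reverse then yields the conclusion $d_n^f(x + B_\infty^{s,t}(X)) = y + B_\infty^{s+n,t+n}(Y)$.

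The main obstacle will be verifying the filtration compatibility between synthetic and classical Adams filtrations under the generic fiber functor, and correctly tracking the weight shift by $\lambda^{n-e(f)}$ on the target so that the synthetic filtration jump $s\to s+n$ transfers precisely to the classical side. A related subtlety is the degenerate case $x\in B_\infty^{s,t}(X)$ (so that $x+B_\infty = 0$ classically): here $[x]_{\mathrm{cl}}$ has strictly higher classical Adams filtration, and one must deduce from the synthetic data that $y + B_\infty^{s+n,t+n}(Y)$ already lies in $\BESS{f}_{n-1}^{s+n,t+n}(Y)$, so that the claimed extension holds vacuously. Both points are ultimately formal consequences of the rigidity identifications packaged in Proposition~\ref{prop:30e8b746} and Theorem~\ref{thm:rigid}, so the argument ought to be short once the bookkeeping is set up carefully.
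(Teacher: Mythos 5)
Your proposal is correct and is essentially the paper's own argument: the paper's proof consists precisely of observing that inverting $\lambda$ induces a map from the $\hat f$-ESS to the classical $f$-ESS whose $E_0$-component is the quotient $Z_\infty^{s,t}/B_{1+t-w}^{s,t}\to Z_\infty^{s,t}/B_\infty^{s,t}$, which is the same "generic fiber plus rigidity" mechanism you describe, just packaged as naturality of spectral sequences rather than element-by-element. Your extra care about the degenerate case $x\in B_\infty^{s,t}(X)$ is consistent with the paper's parenthetical remark that the implied differential may be inessential.
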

(The implied differential could be inessential.)
\begin{proof}
  This follows directly from inverting $\lambda$, which induces a map of spectral sequences from the $\hat f$-ESS to the $f$-ESS. The induced map on the $E_0$-pages are of the following form:
    $$\xymatrix{
    Z_\infty^{s,t}(X)/B_{1+t-w}^{s,t}(X)\ar[r]^-{d^{\hat f}_0}\ar[d]_{\lambda^{-1}} & Z_\infty^{s,t}(Y)/B_{1+t-w-e(f)}^{s,t}(Y)\ar[d]^{\lambda^{-1}}\\
    E_\infty^{s,t}(X)\ar[r]^-{d^{f}_0} & E_\infty^{s, t}(Y)
    }$$
\end{proof}
\begin{remark}
    Since the synthetic Adams $E_\infty$-page contains more information than the classical Adams $E_\infty$-page, the $(f,E_\infty)$-extensions similarly provide more information compared to the classical $f$-extensions.
\end{remark}

\section{The Generalized Leibniz Rule and Generalized Mahowald Trick} \label{sec:rules}

Now, we introduce the theorem of the Generalized Leibniz Rule, a valuable tool for computing new Adams differentials.

\begin{theorem}[Generalized Leibniz Rule]\label{thm:e73f481e}
    Let $f: X\to Y$ be a map between two classical spectra.
    Suppose that $2\le n\le r$, $e(f)\le m\le n-2+e(f)$, $l\ge e(f)$, and we have
    $$x\in Z_{r-1}^{s,t}(X), \hspace{0.5cm} y\in Z_{r-1-m+e(f)}^{s+m,t+m}(Y)$$
    $$x_\infty\in Z_\infty^{s+r,t+r-1}(X), \hspace{0.5cm} y_\infty\in Z_\infty^{s+r+l,t+r+l-1}(Y)$$
    and the following conditions hold:
    \begin{enumerate}
        \item $d_{r}(x)=x_\infty$,
        \item $d_{m}^{f,E_n}(x)=y$,
        \item $d_{l}^{f,E_\infty}(x_\infty)=y_\infty$,
        \item the differential in (1) has no crossing on the $E_n$-page or (2) has no crossing.
        \item the differential in (3) has no crossing.
    \end{enumerate}
    Then we have an Adams differential
    $$d_{r+l-m}(y)= y_\infty.$$
\end{theorem}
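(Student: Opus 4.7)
The plan is to lift each of the three input differentials from the classical Adams spectral sequence to the $\HF$-synthetic category, where all three can be read off a single naturality square for the connecting map $\delta$, and then apply the synthetic analogue of Theorem~\ref{thm:4114f70c}. Concretely, by Proposition~\ref{prop:6de7d130}, hypothesis (1) translates to the synthetic $\delta^X$-extension
$$d_r^{\delta^X}(x) = \lambda^{r-n}x_\infty, \quad \delta^X := \delta_{n-1,\infty}:\nu X/\lambda^{n-1}\to \Sus^{1,-(n-1)}\nu X;$$
by Definition~\ref{def:6c076a33}, hypothesis (2) translates to
$$d_m^{\hat f_{n-1}}(x) = \lambda^{m-e(f)}y;$$
and hypothesis (3) translates to $d_l^{\hat f}(x_\infty) = \lambda^{l-e(f)}y_\infty$, which after multiplying by $\lambda^{r-n}$ gives $d_l^{\hat f}(\lambda^{r-n}x_\infty) = \lambda^{r+l-n-e(f)}y_\infty$.

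Next I would write down the naturality square
$$\xymatrix{
\Sus^{0,e(f)}\nu X/\lambda^{n-1} \ar[r]^-{\hat f_{n-1}} \ar[d]_{\delta^X} & \nu Y/\lambda^{n-1} \ar[d]^{\delta^Y} \\
\Sus^{1,-(n-1)+e(f)}\nu X \ar[r]_-{\hat f} & \Sus^{1,-(n-1)}\nu Y
}$$
which commutes by naturality of the cofiber sequence defining $\delta_{n-1,\infty}$. Applying the synthetic version of Theorem~\ref{thm:4114f70c} to this square with $f = \hat f_{n-1}$, $p = \delta^X$, $g = \hat f$, $q = \delta^Y$, the three translated extensions above are exactly its conditions (1), (2), (4). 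The no-crossing hypotheses (3) and (5) of that theorem are supplied by our hypotheses (4) and (5), using Propositions~\ref{prop:cross-dr-En} and \ref{prop:cross-f-Er} to identify each classical $E_n$-page crossing with the corresponding synthetic crossing in the $\delta^X$-ESS or $\hat f_{n-1}$-ESS. The output is the synthetic differential
$$d^{\delta^Y}_{r+l-m}(\lambda^{m-e(f)}y) = \lambda^{r+l-n-e(f)}y_\infty,$$
and a final invocation of Proposition~\ref{prop:6de7d130} (read backwards) converts this into the desired classical Adams differential $d_{r+l-m}(y) = y_\infty$.

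The main obstacle I anticipate is the index bookkeeping. One must check that $y$ is actually a $d_{n-1}$-cycle in the Adams spectral sequence of $Y$, so that it represents a bona fide class on the relevant synthetic $E_\infty$-page of $\nu Y/\lambda^{n-1}$; that the weights on the two sides of the square match so that Proposition~\ref{prop:6de7d130} can be applied in reverse without ambiguity; and that the auxiliary condition (5) of Theorem~\ref{thm:4114f70c} is genuinely satisfied. The natural choice is $k=r+l-m$, and the required vanishing of shorter $\delta^Y$-differentials out of $\lambda^{m-e(f)}y$ (with no crossing) follows from the page-of-survival constraint $y\in Z^{s+m,t+m}_{r-1-m+e(f)}(Y)$ together with the range $e(f)\le m\le n-2+e(f)$ on $m$. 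Once these compatibilities are confirmed, the conclusion is a purely formal consequence of the synthetic Theorem~\ref{thm:4114f70c}.
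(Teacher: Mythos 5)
Your translation of the three hypotheses into synthetic data and your choice of naturality square are exactly the paper's: the paper also rewrites (1) as $d^{\delta_X}_r(x)=\lambda^{r-n}x_\infty$ via Proposition~\ref{prop:6de7d130}, rewrites (2) and (3) via Definition~\ref{def:6c076a33}, multiplies the latter by $\lambda^{r-n}$, and uses Propositions~\ref{prop:cross-dr-En} and \ref{prop:cross-f-Er} to convert the no-crossing hypotheses. The difference, and the one genuine problem, is in the final invocation. You apply the full Theorem~\ref{thm:4114f70c} with $k=r+l-m$ and assert that its condition (5) --- the vanishing, with no crossing, of $d^{\delta_Y}_{k-1}$ on $\lambda^{m-e(f)}y$ --- ``follows from the page-of-survival constraint $y\in Z^{s+m,t+m}_{r-1-m+e(f)}(Y)$.'' It does not. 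By Corollary~\ref{cor:2a636737}, the $\delta_Y$-differential of length $j$ on $\lambda^{m-e(f)}y$ encodes the classical Adams differential $d_j(y)$, so the hypothesis $y\in Z_{r-1-m+e(f)}$ only kills these for $j\le r-1-m+e(f)$, while your choice of $k$ requires them to vanish through $j=r+l-m-1$. Since $l\ge e(f)$ with equality not assumed, the range $r-m+e(f)\le j\le r+l-m-1$ is nonempty whenever $l>e(f)$, and nothing in the hypotheses controls $d_j(y)$ there; indeed the survival of $y$ to the $E_{r+l-m}$-page is part of the theorem's \emph{conclusion}, so using it as an input is circular.

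The repair is exactly what the paper does: hypothesis (5) of the present theorem, pushed through Proposition~\ref{prop:cross-f-Er}, gives the $\hat f$-extension $d^{\hat f}_l(\lambda^{r-n}x_\infty)=\lambda^{r+l-n-e(f)}y_\infty$ \emph{full} no-crossing, so one should apply Corollary~\ref{cor:0012nik} rather than Theorem~\ref{thm:4114f70c}; that corollary trades the unrestricted no-crossing of the bottom map for the deletion of condition (5) altogether, and the troublesome verification disappears. (Minor point: the last conversion of $d^{\delta_Y}_{r+l-m}(\lambda^{m-e(f)}y)=\lambda^{r+l-n-e(f)}y_\infty$ back into the classical differential $d_{r+l-m}(y)=y_\infty$ is Remark~\ref{rmk:qvoewfj}, i.e.\ Corollary~\ref{cor:2a636737} run in reverse, rather than Proposition~\ref{prop:6de7d130} itself; the weight bookkeeping you worried about is handled there.) With that substitution your argument coincides with the paper's proof.
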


\begin{proof}
    Consider the commutative diagram of synthetic spectra
    $$\xymatrix{
        \Sus^{0,e(f)}\nu X/\lambda^{n-1} \ar[rr]^{\hat f_{n-1}}\ar[d]_{\delta_X} && \nu Y/\lambda^{n-1}\ar[d]_{\delta_Y}\\
        \Sus^{1,-n+1+e(f)} \nu X \ar[rr]_{\hat f} && \Sus^{1,-n+1}\nu Y.
    }$$
    By condition (1) and Proposition \ref{prop:6de7d130}, we have
    \begin{equation}\label{eq:ca3fdd92}
        d^{\delta_X}_r(x)=\lambda^{r-n}x_\infty.
    \end{equation}
    By conditions (2) and (3), and Definition \ref{def:6c076a33}, we have
    \begin{equation}\label{eq:b2aa5032}
        d^{\hat f_{n-1}}_m(x)=\lambda^{m-e(f)} y
    \end{equation}
    and
    \begin{equation*}
        d^{\hat f}_l(x_\infty)=\lambda^{l-e(f)} y_\infty, 
    \end{equation*}
    which implies
    \begin{equation}\label{eq:886aee9b}
         d^{\hat f}_l(\lambda^{r-n}x_\infty)=\lambda^{r+l-n-e(f)}y_\infty.
    \end{equation}
    
    Applying Proposition \ref{prop:cross-dr-En} and Proposition \ref{prop:cross-f-Er} to conditions (4) and (5), we know that the differential in (\ref{eq:ca3fdd92}) or (\ref{eq:b2aa5032}) has no crossing, and that the differential in (\ref{eq:886aee9b}) has no crossing.
    
    Therefore, we can apply Corollary \ref{cor:0012nik} and obtain
    $$d^{\delta_Y}_{r+l-m}(\lambda^{m-e(f)} y)=\lambda^{r+l-n-e(f)} y_\infty.$$
    By Remark \ref{rmk:qvoewfj}, this is equivalent to
    $$d_{r+l-m}(y)= y_\infty.$$
\end{proof}

\begin{remark}
    We can further generalize Theorem \ref{thm:e73f481e} by using the conditions in Theorem \ref{thm:4114f70c} rather than those Corollary \ref{cor:0012nik}. We leave this generalization to the reader.
\end{remark}

We emphasize that the no-crossing conditions in Theorem \ref{thm:e73f481e}, the Generalized Leibniz Rule, are crucial. We first demonstrate the strength of the Generalized Leibniz Rule with Example~\ref{exam:Leibnizyes}, which uses a $d_2$-differential to prove a $d_3$-differential in the classical Adams spectral sequence. Additionally, Example~\ref{exam:Leibnizno} shows that, without the no-crossing condition, the conclusion could be false.

\begin{remark}
    A version of the Generalized Leibniz Rule without the no-crossing conditions is presented in the synthetic setting in Chua's work \cite[Theorem~12.9]{Chua}. However, there is no doubt that this version is incorrect. Indeed, Example~\ref{exam:Leibnizno} is a counter-example to Chua's theorem. For further details, see Remark~\ref{rem:chuaerror}.
\end{remark}

\begin{example} \label{exam:Leibnizyes}
 We show that the Generalized Leibniz Rule can be used to prove the following classical Adams differential
    $$d_3(h_2 h_5) = h_0 p,$$
    using the classical Hopf invariant one differential
    $$d_2(h_5) = h_0 h_4^2,$$
    and the $(f, E_3)$-extension
        $$d_2^{f, E_3} (h_0h_4^2) = h_0 p,$$
        for the map $f = \nu: S^3 \rightarrow S^0$ from Example~\ref{exam:extonEn}.

        Specifically, we have:
        $$n=2, \ r=2, \ s=1, \ t=35, \ m=1, \ l=2, \ e(f)=1,$$
        and 
        $$x=h_5 \in \Ext_A^{1,32} \cong Z_1^{1,35}(S^3), \ y=h_2h_5 \in \Ext_A^{2,36} \cong Z_1^{2,36}(S^0),$$
        $$x_\infty=h_0h_4^2 \in \Ext_A^{3,33} \cong Z_\infty^{3,36}(S^3), \ y_\infty=h_0p \in \Ext_A^{5,38} \cong Z_\infty^{5,38}(S^0).$$
In terms of the conditions in Theorem \ref{thm:e73f481e}, we have:
      \begin{enumerate}
      \item $d_2(h_5) = h_0h_4^2$,
      \item $d_1^{f, E_2}(h_5) = h_2h_5$, from Example~\ref{exam:extonEn}(1),
      \item $d_2^{f, E_\infty} (h_0h_4^2) = h_0 p$, from Example~\ref{exam:extonEn}(3),
      \item 
      \begin{enumerate}
          \item the differential in (1) has no crossing, from Remark~\ref{nocrossE2}, and
          \item the differential in (2) has no crossing, from Example~\ref{exam:Ercross}(1), 
      \end{enumerate}
      \item the differential in (3) has no crossing, from Example~\ref{exam:Ercross}(3).
      \end{enumerate}
      Since all conditions of Theorem \ref{thm:e73f481e} are satisfied, we conclude that there is a classical Adams differential
      $$d_3(h_2 h_5) = h_0 p.$$
\end{example}

\begin{example} \label{exam:Leibnizno}
    We show that, without the no crossing conditions, the Generalized Leibniz Rule could lead to incorrect conclusions. 

    Consider the map $f=2: S^0 \rightarrow S^0$, and the classical Hopf invariant one differential
    $$d_2(h_4) = h_0 h_3^2.$$
    Set $$n=2, \ r=2, \ s=1, \ t=16, \ m=1, \ l=2, \ e(f)=1,$$
        and 
        $$x=h_4 \in \Ext_A^{1,16} \cong Z_1^{1,16}(S^0), \ y=h_0h_4 \in \Ext_A^{2,17} \cong Z_1^{2,17}(S^0),$$
        $$x_\infty=h_0h_3^2 \in \Ext_A^{3,17} \cong Z_\infty^{3,17}(S^0), \ y_\infty=0 \in \Ext_A^{5,19} \cong Z_\infty^{5,19}(S^0).$$
In terms of the conditions in Theorem \ref{thm:e73f481e}, we have:
 \begin{enumerate}
      \item $d_2(h_4) = h_0h_3^2$,
      \item $d_1^{f, E_2}(h_4) = h_0h_4$, since the product of $h_4$ by $h_0$ is $h_0h_4$ in Ext,
      \item $d_2^{f, E_\infty} (h_0h_3^2) = 0$, from Example~\ref{exam:extonEn}(4),
      \item 
      \begin{enumerate}
          \item the differential in (1) has no crossing, from Remark~\ref{nocrossE2}, and
          \item the differential in (2) has no crossing, for degree reasons, 
      \end{enumerate}
      \item the differential in (3) has a crossing:
      $$d_1^{f, E_\infty} (d_0) = h_0 d_0,$$
      from Example~\ref{exam:Ercross}(4).
      \end{enumerate}
      For condition $(5)$, if there were no crossing for the differential in $(3)$, then the Generalized Leibniz Rule would imply a classical Adams differential
      $$d_3(h_0h_4) = 0,$$
      which is incorrect. This example illustrates that the no-crossing conditions are essential when applying the Generalized Leibniz Rule.
\end{example}

\begin{remark}
    As noted in Example~\ref{exam:extonEn}(4) and Example~\ref{exam:Ercross}(4), the essential $(f, E_\infty)$-extension:
        $$d_1^{f, E_\infty} (d_0) = h_0 d_0,$$
        is a crossing for both the $(f, E_\infty)$-extension,
        $$d_2^{f, E_\infty} (h_0 h_3^2) = 0,$$
        and the inessential $(f, E_\infty)$-extension: 
        $$d_2^{f, E_\infty} (h_0 h_3^2) = h_0 d_0.$$
        This indicates that, in Example~\ref{exam:Leibnizno}, if we were to disregard the no-crossing condition (5), and apply the Generalized Leibniz Rule to these two cases of the $(f, E_\infty)$-extensions, we would arrive at two conflicting classical statements:
$$d_3(h_0h_4) = 0, \ d_3(h_0h_4) = h_0 d_0.$$
\end{remark}

\begin{remark} \label{rem:chuaerror}
    In Chua's work \cite[Theorem~12.9]{Chua}, it is stated that for a synthetic map $\alpha: X\rightarrow Y$, and an element $x \in \pi_{*,*}X/\lambda$, there exists a differential from a maximal $\alpha$-extension of $x$ to a maximal $\alpha$-extension of $d_r(x)$. According to  \cite[Definition~12.5]{Chua}, a maximal $\alpha$-extension of $x'$ is defined as $\alpha[x']$, where $[x']$ represents a lift of $x'$ to the $E_{r'}$-page, chosen such that $\alpha[x']$ is the most $\lambda$-divisible among all such lifts. 

    In the context of Example~\ref{exam:Leibnizno}, let $r=2, r'=\infty$,
    $$\alpha = [h_0]: S^{0,1} \rightarrow S^{0,0},$$
    and consider $x = h_4$ in Ext, with $x' = d_2(x) = h_0h_3^2$.

    As noted in Example~\ref{exam:extonEn}(4), the $(f, E_\infty)$-extension:
        $$d_2^{f, E_\infty} (h_0 h_3^2) = 0$$
         is equivalent to the existence of a synthetic homotopy class $[h_0 h_3^2]$ in $\pi_{14, 17} S^{0,0}$, such that
        $$[h_0 h_3^2] \cdot [h_0] = 0 \ \text{in} \ \pi_{14,18}S^{0,0}.$$
        Similarly, the inessential $(f, E_\infty)$-extension: 
        $$d_2^{f, E_\infty} (h_0 h_3^2) = h_0 d_0$$
        implies the existence of another synthetic homotopy class $[h_0 h_3^2]$ in $\pi_{14, 17} S^{0,0}$, such that
        $$[h_0 h_3^2] \cdot [h_0] = [h_0d_0] \ \text{in} \ \pi_{14,18}S^{0,0}.$$
        Between these two lifts of $h_0h_3^2$, the first $[h_0h_3^2]$ is clearly the maximal $[h_0]$-extension according to Chua’s definition \cite[Definition~12.5]{Chua}. Consequently, the incorrect version of the Generalized Leibniz Rule in \cite[Theorem~12.9]{Chua}, would lead to an incorrect conclusion:
        $$d_3(h_0h_4) = 0.$$
\end{remark}

Next, we discuss the Generalized Mahowald Trick. 

In order to apply the Generalized Leibniz Rule, we need to provide a method for computing extensions on specific Adams $E_k$-pages, such as the $(f, E_3)$-extension $d_2^{f, E_3} (h_0h_4^2) = h_0 p$ in Example~\ref{exam:Leibnizyes}. This is provided by Theorem~\ref{thm:158d451a} (the Generalized Mahowald Trick). The crux of the proof of the Generalized Mahowald Trick lies in the following lemma.
\begin{lemma}[May \cite{May01}]\label{lem:452d218c}
    Let $X\to Y\to Z\to \Sus X$ and $X'\to Y'\to Z'\to \Sus X'$ be distinguished triangles of (synthetic) spectra. By smashing these distinguished triangles together, we obtain the following commutative diagram of cofiber sequences:
    $$\xymatrix{
    X\sma X' \ar[r] \ar[d] & Y\sma X' \ar[r] \ar[d] & Z\sma X' \ar[d]\\
    X\sma Y' \ar[d] \ar[r] & Y\sma Y' \ar[d] \ar[r] & Z\sma Y' \ar[d]\\
    X\sma Z' \ar[r] & Y\sma Z' \ar[r] & Z\sma Z'\\
    }$$
    If $a\in \pi_n(X\sma Z')$ and $b\in\pi_n(Y\sma Y')$ map to the same element in $\pi_n(Y\sma Z')$, then there exists $c\in \pi_n(Z\sma X')$ such that
    \begin{enumerate}
        \item $b$ and $c$ map to the same element in $\pi_n(Z\sma Y')$, and
        \item $a$ and $c$ map to the same element via boundary maps in $\pi_{n-1}(X\sma X')$.
    \end{enumerate}
\end{lemma}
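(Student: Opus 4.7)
The plan is a standard $3\times 3$ diagram chase combined with one application of the octahedral axiom. Roughly, I produce $c$ satisfying (1) by exactness in the long exact sequences associated with the diagram, and then verify (2) by arranging that $a$, $b$, and $c$ all descend from a single class in the cofiber of the diagonal $d : X\sma X' \to Y\sma Y'$.

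For (1), let $\bar b \in \pi_n(Z\sma Y')$ denote the image of $b$ under the horizontal map $Y\sma Y' \to Z\sma Y'$. By commutativity of the bottom-right square, its further image in $\pi_n(Z\sma Z')$ coincides with the image of $b$ along $Y\sma Y' \to Y\sma Z' \to Z\sma Z'$. By the hypothesis that $a$ and $b$ agree in $\pi_n(Y\sma Z')$, this equals the image of $a$ along $X\sma Z' \to Y\sma Z' \to Z\sma Z'$, which vanishes as two consecutive maps in the bottom-row cofiber sequence. By exactness of the long exact sequence of the right column, $\bar b$ lifts to some $c \in \pi_n(Z\sma X')$, and this $c$ automatically satisfies (1).

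For (2), I apply the octahedral axiom to the two factorizations of $d$: applied to $X\sma X' \to Y\sma X' \to Y\sma Y'$, it produces a distinguished triangle
$$Z\sma X' \to D \to Y\sma Z' \to \Sus(Z\sma X'),$$
where $D := \mathrm{cof}(d)$; applied instead to $X\sma X' \to X\sma Y' \to Y\sma Y'$, it produces
$$X\sma Z' \to D \to Z\sma Y' \to \Sus(X\sma Z').$$
Both triangles feature the same connecting map $D \to \Sus(X\sma X')$ induced by the cofiber sequence of $d$ (up to sign, which is immaterial over $\bF_2$). The hypothesis combined with the construction of $c$ shows that $a$, $b$, and $c$ lift coherently to a common element $e \in \pi_n(D)$. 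Applying the boundary $D \to \Sus(X\sma X')$ to $e$ then recovers $\partial_{\mathrm{col}}(a)$ via the second triangle and $\partial_{\mathrm{row}}(c)$ via the first, yielding (2).

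The main obstacle is the indeterminacy in $c$: any alternative lift of $\bar b$ differs by an element $\partial(w)$ with $w\in \pi_{n+1}(Z\sma Z')$, which would change $\partial_{\mathrm{row}}(c)$ by the double boundary $\partial_{\mathrm{row}}\circ \partial(w)$. The octahedral framework circumvents this by constructing $a$, $b$, $c$ simultaneously from a single element of $\pi_n(D)$, thus ensuring a coherent choice of lift. The most delicate step is to verify that the two octahedral triangles genuinely share the same boundary $D\to \Sus(X\sma X')$ up to sign, so that both evaluations on $e$ land at the same class in $\pi_{n-1}(X\sma X')$.
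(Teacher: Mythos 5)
Your part (1) is a correct and standard chase, and your choice of auxiliary object is genuinely different from the paper's. The gap is in part (2). As literally stated, the claim that ``$a$, $b$, and $c$ lift coherently to a common element $e\in\pi_n(D)$'' cannot be right: the maps go \emph{into} $D$ from the three corners, and if $e$ were the image of $b$ under $\beta\colon Y\sma Y'\to D$ then $\partial_D(e)=0$, which would force $\partial_{\mathrm{col}}(a)=\partial_{\mathrm{row}}(c)=0$. The element that makes your strategy work is a \emph{difference}, e.g.\ $e=\beta(b)-u'(a)$, where $u'\colon X\sma Z'\to D$ comes from your second octahedron. To pull $e$ back to $c\in\pi_n(Z\sma X')$ along the first octahedron's triangle you need $v(e)=0$ in $\pi_n(Y\sma Z')$, i.e.\ that the composite $X\sma Z'\xrightarrow{u'}D\xrightarrow{v}Y\sma Z'$ is the canonical map of the $3\times 3$ diagram; and to deduce (1) you likewise need $Z\sma X'\xrightarrow{u}D\xrightarrow{v'}Z\sma Y'$ to be the canonical map. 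Neither identification follows from two independent applications of the octahedral axiom in a triangulated category: each octahedron individually exists, but their mutual compatibility through the shared cone $D$ is precisely the extra coherence that is \emph{not} an axiom. Without it, your $c$ retains the double-boundary indeterminacy you yourself flagged, so the ``most delicate step'' is not the shared boundary $D\to\Sus(X\sma X')$ (that part is fine, since both octahedra are built on one fixed triangle for $d$) but these two unverified composites.

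The paper avoids this by citing the coherence rather than reproving it: May's axiom TC3 supplies a braid object $V$, which by May's Lemma 4.6 is the homotopy pullback of $Y\sma Y'\to Y\sma Z'\leftarrow X\sma Z'$ (equivalently the fiber of $Y\sma Y'\to Z\sma Z'$, a different object from your $D$). Mayer--Vietoris gives a class in $\pi_n(V)$ mapping simultaneously to $a$ and to $b$, and $c$ is defined as its image under the structure map $j_3\colon V\to Z\sma X'$, so both conclusions are immediate from the commutativity of the braid. Your argument can be repaired either by verifying the two composite identifications directly in the stable $\infty$-category of synthetic spectra (where they follow from universal properties of pushouts) or by appealing to TC3 as the paper does. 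One further caution: the homotopy groups here are not $\bF_2$-modules, so signs are not automatically immaterial; the anticommuting square of the $3\times 3$ diagram is absorbed into May's conventions, but in your set-up it would have to be tracked.
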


\begin{proof}
The proof follows directly from \cite[Lemma 4.6]{May01}.

In fact, consider $V$ in Axiom TC3 of \cite[Section 4]{May01} and the corresponding commutative diagram. By \cite[Lemma 4.6]{May01} we know that $V$ is the pull back of the following diagram.
$$\xymatrix{
\ar@{}[dr]|\lrcorner V\ar[d]\ar[r]& Y\sma Y'\ar[d]\\
X\sma Z'\ar[r] & Y\sma Z'
}$$
Since $a$ and $b$ map to the same element in $\pi_n(Y\sma Z')$, we know that we can find $v\in V$ such that $v$ maps to $a$ in $\pi_n(X\sma Z')$ and $b$ in $\pi_n(Y\sma Y')$. Then we let $c=j_3(v)\in \pi_n(Z\sma X')$, where $j_3: V \rightarrow Z \sma X'$ is the map in the commutative diagram in Axiom TC3 of \cite[Section 4]{May01}. This lemma follows from the commutativity of the diagram.
\end{proof}
 
\begin{theorem}[Generalized Mahowald Trick]\label{thm:158d451a} 
    Consider a distinguished triangle of spectra
    $$X\fto{f} Y\fto{g} Z\fto{h} \Sus X$$
    with $e(f)+e(g)+e(h)=1$.
    Suppose that $r=n+m+l$, $n_1=n-e(f)\ge 1$, $m_1=m-e(g)\ge 0$, $l_1=l-e(h)\ge 0$, and
    $$\begin{array}{ll}
        x\in Z_{n_1}^{s+l,t+l-1}(X), & y\in Z_{m_1+1}^{s+n+l,t+n+l-1}(Y),\\
        \bar x\in Z_{r-1}^{s,t}(Z), & \bar y\in Z_\infty^{s+r,t+r-1}(Z),
    \end{array}$$
    such that
    \begin{enumerate}
        \item $d^{h,E_{r'}}_{l}\bar x=x$, where $r'=r-m_1=n_1+l_1+1$,
        \item $d_r \bar x=\bar y$,
            \item the $(h,E_{r'})$-extension in (1) has no crossing, or the Adams differential (2) has no crossing on the $E_{r'}$-page,
        \item $d^{g,E_{m_1+2}}_{m}y=\bar y$.
    \end{enumerate}
    Then we have $x\in Z_{n+m+e(h)}^{s+l,t+l-1}(X)$ and
    $$d^{f,E_{n+m+1+e(h)}}_{n}x\equiv y \mod B_{r'}^{s+n+l,t+n+l-1}(Y).$$
    (See Figure \ref{fig:2aa45b21}.)
\end{theorem}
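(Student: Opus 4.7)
The plan is to reduce the entire statement to synthetic extensions and then apply May's lemma (Lemma~\ref{lem:452d218c}) to a suitable smash product of two distinguished triangles. By Proposition~\ref{prop:41561db2} together with $e(f)+e(g)+e(h)=1$, the classical triangle $X\to Y\to Z\to\Sigma X$ lifts to the synthetic distinguished triangle
\[
\nu X\xrightarrow{\hat f}\Sigma^{0,-e(f)}\nu Y\xrightarrow{\hat g}\Sigma^{0,-e(f)-e(g)}\nu Z\xrightarrow{\hat h}\Sigma^{1,-1}\nu X.
\]
Via Definition~\ref{def:6c076a33} and Proposition~\ref{prop:6de7d130}, the three hypotheses become synthetic extensions: (1) gives $d^{\hat h_{r'-1}}_{l}\bar x=\lambda^{l_1}x$; (2) gives $d^{\delta_{r-1}}_{r}\bar x=\bar y$ in the $\delta_{r-1}$-ESS for $\nu Z/\lambda^{r-1}$; and (4) gives $d^{\hat g_{m_1+1}}_{m}y=\lambda^{m_1}\bar y$. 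The key identity $r-r'=m_1$ ensures these three pieces of data fit compatibly.

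Next, using the no-crossing hypothesis~(3) together with the synthetic versions of Propositions~\ref{prop:8154e6f1} and~\ref{prop:i8r47oe}, I would produce a single representative $[\bar x]\in\pi_{*,*}\nu Z/\lambda^{r-1}$ that simultaneously witnesses (1) and (2): its reduction along $\rho:\nu Z/\lambda^{r-1}\to\nu Z/\lambda^{r'-1}$ maps under $\hat h_{r'-1}$ to a class detected by $\lambda^{l_1}x$, and its $\delta_{r-1}$-image is detected by $\bar y$. The no-crossing assumption is exactly what permits both data to be encoded in a single lift. Similarly, fix a lift $[y]\in\pi_{*,*}\Sigma^{0,-e(f)}\nu Y/\lambda^{m_1+1}$ realizing~(4).

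I would then apply Lemma~\ref{lem:452d218c} to the smash product of the above synthetic triangle with the $\lambda$-filtration triangle
\[
\Sigma^{0,-m_1-1}S^{0,0}/\lambda^{r'-1}\xrightarrow{\lambda^{m_1+1}}S^{0,0}/\lambda^{r-1}\xrightarrow{\rho}S^{0,0}/\lambda^{m_1+1}.
\]
In the central square of the resulting $3\times 3$ diagram, $[\bar x]$ and $\hat g(\lambda^{m_1}[y])$ agree in $\pi_{*,*}\Sigma^{0,-e(f)-e(g)}\nu Z/\lambda^{m_1+1}$ by (4) and the construction of $[\bar x]$. The lemma then produces a class $c\in\pi_{*,*}\nu X/\lambda^{n+m+e(h)}$ whose $\hat h$-boundary lifts $[x]$ and whose image under $\hat f_{n+m+e(h)}$ agrees with $\lambda^{n_1}[y]$ modulo the image of differentials of lower synthetic length. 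Translating back through Definition~\ref{def:6c076a33}, this yields both $x\in Z_{n+m+e(h)}^{s+l,t+l-1}(X)$ and the desired congruence $d^{f,E_{n+m+1+e(h)}}_{n}x\equiv y\mod B_{r'}^{s+n+l,t+n+l-1}(Y)$.

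The main obstacle is the construction of the coherent representative $[\bar x]$ in the second paragraph: one must verify that the no-crossing condition~(3) allows a single lift in $\pi_{*,*}\nu Z/\lambda^{r-1}$ to encode both the $(h,E_{r'})$-extension of~(1) and the Adams differential~(2), and then carefully track weights, Adams filtrations, and $\lambda$-divisibilities through the $3\times 3$ diagram so that the output extension lands on the $E_{n+m+1+e(h)}$-page with indeterminacy precisely $B_{r'}^{s+n+l,t+n+l-1}(Y)$.
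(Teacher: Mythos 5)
Your proposal follows essentially the same route as the paper: translate the hypotheses into synthetic extensions, use the no-crossing condition (via Propositions~\ref{prop:cross-dr-En} and \ref{prop:cross-f-Er}) to obtain a single representative of $\bar x$ witnessing both (1) and (2), match it against a lift of $y$ in $\nu Z/\lambda^{m_1+1}$, and apply Lemma~\ref{lem:452d218c} to the smash product of the synthetic cofiber sequence with the $\lambda$-tower triangle. The only discrepancies are bookkeeping ones — the paper smashes with $S^{0,0}/\lambda^{r}\to S^{0,0}/\lambda^{r'-1}\to S^{1,-r'+1}/\lambda^{m_1+1}$ rather than your off-by-one variant, the last synthetic suspension is $\Sigma^{1,0}\nu X$ not $\Sigma^{1,-1}\nu X$, and condition (2) is encoded as $d^{\delta_{r'-1}}_r(\bar x)=\lambda^{m_1}\bar y$ on $\nu Z/\lambda^{r'-1}$ — none of which change the argument.
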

\begin{figure}[h]
\centering
\scalebox{0.9}{\begin{tikzpicture}[node/.style={circle,fill=black!0.1}]
    \def\ymax{6}
    \def\ymaxp{7}
    
    \foreach \x in {0,...,8} {
        \draw[lightgray] (\x-0.5, -0.5) -- (\x-0.5, \ymax+0.5);
    }
    \foreach \x in {0,2,4,5,7} {
        \foreach \y in {0,...,\ymaxp} {
            \draw[lightgray] (\x-0.5, \y-0.5) -- (\x+0.5, \y-0.5);
        }
    }

    \coordinate (x) at (0, 1);
    \coordinate (y) at (2, 5);
    \coordinate (y1) at (4, 6);
    \coordinate (x1) at (5, 0);
    \coordinate (sx) at (7, 1);
    
    \fill (x) circle (0.1) node[below left] {$x$};
    \fill (y) circle (0.1) node[below right] {$y$};
    \fill (x1) circle (0.1) node[below left] {$\bar x$};
    \fill (y1) circle (0.1) node[above right] {$\bar y$};
    \fill (sx) circle (0.1) node[above right] {$\Sus x$};

    \draw[dashed, -{Stealth[length=1.5mm]}, shorten >=(0.08cm] (x) -- (y) node[near start,below right] {$d^{f, E_{n+m+e(h)+1}}_n(\mathrm{mod}~B_{r'})$};
    \drawarrow (y) -- (y1) node[midway,left] {$d^{g,E_{m-e(g)+2}}_{m}$};
    \drawarrow (x1) -- (y1) node[midway,right] {$d_r$};
    \drawarrow (x1) -- (sx) node[midway,below] {~\hspace{2em}$d^{h,E_{r'}}_{l}$};

    \node[anchor=east] at (-.7, 0) {$s$};
    \node[anchor=east] at (-.7, 1) {$s+l$};
    \node[anchor=east] at (-.7, 5) {$s+l+n$};
    \node[anchor=east] at (-.7, 6) {$s+r$};

    \node at (0, -1.0) {$E_2(X)$};
    \node at (2, -1.0) {$E_2(Y)$};
    \node at (4.5, -1.0) {$E_2(Z)$};
    \node at (7, -1.0) {$E_2(\Sus X)$};
\end{tikzpicture}}
\caption{A demonstration of Theorem \ref{thm:158d451a}}\label{fig:2aa45b21}
\end{figure}
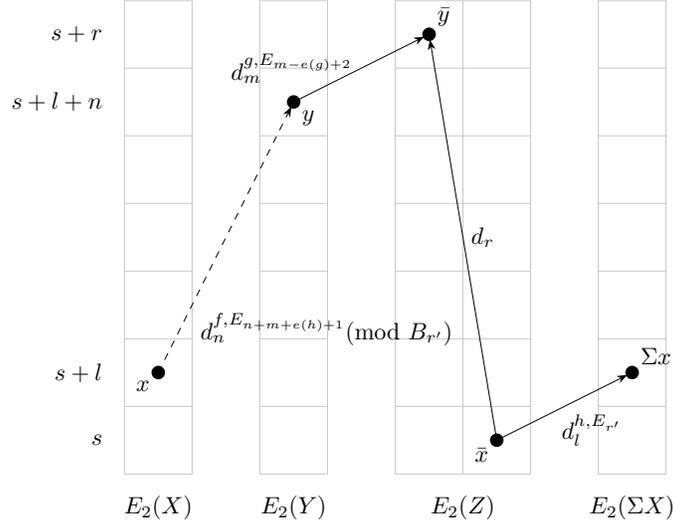

\begin{figure}
\centering
    \scalebox{0.85}{$\xymatrix@C=1em{
        \bigwedge & \nu X \ar[r]^-{\hat f} & \Sus^{0,-\!e(f)}\nu Y \ar[r]^-{\hat g} &  \Sus^{0,-\!e(f)\!-\!e(g)}\nu Z \ar[r]^-{\hat h} & \Sus^{1,0}\nu X\\
        S^{0,0}/\lambda^{r} \ar[d]_{\rho} & & & & [\lambda^{l_1}x] \ar[d]\\
        S^{0,0}/\lambda^{r'-1} \ar[d]_{\delta_{r'-1}} & & & [\bar x] \ar[r] \ar[d] & [\lambda^{l_1}x]\\
        S^{1,-r'+1}/\lambda^{m_1+1} \ar[d]_{\lambda^{r'-1}} & & [y] \ar[r] \ar[d] & [\lambda^{m_1}\bar y] & \\
        S^{1,0}/\lambda^{r} & [\lambda^{l_1}x] \ar[r] & [\lambda^{r'-1}y] & & \\
    }$}
\caption{Elements in the homotopy groups of the smash products}\label{fig:2d013742} 
\end{figure}

\begin{proof}
    Consider the following two distinguished triangles of synthetic spectra
    \begin{gather*}
        \nu X\fto{\hat f} \Sus^{0,-e(f)}\nu Y\fto{\hat g} \Sus^{0,-e(f)-e(g)}\nu Z\fto{\hat h} \Sus^{0,-1}\nu \Sus X=\Sus^{1,0}\nu X\\
        S^{0,0}/\lambda^{r}\fto{\rho} S^{0,0}/\lambda^{r'-1} \fto{\delta_{r'-1}} S^{1,-r'+1}/\lambda^{m_1+1} \fto{\lambda^{r'-1}} S^{1,0}/\lambda^{r}
    \end{gather*}
    and their smash product. See Figure \ref{fig:2d013742}.

    By condition (1), we have
    \begin{equation}\label{eq:70327019}
        d^{\hat h_{r'-1}}_l(\bar x)=\lambda^{l_1}x
    \end{equation}
    where $\hat h_{r'-1}$ is the map $\Sus^{0,e(h)}\nu Z/\lambda^{r'-1}\to \nu X/\lambda^{r'-1}$ induced by $\hat h$.
    By condition (2), we have
    \begin{equation}\label{eq:e129d90c}
        d^{\delta_{r'-1}}_r(\bar x)=\lambda^{m_1}\bar y.
    \end{equation}

    Applying Proposition \ref{prop:cross-dr-En} and Proposition \ref{prop:cross-f-Er} to condition (3), we know that the differential in (\ref{eq:70327019}) or the differential in (\ref{eq:e129d90c}) has no crossing. Hence, there exists
    $$[\bar x]\in \{\bar x\}\subset \pi_{t-s,t}(\nu Z/\lambda^{r'-1})$$
    such that
    $$\hat h_{r'-1}([\bar x])\in \{\lambda^{l_1}x\}\subset \pi_{t-s-1, t-1+e(h)}(\nu X/\lambda^{r'-1})$$
    and
    $$\delta_{r'-1}([\bar x])\in \{\lambda^{m_1}\bar y\}\subset \pi_{t-s-1,t+r'-1}(\nu Z/\lambda^{m_1+1}).$$

    By condition (4), we have
    \begin{equation*}
        d^{\hat g_{m_1+1}}_m(y)=\lambda^{m_1}\bar y.
    \end{equation*}
    This implies that there exists
    $$[y]\in \{y\}\subset \pi_{t-s-1,t+n+l-1}(\nu Y/\lambda^{m_1+1})$$
    such that
    $$\hat g_{m_1+1}([y])\in \{\lambda^{m_1}\bar y\}\subset \pi_{t-s-1, t+r'-1}(\nu Z/\lambda^{m_1+1}),$$
    where $\hat g_{m_1+1}$ is the map $\Sus^{0,e(g)}\nu Y/\lambda^{m_1+1}\to \nu Z/\lambda^{m_1+1}$ induced by $\hat g$.
    
    Due to degree reasons (Proposition \ref{prop:59f111f}), $\lambda^{m_1}\bar y$ detects a unique element in homotopy,
    $$[\lambda^{m_1}\bar y]\in \pi_{t-s-1, t+r'-1}(\nu Z)/\lambda^{m_1+1}.$$
    Therefore, we have
    $$\delta_{r'-1}([\bar x])=\hat g_{m_1+1}([y])=[\lambda^{m_1}\bar y],$$
    By Lemma \ref{lem:452d218c}, there exists
    $$[\lambda^{l_1}x]\in \pi_{t-s-1,t-1+e(h)}(\nu X/\lambda^r),$$
    such that
    \begin{gather}
        \rho ([\lambda^{l_1}x])=\hat h_{r'-1}([\bar x]),\label{eq:f4848f29}\\
        \hat f_r([\lambda^{l_1}x])=\lambda^{r'-1}[y],\label{eq:fba35e4e}
    \end{gather}
    where $\rho$ is the map $\nu X/\lambda^r\to \nu X/\lambda^{r'-1}$, and $\hat f_r$ is the map $\Sus^{0,e(f)}\nu X/\lambda^r\to \nu Y/\lambda^r$ induced by $\hat f$.
    
    The equality in (\ref{eq:f4848f29}) indicates that $[\lambda^{l_1}x]$ can be lifted to the $E_\infty$-page of $\nu X/\lambda^r$, so we have
    $$x\in Z_{n+m+e(h)}^{s+l,t+l-1}(X).$$
    The equation in (\ref{eq:fba35e4e}) indicates that
    $$d^{\hat f_r}_n(\lambda^{l_1}x)=\lambda^{r'-1}y.$$
    Therefore, by dividing $\lambda^{l_1}$, we have
    $$d^{\hat f_{n+m+e(h)}}_n(x)\equiv \lambda^{n_1}y \mod B_{r'}^{s+n+l,t+n+l-1}(Y)$$
    for $x$ in the $E_\infty$ page of $\nu X/\lambda^{r-l_1}=\nu X/\lambda^{n+m+e(h)}$. By definition, this is equivalent to
    $$d^{f, E_{n+m+1+e(h)}}_nx\equiv y \mod B_{r'}^{s+n+l,t+n+l-1}(Y).$$
\end{proof}

    We refer to Theorem~\ref{thm:158d451a} as the Generalized Mahowald Trick, as this approach was first utilized by Mahowald and his collaborators in various works (see, for example, \cite{MahowaldTangora}), particularly in the case where $m_1 = l_1 = 0$. The synthetic setting advances this method by allowing for the consideration of cases where $m_1>0, l_1>0$ as well. 
    
\begin{remark}
    Several versions of classical generalizations of Mahowald's original trick appear in the literature and are often referred to as geometric boundary theorems. Notable examples include the works of Behrens \cite{Behrens}, and Ma \cite{Ma}.
\end{remark}

\begin{remark}
  A synthetic generalization of the Mahowald Trick, again lacking any no-crossing conditions, is also presented in Chua's work \cite[Theorem~12.11]{Chua}. This version is also incorrect for similar reasons.
\end{remark}

\begin{example} \label{exam:Mahowald}
    We apply the Generalized Mahowald Trick to prove the claim in Example~\ref{exam:extonEn}(2) regarding an $(f, E_3)$-extension:
        $$d_2^{f, E_3} (h_0h_4^2) = h_0 p,$$
        for the map $f=\nu:S^3 \rightarrow S^0$.

        Specifically, let $X = S^3, Y = S^0, Z = S^0/\nu$, where $Z$ is the cofiber of $\nu \in \pi_3$, we have a distinguished triangle
        $$\xymatrix{S^3 \ar[r]^{\nu} & S^0 \ar[r]^-{i} & S^0/\nu \ar[r]^{q} & S^4.}$$
We set
$$f=\nu, \ g=i, \ h=q, \ e(f)=1, \ e(g)=0, \ e(h)=0.$$
The short exact sequence on $\HF$-homology
$$\xymatrix{0 \ar[r] & {\HF}_*S^0 \ar[r]^-{i_*} & {\HF}_*S^0/\nu \ar[r]^-{q_*} & {\HF}_*S^4 \ar[r] & 0,}$$
induces a long exact sequence on Ext-groups
$$\xymatrix{\cdots \ar[r]^-{\cdot h_2} & {\Ext}_A^{*,*}(S^0) \ar[r]^-{i_*} & {\Ext}_A^{*,*}(S^0/\nu) \ar[r]^-{q_*} & {\Ext}_A^{*,*}(S^4) \ar[r]^-{\cdot h_2} & \cdots}.$$
We use Atiyah--Hirzebruch notation (as in \cite[Notation 3.3]{WX61, WangXu51ext}) to denote elements in ${\Ext}_A^{*,*}(S^0/\nu)$. Specifically, if an element $x$ in ${\Ext}_A^{*,*}(S^0/\nu)$ satisfies 
$$q_*(x) = a \neq 0 \in \Ext_A^{*,*}(S^4),$$ 
we denote $x$ by $a[4]$; otherwise, due to exactness, 
$$x = i_*(b) \ \text{for some} \ b \in \Ext_A^{*,*}(S^0),$$
and in this case, we denote $x$ by $b[0]$. 

We have
$$m=0, \ l=0, \ n=2, \ r=2, \ r'=2, \ n_1=1, \ m_1=0, \ l_1=0,$$
and

\begin{align*}
x & =h_0h_4^2 && \in \Ext_A^{3,33} && \cong Z_1^{3,36}(S^3),\\
\bar{x} & =h_0h_4^2[4] && \in \Ext_A^{3,37}(S^0/\nu) && \cong Z_1^{3,37}(S^0/\nu), \\
y & =h_0p && \in \Ext_A^{5,38} && \cong Z_1^{5,38}(S^0),\\
        \bar{y} & =h_0p[0] && \in \Ext_A^{5,38}(S^0/\nu) && \cong Z_\infty^{5,38}(S^0/\nu). 
  \end{align*}      
For conditions in Theorem~\ref{thm:158d451a}, we have:
\begin{enumerate}
      \item $d_0^{h, E_2}(h_0h_4^2[4]) = h_0h_4^2$, since in Ext $h_2 \cdot h_0h_4^2 = 0$.
      \item $d_2(h_0h_4^2[4]) = h_0p[0]$. This is a classical Adams $d_2$-differential for $S^0/\nu$, and can be computed using Lin's computer program via the method of the secondary Steenrod algebra \cite{BJ, Nassau, Chua}.
      \item 
      \begin{enumerate}
          \item The differential in (1) has no crossing, as it is a $d_0$-differential, and
          \item the differential in (2) has no crossing, from Remark~\ref{nocrossE2}. 
      \end{enumerate}
      \item 
      $d_0^{g, E_2} (h_0p) = h_0p[0]$, as $h_0p$ is not divisible by $h_2$ in Ext.
      \end{enumerate}
    Since all conditions of  Theorem~\ref{thm:158d451a} are  satisfied, we conclude that $x=h_0h_4^2$ survives to the $E_3$-page, and there is an $(f, E_3)$-extension:
        $$d_2^{f, E_3} (h_0h_4^2) = h_0 p,$$
        as promised.
        
\end{example}

The outcome of the Generalized Mahowald Trick, as stated in Theorem~\ref{thm:158d451a}, is an $f$-extension on a specific Adams page, such as the $(f, E_3)$-extension $d_2^{f, E_3} (h_0h_4^2) = h_0 p$ in Example~\ref{exam:extonEn}(2) and Example~\ref{exam:Mahowald}. In practice, the source of an $(f, E_r)$-extension may survive to later Adams pages, prompting interest in the $(f, E_{>r})$-extensions, such as the $(f, E_\infty)$-extension $d_2^{f, E_\infty} (h_0h_4^2) = h_0 p$ in Example~\ref{exam:extonEn}(3), which is applied in Example~\ref{exam:Leibnizyes}. The following Propositions~\ref{prop:dec738d3} and \ref{cor:dfc6043e} describe the relationships between extensions across different pages.

\begin{proposition}\label{prop:dec738d3}
    Suppose that we have an $(f,E_r)$-extension $d^{f,E_r}_n(x)=y$, where $x\in Z_{r-1}^{s,t}(X)$ and $y\in Z_{r-1-n+e(f)}^{s+n,t+n}(Y)$. Then for all $2\le r'<r$, we also have
    \begin{equation}\label{eq:dr'}
        d^{f,E_{r'}}_n(x)=y.
    \end{equation}

    Furthermore, if $d^{f,E_r}_n(x)=y$ is essential and $n\le r'-2+e(f)$, then (\ref{eq:dr'}) is inessential if and only if there exists some $0<a'\le n-e(f)$ and an element
    $$x'\in Z_{r'-1-a'}^{s+a',t+a'}(X)\backslash Z_{r-1-a'}^{s+a',t+a'}(X)$$
    such that
    $$d^{\hat f_{r'-1}}_{n-a'-b}(\lambda^{a'} x')=\lambda^{n-b-e(f)} y'$$
    for some $b\ge 0$.
    
\end{proposition}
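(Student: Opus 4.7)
The plan is to recast both assertions in terms of synthetic $\hat{f}_{r-1}$-ESS differentials via Definition~\ref{def:6c076a33}, and then exploit the naturality of the extension spectral sequence under the reduction maps $\rho:\nu X/\lambda^{r-1}\to \nu X/\lambda^{r'-1}$ (and similarly for $Y$), combined with the explicit $E_\infty$-page descriptions from Proposition~\ref{prop:59f111f}.

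For the first assertion, I would form the commutative square
\[
\xymatrix{
\Sus^{0, e(f)} \nu X/\lambda^{r-1} \ar[r]^-{\hat{f}_{r-1}} \ar[d]_{\rho} & \nu Y/\lambda^{r-1} \ar[d]^{\rho} \\
\Sus^{0, e(f)} \nu X/\lambda^{r'-1} \ar[r]_-{\hat{f}_{r'-1}} & \nu Y/\lambda^{r'-1}.
}
\]
By Proposition~\ref{prop:9770ae6e}, both vertical $\rho$-ESS's collapse at $d_0$ with no crossings, so Corollary~\ref{cor:e7b20ae2} produces a map of extension spectral sequences $\ESS{\hat{f}_{r-1}}_{*}\to \ESS{\hat{f}_{r'-1}}_{*}$. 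Under Proposition~\ref{prop:59f111f}, on a tridegree of the form $(s,t,t-k)$ this map is simply the inclusion of quotients $Z_{r-1-k}^{s,t}/B_{1+k}^{s,t}\hookrightarrow Z_{r'-1-k}^{s,t}/B_{1+k}^{s,t}$, and so sends both $x$ and $\lambda^{n-e(f)}y$ to themselves. Naturality of the differentials then upgrades $d^{\hat{f}_{r-1}}_n(x)=\lambda^{n-e(f)}y$ to $d^{\hat{f}_{r'-1}}_n(x)=\lambda^{n-e(f)}y$, which by Definition~\ref{def:6c076a33} is the desired $(f,E_{r'})$-extension.

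For the second assertion, I translate inessentiality of $d^{f,E_{r'}}_n(x)=y$ into inessentiality of $d^{\hat{f}_{r'-1}}_n(x)=\lambda^{n-e(f)}y$ in the $\hat{f}_{r'-1}$-ESS, then apply the synthetic analog of Proposition~\ref{prop:8154e6f1}(2): this is equivalent to the existence of a shorter essential differential landing on the same coset as $\lambda^{n-e(f)}y$. Writing the source of such a differential via Proposition~\ref{prop:59f111f} as $\lambda^{a'}x'$ with $x'\in Z_{r'-1-a'}^{s+a',t+a'}(X)$ (the range $0<a'\le n-e(f)$ being forced by the constraint $0\le t-w<r'-1$ in Proposition~\ref{prop:59f111f} together with $n\le r'-2+e(f)$), I would show $x'\notin Z_{r-1-a'}^{s+a',t+a'}(X)$ by contradiction: if $x'$ did survive to $Z_{r-1-a'}$, then $\lambda^{a'}x'$ would live in $E_\infty(\nu X/\lambda^{r-1})$, and part (1) applied in the same direction (naturality via $\rho$) would lift the shorter differential to the $\hat{f}_{r-1}$-ESS and land on a coset representing $\lambda^{n-e(f)}y$ in $Z_{r-1-n+e(f)}^{s+n,t+n}/B_{1+n-e(f)}^{s+n,t+n}$, contradicting the hypothesized essentiality of $d^{f,E_{r}}_n(x)=y$. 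The converse is the symmetric observation: an essential crossing in the $\hat{f}_{r'-1}$-ESS of the catalogued form from the proof of Proposition~\ref{prop:cross-f-Er} (the case $b=0$, $y'\equiv y$ modulo $B_{1+n-e(f)}$) directly puts $\lambda^{n-e(f)}y$ into $\BESS{\hat{f}_{r'-1}}_{n-1}$, and so makes $d^{f,E_{r'}}_n(x)=y$ inessential.

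The main obstacle will be the careful bookkeeping of cosets under the chain of reduction maps. Specifically: (i) verifying that the shorter differential's value in $Z_{r'-1-n+e(f)}/B_{1+n-e(f)}$ lifts uniquely to the coset of $\lambda^{n-e(f)}y$ in the smaller quotient $Z_{r-1-n+e(f)}/B_{1+n-e(f)}$ (using that $\lambda^{n-e(f)}y$ is canonically defined in both, since $y\in Z_{r-1-n+e(f)}$), which is what powers the contradiction argument; and (ii) interpreting the parameter $b\ge 0$ in the statement as indexing the general form of a crossing from Proposition~\ref{prop:cross-f-Er}, where the paradigmatic $b=0,\ y'\equiv y$ case is the one that contributes directly to the equivalence, while $b>0$ cases arise naturally when describing the full zoo of $\hat{f}_{r'-1}$-ESS differentials emanating from $\lambda^{a'}x'$.
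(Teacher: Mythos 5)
Your treatment of the first assertion coincides with the paper's: form the square with the reduction maps $\rho$, invoke Proposition~\ref{prop:9770ae6e} and Corollary~\ref{cor:e7b20ae2} to get a map of extension spectral sequences, identify it on $E_\infty$-pages as the inclusion $Z_{r-1-k}/B_{1+k}\hookrightarrow Z_{r'-1-k}/B_{1+k}$, and push the differential forward. That part is correct.

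The second part has a genuine gap. Your contradiction argument claims that if $x'\in Z_{r-1-a'}^{s+a',t+a'}(X)$, then "naturality via $\rho$" would \emph{lift} the shorter essential differential $d^{\hat f_{r'-1}}_{n-a'}(\lambda^{a'}x')=\lambda^{n-e(f)}y$ to the $\hat f_{r-1}$-ESS and contradict essentiality of $d^{f,E_r}_n(x)=y$. But the map of spectral sequences runs from the $\hat f_{r-1}$-ESS \emph{to} the $\hat f_{r'-1}$-ESS; naturality transports differentials downward, not upward. Knowing that $\lambda^{a'}x'$ lifts to $E_\infty(\nu X/\lambda^{r-1})$ does not let you lift the differential: the lift may support a strictly \emph{shorter} essential $\hat f_{r-1}$-differential whose target, after applying $\rho_Y$, is absorbed by still earlier differentials downstairs, so that $\lambda^{a'}x'$ legitimately survives longer in the $\hat f_{r'-1}$-ESS and supports an essential $d_{n-a'}$ there that is not in the image of the map of spectral sequences. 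This is exactly the paper's "second scenario," and the paper handles it by an iterative descent: replace $x$ by $\lambda^{a}x'$, analyze its successive essential extensions, and repeat until an element falling outside $Z_{r-1-a'}$ is reached. That iteration is what forces the parameter $b$ and the shifted targets $\lambda^{n-b-e(f)}y'$ in the statement; your reading of $b>0$ as merely "describing the full zoo" of differentials out of $\lambda^{a'}x'$ misses that these cases are an unavoidable output of the argument, not decoration. As written, your proof would establish only the (false, in general) claim that inessentiality of (\ref{eq:dr'}) forces a crossing with $b=0$ whose source dies between pages $r'$ and $r$.
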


\begin{proof}
    Consider the following commutative diagram:
    $$\xymatrix{
        \Sus^{0,e(f)}\nu X/\lambda^{r-1} \ar[r]^-{\hat f_{r-1}}\ar[d]_{\rho_X} & \nu Y/\lambda^{r-1} \ar[d]_{\rho_Y}\\
        \Sus^{0,e(f)}\nu X/\lambda^{r'-1} \ar[r]^-{\hat f_{r'-1}} & \nu Y/\lambda^{r'-1}
    }$$
    By Corollary \ref{cor:e7b20ae2}, $(\rho_X,\rho_Y)$ induces a map from the $\hat f_{r-1}$-ESS to the $\hat f_{r'-1}$-ESS. Therefore, by naturality,
    $$d^{\hat f_{r-1}}_n(x)=\lambda^{n-e(f)} y\ \text{implies} \ d^{\hat f_{r'-1}}_n(x)=\lambda^{n-e(f)} y,$$
    which, by definition, is
    $$d^{f,E_{r'}}_n(x)=y.$$
   
    Next, we prove the second part of the proposition. By Definition \ref{def:6c076a33}, the $(f,E_r)$-extension (\ref{eq:dr'}) is inessential if and only if 
    there exists $0<a\le n-e(f)$ and 
    $$\lambda^a x'\in E_\infty^{s+a,t+a,t}(\nu X/\lambda^{r'-1})\iso Z_{r'-1-a}^{s+a,t+a}(X)/B_{1+a}^{s+a,t+a}(X),$$
    such that
    \begin{equation}\label{eq:dcd62a19}
        d^{\hat f_{r'-1}}_{n-a}(\lambda^ax')=\lambda^{n-e(f)} y,
    \end{equation}
    and this differential is not induced by $(\rho_X,\rho_Y)$, as we assume that $d^{f,E_r}_n(x)=y$ is essential.
    
    There are two scenarios where this differential is not induced by $(\rho_X,\rho_Y)$. The first case occurs when $\lambda^ax'$ is not in the image of $\rho_X$ at all, which is equivalent to $x'\notin Z_{r-1-a}^{s+a,t+a}(X)$. (See Figure \ref{fig:b22d961f}.) 
    
    \begin{figure}[h]
    \centering
    \scalebox{0.9}{\begin{tikzpicture}
        \fill[llgray] (0-0.5, 4-0.5) rectangle ++(1, 2);
        \tikzgrid{0}{2}{0}{7}
        \tikzgrid{3}{5}{0}{7}
        \draw[thick] (0-0.5, 4-0.5) rectangle ++(1, 2);
    
        \coordinate (x) at (1, 0);
        \coordinate (x1) at (1, 1);
        \coordinate (dx1) at (0, 5);
        \coordinate (dx) at (0, 6);
        \coordinate (y) at (4, 3);
        \coordinate (y1) at (4, 2);
        \coordinate (dy) at (3, 6);
        
        \fill (x) circle (0.1) node[below right] {$x$};
        \fill (x1) circle (0.1) node[below right] {$x'$};
        \fill (y1) circle (0.1) node[right] {$y$};
        \fill (dx1) circle (0.1) node[right] {};
    
        \draw[dashed, -{Stealth[length=1.5mm]}, shorten >=(0.08cm)] (x) -- (dx) node[near end, above right] {$d_{\ge r'}$};
        \drawarrow (x1) -- (dx1) node[near end,left
        ] {$d_*$};
        \draw[dashed, -{Stealth[length=1.5mm]}, shorten >=(0.08cm)] (x) -- (y) node[near end,above left=-5pt] {$d^{f,E_{r'}}_{>n}$};
        \drawarrow (x) -- (y1) node[midway,below right=-5pt] {$d^{f,E_{r}}_{n}$};
        \drawarrow (x1) -- (y1) node[near start,above=-3pt] {$d^{f,E_{r'-a}}_{n-a}$};
    
        \node[anchor=east] at (-0.6, 0) {$s$};
        \node[anchor=east] at (-0.6, 1) {$s+a$};
        \node[anchor=east] at (-0.6, 2) {$s+n$};
        \node[anchor=east] at (-0.6, 4) {$s+r'$};
        \node[anchor=east] at (-0.6, 6) {$s+r$};
    
        \node at (.5, -1.0) {$E_2(X)$};
        \node at (3.5, -1.0) {$E_2(Y)$};
    \end{tikzpicture}}
    \caption{Extensions across pages}\label{fig:b22d961f}
    \end{figure}
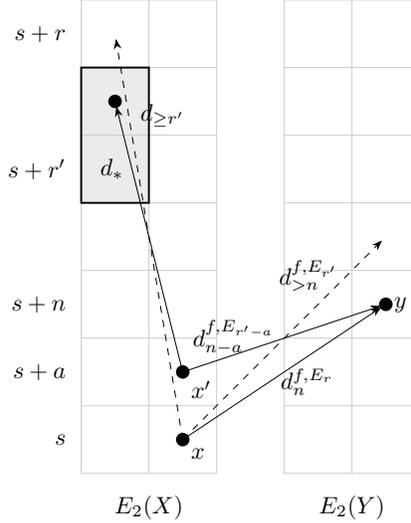
    
    The second case occurs when $\lambda^ax'$ is in the image of $\rho_X$, but it supports an essential $\hat f_{r-1}$-extension that is strictly shorter than the differential (\ref{eq:dcd62a19}). 
    
    To further explore this scenario, we replace $x$ with $\lambda^ax'$ and analyze successive essential extensions. This process is repeated iteratively until the first case is reached. Ultimately, this leads to the existence of some $0<a'\le n-e(f)$ and an element
    $$x''\in Z_{r'-1-a'}^{s+a',t+a'}(X)\backslash Z_{r-1-a'}^{s+a',t+a'}(X)$$
    such that
    $$d^{\hat f_{r'-1}}_{n-a'-b}(\lambda^{a'} x'')=\lambda^{n-b-e(f)} y'$$
    for some $b>0$. This represents a crossing of $d^{\hat f_{r'-1}}_n(x)=\lambda^{n-e(f)}y$.
    
\end{proof}

The following Corollary~\ref{cor:dfc6043e} is the contrapositive statement of the second part of Proposition~\ref{prop:dec738d3}.

\begin{corollary}\label{cor:dfc6043e}
    Suppose $r\ge r'$ and there exists an $(f,E_{r'})$-extension 
    $$d^{f,E_{r'}}_n(x)=y$$
    for $x\in Z_{r-1}^{s,t}(X)$ and $y\in Z_{r'-1-n+e(f)}^{s+n,t+n}(Y)$. Assume that this extension has no crossing of the form $d^{f,E_{r'-a}}_{n-a-b}(x')=y'$ for any $b>0$, $0<a\le n-e(f)$, and 
    $$x'\in Z_{r'-1-a}^{s+a,t+a}(X)\backslash Z_{r-1-a}^{s+a,t+a}(X).$$ 
    Under these conditions, we also have an $(f,E_{r})$-extension
    $$d^{f,E_{r}}_n(x)=y.$$
\end{corollary}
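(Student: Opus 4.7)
The plan is to translate the statement into the synthetic $\hat f_{r-1}$-ESS and $\hat f_{r'-1}$-ESS via Definition~\ref{def:6c076a33} and then argue by contradiction. I assume the $(f,E_r)$-extension $d_n^{f,E_r}(x) = y$ fails — meaning either $y$ does not survive to the required Adams page in $Y$, or the synthetic differential $d_n^{\hat f_{r-1}}(x) = \lambda^{n-e(f)} y$ fails in the $\hat f_{r-1}$-ESS — and aim to extract from the failure a crossing of the excluded form, contradicting the hypothesis.

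The main tool is the naturality square
\[
\xymatrix{
\Sus^{0,e(f)}\nu X/\lambda^{r-1} \ar[r]^-{\hat f_{r-1}}\ar[d]_{\rho_X} & \nu Y/\lambda^{r-1} \ar[d]^{\rho_Y}\\
\Sus^{0,e(f)}\nu X/\lambda^{r'-1} \ar[r]_-{\hat f_{r'-1}} & \nu Y/\lambda^{r'-1}.
}
\]
By Corollary~\ref{cor:e7b20ae2} together with Proposition~\ref{prop:9770ae6e} (which shows $\rho_X,\rho_Y$ carry only $d_0$-extensions in their respective ESS's), the pair $(\rho_X,\rho_Y)$ induces a map from the $\hat f_{r-1}$-ESS to the $\hat f_{r'-1}$-ESS whose action on $E_\infty$-pages is the injection induced by $\rho_Y$. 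Because $x\in Z_{r-1}^{s,t}(X)$, $x$ represents a class on $E_\infty^{s,t,t}(\nu X/\lambda^{r-1})$ and so supports some essential $\hat f_{r-1}$-extension $d_m^{\hat f_{r-1}}(x) = z$ (possibly trivial). Naturality gives the extension $d_m^{\hat f_{r'-1}}(x) = \rho_Y(z)$ in the $\hat f_{r'-1}$-ESS, which must be consistent with the given essential extension $d_n^{\hat f_{r'-1}}(x) = \lambda^{n-e(f)}y$. When the transported extension remains essential, the synthetic version of Proposition~\ref{prop:8154e6f1}~(3) forces $m=n$ and $\rho_Y(z)=\lambda^{n-e(f)}y$ modulo boundaries; injectivity of $\rho_Y$ then implies $y\in Z_{r-1-n+e(f)}^{s+n,t+n}(Y)$ and $z = \lambda^{n-e(f)}y$ as cosets in the $\hat f_{r-1}$-$E_\infty$, yielding the required $(f,E_r)$-extension.

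The main obstacle is handling the case when the transported extension $d_m^{\hat f_{r'-1}}(x)=\rho_Y(z)$ is inessential in the $\hat f_{r'-1}$-ESS. Inessentialness means some shorter essential $\hat f_{r'-1}$-extension from a source $\lambda^{a_1}x_1$ with $a_1>0$ hits $\rho_Y(z)$. If $x_1\notin Z_{r-1-a_1}^{s+a_1,t+a_1}(X)$, this extension is precisely an excluded crossing $d_{n-a_1-b_1}^{f,E_{r'-a_1}}(x_1)=y_1$, with $b_1 = n-m > 0$ because the original length-$n$ essential extension in $\hat f_{r'-1}$-ESS forces $m<n$ here; this contradicts the hypothesis. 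Otherwise $\lambda^{a_1}x_1$ lifts to $E_\infty^{s+a_1,t+a_1,t}(\nu X/\lambda^{r-1})$, and I recurse on $\lambda^{a_1}x_1$ with its own essential $\hat f_{r-1}$-extension, reapplying the comparison one level higher in Adams filtration. The recursion strictly increases the Adams filtration of the working source at each step and terminates by the finite generation of Adams groups in each tridegree. At termination either the comparison closes in favor of the $(f,E_r)$-extension or a terminal non-liftable source supplies an excluded crossing. Bookkeeping ensures the produced crossing always has $b>0$, because $b=0$ corresponds to indeterminacy already absorbed in the definition of $y$ as a coset modulo $B_{1+n-e(f)}^{s+n,t+n}(Y)$ and therefore cannot genuinely obstruct the $(f,E_r)$-extension.
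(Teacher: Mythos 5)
Your overall strategy is the one the paper itself relies on: the paper dispatches this corollary as ``the contrapositive of the second part of Proposition~\ref{prop:dec738d3}'', and your argument is essentially a reconstruction of the proof of that proposition run in the reverse direction --- the naturality square for $(\rho_X,\rho_Y)$, Corollary~\ref{cor:e7b20ae2} plus Proposition~\ref{prop:9770ae6e} to get a map of extension spectral sequences, and then the dichotomy between a higher-filtration source that fails to lift to $Z_{r-1-a}$ (which produces one of the excluded crossings) and one that does lift (which you absorb by recursion on the Adams filtration of the source). That skeleton is correct and is what the paper intends. Two smaller imprecisions: the hypothesis does not assert that the given $(f,E_{r'})$-extension is \emph{essential}, so the invocations of Proposition~\ref{prop:8154e6f1}(3) that ``force $m=n$'' or ``force $m<n$'' need the inessential case to be reduced to the essential one first; and termination of the recursion is by the bound $a\le n-e(f)$, not by finite generation.

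The genuine gap is in your last step, the claim that any crossing produced must have $b>0$ because ``$b=0$ corresponds to indeterminacy already absorbed in the definition of $y$ as a coset modulo $B_{1+n-e(f)}^{s+n,t+n}(Y)$.'' This is the opposite of what Definition~\ref{def:41d51149} says: a crossing with $b=0$ has target $y'\in Z_{r'-1-n+e(f)}^{s+n,t+n}(Y)\backslash B_{1+n-e(f)}^{s+n,t+n}(Y)$, so $y'$ is explicitly \emph{not} a boundary of length $\le n-e(f)$ and is not absorbed by $B_{1+n-e(f)}$. What actually happens when the recursion terminates at a non-liftable source $\lambda^{a}x'$ supporting a $b=0$ crossing $d^{\hat f_{r'-1}}_{n-a}(\lambda^{a}x')=\lambda^{n-e(f)}y'$ is that the target of the hypothesized extension gets shifted from $y$ to $y+y'$ within the same Adams filtration $s+n$; this $y'$ lies in the indeterminacy of the $(f,E_{r'})$-extension described in Remark~\ref{def:fErextess} (it is the image of a shorter $\hat f_{r'-1}$-differential), but since $\lambda^{a}x'$ does not lift to the $\hat f_{r-1}$-ESS, $y'$ need not lie in the corresponding indeterminacy at the $E_r$ level. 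So your bookkeeping does not rule out the $b=0$ case; it only shows that such a crossing cannot lower the filtration of the target, and the conclusion $d^{f,E_r}_n(x)=y$ must then be read modulo the targets of such filtration-preserving crossings. You should either strengthen the no-crossing hypothesis to $b\ge 0$ for this step, or make explicit that the conclusion's target is determined only up to this residual indeterminacy --- which is how the paper's applications (where full no-crossing is verified) sidestep the issue.
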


\begin{example} \label{exam:stretchext}
    In Example~\ref{exam:Mahowald}, we use the Generalized Mahowald Trick to establish the $(f, E_3)$-extension:
        $$d_2^{f, E_3} (h_0h_4^2) = h_0 p,$$
        for the map $f=\nu:S^3 \rightarrow S^0$, as discussed
in Example~\ref{exam:extonEn}(2). However, to apply the Generalized Leibniz Rule in proving the classical Adams differential 
$$d_3(h_2h_5) = h_0p$$
in Example~\ref{exam:Leibnizyes}, we need the $E_\infty$-page version of the $(f, E_3)$-extension:
$$d_2^{f, E_\infty} (h_0h_4^2) = h_0 p.$$
We apply Corollary~\ref{cor:dfc6043e} to confirm this.

As checked in Example~\ref{exam:Ercross}(2), the $(f, E_3)$-extension has no crossing. Consequently, by Corollary~\ref{cor:dfc6043e}, we obtain the required $(f, E_\infty)$-extension:
$$d_2^{f, E_\infty} (h_0h_4^2) = h_0 p.$$
\end{example}

\section{Proof of the main theorem}

In this section, we present the proof of our main Theorem~\ref{thm:h62}. 

\begin{theorem}[Theorem~\ref{thm:h62}] \label{thm:126survives}
    The element $h_6^2$ survives to the $E_\infty$-page in the Adams spectral sequence.
\end{theorem}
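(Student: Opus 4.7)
The plan is to execute the three-fold strategy that is sketched in the introduction. First, I invoke the Burklund--Xu reduction (Proposition 7.19 of \cite{BurklundXu}): the element $h_6^2$ is a permanent cycle in the classical Adams spectral sequence if and only if $\lambda \eta \theta_5^2 = 0$ in the homotopy groups of the $\HF$-synthetic sphere, where $\theta_5$ is any synthetic homotopy class detected by $h_5^2$. This converts the permanent cycle problem into a single vanishing problem for a synthetic product, which can be attacked with the naturality and extension machinery developed in Sections 2--5.

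Second, I enumerate the a priori targets. The classical Adams $E_2$-page has 105 additive generators in stem $125$, and these are the only candidate targets for a nonzero differential supported by $h_6^2$. I would feed this list into Lin's program, which computes Adams $E_2$-pages and $d_2$-differentials (via secondary Steenrod algebras) for a large collection of finite spectra, and then propagates differentials inductively using the Leibniz rule and the naturality of the extension spectral sequences $\ESS{f}_*^{*,*}$. This rigorous computer-assisted sweep is expected to eliminate the large majority of the 105 candidates (the paper indicates that 101 are ruled out by the inductive approach alone).

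Third, for the handful of survivors I would apply the Generalized Leibniz Rule (Theorem~\ref{thm:e73f481e}) and the Generalized Mahowald Trick (Theorem~\ref{thm:158d451a}), working in the $\HF$-synthetic category and in the language of $(f,E_r)$-extensions from Section~\ref{sec:extenpage}. The non-trivial input at this stage is to verify the various no-crossing hypotheses; Examples~\ref{exam:Leibnizyes} and \ref{exam:Leibnizno} illustrate how essential these hypotheses are. Following the outline in the introduction, this should reduce the problem to a single remaining candidate: a possible $d_{12}$-differential on $h_6^2$ hitting a specific generator in stem $125$, filtration $14$ (Proposition~\ref{prop:possibleh62}(2)).

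The hard part will be ruling out this final candidate, since the general machinery terminates here. By Proposition~\ref{prop:possibleh62}(5), a nonzero such $d_{12}$ would force an $\eta$-extension from stem $124$, filtration $10$ to stem $125$, filtration $14$. I would eliminate this $\eta$-extension by an ad hoc synthetic argument near stem $126$: using the synthetic Adams $E_\infty$-page data from Propositions~\ref{prop:30e8b746} and \ref{prop:59f111f}, the $(\eta, E_\infty)$-extension formalism of Section~\ref{sec:extenpage}, and the tabulated classical differentials in the Appendix, I would track every synthetic class that could detect the hypothetical $\eta$-multiple and show that each one is excluded (cf.\ Proposition~\ref{prop:state5false}). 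With this $\eta$-extension ruled out, the last candidate differential on $h_6^2$ vanishes, so $h_6^2$ survives to $E_\infty$.
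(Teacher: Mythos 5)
Your proposal follows the paper's proof in essentially the same way: reduce via the Burklund--Xu criterion, eliminate candidates with Lin's program and the Generalized Leibniz Rule / Generalized Mahowald Trick down to the single $d_{12}$ possibility of Proposition~\ref{prop:possibleh62}(2), and then rule out the required $\eta$-extension via Proposition~\ref{prop:state5false}; this is exactly the two-step deduction the paper gives. The only caveat is that all of the genuinely hard content lives inside Proposition~\ref{prop:state5false} (a Toda-bracket, $2$-extension, and $\nu$-extension argument culminating in the Adams spectral sequence of $S^0/\nu$), which you invoke rather than reprove, so your outline is a faithful plan rather than a self-contained argument.
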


We first recall the following theorem, known as the inductive method, originally by Barratt--Jones--Mahowald \cite{BJMinduction} and later extended to the H$\mathbb{F}_2$-synthetic setting by Burklund--Xu \cite[Proposition~7.19]{BurklundXu}.

\begin{notation}
    Let $\theta_5 = [h_5^2]$ represent any synthetic homotopy class in $\pi_{62,62+2}S^{0,0}$ detected by $h_5^2$ in the Adams $E_2$-page. For convenience, we use the same notation, $\theta_5$, to denote its image in $\pi_{62,62+2}S^{0,0}/\lambda^r$ via the map $S^{0,0} \rightarrow S^{0,0}/\lambda^r$ for all $r \ge 1$. Similarly, let $\eta = [h_1] \in \pi_{1,1+1}S^{0,0}$.
\end{notation}

\begin{theorem}[Barratt--Jones--Mahowald, Burklund--Xu] \label{thm:bjmbx}
\begin{enumerate}
 \item[]
 \item    The element $h_6^2$ survives to the $E_{r+3}$-page of the classical Adams spectral sequence if and only if for some $\theta_5$, 
 $$\lambda \eta \theta_5^2 = 0 \ \text{in} \ \pi_{125,125+4} S^{0,0}/\lambda^{r+1}.$$
 \item   In particular, $h_6^2$ is a permanent cycle in the classical Adams spectral sequence if and only if for some $\theta_5$,
 $$\lambda \eta \theta_5^2 = 0 \ \text{in} \ \pi_{125,125+4} S^{0,0}.$$ 
 \end{enumerate}
\end{theorem}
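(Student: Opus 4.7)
The plan is to prove the equivalence by identifying both sides with the vanishing of a single synthetic boundary class.

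First I would use the rigidity theorems (Theorems~\ref{thm:rigid} and~\ref{thm:17e90ac0}) together with Proposition~\ref{prop:59f111f} to reformulate ``$h_6^2$ survives to the $E_{r+3}$-page'' as the statement that $h_6^2\in\pi_{126,128}(\nu S^0/\lambda)$ admits a lift to $\pi_{126,128}(\nu S^0/\lambda^{r+2})$ along the reduction $\rho_{1,r+2}$. Applying the cofiber sequence of Notation~\ref{not:deltaandrho} for $(n,m)=(1,r+2)$, this lift exists if and only if the boundary class $\delta(h_6^2)\in\pi_{125,129}(\nu S^0/\lambda^{r+1})$ vanishes (the weight $129$ arises from the shift $(n,w)\mapsto(n-1,w+1)$ under $\Sigma^{1,-1}$). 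Once the identification $\delta(h_6^2)=\lambda\eta\theta_5^2$ is established, Part~(1) is immediate, and Part~(2) follows by passage to the inverse limit $r\to\infty$, using $\nu S^0=\varprojlim_r \nu S^0/\lambda^r$ from~\cite{BHS} together with the vanishing of the relevant $\lim^1$ term (which holds in this range since the synthetic homotopy groups involved are finite).

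The heart of the proof is therefore the single identity
\[
\delta(h_6^2)=\lambda\eta\theta_5^2 \qquad \text{in } \pi_{125,129}(\nu S^0/\lambda^{r+1}).
\]
The classical origin of this identity is the Steenrod-square formula $\mathrm{Sq}^0(h_5^2)=h_6^2$ on $\mathrm{Ext}_A$, which realizes $h_6^2$ as detecting a Toda bracket built from two copies of $\theta_5$ linked by a relevant self-map. In the synthetic category I would replace the classical role of multiplication-by-$2$ by the deformation parameter $\lambda$, exploiting the $E_\infty$-algebra structure on $S^{0,0}/\lambda^n$, and form the corresponding synthetic Toda bracket inside $\nu S^0/\lambda^{r+2}$. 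The resulting bracket is detected by $h_6^2$, and tracking weights across the construction yields the extra factor of $\lambda$ in the obstruction; the indeterminacy of the bracket is precisely the subgroup generated by $\lambda\eta\theta_5^2$, so the vanishing of this obstruction is exactly the desired equivalence.

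To formalize this rigorously, I would apply the Generalized Mahowald Trick (Theorem~\ref{thm:158d451a}) to a diagram assembled from the $\theta_5$-self-multiplication map and the cofiber sequence defining $\delta$. This produces an $(f,E_{r+3})$-extension in the sense of Definition~\ref{def:6c076a33} whose source is $h_6^2$ and whose target is $\lambda\eta\theta_5^2$, forcing the desired identification. The principal technical obstacle is verifying the no-crossing hypotheses of Theorem~\ref{thm:158d451a}; these amount to ruling out classical Adams differentials of length at most $r+2$ landing in stems $124$--$125$ at the intermediate Adams filtrations between those of $h_6^2$ and $\lambda\eta\theta_5^2$, a check that must be handled uniformly in $r$ using the Ext and $d_2$-computations provided by Lin's program.
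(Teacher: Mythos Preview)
The paper does not prove this theorem: it is quoted from Burklund--Xu \cite[Proposition~7.19]{BurklundXu}, and the paper only records two facts about its proof: (i) in Remark~\ref{rem:theta5choice}, that the argument proceeds via the \emph{quadratic construction} on a map $M\to S^0$ extending $\theta_5$ on the bottom cell; and (ii) in the proof of Proposition~\ref{prop:possibleh62}, that the output of that argument is the identity $\delta_1(h_6^2)=\lambda\eta\theta_5^2$. Your reduction to this identity (via the cofiber sequence for $\lambda$ and rigidity) is correct and matches what the paper uses.

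The gap is in how you propose to prove the identity. The relation $\mathrm{Sq}^0(h_5^2)=h_6^2$ you invoke is a statement about \emph{power operations} on $\mathrm{Ext}$; its geometric realization is the quadratic construction $D_2(-)$, i.e.\ an extended-power construction, and this is exactly what Barratt--Jones--Mahowald and Burklund--Xu use. The Generalized Mahowald Trick (Theorem~\ref{thm:158d451a}) has no such content: it takes as input a \emph{distinguished triangle} $X\to Y\to Z\to\Sigma X$ together with an Adams differential for $Z$ and known extensions along $g,h$, and outputs an extension along $f$. There is no cofiber sequence in sight whose associated Ext long exact sequence produces $h_6^2$ from $h_5^2$ in the required way; the doubling of the internal degree in $h_5^2\mapsto h_6^2$ is a hallmark of a power operation, not of a boundary map in a triangle. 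Your sentence ``This produces an $(f,E_{r+3})$-extension whose source is $h_6^2$ and whose target is $\lambda\eta\theta_5^2$'' is not well-formed: an $(f,E_r)$-extension (Definition~\ref{def:6c076a33}) goes from $E_2(X)$ to $E_2(Y)$ along a map $f$, whereas here you want a \emph{differential} on $h_6^2$ in the sphere. Finally, the proposed ``uniform in $r$'' no-crossing check is not a technicality one can defer: without the structural input from the quadratic construction there is no mechanism that pins down the differential for all $r$ simultaneously.

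In short: keep your first two paragraphs, but replace the Mahowald-trick plan by the quadratic-construction argument of \cite{BJMinduction,BurklundXu}.
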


\begin{remark}
    The statement in Theorem~\ref{thm:bjmbx}(1) was originally stated as 
    $$\eta \theta_5^2 = 0 \ \text{in} \ \pi_{125,125+5} S^{0,0}/\lambda^r$$
    in \cite[Proposition~7.19]{BurklundXu}. Upon inspection, $\pi_{125,125+5} S^{0,0}$ doesn't contain any $\lambda$-torsion classes, so this is equivalent to the version we stated, which is more consistent with the statement in part (2). 
\end{remark}

\begin{remark} \label{rem:theta5choice}
    Theorem~\ref{thm:bjmbx} is proved using the quadratic construction  on a map from the mod 2 Moore spectrum to the sphere spectrum, where the restriction on the bottom cell is $\theta_5$. Therefore, it is necessary to use a $\theta_5$ of order 2. 
    
    Notably, \cite{Xu, IWX} confirms that all classical $\theta_5$'s indeed have order 2. Furthermore, from Proposition~\ref{prop:30e8b746} and an analysis of the differentials in the classical Adams spectral sequence, we find that $\pi_{62,62+2}S^{0,0}$ doesn't contain any $\lambda$-torsion. Consequently, all synthetic $\theta_5$'s also have order 2, making it valid to apply Theorem~\ref{thm:bjmbx} to any $\theta_5$. 
    
    Additionally, since the proof of Theorem~\ref{thm:bjmbx} shows that the expression $\lambda \eta \theta_5^2$ corresponds to the total differential $\delta_1: S^{0,0}/\lambda \rightarrow S^{1,-1}$ on $h_6^2$, the value of the expression $\lambda \eta \theta_5^2$ is consistent for every choice of $\theta_5$.  (Note that our grading for the triangulation translation functor is smashing with $S^{1,0}$, which is consistent with \cite[Appendix A]{BHS} but is different from \cite[Section 7]{BurklundXu}, so the target of $\delta_1$ is $S^{1,-1}$. )
\end{remark}

We pay special attention to the following three elements on the classical Adams $E_2$-page (see Figure~\ref{Fig:near126} and Tables~\ref{Table:S125.19}, \ref{Table:S126.10}, \ref{Table:S124.12}, \ref{Table:S125.20} in the Appendix): 
\begin{align*}
    h_1h_4x_{109,12} & \in \Ext_A^{14, 125+14}, \\
    x_{126,8,4} + x_{126,8} & \in \Ext_A^{8, 126+8}, \\
    h_0^2x_{124,8} & \in \Ext_A^{10, 124+10},\\
    g^4\Delta h_1g & \in \Ext_A^{25,125+25}.
\end{align*}

For the right side of Figure ~\ref{Fig:near126}, we use dashed differentials to indicate the shortest possible nonzero differentials that these elements could support.

{
\include{112}
}

From Figure~\ref{Fig:near126} and Tables~\ref{Table:S124.12}, \ref{Table:S126.10}, \ref{Table:S125.19}, \ref{Table:S125.20} in the Appendix, we know that
\begin{fact} \label{fact:theta5sqAF}
\begin{enumerate}
    \item $x_{126,8,4} + x_{126,8}$ survives to the $E_6$-page.
    
    \item $h_1h_4x_{109,12}$ is a permanent cycle, and can only be killed by 
    $$d_6(x_{126,8,4} + x_{126,8}) \ \text{or} \ d_{12}(h_6^2).$$
    
    \item $h_0^2x_{124,8}$ survives to the $E_\infty$-page.

    \item In $\Ext_A^{25,125+25}$, the element $g^4\Delta h_1g$ is the only one that survives to the classical $E_5$-page.
\end{enumerate}\end{fact}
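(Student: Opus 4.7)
The plan is to verify each of the four claims by enumerating candidate differentials in a narrow range of bidegrees near the 125/126-stem, using the $\Ext$ data in Tables \ref{Table:S124.12}, \ref{Table:S126.10}, \ref{Table:S125.19}, and \ref{Table:S125.20} of the Appendix, and then ruling cases out with the machinery of Sections \ref{sec:extenpage} and \ref{sec:rules}. All of the underlying $\Ext$ and $d_2$ data is supplied by Lin's program (based on the secondary Steenrod algebra of \cite{BJ,Nassau,Chua}), and the propagation of higher differentials is handled by the Leibniz rule, the Generalized Leibniz Rule (Theorem \ref{thm:e73f481e}), and the Generalized Mahowald Trick (Theorem \ref{thm:158d451a}).

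For claim (1), we would fix the target $x_{126,8,4}+x_{126,8}\in \Ext_A^{8,134}$ and for each $r=2,3,4,5$ list the generators of $\Ext_A^{8-r,133-r}$ surviving to the $E_r$-page; the task is then to show that none supports a $d_r$ whose image contains $x_{126,8,4}+x_{126,8}$. In parallel we would check that $x_{126,8,4}+x_{126,8}$ is itself a $d_r$-cycle for $r\le 5$ by enumerating candidate targets in $\Ext_A^{8+r,133+r}$. Each individual case is a direct application of the tools above: either the target is already zero on the relevant page, or the source has a $d_r$-value computed by Lin's program that lies elsewhere, or a chain of Generalized Mahowald Trick / Generalized Leibniz Rule steps pins the differential away from this element.

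Claim (2) splits in two. Showing that $h_1h_4x_{109,12}$ is a permanent cycle means ruling out, for every $r\ge 2$, any nonzero $d_r(h_1h_4x_{109,12})\in \Ext_A^{14+r,138+r}$; here we expect most cases to collapse quickly by $h_1$-Leibniz from known differentials on $h_4x_{109,12}$, together with Lin's $d_2$-data. Narrowing the possible sources of a differential hitting $h_1h_4x_{109,12}$ down to exactly $d_6(x_{126,8,4}+x_{126,8})$ and $d_{12}(h_6^2)$ requires enumerating, for each $r\ge 2$, the generators of $\Ext_A^{14-r,125-r}$ surviving to the $E_r$-page and showing that their actual $d_r$ (computed inductively, sometimes using the $(f,E_r)$-extension formalism of Section \ref{sec:extenpage} to borrow information from lower Adams filtrations or from cofibers) is either zero, lands elsewhere, or is precisely one of the two permitted cases. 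We expect this to be the main obstacle, since the list of potential sources is the longest and several require nontrivial Mahowald-trick arguments to dispatch, especially those that are themselves permanent cycles on lower pages and first become differential sources only after intermediate cancellations.

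Claim (3) proceeds similarly: because $h_0^2x_{124,8}$ is an $h_0$-multiple, the $h_0$-Leibniz rule applied to the known $d_r$ of $x_{124,8}$ and of $h_0x_{124,8}$ (and its propagation across synthetic pages via Corollary \ref{cor:dfc6043e}) handles both the outgoing and incoming differentials uniformly, reducing survival to a short finite list of checks, each pinned down by Lin's program. Claim (4) is the most purely algebraic: we would enumerate the generators of $\Ext_A^{25,150}$ as produced by the program, compute all $d_2, d_3, d_4$ on them via the Leibniz rule from lower-filtration differentials (this bidegree is far enough above the vanishing line that most of the structure is controlled by multiplication by $g$, $\Delta h_1$, and other fixed periodicity classes), and confirm $g^4\Delta h_1g$ is the unique survivor. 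The unifying point is that, across all four claims, the conceptual difficulty is small but the bookkeeping is substantial; the role of Lin's program, combined with the rigorously stated Generalized Leibniz Rule and Generalized Mahowald Trick (which, unlike the informal versions critiqued in Remarks \ref{rem:chuaerror} and following, enforce the no-crossing hypotheses), is precisely to render this case analysis finite and machine-verifiable.
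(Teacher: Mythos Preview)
Your plan is sound and describes exactly the kind of case analysis that underlies the claim, but it is worth noting that the paper does not actually \emph{prove} this Fact in the sense you outline. The paper's ``proof'' is a single sentence preceding the statement: it simply points to Figure~\ref{Fig:near126} and Tables~\ref{Table:S124.12}, \ref{Table:S126.10}, \ref{Table:S125.19}, \ref{Table:S125.20} in the Appendix and asserts that the four claims can be read off from them. Those tables are themselves the output of Lin's program, which (as you correctly describe) propagates differentials via the Leibniz rule, naturality, the Generalized Leibniz Rule, and the Generalized Mahowald Trick, with machine-readable proofs archived in \cite{LWXZenodo}.

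So your proposal and the paper's approach are the same at the level of content: both rely on the same machinery and the same computer-verified data. The difference is presentational. The paper treats the Fact as tabulated input and does not rehearse the case analysis; you spell out what that analysis would look like if done by hand. If anything, your write-up is \emph{more} detailed than the paper's, not less. There is no gap in your reasoning, but you should be aware that a referee or reader of the paper is expected simply to consult the tables (and, if skeptical, the machine proofs in \cite{LWXZenodo}) rather than reproduce the enumeration you sketch.
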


\begin{remark}
From Table~\ref{Table:S126.10} in the Appendix, we have
   $$d_3(x_{126,6}) = h_5x_{94,8}, \ \text{or} \ h_5x_{94,8} + h_6(\Delta e_1 + C_0 + h_0^6h_5^2) \neq 0.$$
   Therefore, the element $x_{126,6} \in \Ext_A^{6, 126+6}$ cannot kill $h_1h_4x_{109,12}$.
\end{remark}

We will apply Theorem~\ref{thm:bjmbx} to prove the following Proposition~\ref{prop:possibleh62}.

\begin{proposition} \label{prop:possibleh62}
Exactly one of the following two statements is true:
\begin{enumerate}
    \item The element $h_6^2$ survives to the $E_\infty$-page.
   
    \item There is a nonzero classical Adams differential 
    $$d_{12}(h_6^2) = h_1h_4x_{109,12}.$$
\end{enumerate}
Furthermore, statement $(2)$ is true if and only if the following three statements are all true:
\begin{enumerate} \setcounter{enumi}{2}
    \item \leavevmode\vspace*{-\dimexpr\abovedisplayskip + \baselineskip}  
    $$d_6(x_{126,8,4} + x_{126,8}) = 0.$$
    
    \item There exists a $\theta_5$ such that 
    $\theta_5^2$ is detected by $\lambda^6 h_0^2x_{124,8}$. In particular, 
    $$\theta_5^2 = \lambda^6 [h_0^2x_{124,8}] \neq 0 \in \pi_{124,124+4}S^{0,0}$$
    for some $[h_0^2x_{124,8}]$.

    \item There exists a homotopy class $[h_0^2x_{124,8}]$ such that
    $\lambda^3 \eta [h_0^2x_{124,8}]$ is detected by $\lambda^6 h_1h_4x_{109,12}$. In particular, we have 
    $$\lambda^3 \eta [h_0^2x_{124,8}] = \lambda^6 [h_1h_4x_{109,12}] \in \pi_{125,125+8} S^{0,0}$$
    for some $[h_1h_4x_{109,12}]$.
\end{enumerate}
\end{proposition}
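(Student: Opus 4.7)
The plan is to separate the argument into the dichotomy (1) vs (2) and the characterization of (2) by (3), (4), (5). For the dichotomy, I would enumerate the 105 additive generators of $\Ext_A^{*,125+*}$ that could in principle be hit by $h_6^2$ and rule each one out except $h_1 h_4 x_{109,12}$, which by Fact~\ref{fact:theta5sqAF}(2) is the distinguished survivor in total degree 125 that can only be killed by $d_6(x_{126,8,4}+x_{126,8})$ or by $d_{12}(h_6^2)$. The bulk of the elimination (101 candidates) can be carried out by Lin's program using the inductive approach and the naturality of the $f$-ESS of Section~\ref{sec:e6be737c} across cofiber sequences of carefully chosen finite spectra; the remaining four require applying the Generalized Leibniz Rule (Theorem~\ref{thm:e73f481e}) and the Generalized Mahowald Trick (Theorem~\ref{thm:158d451a}) to propagate known differentials and extensions into the relevant bidegree. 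The detailed bookkeeping is deferred to Table~\ref{Table:S125.19}.

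For the \emph{Furthermore} statement, the pivot is Theorem~\ref{thm:bjmbx} together with Proposition~\ref{prop:6de7d130}, which identify a hypothetical $d_{12}(h_6^2)=h_1 h_4 x_{109,12}$ with the synthetic statement that $\lambda\eta\theta_5^2$ is detected by $\lambda^{10} h_1 h_4 x_{109,12}$ in $\pi_{125,129} S^{0,0}$. Given (2), condition (3) is essentially immediate: the $d_{12}$ requires its target to survive to the $E_{12}$-page, which by Fact~\ref{fact:theta5sqAF}(2) forces $d_6(x_{126,8,4}+x_{126,8})=0$. For (4) and (5), I would run the $\eta$-extension spectral sequence and the squaring map for $\theta_5$ backwards from the synthetic detection of $\lambda\eta\theta_5^2$, using the tables in the Appendix to exclude alternative detectors of $\theta_5^2$ in $\pi_{124,128}$ and of $\eta[h_0^2 x_{124,8}]$ in $\pi_{125,136}$; this should pin down $\theta_5^2 = \lambda^6[h_0^2 x_{124,8}]$ together with the extension recorded in (5).

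The reverse implication is the cleanest part of the argument: (4) and (5) combine into
$$\lambda\eta\theta_5^2 \;=\; \lambda^7 \eta[h_0^2 x_{124,8}] \;=\; \lambda^4 \cdot \lambda^3 \eta[h_0^2 x_{124,8}] \;=\; \lambda^{10}[h_1 h_4 x_{109,12}] \in \pi_{125,129} S^{0,0},$$
while (3) together with Fact~\ref{fact:theta5sqAF}(2) guarantees that $\lambda^{10} h_1 h_4 x_{109,12}$ is nonzero in the synthetic $E_\infty^{14,139,129}$ but $\lambda^{11} h_1 h_4 x_{109,12}$ vanishes there. Therefore $\lambda\eta\theta_5^2$ is nonzero modulo $\lambda^{11}$ yet vanishes modulo $\lambda^{10}$, and Theorem~\ref{thm:bjmbx}(1) forces $h_6^2$ to support a nonzero $d_{12}$, whose target is then identified by the dichotomy established in Part~1. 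The main obstacle is Part~1: the elimination across 105 targets is computationally heavy, and the residual cases that escape the machine demand careful verification of the no-crossing hypotheses underpinning Theorems~\ref{thm:e73f481e} and~\ref{thm:158d451a}. A secondary difficulty is the forward half of the \emph{Furthermore} direction, where potential alternative detectors of $\theta_5^2$ near stem 124 must be systematically excluded using the Appendix data.
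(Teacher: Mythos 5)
Your reverse implication ((3)+(4)+(5) $\Rightarrow$ (2)) matches the paper's argument, and your observation that (2) $\Rightarrow$ (3) is immediate from Fact~\ref{fact:theta5sqAF}(2) is correct. But the forward direction for (4) and (5) — and, in the paper's actual organization, the dichotomy itself — has a genuine gap. The paper does \emph{not} establish the dichotomy by eliminating the 105 targets one by one inside this proof; it derives everything from Theorem~\ref{thm:bjmbx} by bounding the Adams filtration of $\theta_5^2$, reducing to three cases ($\theta_5^2$ detected by $\lambda^6 h_0^2x_{124,8}$, by $\lambda^9 e_0\Delta h_6 g$, or a $\lambda^{10}$-multiple), and then showing that in every case other than the one recorded in (4)--(5) one has $\eta\theta_5^2=0$. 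The step you cannot do "by the tables" is exactly this last one: the only $\lambda^{10}$-multiple in $\pi_{125,125+5}S^{0,0}$ is $\lambda^{20}[g^4\Delta h_1 g]$ in $\AF=25$, and this class is a genuine \emph{nonzero permanent cycle} (Fact~\ref{fact:theta5sqAF}(4)), so no amount of differential bookkeeping excludes it as a value of $\eta\theta_5^2$. The paper kills it with a detection argument: $g^4\Delta h_1 g$ is detected by $\mathit{tmf}$ while $\theta_5$ maps to zero in $\mathit{tmf}$, hence $\eta\theta_5^2$ cannot be that class and must vanish. Your proposal never invokes $\mathit{tmf}$, and without it the exclusion of "alternative detectors" fails precisely at the one candidate that survives all differentials; the same issue infects your Part~1 if the machine elimination leaves any high-filtration permanent cycle (such as $g^4\Delta h_1 g$) as a potential target.

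A second, smaller gap is in how you chain (4) and (5) in the reverse direction. Statement (4) produces the relation for \emph{some} $\theta_5$ and \emph{some} class $[h_0^2x_{124,8}]$, and (5) for \emph{some} possibly different $[h_0^2x_{124,8}]$; to write $\lambda\eta\theta_5^2=\lambda^7\eta[h_0^2x_{124,8}]=\lambda^{10}[h_1h_4x_{109,12}]$ you need the existential statements to upgrade to universal ones, which is the content of the paper's Lemmas~\ref{lem:equistate4} and~\ref{lem:equistate5} (an explicit analysis of the indeterminacy of $\theta_5$, of $\theta_5^2$, and of $[h_0^2x_{124,8}]$ under multiplication by $\lambda^3\eta$). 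Finally, your remark that "$\lambda^{11}h_1h_4x_{109,12}$ vanishes" in the relevant $E_\infty$ is not something you can assert at that point — it is equivalent to the $d_{12}$ you are trying to prove — but it is also not needed: nonvanishing mod $\lambda^{11}$ plus vanishing mod $\lambda^{10}$ (the latter being automatic for a $\lambda^{10}$-multiple) already gives a nonzero $d_{12}$ via Theorem~\ref{thm:bjmbx}(1).
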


By further analyzing classical Adams differentials, we reduce the $\eta$-extension in statement $(5)$ of Proposition~\ref{prop:possibleh62} to a specific 2-extension in stem 125 (Corollary~\ref{cor:2ext125}), and then compare it with a particular $\nu$-extension in stem 125 (Lemma~\ref{lem:nuext125}) to demonstrate that the 2-extension cannot hold. This ultimately leads to the proof of Proposition~\ref{prop:state5false}.

\begin{proposition} \label{prop:state5false}
    If statement $(3)$ is true, then statement $(5)$ in Proposition~\ref{prop:possibleh62} must be false.
\end{proposition}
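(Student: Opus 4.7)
The plan is to prove Proposition~\ref{prop:state5false} by contradiction, following the two-step reduction telegraphed by the authors: first convert the hypothetical synthetic $\eta$-extension of statement (5) into a 2-extension in stem $125$, then compare this 2-extension with an independently computed $\nu$-extension in the same stem and derive an inconsistency. Throughout, the indispensable hypothesis (3) is what fixes the synthetic Adams $E_\infty$-page in the relevant tridegrees via Proposition~\ref{prop:30e8b746} and Fact~\ref{fact:theta5sqAF}, so that the necessary no-crossing conditions of Sections~\ref{sec:extenpage}--\ref{sec:rules} become verifiable from the Appendix data.

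For step one I would apply the Generalized Mahowald Trick (Theorem~\ref{thm:158d451a}) to the distinguished triangle
\[ S^{0,0} \xrightarrow{\ 2\ } S^{0,0} \longrightarrow S^{0,0}/2 \longrightarrow S^{1,0}, \]
combined with the cofiber sequence for $\eta$. The hypothetical relation $\lambda^3 \eta[h_0^2 x_{124,8}] = \lambda^6 [h_1 h_4 x_{109,12}]$ is exactly the type of synthetic extension this machinery translates: the factor of $h_1$ on the target is pulled out as an $\eta$-multiplication, and the Toda-bracket identity $\langle 2,\eta,2\rangle \ni \eta^2$ then converts the remaining $\eta$-extension into a $2$-extension between specific classes in $\pi_{125,*}S^{0,0}$. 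The output is the statement of Corollary~\ref{cor:2ext125}: a predicted $2$-extension whose source sits on a specific Adams $E_r$-page in stem $125$ and whose target has Adams filtration pinned down by statement (3).

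Step two produces a $\nu$-extension that is incompatible with the one just predicted. The starting point is the classical relations $2\nu = 0$ and $\eta\nu = 0$, which lift to $S^{0,0}$ synthetically. Using Lin's program for the $h_2$-multiplications in the range, the Generalized Leibniz Rule (Theorem~\ref{thm:e73f481e}) to promote those multiplications into $(\nu,E_r)$-extensions, and Corollary~\ref{cor:dfc6043e} to stretch across pages, I would prove Lemma~\ref{lem:nuext125}: a specific $\nu$-extension whose source coincides with (a $\lambda$-power multiple of) the source of the predicted $2$-extension from step one. Multiplying the two extensions and using $2\nu = 0$ forces a class in a synthetic homotopy group of stem $128$ that the Appendix tables show to be zero, yielding the desired contradiction.

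The main obstacle is step one. The translation of $\eta$-extensions into $2$-extensions via Theorem~\ref{thm:158d451a} is fragile precisely where no-crossing conditions fail, as Example~\ref{exam:Leibnizno} and Remark~\ref{rem:chuaerror} warn. Near $(s,t-s) = (14,125)$ there are several candidates in nearby bidegree that could each furnish a crossing on some $E_{r'}$-page with $r' < 12$, and each must be ruled out by inspecting the classical Adams differential and Ext data supplied in the Appendix (Tables~\ref{Table:S125.19}, \ref{Table:S126.10}, \ref{Table:S124.12}, \ref{Table:S125.20}). Once every such crossing is excluded using (3), the rest of the argument is systematic: Lemma~\ref{lem:nuext125} is a direct Lin-program computation, and the final comparison is an algebraic manipulation in $\pi_{*,*}S^{0,0}$ that $2\nu=0$ makes unavoidable.
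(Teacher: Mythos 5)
The overall architecture you describe --- reduce the $\eta$-extension of statement $(5)$ to a $2$-extension in stem $125$ and then play it off against a $\nu$-extension landing on the same class --- is indeed the paper's (it is announced just before the proposition), but your execution of both halves has genuine gaps. In step one, the reduction is not obtained by applying Theorem~\ref{thm:158d451a} to the cofiber of $2$. The paper first manufactures an auxiliary class $\alpha_1=[x_{123,9}+h_0x_{123,8}]$ in stem $123$ whose $\eta$-multiple detects $h_0^2x_{124,8}$ (via the differential $d_2(x_{125,8})=h_1(x_{123,9}+h_0x_{123,8})+h_0^2x_{124,8}$) and which satisfies $\lambda^3\alpha_1\cdot[h_0]=0$ (Lemma~\ref{lem:x1239}); only then does the shuffle $\eta^2=\langle[h_0],\eta,[h_0]\rangle$ convert $\eta\cdot\lambda^3\eta\alpha_1$ into $\langle\lambda^3\alpha_1,[h_0],\eta\rangle\cdot[h_0]$. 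The genuinely hard part of Lemma~\ref{lem:toda2ext} --- showing this bracket is detected by $\lambda^4h_0^2x_{125,9,2}$ rather than by the lower-filtration candidates $[h_0^2x_{125,5}]$ or $\lambda^2[h_6(\Delta e_1+C_0+h_0^6h_5^2)]$, each of which is excluded by a separate ad hoc argument --- is absent from your plan; without that identification the $2$-extension of Corollary~\ref{cor:2ext125} has no pinned-down source to compare with the $\nu$-extension of Lemma~\ref{lem:nuext125}, which lands precisely on $h_0^2x_{125,9,2}$.

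The more serious problem is your final contradiction. You invoke ``$2\nu=0$'', but $\nu$ generates a $\mathbb{Z}/8$ in the $2$-completed $\pi_3$, so this relation is false and the algebraic manipulation built on it collapses; moreover the contradiction does not occur in stem $128$, which the Appendix does not even tabulate. What the paper actually does is the following. Combining the two extensions gives $[\lambda^4h_1x_{121,7}]\cdot[h_2]\cdot[h_0]=\lambda^8[h_1h_4x_{109,12}]\neq 0$ in $\pi_{125,125+6}S^{0,0}/\lambda^9$. Since $h_1h_4x_{109,12}$ is not an $h_2$-multiple in Ext, the stem-$122$ class $[\lambda^4h_1x_{121,7}]\cdot[h_0]$ must be nonzero in $\AF\le 12$, forcing it to be $\lambda^6[h_6Md_0]$ or $\lambda^7[h_5x_{91,11}]$; in either case a filtration count shows $\lambda^4[h_1h_4x_{109,12}]$ is a $\lambda[h_2]$-multiple, hence vanishes in $S^{0,0}/(\lambda[h_2])\simeq\nu(S^0/\nu)$, which by rigidity means $h_1h_4x_{109,12}[0]$ must be killed by a $d_r$ with $r\le 5$ in the classical Adams spectral sequence of $S^0/\nu$ --- and Table~\ref{Table:Cnu126} shows it is not. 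This cofiber-of-$\nu$ argument is the actual engine of the contradiction and cannot be recovered from your sketch.
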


\begin{proof}[Proof of Theorem~\ref{thm:126survives}]
    From Proposition~\ref{prop:state5false}, at least one of  statements $(3)$ or $(5)$ in Proposition~\ref{prop:possibleh62} is false. Consequently,  statement $(2)$ is also false, which confirms that statement $(1)$ in Proposition~\ref{prop:possibleh62} is true.  
\end{proof}

In the rest of this section, we prove Propositions~\ref{prop:possibleh62} and \ref{prop:state5false}.\\

To prove Proposition~\ref{prop:possibleh62}, we first establish Lemmas~\ref{lem:equistate4} and \ref{lem:equistate5}, which demonstrate that the existential statements $(4)$ and $(5)$ in Proposition~\ref{prop:possibleh62} are equivalent to their corresponding universal statements $(4')$ and $(5')$. 

\begin{lemma} \label{lem:equistate4}
    The statement $(4)$ in Proposition~\ref{prop:possibleh62} is equivalent to the following statement $(4')$:
    \begin{enumerate}[label=(\arabic*')]\setcounter{enumi}{3} 
    \item For every $\theta_5$, we have $\theta_5^2$ is detected by $\lambda^6 h_0^2x_{124,8}$. In particular, 
    $$\theta_5^2 = \lambda^6 [h_0^2x_{124,8}] \neq 0 \in \pi_{124,124+4}S^{0,0}$$
    for some $[h_0^2x_{124,8}]$.
    \end{enumerate}
\end{lemma}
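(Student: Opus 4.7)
The direction $(4') \Rightarrow (4)$ is immediate: since $h_5^2$ is a permanent cycle by Barratt--Jones--Mahowald \cite{BJMtheta5}, at least one $\theta_5$ exists, and (4$'$) applied to it gives (4).

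For $(4) \Rightarrow (4')$, let $\theta_5$ be a class satisfying (4) and let $\theta_5'$ be any other synthetic lift of $h_5^2$ in $\pi_{62,64} S^{0,0}$. Since both are detected by $h_5^2 \in E_\infty^{2,64,64}$, the difference $\epsilon := \theta_5' - \theta_5$ lies in synthetic Adams filtration $\geq 3$, i.e., $\epsilon \in F_3 \pi_{62,64}(S^{0,0})$. By Remark \ref{rem:theta5choice}, every $\theta_5$ has order $2$, so $2\theta_5 = 0$ and therefore
$$(\theta_5')^2 \;=\; \theta_5^2 + 2\theta_5\epsilon + \epsilon^2 \;=\; \theta_5^2 + \epsilon^2.$$
Since $\lambda^6 h_0^2 x_{124,8}$ sits in synthetic Adams filtration $10$, it now suffices to show $\epsilon^2 \in F_{11}\pi_{124,128}(S^{0,0})$; then $(\theta_5')^2$ and $\theta_5^2$ agree modulo $F_{11}$ and are therefore detected by the same class $\lambda^6 h_0^2 x_{124,8}$, yielding (4$'$).

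To establish the filtration bound on $\epsilon^2$, I would enumerate the possible detecting classes for $\epsilon$ on the synthetic $E_\infty$-page in tridegrees $(s,62+s,64)$ with $s \geq 3$; via Theorem \ref{thm:rigid} and Proposition \ref{prop:30e8b746}, these correspond to classical $E_\infty$-classes in the $62$-stem of Adams filtration $\geq 3$, shifted by an appropriate power of $\lambda$. For each such representative $y$, I would compute $y^2$ on the synthetic $E_\infty$-page at weight $128$ in stem $124$, using Lin's program for the multiplicative structure of $\Ext_A$ together with the differential data summarized in the Appendix, and check that it lies in tridegree $(s',124+s',128)$ with $s' \geq 11$. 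The main obstacle is this case-by-case verification in the intermediate filtrations $s' = 6, 7, 8, 9, 10$: although each square has ``naive'' filtration $2s \geq 6$, one must carefully track both $\lambda$-weights and the subsequent Adams differentials/extensions surrounding the $124$-stem (cf.\ Tables~\ref{Table:S124.12} and \ref{Table:S125.19}) to rule out genuine contributions at these middle filtrations and push the effective filtration of $\epsilon^2$ all the way up to $11$.
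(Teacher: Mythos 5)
Your reduction is the same as the paper's: only $(4)\Rightarrow(4')$ needs proof, and the key identity is $(\theta_5+\epsilon)^2=\theta_5^2+\epsilon^2$ using $2\theta_5=0$ from Remark~\ref{rem:theta5choice}. Where you diverge is in bounding the filtration of $\epsilon^2$. You only record that $\epsilon$ has Adams filtration $\ge 3$ and then propose a case-by-case computation of squares of all potential detecting classes in filtrations $3$ through $10$, flagging the intermediate filtrations $s'=6,\dots,10$ as the main obstacle. That obstacle is not real: the paper invokes the computation of \cite{IWX} that $\pi_{62}\cong(\mathbb{Z}/2)^4$ with generators in Adams filtration $2,6,8,10$, and (via Remark~\ref{rem:theta5choice}) that $\pi_{62,64}S^{0,0}$ has no $\lambda$-torsion, so the indeterminacy $\epsilon$ automatically has $\AF\ge 6$ --- there are simply no classes in filtrations $3$, $4$, or $5$. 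Hence $\epsilon^2$ has $\AF\ge 12>10$ with no further computation, and the detection of $\theta_5^2$ by $\lambda^6 h_0^2x_{124,8}$ in $\AF=10$ is independent of the choice. Your plan would reach the same conclusion once the enumeration is actually carried out (it would find the list of detecting classes empty below filtration $6$), but as written it leaves the decisive quantitative step unexecuted and anticipates difficulties that the known structure of the $62$-stem removes entirely; moreover, computing squares of $E_\infty$-representatives in $\Ext$ and then chasing filtration jumps, as you propose, is a more delicate route than simply bounding $\AF(\epsilon)$ first and using multiplicativity of the filtration.
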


\begin{proof}
    We only need to show that statement $(4)$ implies statement $(4')$. 
    
    According to \cite{IWX}, 
    $$\pi_{62} \cong \mathbb{Z}/2 \oplus \mathbb{Z}/2 \oplus \mathbb{Z}/2 \oplus \mathbb{Z}/2,$$ 
    and is generated by $\theta_5$ and classes of $\AF=6,8,10$. Therefore, the indeterminacy of the classical $\theta_5$, or the difference of any two choices of classical $\theta_5$, lies in $\AF \ge 6$.
    
    As explained in Remark~\ref{rem:theta5choice}, $\pi_{62,62+2}S^{0,0}$ contains no $\lambda$-torsion, and therefore, the indeterminacy of the synthetic $\theta_5$ also belongs to $\AF \ge 6$. Since every $\theta_5$ has order 2, the indeterminacy of the synthetic $\theta_5^2$ lies in $\AF \ge 12$.

    Therefore, if for some $\theta_5$, $\theta_5^2$ is detected by this specific element $\lambda^6 h_0^2x_{124,8}$, which is nonzero in $\AF=10$, then for any $\theta_5$, $\theta_5^2$ is nonzero and detected by $\lambda^6 h_0^2x_{124,8}$.

\end{proof}

\begin{lemma} \label{lem:equistate5}
    The statement $(5)$ in Proposition~\ref{prop:possibleh62} is equivalent to the following statement $(5')$:
    \begin{enumerate}[label=(\arabic*')]\setcounter{enumi}{4} 
    \item For every homotopy class $[h_0^2x_{124,8}]$, we have
    $\lambda^3 \eta [h_0^2x_{124,8}]$ is detected by $\lambda^6 h_1h_4x_{109,12}$. In particular, we have 
    $$\lambda^3 \eta [h_0^2x_{124,8}] = \lambda^6 [h_1h_4x_{109,12}] \in \pi_{125,125+8} S^{0,0}$$
    for some $[h_1h_4x_{109,12}]$.
    \end{enumerate}
\end{lemma}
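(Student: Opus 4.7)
The direction $(5') \Rightarrow (5)$ is immediate, since a representative $[h_0^2x_{124,8}] \in \pi_{124, 134}S^{0,0}$ exists by Fact~\ref{fact:theta5sqAF}(3). To establish the converse, the plan is to mirror Lemma~\ref{lem:equistate4}, but with additional computational bookkeeping, because the filtration slack is tighter. Given two representatives $[h_0^2x_{124,8}]$ and $[h_0^2x_{124,8}]'$ in $\pi_{124, 134}S^{0,0}$, I would set $\epsilon := [h_0^2x_{124,8}]' - [h_0^2x_{124,8}]$, so $\AF(\epsilon) \ge 11$, and reduce the lemma to showing that $\lambda^3\eta\epsilon$ either vanishes or is detected at $\AF \ge 15$ in $\pi_{125, 133}S^{0,0}$. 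This suffices because $\lambda^6 h_1h_4x_{109,12}$ detects at tridegree $(14, 139, 133)$ on the synthetic $E_\infty$-page, so any contribution from $\epsilon$ at strictly higher filtration cannot alter the leading term of $\lambda^3\eta[h_0^2x_{124,8}]$.

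Since the naive filtration count only yields $\AF(\lambda^3\eta\epsilon) \ge 12$, I cannot conclude by filtration alone as in Lemma~\ref{lem:equistate4}. Instead, I will write $\epsilon$ as detected by $\lambda^{s-10} y$ with $y \in Z_\infty^{s, s+124}(S^0)$ and $s \ge 11$, so that the candidate leading term of $\lambda^3\eta\epsilon$ at tridegree $(s+1, s+126, 133)$ is $\lambda^{s-7} h_1 y$. By Proposition~\ref{prop:30e8b746}, this representative vanishes on the synthetic $E_\infty$-page precisely when $h_1y \in B_{s-6}^{s+1, s+126}(S^0)$, i.e., when $h_1y$ is hit by a classical Adams differential of length at most $s-6$. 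If $\lambda^{s-7} h_1 y$ does vanish on $E_\infty$, then the actual leading term of $\lambda^3\eta\epsilon$ comes from a hidden $\eta$-extension on $\epsilon$, and I will need to locate it as well.

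The proof then reduces to a finite case analysis over the Ext generators $y \in \Ext_A^{s, s+124}$ for $s \in \{11, 12, 13\}$, which are enumerated in Table~\ref{Table:S124.12} of the Appendix. For each such $y$, I would compute $h_1 y$ via Lin's program and, using the Adams differentials tabulated in the Appendix, verify that $\lambda^{s-7} h_1 y$ dies on the synthetic $E_\infty$-page and that any residual hidden $\eta$-extension from $\epsilon$ lands at $\AF \ge 15$ rather than contributing to the coset of $\lambda^6 h_1h_4x_{109,12}$. The hardest case will be $s = 13$: here $h_1 y$ lands in exactly the bidegree $\Ext_A^{14, 139}$ of the target, so I will need to directly rule out that any $\AF = 13$ indeterminacy satisfies $\lambda^6 h_1 y \equiv \lambda^6 h_1h_4x_{109,12}$ modulo $B_7^{14, 139}(S^0)$. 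Once these cases are cleared, the equivalence $(5) \Leftrightarrow (5')$ follows.
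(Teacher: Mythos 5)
Your reduction is the same as the paper's: both directions hinge on showing that the difference $\epsilon$ of two representatives of $[h_0^2x_{124,8}]$, which has $\AF\ge 11$, contributes to $\lambda^3\eta\epsilon$ only in $\AF\ge 15$. Your degree bookkeeping (the leading term $\lambda^{s-7}h_1y$ at tridegree $(s+1,s+126,133)$, the criterion $h_1y\in B_{s-6}^{s+1,s+126}$) is correct. But from there you diverge from the paper in the order of operations, and this is where the argument breaks. You multiply by $\eta$ first and propose a case analysis on $h_1y$ in stem $125$; the paper multiplies by $\lambda^3$ first and does the case analysis in stem $124$. The paper's order is not cosmetic: since multiplication by $\lambda$ is completely controlled by Proposition~\ref{prop:30e8b746}, one sees directly that every permanent cycle in stem $124$ at $\AF=11,12$ lies in $B_3$, hence dies in $E_\infty^{s,s+124,131}\cong Z_\infty/B_{s-6}$; so $\lambda^3\epsilon$ is forced into $\AF\ge 13$, where the only survivor is $e_0\Delta h_6g$, and then $h_1\cdot e_0\Delta h_6g=0$ in Ext pushes $\eta\lambda^3\epsilon$ into $\AF\ge 15$. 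No hidden-extension analysis is ever needed.

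Your route, by contrast, cannot be closed by the table lookups you describe. Killing the leading term $\lambda^{s-7}h_1y$ only tells you that $\lambda^3\eta\epsilon$ is detected in $\AF\ge s+2$; for the $s=11$ class (detected by $x_{124,11,3}$, which is a $d_3$-boundary and therefore genuinely present in $E_\infty^{11,135,134}\cong Z_\infty/B_2$) the class $\eta\epsilon$ could a priori be detected by a \emph{hidden} $\eta$-extension in $\AF=13$ or $14$ of stem $125$ — and $\AF=14$, weight $133$ is exactly where $\lambda^6h_1h_4x_{109,12}$ lives. Ruling out such a hidden extension is not a statement readable from the Ext charts and differential tables; it is the same kind of question as statement $(5)$ itself. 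You flag this step as ``verify that any residual hidden $\eta$-extension lands at $\AF\ge 15$,'' but you give no mechanism for the verification, and you misplace the difficulty by calling $s=13$ the hardest case (that case is immediate from $h_1\cdot e_0\Delta h_6g=0$). The fix is precisely to commute the $\lambda^3$ past the $\eta$ and argue as the paper does; as written, your case analysis in stem $125$ has a genuine gap at $s=11$ (and, formally, $s=12$, though there the detecting group in weight $134$ is already zero).
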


\begin{proof}
    We only need to show that statement $(5)$ implies statement $(5')$, which is sufficient to show that the indeterminacy of $[h_0^2x_{124,8}]$, or the difference between any two homotopy classes $[h_0^2x_{124,8}]$, when multiplied by $\lambda^3 \eta$, belongs to $\AF \ge 15$, given that $[h_1h_4x_{109,12}]$ has $\AF=14$.

    Since $[h_0^2x_{124,8}]$ has $\AF=10$, the indeterminacy is generated by cycles in $\AF \ge 11$ of stem 124. We observe that classical cycles in $\AF=11, 12$ of stem 124 are all killed by Adams $d_2$ or $d_3$-differentials, and cycles in $\AF=13$ are all annihilated by $h_1$ in Ext. Therefore, we conclude that the indeterminacy, when multiplied by $\lambda^3 \eta$, belongs to $\AF \ge 15$.
\end{proof}

\begin{remark}
    From Fact~\ref{fact:theta5sqAF}(2) and the rigidity Theorems~\ref{thm:rigid} and \ref{thm:17e90ac0} for the synthetic Adams spectral sequence for $S^{0,0}$, the right side of the equation in statement $(5')$ is nonzero if and only if statement $(3)$ is true.
\end{remark}

By Lemmas~\ref{lem:equistate4} and \ref{lem:equistate5}, we will freely interchange statements $(4)$ and $(4')$, as well as $(5)$ and $(5')$, depending on the context.\\

Now we prove Proposition~\ref{prop:possibleh62}.

\begin{proof}[Proof of Proposition~\ref{prop:possibleh62}]
    From the proof of Theorem~\ref{thm:bjmbx} \cite{BurklundXu} we know that
    $$\delta_1(h_6^2) = \lambda \eta \theta_5^2,$$
    where $\delta_1$ is the map $S^{0,0}/\lambda \rightarrow S^{1,-1}$. Suppose that $\eta \theta_5^2$ is detected by $\lambda^{n-5} T_n$ in $\AF = n$ of the synthetic Adams spectral sequence for some $T_n \in \Ext_A^{n, 125+n}$. This implies a differential in the $\delta_1$-ESS:
    $$d_{n-2}^{\delta_1} (h_6^2) = \lambda^{n-2} T_n.$$
    By Theorems~\ref{thm:rigid}, \ref{thm:17e90ac0} and Corollary~\ref{cor:2a636737}, this is equivalent to a 
    synthetic Adams differential
    $$d_{n-2} (h_6^2) = \lambda^{n-3} T_n$$
    and a classical Adams differential
    $$d_{n-2} (h_6^2) = T_n.$$
    This differential is nonzero if and only if $T_n$ is nonzero on the classical $E_{n-2}$-page, i.e., not the image of a differential $d_{\le n-3}$. In particular, when $\lambda \eta \theta_5^2=0$, we conclude that $h_6^2$ is a permanent cycle.

    We first show that statements $(3)$, $(4')$ and $(5')$ together imply statement $(2)$.

    By statements $(4')$ and $(5')$, we have that for any $\theta_5$,
    $$\lambda \eta \theta_5^2 = \lambda \eta \cdot \lambda^6 [h_0^2x_{124,8}] = \lambda^4 \cdot \lambda^3 \eta [h_0^2x_{124,8}] = \lambda^{10} [h_1h_4x_{109,12}].$$
Recall from Fact~\ref{fact:theta5sqAF}(2) that $h_1h_4x_{109,12}$ is a permanent cycle, and can only be killed classically by 
    $$d_6(x_{126,8,4} + x_{126,8}) \ \text{or} \ d_{12}(h_6^2).$$
    Therefore, statement $(3)$ implies that the expression $\lambda^{10} [h_1h_4x_{109,12}]$ is nonzero in synthetic homotopy, leading to a nonzero classical $d_{12}$-differential:
    $$d_{12}(h_6^2) = h_1h_4x_{109,12}.$$
    
We will complete the proof of
    Proposition~\ref{prop:possibleh62} by showing that if one the statements $(3)$, $(4)$ or $(5)$ is false, then statement $(2)$ is also false, and in this case, 
    $$\eta \theta_5^2 = 0 \in \pi_{125,125+5} S^{0,0},$$ 
    making statement $(1)$ true. This conclusion is reached by estimating the Adams filtration of $\theta_5^2$ and subsequently that of the expression $\eta \theta_5^2$.

We begin by estimating the Adams filtration of $\theta_5^2$ in   $\pi_{124,124+4} S^{0,0}$. First, we observe that this group  $\pi_{124,124+4} S^{0,0}$ does not contain any $\lambda$-torsion classes. 
This is because, in the 125-stem, the group (from Table~\ref{Table:S125.19} in the Appendix) 
$$\Ext_A^{i,125+i} = 0 \ \text{for} \ i\leq 4,$$ 
and thus, by the rigidity Theorems~\ref{thm:rigid} and \ref{thm:17e90ac0} for the synthetic Adams spectral sequence for $S^{0,0}$, in the 124-stem, $\lambda$-torsion classes can only appear in $\pi_{124,124+j} S^{0,0}$ for $j \geq 7$.

Additionally, by analyzing classical Adams differentials, the Adams filtration of $\theta_5^2$ is at least 10. In the case where it is of Adams filtration 10, it is detected by the element $h_0^2 x_{124,8}$, which, according to Fact~\ref{fact:theta5sqAF}(3), is a permanent cycle and cannot be killed. 

Upon further inspection of the differentials associated with elements in stem 124 and filtration between 10 and 13, we are left with three possibilities:
\begin{enumerate}[label=(\alph*)]
    \item $\theta_5^2 = \lambda^6 \cdot [h_0^2 x_{124,8}]$, which is statement (4), or
    \item $\theta_5^2 = \lambda^9 \cdot [e_0 \Delta h_6 g]$, where $e_0 \Delta h_6 g$ is permanent cycle in $\AF=13$, or
    \item $\theta_5^2$ is a $\lambda^{10}$-multiple.
\end{enumerate}

Since in Ext, we have $h_1 \cdot e_0 \Delta h_6 g = 0$, we deduce that in either possibilities (b) or (c), $\eta \theta_5^2$ is a $\lambda^{10}$-multiple. In other words, $\eta \theta_5^2$ has $\AF \ge 15$. In the group $\pi_{125,125+5} S^{0,0}$, the only class that is a $\lambda^{10}$-multiple is actually $\lambda$-free: By Fact~\ref{fact:theta5sqAF}(4), it is $\lambda^{20} g^4 \Delta h_1 g$ in $\AF=25$ and is detected by tmf (see \cite{tmf} for the Hurewicz image of tmf). Since $\theta_5$ maps to zero in tmf, we have that $\eta \theta_5^2$ maps to zero in tmf and therefore must be zero in this case. 

This shows that if statement (4) is false, then $\eta \theta_5^2 = 0$, and therefore $h_6^2$ is a permanent cycle.

Hence, we focus on the remaining possibility (a), assuming statement (4) holds:
$$\theta_5^2 = \lambda^6 \cdot [h_0^2 x_{124,8}].$$

By the proof of Lemma~\ref{lem:equistate5}, the only nonzero possibility for $\lambda^3 \eta [h_0^2x_{124,8}]$ is $\lambda^6 [h_1h_4x_{109,12}]$. Therefore, if statement (5) is false, then using the same tmf detection argument, we conclude that $\eta \theta_5^2 = 0$, and consequently, $h_6^2$ is a permanent cycle.

Finally, if statement (3) is false, an inspection reveals the only alternative classical differential:
$$d_6(x_{126,8,4} + x_{126,8}) = h_1h_4x_{109,12}.$$
This, combined with the tmf detection argument, would again imply $\eta \theta_5^2 = 0$, and consequently, $h_6^2$ is a permanent cycle. This completes the proof.
\end{proof}





Before we prove Proposition~\ref{prop:state5false}, we first state and prove a few lemmas.

For Lemma~\ref{lem:x1239}, we draw attention to the following element in Ext:
$$x_{123,9}+h_0x_{123,8} \in \Ext_A^{9, 123+9}.$$
From Tables~\ref{Table:S123}, \ref{Table:S125.19} in the Appendix, we have
\begin{fact} \label{fact:x1239}
    \begin{enumerate}

   \item  $x_{123,9}+h_0x_{123,8}$ survives to the Adams $E_{12}$-page, and is not killed by any classical differential.
    \item We have a classical nonzero differential 
    $$d_2 (x_{125,8}) = h_1 (x_{123,9}+h_0x_{123,8}) + h_0^2x_{124,8}.$$
   \end{enumerate}
\end{fact}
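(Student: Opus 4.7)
The plan is to verify both parts of Fact~\ref{fact:x1239} by combining the machine-computed Adams $E_2$-page and $d_2$-differentials from Lin's program with the propagation machinery (the Generalized Leibniz Rule, Theorem~\ref{thm:e73f481e}, and the Generalized Mahowald Trick, Theorem~\ref{thm:158d451a}) developed in Section~\ref{sec:rules}. The primary data sources will be Tables~\ref{Table:S123} and \ref{Table:S125.19} in the Appendix, which record the classical Adams spectral sequence in stems~123 and~125.

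Part~(2) is the easier half: the formula
\[
d_2(x_{125,8}) = h_1(x_{123,9}+h_0 x_{123,8}) + h_0^2 x_{124,8}
\]
lies in bidegree $(10,125+9)$ on the $E_2$-page, and the entire map $d_2 : E_2^{9,133} \to E_2^{11,134}$ of the sphere is computed by Lin's program using the secondary Steenrod algebra (see \cite{BJ,Nassau,Chua,LWXMachine}). So my plan for~(2) is simply to read off this $d_2$ from the Appendix table and note that the rows of the computed $d_2$-matrix in stem~125 contain the stated entry; nonzeroness is immediate from the linear independence of $h_1(x_{123,9}+h_0x_{123,8}) + h_0^2 x_{124,8}$ and the image of any lower-filtration~$d_2$.

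Part~(1) has two sub-claims. To show $x_{123,9}+h_0x_{123,8}$ survives to $E_{12}$, I would enumerate, for each $2 \le r \le 11$, the generators of $\Ext_A^{9+r,\,123+(9+r)-1}$ listed in Table~\ref{Table:S123} and rule out each as a possible target of $d_r$ on $x_{123,9}+h_0x_{123,8}$. Candidates fall into three classes: (a)~those killed in $\Ext$ by an $h_i$-relation, so that multiplicative naturality (the ordinary Leibniz rule) with a known differential forces the $d_r$ to vanish; (b)~those already known to be hit by a shorter differential, eliminated by Proposition~\ref{prop:8154e6f1}(3); and (c)~those requiring the Generalized Leibniz Rule applied to a map $f:X\to Y$ of finite spectra (whose Ext data and $d_2$ data are in \cite{LWXMachine}) together with a synthetic $(f,E_r)$-extension deduced from the Generalized Mahowald Trick. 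In each case the no-crossing hypotheses of Theorems~\ref{thm:e73f481e}, \ref{thm:158d451a} are verified by a finite inspection of the same tables. To show $x_{123,9}+h_0x_{123,8}$ is not killed by any $d_r$, I would similarly enumerate candidate sources in $\Ext_A^{9-r,\,123-r+1}$ at stem~124, and verify from the Appendix tables that each either supports a differential with a different target or is itself a permanent cycle in a different coset.

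The main obstacle is bookkeeping rather than conceptual: the $E_2$-page near stem~124 is dense and each individual exclusion is routine, but there are many of them, and several require invoking the Generalized Mahowald Trick at intermediate $r$ where verifying the no-crossing hypothesis is nontrivial. The redeeming feature is that all such hypotheses can be checked mechanically against the tabulated $E_2$ and $d_2$ data, so the entire argument can be carried out, and audited against, the computer-generated proof files of \cite{LWXMachine, LWXZenodo}.
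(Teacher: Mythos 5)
Your proposal takes essentially the same route as the paper: the Fact is justified there solely by reading off Tables~\ref{Table:S123} and \ref{Table:S125.19}, which are produced by Lin's program (the $d_2$ in part~(2) from the secondary Steenrod algebra computation, and the survival/non-bounding of $x_{123,9}+h_0x_{123,8}$ from the machine-generated exclusion arguments recorded in \cite{LWXZenodo}), and your plan is to carry out and audit exactly those computations with the same propagation tools. The only slip is notational: the differential in part~(2) is a map $E_2^{8,133}\to E_2^{10,134}$ rather than $E_2^{9,133}\to E_2^{11,134}$, and the table records its value as $h_1x_{123,9}+h_0^2x_{124,8}$, which agrees with the stated expression because the relation $h_0h_1=0$ kills the term $h_1\cdot h_0x_{123,8}$.
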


\begin{lemma} \label{lem:x1239}
    There exists a homotopy class 
    $$\alpha_1 = [x_{123,9}+h_0x_{123,8}] \in \pi_{123,123+9} S^{0,0}/\lambda^9$$
    with the following properties:
    \begin{enumerate}
    \item For any homotopy class $[h_0^2 x_{124,8}]$, there exist homotopy classes 
    $$\alpha_2 \in \pi_{124,124+13} S^{0,0}/\lambda^9, \ \alpha_3 \in \pi_{125,125+15} S^{0,0}/\lambda^9,$$ 
    such that 
    \begin{align*}
        \lambda^3 \eta \cdot \alpha_1 & = \lambda^3 [h_0^2 x_{124,8}] + 
\lambda^6 \alpha_2 && \in \pi_{124,124+7} S^{0,0}/\lambda^9,\\
\eta \cdot \alpha_2 & = \lambda \cdot \alpha_3 && \in \pi_{125,125+14} S^{0,0}/\lambda^9,
    \end{align*}
\item \leavevmode\vspace*{-\dimexpr\abovedisplayskip + \baselineskip} 
$$\lambda^3 \cdot \alpha_1 \cdot [h_0] = 0 \in \pi_{123,123+7} S^{0,0}/\lambda^9.$$
    \end{enumerate}
\end{lemma}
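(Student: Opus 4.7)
The plan is to construct $\alpha_1$ first, and then verify each claim by lifting classical Ext-level and $E_r$-level information into synthetic relations via Propositions~\ref{prop:6de7d130} and~\ref{prop:59f111f}. By Fact~\ref{fact:x1239}(1), the class $x_{123,9}+h_0x_{123,8}$ survives to the Adams $E_{12}$-page, hence lies in $Z_9^{9,132}$, and by Proposition~\ref{prop:59f111f} it represents a nonzero class on $E_\infty^{9,132,132}(\nu S^0/\lambda^9)$. I take $\alpha_1$ to be any such lift in $\pi_{123,123+9}S^{0,0}/\lambda^9$.

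For the first equation of (1), the key move is to translate the classical differential $d_2(x_{125,8})=h_1\alpha+h_0^2x_{124,8}$ from Fact~\ref{fact:x1239}(2) into synthetic data using Theorem~\ref{thm:rigid}. On the $E_\infty$-page of $\nu S^0/\lambda^9$ at weight $w=133$, Proposition~\ref{prop:59f111f} gives $E_\infty^{10,134,133}\cong Z_8^{10,134}/B_2^{10,134}$, a quotient in which $h_1\alpha$ and $h_0^2x_{124,8}$ coincide because their difference $d_2(x_{125,8})$ lies in $B_2^{10,134}$. Hence $\lambda\eta\alpha_1$ and $\lambda[h_0^2x_{124,8}]$ agree in $\pi_{124,124+9}S^{0,0}/\lambda^9$ modulo Adams filtration at least $11$. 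Multiplying by $\lambda^2$ to descend to weight $131$ and inspecting stem $124$ in the Appendix---where, as in the proof of Lemma~\ref{lem:equistate5}, all cycles in filtrations $11$ and $12$ are killed by Adams $d_2$- or $d_3$-differentials---I will show that this means the difference $\lambda^3\eta\alpha_1-\lambda^3[h_0^2x_{124,8}]$ is a $\lambda^6$-multiple of some $\alpha_2\in\pi_{124,124+13}S^{0,0}/\lambda^9$, because at weight $131$ the killed classes lie in $B_r$ with $r$ large enough to vanish in the relevant $Z/B$ quotients.

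For the second equation of (1), since $\alpha_2$ is detected in filtration $\geq 13$ and every filtration-$13$ cycle in stem $124$ is $h_1$-annihilated in $\mathrm{Ext}_A$, the product $\eta\alpha_2$ is detected in filtration $\geq 15$ on the synthetic $E_\infty$ of $\nu S^0/\lambda^9$ at weight $139$. The $\lambda$-divisibility required to produce $\alpha_3$ will follow by checking that the detecting cycle lifts from $E_\infty^{s,125+s,139}\cong Z_{23-s}^{s,125+s}/B_{s-13}^{s,125+s}$ to $E_\infty^{s,125+s,140}\cong Z_{24-s}^{s,125+s}/B_{s-14}^{s,125+s}$, which amounts to a short check that the relevant cycles in stem $125$ at filtration $\geq 15$ survive one page further classically, drawn from the Adams tables in the Appendix. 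For property (2), I will identify $\alpha_1\cdot[h_0]$ as detected by $h_0x_{123,9}+h_0^2x_{123,8}\in\mathrm{Ext}_A^{10,133}$ and argue, via the Appendix's data in stem $123$, that this class is hit by an Adams differential of length at most $4$; it then lies in $B_4^{10,133}$ and vanishes in $E_\infty^{10,133,130}(\nu S^0/\lambda^9)\cong Z_6^{10,133}/B_4^{10,133}$. A finite check of higher filtrations in stem $123$ at weight $130$ will rule out any remaining nonzero lift of $\lambda^3\alpha_1\cdot[h_0]$ in homotopy.

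The chief obstacle will be the bookkeeping in the first equation of (1): certifying that the error is $\lambda^6$-divisible rather than only $\lambda^4$- or $\lambda^5$-divisible requires an essentially exhaustive accounting of the permanent cycles and differential targets in stem $124$ at filtrations $11$–$13$, extracted directly from the Appendix, together with verification that every such error admits a representative in the image of $\lambda^6$ into weight $131$.
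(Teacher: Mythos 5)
Your construction of $\alpha_1$ and your plan for part (1) follow essentially the same route as the paper: both hinge on converting the classical differential $d_2(x_{125,8})=h_1(x_{123,9}+h_0x_{123,8})+h_0^2x_{124,8}$ into the statement that $\lambda\eta\alpha_1+\lambda[h_0^2x_{124,8}]$ has $\AF\ge 11$, and then pushing the filtration up to $13$ by the same stem-$124$ inspection used in Lemma~\ref{lem:equistate5} (cycles in filtrations $11$ and $12$ are killed by $d_2$ or $d_3$, so they vanish in the relevant $Z/B$ quotients at weight $131$; at filtration $13$ only $e_0\Delta h_6g$ survives, and it is $h_1$-annihilated, which gives the second equation). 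That part of your outline is sound, modulo the bookkeeping you acknowledge.

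The genuine gap is in property (2). You propose to work entirely in $S^{0,0}/\lambda^{9}$: show the detecting class of $\alpha_1\cdot[h_0]$ dies by a short differential, and then dispose of higher filtrations at weight $130$ by ``a finite check.'' That check cannot succeed as stated, because the $E_\infty$-page of $\nu S^0/\lambda^{9}$ in stem $123$, weight $130$, contains genuinely nonzero classes in the intervening filtrations: $\lambda^4(x_{123,11,2}+x_{123,11}+h_0h_6[B_4])$ in $\AF=11$ lies in $Z_5/B_5$ (the element supports a $d_7$, hence survives to $E_7$ and is not a $d_{\le 5}$-boundary), and $\lambda^8 h_0^2x_{123,13,2}$ in $\AF=15$ lies in $Z_1/B_9$ (its $d_3$ lands in $\lambda^{10}$-divisible territory and so is invisible mod $\lambda^{9}$). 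Inspection of $E_\infty(\nu S^0/\lambda^{9})$ alone cannot exclude these as values of $\lambda^3\alpha_1\cdot[h_0]$. The paper's proof resolves this by defining $\alpha_1$ as a class in $\pi_{*,*}S^{0,0}/\lambda^{11}$ from the outset and computing the product there first: modulo $\lambda^{11}$ the two candidates above are killed (the $d_7$ and the $d_3$ become visible in $Z_7/B_5$ and $Z_3/B_9$ respectively), so the product has $\AF\ge 17$ and is therefore $\lambda^{10}$-divisible, hence maps to zero in $S^{0,0}/\lambda^{9}$. This passage to a higher power of $\lambda$ is the essential idea your proposal is missing, and your choice to define $\alpha_1$ only in $S^{0,0}/\lambda^{9}$ forecloses it; you would need either to adopt that device or to supply a different argument (not sketched in your proposal) ruling out the $\AF=11$ and $\AF=15$ values.
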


\begin{proof}
From Fact~\ref{fact:x1239}(1), the element $x_{123,9}+h_0x_{123,8}$ survives to the Adams $E_{12}$-page, and is not killed by any classical differential.

Let $\alpha_1 = [x_{123,9}+h_0x_{123,8}] \in \pi_{123,123+9} S^{0,0}/\lambda^{11}$ denote any homotopy class detected by $x_{123,9}+h_0x_{123,8}$. For simplicity, we also use  $\alpha_1$ to denote its images in $\pi_{123,123+9} S^{0,0}/\lambda^r$ for $1 \leq r \leq 10$, under the following sequence of maps:
$$
\xymatrix{
S^{0,0}/\lambda^{11} \ar[r] & S^{0,0}/\lambda^{10} \ar[r] & \cdots \ar[r] & S^{0,0}/\lambda \\
\alpha_1 = [x_{123,9}+h_0x_{123,8}] \ar@{|->}[r] & \alpha_1 \ar@{|->}[r] & \cdots \ar@{|->}[r] & x_{123,9}+h_0x_{123,8}
}$$

From the nonzero $d_2$-differential in Fact~\ref{fact:x1239},  we have 
$$h_1 \cdot (x_{123,9}+h_0x_{123,8}) = h_0^2x_{124,8},$$
on the classical $E_3$-page. This implies synthetically, for $2 \leq r \leq 11$,
$$\lambda \eta \cdot \alpha_1 + \lambda [h_0^2 x_{124,8}] \in \pi_{124,124+9} S^{0,0}/\lambda^r$$
lies in $\AF \ge 11$ for any $[h_0^2 x_{124,8}]$.


By inspection, as in the proof of Lemma~\ref{lem:equistate5}, $$\lambda^3 \eta \cdot \alpha_1 + \lambda^3 [h_0^2 x_{124,8}] \in \pi_{124,124+7} S^{0,0}/\lambda^9$$
has $\AF \ge 13$. The only possibility for it to have $\AF=13$ is that it is detected by the element $\lambda^6 e_0 \Delta h_6g$. (Note that the class $[\lambda^6 h_4 x_{109,12}]$ is irrelevant due to the nonzero differential $d_3(\lambda^6 h_4 x_{109,12}) = \lambda^8 h_1x_{122,15,2}$.) Since in Ext we have 
$$h_1 \cdot e_0 \Delta h_6g = 0,$$
we may choose $\alpha_2 \in \pi_{124,124+13} S^{0,0}/\lambda^9$ to be any class detected by $\lambda^6 e_0 \Delta h_6g$, with the property that $\eta \alpha_2$ is $\lambda$-divisible. Thus, there exists an $\alpha_3$ such that $\eta \alpha_2 = \lambda  \alpha_3$.

This proves the required property $(1)$.

For the relation in property $(2)$, we will first prove it in $\pi_{123,123+7} S^{0,0}/\lambda^{11}$ and then map it to $\pi_{123,123+7} S^{0,0}/\lambda^9$.

By Proposition~\ref{prop:59f111f} for the synthetic Adams spectral sequence for $S^{0,0}/\lambda^{11}$, the expression 
$$\lambda^3 \cdot \alpha_1 \cdot [h_0] \in  \pi_{123,123+7} S^{0,0}/\lambda^{11}$$
has $\AF \ge 17$. 
In particular, the values $$\lambda^4(x_{123,11,2}+x_{123,11}+h_0h_6B_4) \ \text{in} \ \AF=11,$$ 
$$\lambda^8 h_0^2x_{123,13,2} \ \text{in} \ \AF=15$$
can be ruled out due to the nonzero Adams differentials
$$d_7( \lambda^4 \cdot (x_{123,11,2}+x_{123,11}+h_0h_6B_4) \ ) = \lambda^{10} h_1x_{121,17},$$
$$d_3 (\lambda^8 \cdot h_0^2x_{123,13,2}) = \lambda^{10} h_0^2 x_{122,16},$$
in the synthetic Adams spectral sequence for $S^{0,0}/\lambda^{11}$, which are zero in the spectral sequence for $S^{0,0}/\lambda^{9}$.

Therefore, the expression $\lambda^3 \cdot \alpha_1 \cdot [h_0]$ is $\lambda^{10}$-divisible in $\pi_{123,123+7} S^{0,0}/\lambda^{11}$. Mapping it further to $\pi_{123,123+7} S^{0,0}/\lambda^9$, we conclude that it is zero. 
\end{proof}





For Lemma~\ref{lem:toda2ext}, we draw attention to the following element in Ext:
$$h_0^2x_{125,9,2} \in \Ext_A^{11, 125+11}.$$
From Table~\ref{Table:S125.19} in the Appendix, we have
\begin{fact} \label{fact:h02x1259}
    The element $h_0^2x_{125,9,2}$ survives to the Adams $E_5$-page, and is not killed by any classical differential.
\end{fact}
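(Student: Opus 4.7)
The plan is to verify Fact~\ref{fact:h02x1259} directly from the computational data tabulated in the Appendix, since this is a statement purely about the classical Adams spectral sequence in a specific bidegree $(s,t) = (11, 125+11)$ where all relevant Ext groups and low-page differentials have been computed by Lin's program (see \cite{LWXMachine}). I will split the claim into two parts: (a) $d_r(h_0^2 x_{125,9,2}) = 0$ for $r = 2, 3, 4$, and (b) $h_0^2 x_{125,9,2}$ is not in the image of any $d_r$ for $r \geq 2$.

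For part (a), I will use the Leibniz rule together with the fact that $h_0$ is a permanent cycle detecting $2 \in \pi_0 S^0$. This reduces the computation to analyzing $d_r(x_{125,9,2})$ in $\Ext_A^{9+r, 125+9+r-1}$ for $r = 2, 3, 4$, and showing that $h_0^2 \cdot d_r(x_{125,9,2}) = 0$ in Ext for each such $r$. I will consult Table~\ref{Table:S125.19} for the value of $d_2(x_{125,9,2})$ computed by Lin's program via secondary Steenrod algebra methods, and then appeal to multiplicative structure in Ext to check $h_0^2$-annihilation of each possible target; for $r=3,4$ either the differential is already forced to vanish on $x_{125,9,2}$ by naturality from earlier stems, or its target lies in an $h_0^2$-divisible-free portion of Ext.

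For part (b), I will enumerate all elements in bidegrees $(s-r, t-r+1) = (11-r, 125+11-r+1)$ for $r \geq 2$ that could conceivably support a differential hitting $h_0^2 x_{125,9,2}$, using Tables~\ref{Table:S125.19} and \ref{Table:S125.20} to list the generators of $\Ext_A^{s',125+s'}$ for $s' \leq 10$. For each such potential source $y$, I will rule out $d_r(y) = h_0^2 x_{125,9,2}$ by one of the following: (i) $y$ already supports a known nonzero earlier differential or has been killed by one (from the Appendix tables); (ii) the target of the relevant $d_r(y)$ has been independently computed and is different; or (iii) the Generalized Leibniz Rule (Theorem~\ref{thm:e73f481e}) applied to a known shorter differential involving $y$ rules out this hypothetical target by a filtration or crossing argument.

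The main obstacle I anticipate is part (b) for small values of $r$, specifically $r = 2, 3$, where the potential sources lie in high filtration (filtrations $9$ and $8$) in stem $125$ and may themselves be complicated decomposable elements whose $d_r$-differentials are not directly tabulated. For these I expect to need the Generalized Mahowald Trick (Theorem~\ref{thm:158d451a}) applied to the cofiber sequences arising from well-understood low-filtration maps, together with naturality along maps from finite spectra for which $E_2$-data has been computed (see \cite{LWXMachine}). The output is a finite case-check that is routine in principle but delicate in bookkeeping, and I will lean heavily on the machine-generated proofs of \cite{LWXZenodo} to confirm each sub-case.
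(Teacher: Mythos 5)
The paper's entire justification for this Fact is a citation of Table~\ref{Table:S125.19}: the row for $s=11$ in stem $125$ records that $h_0^2x_{125,9,2}$ supports at earliest a $d_5$ (with undetermined target), and no row in any table lists $h_0^2x_{125,9,2}$ as the target of a differential. Your ultimate reliance on the machine-generated data of \cite{LWXMachine} and \cite{LWXZenodo} is therefore the same as the paper's, but the hand-verification scheme you wrap around it has two concrete defects. First, your part (a) reduction via the Leibniz rule to $d_r(x_{125,9,2})$ for $r=2,3,4$ breaks down immediately after $r=2$: Table~\ref{Table:S125.19} records $d_2(x_{125,9,2})=h_0x_{124,10,2}+h_0^3x_{124,8}\neq 0$, so $x_{125,9,2}$ does not survive to $E_3$ and $d_3(x_{125,9,2})$, $d_4(x_{125,9,2})$ are not even defined. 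The vanishing of $d_2$ on the product $h_0^2x_{125,9,2}$ amounts to the Ext-multiplication statement $h_0^2\cdot(h_0x_{124,10,2}+h_0^3x_{124,8})=0$, which is fine; but for $d_3$ and $d_4$ you must argue directly in the target bidegrees (stem $124$, filtrations $14$ and $15$, modulo the images of earlier differentials), not via a Leibniz factorization through a class that is already dead on the $E_2$-page.

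Second, in part (b) you propose to enumerate potential sources using Tables~\ref{Table:S125.19} and \ref{Table:S125.20}, i.e.\ generators of $\Ext_A^{s',125+s'}$. That is the wrong stem: a differential $d_r(y)=h_0^2x_{125,9,2}$ has source $y\in E_r^{11-r,137-r}$, which lies in stem $126$, so the relevant table is Table~\ref{Table:S126.10} (stem $126$, $s\le 9$). Scanning that table, the differentials out of stem $126$ in low filtration hit classes such as $h_0^3x_{125,9,2}$, $h_1x_{124,10,2}$, and $h_1x_{124,10}$, but never $h_0^2x_{125,9,2}$ itself; this is how the ``not killed'' half of the Fact is read off. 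With these two corrections your plan collapses to exactly what the paper does, namely a lookup in the Appendix tables, and the extra machinery (Generalized Mahowald Trick, naturality along finite complexes) is not needed for this particular Fact.
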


\begin{lemma} \label{lem:toda2ext}
    Assuming that both statements $(3)$ and $(5')$ in Proposition~\ref{prop:possibleh62} are true, the synthetic Toda bracket
$$\langle \lambda^3 \alpha_1, [h_0], \eta \rangle \subset \pi_{125,125+7} S^{0,0}/\lambda^9$$
    does not contain zero, and is detected by $\lambda^4 h_0^2x_{125,9,2}$. Here $\alpha_1 = [x_{123,9}+h_0x_{123,8}]$ refers to the homotopy class described in Lemma~\ref{lem:x1239}.
\end{lemma}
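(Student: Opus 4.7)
I would prove Lemma~\ref{lem:toda2ext} by a direct computation of the synthetic Toda bracket via a synthetic Moss-type theorem, using the classical Adams $d_2$-differential of Fact~\ref{fact:x1239}(2) as the essential input.

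First, I verify that the bracket is defined. The vanishing $\lambda^3\alpha_1\cdot[h_0]=0$ in $\pi_{123,123+7}S^{0,0}/\lambda^9$ is Lemma~\ref{lem:x1239}(2). The vanishing $[h_0]\cdot\eta=0$ in $\pi_{1,3}S^{0,0}/\lambda^9$ follows from $h_0h_1=0$ in $\Ext_A^{2,3}$ combined with $\Ext_A^{s,s+1}=0$ for $s\ge 2$; Proposition~\ref{prop:59f111f} then ensures the relevant tridegree of $S^{0,0}/\lambda^9$ is trivial, so $[h_0]\eta$ is strictly zero.

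Second, I would identify the Toda bracket via a synthetic Moss theorem, most naturally phrased through the Generalized Mahowald Trick (Theorem~\ref{thm:158d451a}) applied to the cofiber sequence $S^{0,1}\xrightarrow{[h_0]}S^{0,0}\to\nu(S/2)\to S^{1,1}$. The element $x_{123,9}+h_0x_{123,8}$ is evidently chosen to be $h_0$-torsion in $\Ext$ (this is the force of the precise linear combination), so the Massey product $\langle x_{123,9}+h_0x_{123,8},h_0,h_1\rangle\subset\Ext_A^{10,135}$ is defined on the classical $E_2$-page. A chain-level computation, verifiable via Lin's program \cite{LWXMachine}, should pin down this Massey product as an element subsequently killed by the $d_2$-differential $d_2(x_{125,8})=h_1(x_{123,9}+h_0x_{123,8})+h_0^2x_{124,8}$ from Fact~\ref{fact:x1239}(2). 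Moss's convergence statement then shifts the Toda bracket's detection one filtration higher into $\AF=11$, landing on $\lambda^4h_0^2x_{125,9,2}$, which is an $E_\infty$-cycle on $S^{0,0}/\lambda^9$ by Fact~\ref{fact:h02x1259}.

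Here is where the two hypotheses enter: hypothesis~(5') fixes $\lambda^3\eta\cdot[h_0^2x_{124,8}]=\lambda^6[h_1h_4x_{109,12}]$, which controls the $[h_0^2x_{124,8}]$-contribution arising from the $d_2$-differential in the previous step; hypothesis~(3) excludes a $d_6$ on $x_{126,8,4}+x_{126,8}$ that would otherwise create crossing terms at filtration $14$ and spoil the argument. Without these inputs, the detection could drop to $\AF=10$ or remain ambiguous.

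The main obstacle is the filtration jump from the naive bound $\AF\ge 10$ up to the claimed $\AF=11$. One must inspect every candidate detecting element in $\Ext_A^{10,135}$ from Table~\ref{Table:S125.19}, rule each out using Moss-type no-crossing conditions and the Adams differentials in the range, and confirm that the only survivor is $\lambda^4h_0^2x_{125,9,2}$. Verifying these no-crossing conditions in such a dense region of the Adams chart, and correctly tracking the indeterminacy of the bracket modulo $\lambda^3\alpha_1\cdot\pi_{2,*}+\pi_{124,*}\cdot\eta$, is the principal technical difficulty.
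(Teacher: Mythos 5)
Your approach (compute the bracket directly via a Moss-type convergence argument from the Massey product $\langle x_{123,9}+h_0x_{123,8},h_0,h_1\rangle$ and the $d_2$ of Fact~\ref{fact:x1239}(2)) is genuinely different from the paper's, and it has gaps that I do not see how to close. The paper instead multiplies the bracket by $[h_0]$ and shuffles: $\langle \lambda^3\alpha_1,[h_0],\eta\rangle\cdot[h_0]=\lambda^3\alpha_1\cdot\langle[h_0],\eta,[h_0]\rangle=\eta\cdot\lambda^3\eta\alpha_1$, which by Lemma~\ref{lem:x1239} and hypothesis $(5')$ equals $\lambda^6[h_1h_4x_{109,12}]+\lambda^7\alpha_3$; hypothesis $(3)$ guarantees this is nonzero and detected in $\AF=14$. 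That single computation is what shows the bracket does not contain zero \emph{and} (since $h_1h_4x_{109,12}$ is not $h_0$-divisible in Ext) bounds its filtration by $12$ from above. Your description of where $(3)$ and $(5')$ enter ("controls the contribution from the $d_2$", "excludes crossing terms") does not reflect this mechanism and leaves the non-vanishing of the bracket unestablished.

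Two concrete gaps. First, you assert that $\lambda^4h_0^2x_{125,9,2}$ is an $E_\infty$-cycle on $S^{0,0}/\lambda^9$ "by Fact~\ref{fact:h02x1259}", but that fact only gives survival to $E_5$ (membership in $Z_4$), whereas by Proposition~\ref{prop:59f111f} the relevant $E_\infty$-group is $Z_5/B_5$; whether $d_5(h_0^2x_{125,9,2})=0$ is open (Table~\ref{Table:S125.19}), and the paper \emph{derives} it from the lemma (Remark~\ref{rem:h02x1259}) via the nonvanishing of the $[h_0]$-multiple rather than assuming it. Second, the lower-filtration candidates that must be excluded sit in $\AF=7$ and $\AF=9$ (not $\AF=10$), and the $\AF=9$ candidate $\lambda^2[h_6(\Delta e_1+C_0+h_0^6h_5^2)]$ detects a permanent cycle, so it cannot be eliminated by "Adams differentials in the range" or no-crossing bookkeeping. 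The paper needs a separate homotopy-theoretic argument here: $[h_6(\Delta e_1+C_0+h_0^6h_5^2)]\in\langle\theta_5,2,[\Delta e_1+C_0+h_0^6h_5^2]\rangle$, whose $\lambda^2[h_0]$-multiple is $\lambda^3\eta\theta_5[\Delta e_1+C_0+h_0^6h_5^2]$, and this is incompatible with $\theta_5^2=\lambda^6[h_0^2x_{124,8}]$ because $h_5^2(\Delta e_1+C_0+h_0^6h_5^2)=0\neq h_0^2x_{124,8}$ in Ext. Without an argument of this kind your elimination step fails, and the detection could land in $\AF=9$.
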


Note that the synthetic Toda bracket in Lemma~\ref{lem:toda2ext} is well defined, as the homotopy class $\alpha_1$ in Lemma~\ref{lem:x1239} satisfies the relation $\lambda^3 \alpha_1 \cdot [h_0] = 0$.

\begin{remark} \label{rem:h02x1259}
    According to Fact~\ref{fact:h02x1259}, it is not yet known whether the element $h_0^2x_{125,9,2}$ supports a nonzero $d_5$-differential. Assuming that both statements $(3)$ and $(5')$ in Proposition~\ref{prop:possibleh62} are true, Lemma~\ref{lem:toda2ext} specifically implies that $\lambda^4 h_0^2x_{125,9,2}$ detects a nonzero homotopy class in $\pi_{125,125+7} S^{0,0}/\lambda^9$. Therefore, under these assumptions, we would have $d_5(h_0^2x_{125,9,2}) = 0$. 
\end{remark}

\begin{proof}[Proof of Lemma~\ref{lem:toda2ext}]
    We assume both statements $(3)$ and $(5')$ in Proposition~\ref{prop:possibleh62} are true. From statement $(5')$, we have 
    $$\lambda^3 \eta [h_0^2x_{124,8}] = \lambda^6 [h_1h_4x_{109,12}] \in \pi_{125,125+8} S^{0,0}.$$
Mapping this relation to $S^{0,0}/\lambda^9$, and applying the following relations from Lemma~\ref{lem:x1239}
\begin{align*}
        \lambda^3 \eta \cdot \alpha_1 & = \lambda^3 [h_0^2 x_{124,8}] + 
\lambda^6 \alpha_2 && \in \pi_{124,124+7} S^{0,0}/\lambda^9,\\
\eta \cdot \alpha_2 & = \lambda \cdot \alpha_3 && \in \pi_{125,125+14} S^{0,0}/\lambda^9,
    \end{align*}
we have
\begin{align*}
    \eta \cdot \lambda^3 \eta \alpha_1 & = \eta \cdot \lambda^3 [h_0^2 x_{124,8}] + \eta \cdot \lambda^6 \alpha_2\\
   & = \lambda^6 [h_1h_4x_{109,12}] + \lambda^7 \alpha_3 \in \pi_{125,125+8} S^{0,0}/\lambda^9,
 \end{align*}
 which, from statement $(3)$ and Remark~\ref{rem:h02x1259}, is nonzero and detected by $\lambda^6 h_1h_4x_{109,12}$ in $\AF=14$.
 
 On the other hand, since $\eta^2 = \langle [h_0], \eta, [h_0] \rangle$, we have
 $$\eta \cdot \lambda^3 \eta \alpha_1 = \lambda^3 \alpha_1 \cdot \langle [h_0], \eta, [h_0] \rangle = \langle \lambda^3 \alpha_1, [h_0], \eta \rangle \cdot [h_0].$$
 Therefore, the synthetic Toda bracket
$\langle \lambda^3 \alpha_1, [h_0], \eta \rangle$
    does not contain zero, and its $[h_0]$-multiple is detected by $\lambda^6 h_1h_4x_{109,12}$ in $\AF=14$. Since $h_1h_4x_{109,12}$ is not $h_0$-divisible in Ext, the synthetic Toda bracket is detected by an element in $\AF \le 12$.

This synthetic Toda bracket $\langle \lambda^3 \alpha_1, [h_0], \eta \rangle$ lies in $\pi_{125,125+7} S^{0,0}/\lambda^9$, whose $\AF \le 12$ part is generated by
\begin{align*}
[h_0^2 x_{125,5}] \ & \text{in} \ \AF=7,\\ 
\lambda^2 [h_6(\Delta e_1 +C_0+h_0^6h_5^2)] \ & \text{in} \ \AF=9,\\ 
[\lambda^4 h_0^2 x_{125,9,2}] \ & \text{in} \ \AF=11.
\end{align*}


From Table~\ref{Table:S126.10} in the Appendix, we have
$$0 \neq d_3(x_{126,6}) = \lambda^2 h_5 x_{94,8} + \ \text{possibly} \ \lambda^2 h_6(\Delta e_1 +C_0+h_0^6h_5^2).$$ 
In both scenarios, $\lambda^2 [h_6(\Delta e_1 +C_0+h_0^6h_5^2)]$ remains and is in $\AF=9$.

For the rest of the proof, we only need to rule out the cases $[h_0^2 x_{125,5}]$ in $\AF=7$ and $\lambda^2 [h_6(\Delta e_1 +C_0+h_0^6h_5^2)]$ in $\AF=9$. 

For $[h_0^2 x_{125,5}]$ in $\AF=7$, due to the nonzero $d_3$-differential
$$d_3(x_{126,4}) = \lambda^2 h_0^2 x_{125,5},$$
it can be chosen to be annihilated by $\lambda^2$. 

However, from statement $(3)$ and Remark~\ref{rem:h02x1259}, $\lambda^8 [h_1h_4x_{109,12}]$ remains nonzero in the homotopy of $S^{0,0}/\lambda^9$. Therefore, the synthetic Toda bracket is not annihilated by $\lambda^2$. As discussed earlier, its $[h_0]$-multiple is detected by $\lambda^6 h_1h_4x_{109,12}$, and thus, this case of $[h_0^2 x_{125,5}]$ can be ruled out.

For $\lambda^2 [h_6(\Delta e_1 +C_0+h_0^6h_5^2)]$ in $\AF=9$, we first consider a classical Toda bracket in stem 125:
$$\langle \theta_5, 2, [\Delta e_1 +C_0+h_0^6h_5^2] \rangle.$$

From \cite{Xu, IWX}, $\pi_{62} \cong \mathbb{Z}/2 \oplus \mathbb{Z}/2 \oplus \mathbb{Z}/2 \oplus \mathbb{Z}/2$, so in particular both $\theta_5$ and $[\Delta e_1 +C_0+h_0^6h_5^2]$ have order 2, and this classical Toda bracket is well defined.

From classical $d_2$-differentials:
$$d_2(h_6) = h_0h_5^2, \ d_2(h_0^6h_6)= h_0(\Delta e_1 +C_0+h_0^6h_5^2),$$
we obtain the following Massey product on the $E_3$-page
$$h_6(\Delta e_1 +C_0+h_0^6h_5^2) = \langle h_5^2, h_0, \Delta e_1 +C_0+h_0^6h_5^2 \rangle,$$
and we check that it has zero indeterminacy. Further analysis shows that there are no crossing differentials, as per the criteria of Moss's theorem \cite[Theorem~1.2]{Moss} (noting that crossing differentials in Moss's theorem have a different meaning from our definition). Therefore, we have a classical Toda bracket 
$$[h_6(\Delta e_1 +C_0+h_0^6h_5^2)] \in \langle \theta_5, 2, [\Delta e_1 +C_0+h_0^6h_5^2] \rangle.$$

Synthetically, by inspection, we also have $2\theta_5=0$ and $2 [\Delta e_1 +C_0+h_0^6h_5^2] = 0$. It follows that there is a corresponding synthetic Toda bracket
$$[h_6(\Delta e_1 +C_0+h_0^6h_5^2)] \in \langle \theta_5, 2, [\Delta e_1 +C_0+h_0^6h_5^2] \rangle.$$
Multiplying by $\lambda^2 [h_0]$, we get:
\begin{align*}
    \lambda^2 [h_0] \cdot [h_6(\Delta e_1 +C_0+h_0^6h_5^2)]& = \lambda^2 [h_0] \cdot \langle \theta_5, 2, [\Delta e_1 +C_0+h_0^6h_5^2] \rangle \\
    & = \lambda \cdot \langle 2, \theta_5, 2 \rangle [\Delta e_1 +C_0+h_0^6h_5^2]\\
    & = \lambda^3 \eta \theta_5 [\Delta e_1 +C_0+h_0^6h_5^2].
\end{align*}
Note that all expressions in the above equation have zero indeterminacy. 

If the synthetic Toda bracket $\langle \lambda^3 \alpha_1, [h_0], \eta \rangle$ were $\lambda^2 [h_6(\Delta e_1 +C_0+h_0^6h_5^2)]$, mapping the above equation to $S^{0,0}/\lambda^9$, we would then have a nonzero equation in $\pi_{125,125+8} S^{0,0}/\lambda^9$.
\begin{align*}
    \lambda^3 \eta [h_0^2 x_{124,8}] = \lambda^6 [h_1h_4x_{109,12}] & = \langle \lambda^3 \alpha_1, [h_0], \eta \rangle [h_0] + \lambda^6 \eta \alpha_2 \\ & = \lambda^2 [h_6(\Delta e_1 +C_0+h_0^6h_5^2)] [h_0] + \lambda^6 \eta \alpha_2\\ & = \lambda^3 \eta \theta_5 [\Delta e_1 +C_0+h_0^6h_5^2] + \lambda^6 \eta \alpha_2.
\end{align*}
For this equation to be nonzero, we must have a nonzero equation
$$\lambda^3 [h_0^2 x_{124,8}] = \lambda^3 \theta_5 [\Delta e_1 +C_0+h_0^6h_5^2] + \lambda^6 \alpha_2 \ \ \text{in} \ \ \pi_{124,124+7} S^{0,0}/\lambda^9.$$
However, in Ext, we have 
$$h_5^2 (\Delta e_1 +C_0+h_0^6h_5^2) = 0 \neq h_0^2 x_{124,8} \in \Ext_A^{10,124+10},$$
so this equation is not possible.

We have ruled out the possibility that the synthetic Toda bracket $\langle \lambda^3 \alpha_1, [h_0], \eta \rangle$ is detected by $\lambda^2 [h_6(\Delta e_1 +C_0+h_0^6h_5^2)]$ or $[h_0^2 x_{125,5}]$. Therefore, we conclude that it must be detected by $[\lambda^4 h_0^2 x_{125,9,2}]$.
\end{proof}

From the proof of Lemma~\ref{lem:toda2ext}, we have the following $[h_0]$-extension.

\begin{corollary} \label{cor:2ext125}
    Assuming that both statements $(3)$ and $(5')$ in Proposition~\ref{prop:possibleh62} are true, we have a relation: For any homotopy class $[\lambda^4 h_0^2x_{125,9,2}]$,
    $$[\lambda^4 h_0^2x_{125,9,2}] \cdot [h_0] = \lambda^6 [h_1h_4x_{109,12}] \neq 0 \in \pi_{125,125+8} S^{0,0}/\lambda^9,$$
    for some $[h_1h_4x_{109,12}]$.
\end{corollary}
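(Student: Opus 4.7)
The plan is to extract the desired $[h_0]$-extension directly from the Toda bracket identification in Lemma~\ref{lem:toda2ext} by applying the standard Toda bracket shuffle identity. At the prime $2$ this gives
\[
\langle \lambda^3 \alpha_1, [h_0], \eta \rangle \cdot [h_0] \subseteq \lambda^3 \alpha_1 \cdot \langle [h_0], \eta, [h_0] \rangle,
\]
and because $\eta^2 \in \langle [h_0], \eta, [h_0] \rangle$ (the synthetic lift of the classical relation $\eta^2 \in \langle 2, \eta, 2 \rangle$), any representative $T$ of the outer bracket satisfies
\[
T \cdot [h_0] \equiv \lambda^3 \alpha_1 \cdot \eta^2 \;=\; \eta \cdot (\lambda^3 \eta \alpha_1) \pmod{\text{indeterminacy}}.
\]

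The computation already carried out in the proof of Lemma~\ref{lem:toda2ext} records $\eta \cdot \lambda^3 \eta \alpha_1 = \lambda^6 [h_1 h_4 x_{109,12}] + \lambda^7 \alpha_3$, and the second summand lies in Adams filtration $\ge 15$ because the Ext relation $h_1 \cdot e_0 \Delta h_6 g = 0$ forces $\alpha_3$ to have $\AF \ge 15$. So $\lambda^7 \alpha_3$ is harmlessly absorbed into the lift $[h_1 h_4 x_{109,12}]$. Since Lemma~\ref{lem:toda2ext} identifies any $T \in \langle \lambda^3 \alpha_1, [h_0], \eta \rangle$ with some lift $[\lambda^4 h_0^2 x_{125,9,2}]$, this already establishes the desired equation for at least one lift; nonvanishing of the right-hand side under assumptions $(3)$ and $(5')$ follows exactly as in Remark~\ref{rem:h02x1259} and the proof of Lemma~\ref{lem:toda2ext}.

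To promote the identity from this one $T$ to every lift of $\lambda^4 h_0^2 x_{125,9,2}$, I would show that the difference $\Delta$ of any two lifts, which necessarily sits in Adams filtration $\ge 12$ of $\pi_{125, 125+7} S^{0,0}/\lambda^9$, satisfies $\Delta \cdot [h_0] \in \AF \ge 15$ of $\pi_{125, 125+8} S^{0,0}/\lambda^9$; the product would then be absorbable into the indeterminacy of the chosen lift $[h_1 h_4 x_{109,12}]$ and the equation would persist. The main obstacle is exactly this filtration-jumping check: enumerating the candidate $E_\infty$-generators of $\pi_{125, 125+7} S^{0,0}/\lambda^9$ in filtrations $12, 13, 14$ using Table~\ref{Table:S125.19} (and the associated $\lambda$-Bockstein data), then verifying that each $[h_0]$-product lifts to Adams filtration at least $15$ of $\pi_{125, 125+8} S^{0,0}/\lambda^9$. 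Because $h_0^2 x_{125,9,2}$ itself sits in filtration $11$ and the target is in filtration $14$, a three-step $[h_0]$-extension jump is required, which in general cannot be seen purely from Ext multiplication and will likely require a case-by-case invocation of the Generalized Leibniz Rule (Theorem~\ref{thm:e73f481e}) or the Generalized Mahowald Trick (Theorem~\ref{thm:158d451a}).
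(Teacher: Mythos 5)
Your derivation of the identity for one lift is exactly the paper's argument: shuffle $\langle \lambda^3\alpha_1,[h_0],\eta\rangle\cdot[h_0]=\lambda^3\alpha_1\langle[h_0],\eta,[h_0]\rangle=\eta\cdot\lambda^3\eta\alpha_1=\lambda^6[h_1h_4x_{109,12}]+\lambda^7\alpha_3$, note that $\lambda^7\alpha_3$ sits in strictly higher filtration (indeed $\alpha_3\in\pi_{125,125+15}$ forces $\AF\ge 15$ for weight reasons alone), and absorb it into the choice of $[h_1h_4x_{109,12}]$. So far this is the same proof.

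Where you stop short is the promotion to \emph{every} lift of $\lambda^4h_0^2x_{125,9,2}$, which you correctly isolate as the remaining obligation but leave as a conjectural ``filtration-jumping check'' possibly requiring fresh applications of the Generalized Leibniz Rule or Mahowald Trick. The paper closes this step with no new machinery, by reusing what is already established in the proof of Lemma~\ref{lem:toda2ext}: the $\AF\le 12$ part of $\pi_{125,125+7}S^{0,0}/\lambda^9$ is generated only in filtrations $7$, $9$, and $11$, so the difference $\Delta$ of two lifts actually has $\AF\ge 13$ (not merely $\ge 12$ as you state); filtration $13$ of the target group contributes nothing (the permanent cycles in $\Ext_A^{13,125+13}$ are all killed by $d_2$ or $d_4$, as recorded in the proof of Proposition~\ref{prop:possibleh62}); and the only filtration-$14$ class available is $\lambda^6 h_1h_4x_{109,12}$, which cannot detect $\Delta\cdot[h_0]$ since $h_1h_4x_{109,12}$ is not $h_0$-divisible in Ext. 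Hence $\Delta\cdot[h_0]$ lands in $\AF\ge 15$ and is absorbed into the indeterminacy of $[h_1h_4x_{109,12}]$, exactly as you anticipated. Your proposal is therefore the right route with the last verification missing; to complete it you should cite the enumeration in Lemma~\ref{lem:toda2ext} and the non-$h_0$-divisibility of $h_1h_4x_{109,12}$ rather than reaching for Theorems~\ref{thm:e73f481e} or \ref{thm:158d451a}. (Also, your final sentence conflates the three-step jump of the main extension, which the Toda bracket argument already supplies, with the check on $\Delta$; only the latter is outstanding.)
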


\begin{proof}
    From the proof of Lemma~\ref{lem:toda2ext}, there exists a homotopy class $[\lambda^4 h_0^2x_{125,9,2}]$ contained in the synthetic Toda bracket $ \langle \lambda^3 \alpha_1, [h_0], \eta \rangle$, and we have
    $$[\lambda^4 h_0^2x_{125,9,2}] \cdot [h_0] = \langle \lambda^3 \alpha_1, [h_0], \eta \rangle \cdot [h_0] = \lambda^6 [h_1h_4x_{109,12}] + \lambda^7 \alpha_3.$$
    Since $\lambda^7 \alpha_3$ is in a strictly higher filtration than $\lambda^6 [h_1h_4x_{109,12}]$, we may choose another class $[h_1h_4x_{109,12}]$ such that the right-hand side of the above equation is simply $\lambda^6 [h_1h_4x_{109,12}]$. From the discussion of this synthetic Toda bracket in the proof of Lemma~\ref{lem:toda2ext}, we know that the difference of any two classes detected by $\lambda^4 h_0^2x_{125,9,2}$, when multiplied by $[h_0]$, is not detected by $\lambda^6 [h_1h_4x_{109,12}]$. Therefore, the corollary holds for any choice of $[\lambda^4 h_0^2x_{125,9,2}]$.
\end{proof}





We have one more Lemma~\ref{lem:nuext125} before we prove Proposition~\ref{prop:state5false}.

We draw attention to the following element in Ext:
$$h_1x_{121,7} \in \Ext_A^{8,122+8}.$$
From Table~\ref{Table:S122} in the Appendix, we have
\begin{fact} \label{fact:h1x1217}
    The element $h_1x_{121,7}$ survives to the Adams $E_6$-page, and is not killed by any classical differential.
\end{fact}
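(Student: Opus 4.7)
My plan is to verify Fact~\ref{fact:h1x1217} by systematically ruling out all Adams differentials that either originate from or terminate at the bidegree of $h_1x_{121,7} \in \Ext_A^{8,122+8}$, drawing on the computational data organized in Table~\ref{Table:S122} of the Appendix and the machine-generated proofs of \cite{LWXMachine, LWXZenodo}. The first step is to use Lin's program \cite{LinGrobner} to identify all additive generators of $\Ext_A^{s,t}$ in the relevant window --- namely, $\Ext_A^{8-r,\,130-r+1}$ for $r \geq 2$ (potential sources of incoming differentials) and $\Ext_A^{8+r,\,130+r-1}$ for $r \geq 2$ (potential targets of outgoing differentials) --- and to record, for each such generator, the shortest classical differential it is already known to support or receive.

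To show that $h_1x_{121,7}$ survives to the $E_6$-page, I would check the vanishing of $d_r(h_1x_{121,7})$ for $r = 2, 3, 4, 5$. For $d_2$, this is a direct secondary Steenrod algebra computation \cite{BJ, Nassau, Chua} carried out by Lin's program. For $r = 3, 4, 5$, I would exploit the multiplicative decomposition $h_1 x_{121,7} = h_1 \cdot x_{121,7}$: since $h_1$ is a permanent cycle, the Leibniz rule reduces the problem to known differentials on $x_{121,7}$, which are recorded in the 121-stem table; in the cases where the naive Leibniz rule is obstructed by crossings, I would invoke the Generalized Leibniz Rule (Theorem~\ref{thm:e73f481e}) together with the no-crossing checks ensured in the machine-generated proofs.

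For the second assertion --- that $h_1x_{121,7}$ is not hit by any classical differential --- I would enumerate each candidate source $y \in \Ext_A^{8-r,\,130-r+1}$ for every $r \geq 2$ and show that either $y$ is already killed on an earlier page, or $y$ supports a different (nonzero or zero) differential incompatible with hitting $h_1x_{121,7}$. The primary mechanisms are: naturality of the Adams spectral sequence under inclusions and projections of finite spectra (in particular subquotients of $\mathbb{R}P^\infty$ as in \cite{WX61}), propagation via the Generalized Leibniz Rule, and targeted applications of the Generalized Mahowald Trick (Theorem~\ref{thm:158d451a}) for those $y$ whose differentials are best seen through an auxiliary cofiber sequence.

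The main obstacle I anticipate is the bookkeeping of no-crossing hypotheses: as illustrated by Example~\ref{exam:Leibnizno}, each application of the Generalized Leibniz Rule or Mahowald Trick demands a verification that certain crossing configurations do not occur on the relevant $E_r$-page, and for differentials of length $r \geq 4$ near stem $122$ this check must be carried out against a substantial list of simultaneously computed differentials. I would therefore rely on the rigorous, human-readable proof transcripts produced by Lin's program \cite{LWXZenodo}, which discharge precisely these no-crossing conditions, and present the conclusion as the entry recorded in Table~\ref{Table:S122}.
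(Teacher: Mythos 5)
Your proposal is correct and matches the paper's approach: the paper justifies this Fact solely by citing the entry for $h_1x_{121,7}$ in Table~\ref{Table:S122}, which is exactly the output of Lin's program using the toolbox you describe (secondary Steenrod algebra $d_2$'s, the Generalized Leibniz Rule and Mahowald Trick with machine-checked no-crossing conditions, recorded in \cite{LWXZenodo}). Note also that the ``not killed'' clause is immediate for degree reasons, since $\Ext_A^{s,123+s}=0$ for $s\le 7$ by Table~\ref{Table:S123}, so your enumeration of candidate sources terminates trivially.
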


\begin{lemma} \label{lem:nuext125}
    There exists a homotopy class $[\lambda^4 h_1 x_{121,7}] \in \pi_{122,122+4}S^{0,0}/\lambda^9$, such that 
    $$[\lambda^4 h_1 x_{121,7}] \cdot [h_2] = \lambda [\lambda^5 h_0^2 x_{125,9,2}] \in \pi_{125,125+5} S^{0,0}/\lambda^9.$$
\end{lemma}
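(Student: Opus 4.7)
The plan is to establish this hidden $[h_2]$-multiplication by applying the Generalized Mahowald Trick (Theorem~\ref{thm:158d451a}) to the cofiber sequence
$$S^3 \xrightarrow{\nu} S^0 \xrightarrow{i} C\nu \xrightarrow{q} S^4,$$
with $f = \nu$ corresponding to the synthetic map $[h_2]$. I would first use Lin's program to compute $\Ext_A^{*,*}(C\nu)$ in the relevant range and, via the long exact sequence induced by multiplication by $h_2$, locate a lift $\bar{x} \in \Ext_A^{*,*}(C\nu)$ of the target class $h_0^2 x_{125,9,2}$ through $i_*$. Concretely, since we expect $h_2 \cdot h_1 x_{121,7} = 0$ in Ext (otherwise we'd have a visible extension rather than a hidden one), there should exist an element $\bar{x} \in \Ext_A^{*,*}(C\nu)$ lifting $h_0^2 x_{125,9,2}$ modulo lower filtration.

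Next, I would use Lin's program and the secondary Steenrod algebra computations to identify an Adams differential $d_r(\bar{x}) = \bar{y}$ in the spectral sequence for $C\nu$, where $\bar{y}$ restricts via $q_*$ to a class in $\Ext_A^{*,*}(S^4)$ that maps back to $h_1 x_{121,7}$ up to appropriate shifts. The Generalized Mahowald Trick then converts this into the desired $(\nu, E_{r'})$-extension on the sphere, which, combined with Corollary~\ref{cor:dfc6043e} to push the extension across pages, yields the hidden $[h_2]$-extension. The bookkeeping of weights (source at weight $126$, target at weight $130$, giving exactly the weight increase of $4$ that matches $[h_2]$) ensures the identification takes place over $S^{0,0}/\lambda^9$ rather than requiring the full synthetic sphere.

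Then I would verify the no-crossing conditions required by Theorem~\ref{thm:158d451a}: this amounts to checking from the tables in the Appendix (Tables~\ref{Table:S122}, \ref{Table:S125.19}) that neither the lift nor its differential admits crossings at the relevant intermediate filtrations. Here the key inputs are that $h_1 x_{121,7}$ survives to $E_6$ (Fact~\ref{fact:h1x1217}) and $h_0^2 x_{125,9,2}$ survives to $E_5$ (Fact~\ref{fact:h02x1259}), which constrain where crossings could occur. Finally, I would analyze the indeterminacy of the resulting extension by inspecting the $E_\infty$-page of $S^{0,0}/\lambda^9$ in stem $125$, weight $130$, to confirm that an appropriate choice of $[\lambda^4 h_1 x_{121,7}]$ produces exactly $\lambda[\lambda^5 h_0^2 x_{125,9,2}]$.

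The main obstacle will be identifying the correct auxiliary class $\bar{x}$ in $\Ext(C\nu)$ and verifying the no-crossing conditions, since the filtration jump of $3$ (from Adams filtration $8$ to $11$) indicates this is genuinely a hidden extension of length greater than a simple $d_2$ transfer, so several candidate crossings in intermediate filtrations must be ruled out via delicate case analysis using the $E_2$-page structure recorded in the Appendix tables. In particular, care must be taken to distinguish the synthetic filtration behavior in $S^{0,0}/\lambda^9$ from what one would see on the sphere itself, as certain longer differentials that exist classically are invisible modulo $\lambda^9$.
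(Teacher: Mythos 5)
Your high-level strategy is the same as the paper's: apply the Generalized Mahowald Trick (Theorem~\ref{thm:158d451a}) to the cofiber sequence $S^3 \xrightarrow{\nu} S^0 \xrightarrow{i} S^0/\nu \xrightarrow{q} S^4$, feed in a machine-computed Adams differential for $S^0/\nu$, check no-crossing conditions, and then manage indeterminacy while passing to $S^{0,0}/\lambda^9$. However, your middle step has the roles of the two auxiliary classes reversed in a way that makes the proposed computation impossible. You ask for $\bar{x}\in\Ext_A^{*,*}(S^0/\nu)$ lifting the \emph{target} $h_0^2x_{125,9,2}$ (Adams filtration $11$) through $i_*$, and then for a differential $d_r(\bar{x})=\bar{y}$ whose value $\bar{y}$ restricts under $q_*$ to the \emph{source} $h_1x_{121,7}$ (Adams filtration $8$). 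Since Adams differentials strictly increase filtration, $\bar{y}$ would lie in filtration $\ge 13$ and can never detect a filtration-$8$ class; no such differential exists. In Theorem~\ref{thm:158d451a} the assignment must go the other way: $\bar{x}$ is a class on the \emph{top cell} of $S^0/\nu$ with $d_0^{q,E_2}(\bar{x})=h_1x_{121,7}$ (so $\bar{x}=h_1x_{121,7}[4]$ plus bottom-cell correction terms $x_{126,8}[0]+x_{126,8,2}[0]$ needed for $\bar x$ to be a $d_2$-cycle), $\bar{y}=i_*(h_0^2x_{125,9,2})=h_0^2x_{125,9,2}[0]$ sits on the bottom cell, and the relevant input is the computed differential $d_3(\bar{x})=\bar{y}$ in the Adams spectral sequence of $S^0/\nu$. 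The filtration jump of the differential ($8\to 11$) then exactly encodes the hidden $[h_2]$-extension of length $3$.

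Two further points where your outline diverges from what is actually needed. First, the paper does not invoke Corollary~\ref{cor:dfc6043e} here: the output of the Mahowald Trick is an $([h_2],E_4)$-extension, i.e.\ a relation in $\pi_{*,*}S^{0,0}/\lambda^3$, which is lifted along $\rho\colon S^{0,0}/\lambda^5\to S^{0,0}/\lambda^3$; the resulting indeterminacy is a possible $[\lambda x]$ term with $x$ in filtration $10$ of stem $125$, and one must check case by case (using the Appendix tables) that every such $[\lambda x]$ either does not exist or can be chosen to be zero. Your proposal does not identify this indeterminacy, which is the one genuinely delicate verification beyond the differential itself. Second, the final step is simply multiplication by $\lambda^4\colon \Sigma^{0,-4}S^{0,0}/\lambda^5\to S^{0,0}/\lambda^9$, not a page-stretching argument. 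With the cell roles corrected and the filtration-$10$ indeterminacy analysis added, your argument becomes the paper's proof.
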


\begin{proof}
    From Fact~\ref{fact:h1x1217}, we know that the element $h_1x_{121,7}$ may only support a nonzero $d_r$-differential for $r \ge 6$. By Proposition~\ref{prop:6de7d130}, $\lambda^4 h_1 x_{121,7}$ detects nonzero homotopy classes in $\pi_{122,122+4}S^{0,0}/\lambda^9$, and hence the required existence of such a homotopy class.

    For the desired relation, we follow Example~\ref{exam:Mahowald} and apply the Generalized Mahowald Trick Theorem~\ref{thm:158d451a}. Consider the distinguished triangle
        $$\xymatrix{S^3 \ar[r]^{\nu} & S^0 \ar[r]^-{i} & S^0/\nu \ar[r]^{q} & S^4.}$$
The short exact sequence on $\HF$-homology
$$\xymatrix{0 \ar[r] & {\HF}_*S^0 \ar[r]^-{i_*} & {\HF}_*S^0/\nu \ar[r]^-{q_*} & {\HF}_*S^4 \ar[r] & 0,}$$
induces a long exact sequence on Ext-groups
$$\xymatrix{\cdots \ar[r]^-{\cdot h_2} & {\Ext}_A^{*,*}(S^0) \ar[r]^-{i_*} & {\Ext}_A^{*,*}(S^0/\nu) \ar[r]^-{q_*} & {\Ext}_A^{*,*}(S^4) \ar[r]^-{\cdot h_2} & \cdots}.$$
Also recall for notations, if an element $x$ in ${\Ext}_A^{*,*}(S^0/\nu)$ satisfies 
$$q_*(x) = a \neq 0 \in \Ext_A^{*,*}(S^4),$$ 
we denote $x$ by $a[4]$; otherwise, due to exactness, 
$$x = i_*(b) \ \text{for some} \ b \in \Ext_A^{*,*}(S^0),$$
and in this case, we denote $x$ by $b[0]$. 

We consider
\begin{align*}
x & = h_1 x_{121,7} && \in \Ext_A^{8,130},\\
\bar{x} & =h_1 x_{121,7} [4] + x_{126,8}[0] + x_{126,8,2}[0] && \in \Ext_A^{8,134}(S^0/\nu), \\
y & =h_0^2 x_{125,9,2} && \in \Ext_A^{11,136} ,\\
        \bar{y} & =h_0^2 x_{125,9,2}[0] && \in \Ext_A^{11,136}(S^0/\nu). 
  \end{align*}      

For conditions in Theorem~\ref{thm:158d451a}, we have:
\begin{enumerate}
      \item $d_0^{q, E_2}(h_1 x_{121,7} [4] + x_{126,8}[0] + x_{126,8,2}[0]) = h_1 x_{121,7}$.
      \item $d_3 (h_1 x_{121,7} [4] + x_{126,8}[0] + x_{126,8,2}[0]) = h_0^2 x_{125,9,2} [0]$. This is a classical Adams $d_3$-differential for $S^0/\nu$, and is obtained from our computations.
      \item 
      \begin{enumerate}
          \item The differential in (1) has no crossing, as it is a $d_0$-differential, and
          \item Upon inspection, the differential in (2) has no crossing. 
      \end{enumerate}
      \item 
      $d_0^{i, E_2} (h_0^2 x_{125,9,2}) = h_0^2 x_{125,9,2}[0]$, as $h_0^2 x_{125,9,2}$ is not divisible by $h_2$ in Ext.
      \end{enumerate}
    Since all conditions of Theorem~\ref{thm:158d451a} are  satisfied, we conclude that there is an $([h_2], E_4)$-extension:
        $$d_3^{[h_2], E_4} (h_1 x_{121,7}) = h_0^2 x_{125,9,2}.$$
In other words, we have the following relation:
      $$[h_1 x_{121,7}] \cdot [h_2] = \lambda^2 [h_0^2 x_{125,9,2}] \in \pi_{125,125+9} S^{0,0}/\lambda^3.$$  
Using the map $\rho:S^{0,0}/\lambda^5 \rightarrow S^{0,0}/\lambda^3$ from Notation~\ref{not:deltaandrho}, we lift the above relation and obtain:
$$[h_1 x_{121,7}] \cdot [h_2] = [\lambda^2 h_0^2 x_{125,9,2}] \in \pi_{125,125+9} S^{0,0}/\lambda^5.$$
In fact, since $h_1 x_{121,7} \cdot h_2 = 0$ in $\AF=9$ of Ext, we might obtain a relation of the form:
$$[h_1 x_{121,7}] \cdot [h_2] = [\lambda x] + [\lambda^2 h_0^2 x_{125,9,2}] \in \pi_{125,125+9} S^{0,0}/\lambda^5,$$
for some element $x$ in $\AF=10$. However, upon inspection, for all $x \in  \Ext_A^{10,125+10}$, the homotopy class $[\lambda x]$ either does not exist or can be chosen to be zero.

Using the map $\lambda^4: \Sigma^{0,-4}S^{0,0}/\lambda^5 \rightarrow S^{0,0}/\lambda^9$ from Notation~\ref{not:deltaandrho}, we further push the above relation and obtain the following relation:
$$[\lambda^4 h_1 x_{121,7}] \cdot [h_2] = \lambda [\lambda^5 h_0^2 x_{125,9,2}] \in \pi_{125,125+5} S^{0,0}/\lambda^9.$$
This completes the proof.
\end{proof}

Now we prove Proposition~\ref{prop:state5false}.

We draw attention to the following elements in Ext:
\begin{align*}
h_6 M d_0 & \in \Ext_A^{11,122+11}, \\
h_5x_{91,11} & \in \Ext_A^{12,122+12}.
\end{align*}
From Table~\ref{Table:S122} in the Appendix, we have
\begin{fact} \label{fact:stem122}
\begin{enumerate}
    \item[] 
    \item The element $h_6 M d_0$ survives to the Adams $E_\infty$-page.
    \item The element $h_5x_{91,11}$ survives to the Adams $E_\infty$-page.
\end{enumerate}
    
\end{fact}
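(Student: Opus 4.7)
My plan is to verify each assertion in Fact~\ref{fact:stem122} by enumerating, for each of the two elements, the short list of potentially nonzero Adams differentials with that element as source or target, and eliminating every candidate. All of the relevant $\Ext$-data in stems $121$, $122$, and $123$ is assembled in Table~\ref{Table:S122} of the Appendix, as computed by Lin's program; this is the backbone of the argument.

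For $h_6 M d_0 \in \Ext_A^{11,122+11}$, I would begin by using the known fact that $Md_0 \in \Ext_A^{10,90+10}$ is a permanent cycle detecting a nontrivial homotopy class, so multiplying by $h_6$ provides the natural candidate. The verification then splits into two pieces. First, to rule out outgoing differentials $d_r(h_6 M d_0)$ for $r \geq 2$, I would enumerate the generators of $\Ext_A^{s,121+s}$ for $s$ between $13$ and roughly $22$ from Table~\ref{Table:S122} and check, one generator at a time, that it is either a permanent cycle already detecting a known class, or is hit by a shorter differential, or is incompatible as a $d_r$-target of $h_6 M d_0$ for naturality or module-structure reasons (using the Leibniz rule over $h_0, h_1, h_2$ multiplications). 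Second, to rule out incoming differentials, I would run the analogous enumeration on $\Ext_A^{s,123+s}$ for $s$ between $2$ and $10$, eliminating each candidate source by the same three-step procedure.

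The argument for $h_5 x_{91,11} \in \Ext_A^{12,122+12}$ runs in parallel. Its first subtask is to verify that $x_{91,11}$ is itself a permanent cycle in stem $91$, which is recorded in the existing Isaksen--Wang--Xu tables; then the Leibniz rule for $h_5$-multiplication propagates every known differential on $x_{91,11}$ or on $h_5$-multiples in nearby stems. The remaining candidates for outgoing differentials from $(s,t) = (12, 122+12)$ and for incoming differentials into it are handled by the same tables-plus-Leibniz enumeration as above. Where the Leibniz rule alone is insufficient, I would appeal to the Generalized Leibniz Rule (Theorem~\ref{thm:e73f481e}) or the Generalized Mahowald Trick (Theorem~\ref{thm:158d451a}), invoking an auxiliary finite spectrum $Cf$ whose $E_2$-page and $d_2$-differentials have already been computed by Lin's program.

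I expect the main obstacle to be a small number of medium-length differentials (most plausibly of length $3$ or $4$) whose behavior is not pinned down directly by the $d_2$-tables or by the Leibniz rule. For these the strategy is the same one used throughout the paper: set up a cofiber sequence $X \to Y \to Cf$ in which one of the three factors has its relevant homotopy already under control, and apply Corollary~\ref{cor:aed3d1a4} together with Proposition~\ref{prop:cfe810af} — possibly combined with Theorem~\ref{thm:158d451a} — to convert that external information into the needed assertion in the $\delta$-extension spectral sequence for $\nu S^0/\lambda^{r-1}$, hence into the desired classical differential via Remark~\ref{rmk:qvoewfj}.
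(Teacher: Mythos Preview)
Your proposal is sound in spirit but mismatched in scale and approach to what the paper actually does. The paper offers no argument for Fact~\ref{fact:stem122} beyond the sentence ``From Table~\ref{Table:S122} in the Appendix, we have\ldots''; the table simply records that both $h_6Md_0$ and $h_5x_{91,11}$ are \emph{Permanent}, as output by Lin's program. The machine-readable proofs backing those table entries live in the Zenodo repository \cite{LWXZenodo}, not in the text. So the paper's ``proof'' is a single lookup in a computer-certified table.

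What you have sketched is, in effect, a by-hand reconstruction of what that program does internally: enumerate all candidate sources and targets, kill each with the Leibniz rule, naturality, or one of Theorems~\ref{thm:e73f481e}--\ref{thm:158d451a}. That is a legitimate strategy, and indeed it is exactly the logic the program encodes. But carrying it out in full for stems $121$--$123$ up to filtration $\sim 22$ is a substantial case analysis (dozens of generators, many requiring auxiliary cofiber-sequence arguments), and the paper's whole methodological point is that this layer is delegated to the machine. Your plan would buy independent human verifiability at the cost of considerable length; the paper's approach buys brevity at the cost of trusting the certified computer output.

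One small caution if you do pursue the hand verification: your opening move for $h_6Md_0$ assumes $Md_0$ is already known to be a permanent cycle detecting a class, and then propagates via $h_6$-multiplication. But $h_6$ is \emph{not} a permanent cycle (it supports $d_2(h_6)=h_0h_5^2$), so the Leibniz rule gives $d_2(h_6Md_0)=h_0h_5^2\cdot Md_0$, and you must separately check that this product vanishes (or is a boundary) in $\Ext$ before concluding anything about higher differentials. This is not fatal, but it illustrates that the enumeration is more delicate than your outline suggests.
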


\begin{proof}[Proof of Proposition~\ref{prop:state5false}]
    We assume that statement $(3)$ in Proposition~\ref{prop:possibleh62} is true. For the sake of a contradiction, we also assume statement $(5')$ is true.

    From Lemma~\ref{lem:nuext125} and Corollary~\ref{cor:2ext125}, there exists a homotopy class\\ $[\lambda^4 h_1 x_{121,7}] \in \pi_{122,122+4}S^{0,0}/\lambda^9$, such that
\begin{align*}
    [\lambda^4 h_1 x_{121,7}] \cdot [h_2] \cdot [h_0] & = \lambda [\lambda^5 h_0^2 x_{125,9,2}] \cdot [h_0] \\
    & = \lambda^8 [h_1h_4x_{109,12}] \neq 0 \in \pi_{125,125+6} S^{0,0}/\lambda^9,
\end{align*} 
    for some $[h_1h_4x_{109,12}]$. Note that statement $(3)$ in Proposition~\ref{prop:possibleh62}, Fact~\ref{fact:theta5sqAF}(2), and Proposition~\ref{prop:6de7d130} imply that the expression  $\lambda^8 [h_1h_4x_{109,12}]$ is nonzero in the homotopy of $S^{0,0}/\lambda^9$.

Since the element detecting $h_1h_4x_{109,12}$ in not an $h_2$-multiple in Ext, we must have the expression
$$[\lambda^4 h_1 x_{121,7}] \cdot [h_0]$$
in $\pi_{122,122+5} S^{0,0}/\lambda^9$ be nonzero, and have $\AF \le 12$. Upon inspection, the only possibilities are:
$$\lambda^6 [h_6 M d_0] \ \text{in} \ \AF=11,  \ \ \ \lambda^7 [h_5x_{91,11}] \ \text{in} \ \AF=12.$$
From Fact~\ref{fact:stem122} both $h_6 M d_0$ and $h_5x_{91,11}$ are nonzero permanent cycles in the classical Adams spectral sequence, so either possibility would lift to a relation in the homotopy groups of $S^{0,0}$.

For the case of $\lambda^6 [h_6 M d_0]$, consider the expression
$$\lambda^2 [h_6 M d_0] \cdot [h_2] \in \pi_{125, 125+10}S^{0,0}.$$
Since $h_6 M d_0 \cdot h_2$ in $\AF=12$ is killed by a classical $d_2$-differential, we know that 
$$\lambda [h_6 M d_0] \cdot [h_2] \in \pi_{125, 125+11}S^{0,0}$$ 
is in $\AF \ge 13$. Upon inspection, the permanent cycles in $\Ext_A^{13, 125+13}$ are all killed by $d_2$ or $d_4$-differentials, and thus, $$\lambda^2 [h_6 M d_0] \cdot [h_2] \in \pi_{125, 125+10}S^{0,0}$$ 
is in $\AF \ge 14$. Therefore, for the above relation in the homotopy of $S^{0,0}/\lambda^9$ to hold, we must have 
$$\lambda^2 [h_6 M d_0] \cdot [h_2] = \lambda^4 [h_1h_4x_{109,12}] \in \pi_{125, 125+10} S^{0,0}.$$

For the case of $\lambda^7 [h_5x_{91,11}]$, consider the expression
$$\lambda [h_5x_{91,11}] \cdot [h_2] \in \pi_{125, 125+12}S^{0,0}.$$
Since $h_5x_{91,11} \cdot h_2$ in $\AF=13$ is killed by a classical $d_2$-differential, we know that
$$\lambda [h_5x_{91,11}] \cdot [h_2] \in \pi_{125, 125+12}S^{0,0}$$
is in $\AF \ge 14$.
Therefore, for the above relation in the homotopy of $S^{0,0}/\lambda^9$ to hold, we must have
$$\lambda [h_5x_{91,11}] \cdot [h_2] = \lambda^2 [h_1h_4x_{109,12}] \in \pi_{125, 125+12}S^{0,0}.$$

In both cases, $\lambda^4 [h_1h_4x_{109,12}]$ is a $\lambda [h_2]$-multiple in the homotopy of $S^{0,0}$. Since the classical $\nu \in \pi_3$ has $\AF=1$, we have 
$$S^{0,0}/(\lambda [h_2]) \simeq \nu (S^0/\nu).$$
By the rigidity Theorem~\ref{thm:rigid} of the synthetic Adams spectral sequence for $S^{0,0}/(\lambda [h_2])$, we know that the element $\lambda^4 h_1h_4x_{109,12}[0]$ must be killed by a synthetic Adams differential, which corresponds to to a statement that in the classical Adams spectral sequence of $S^0/\nu$, the element $h_1h_4x_{109,12}[0]$ must be killed by a $d_r$-differential for $r \leq 5$.

\begin{table}[h]
    \centering
\scalebox{0.85}{\begin{tabular}{|c|l|l|l|}\hline 
$s$ & Elements & $d_r$ & value\\\hline\hline
\multirow{4}{*}{14} & $h_0^{12}h_6^2[0]$ & $d_{2}^{-1}$ & $h_0^{11}h_7[0]$ \\\cline{2-4}
 & $x_{126,14}[0]$ & $d_{3}^{-1}$ & $(((x_{123,11,2})+(x_{123,11})+h_0 h_6 [B_4])[4])$ \\\cline{2-4}
 & $Q_2D_2(h_3[4])+D_2x_{68,8}[0]$ & $d_{3}^{-1}$ & $(((x_{123,11,2})+h_5 (x_{92,10}))[4])$ \\\cline{2-4}
 & $D_2x_{68,8}[0]$ & $d_{2}$ & $h_0Q_2x_{68,8}[0]$ \\\hline\hline
\multirow{6}{*}{13} & $h_0^{11}h_6^2[0]$ & $d_{2}^{-1}$ & $h_0^{10}h_7[0]$ \\\cline{2-4}
 & $h_1x_{120,11}(h_1[4])$ & $d_{12}$ & $?$ \\\cline{2-4}
 & $h_1x_{125,12,2}[0]$ & $d_{5}$ & $d_0^2x_{97,10}[0]$ \\\cline{2-4}
 & $h_6x_{56,10}(h_0 h_2[4])$ & $d_{3}$ & $h_1x_{124,15}[0]$ \\\cline{2-4}
 & $(((x_{122,13})+h_1^2 (x_{120,11})+h_0^2 h_6 (Md_0))[4])$ & $d_{2}$ & $x_{125,15}[0]+h_0^3x_{125,12}[0]$ \\\cline{2-4}
 & $h_0h_3x_{119,11}[0]$ & $d_{2}$ & $h_0^3x_{125,12}[0]$ \\\hline\hline
\multirow{4}{*}{12} & $d_1x_{94,8}[0]$ & $d_{2}^{-1}$ & $x_{127,10}[0]$ \\\cline{2-4}
 & $h_0^{10}h_6^2[0]$ & $d_{2}^{-1}$ & $h_0^9h_7[0]$ \\\cline{2-4}
 & $((h_5 (x_{91,11})+h_0 (x_{122,11}))[4])$ & $d_{3}$ & $Q_2x_{68,8}[0]$ \\\cline{2-4}
 & $h_3x_{119,11}[0]$ & $d_{2}$ & $h_0^2x_{125,12}[0]$ \\\hline\hline
\multirow{4}{*}{11} & $h_0^9h_6^2[0]$ & $d_{2}^{-1}$ & $h_0^8h_7[0]$ \\\cline{2-4}
 & $x_{126,11}[0]$ & $d_{3}^{-1}$ & $x_{127,8}[0]$ \\\cline{2-4}
 & $h_1x_{125,10,2}[0]+h_1x_{125,10}[0]$ & & Permanent \\\cline{2-4}
 & $h_1x_{125,10}[0]$ & $d_{14}$ & $?$ \\\hline\hline
\multirow{3}{*}{10} & $h_0^2x_{126,8}[0]$ & $d_{2}^{-1}$ & $h_0x_{127,7}[0]$ \\\cline{2-4}
 & $h_0^8h_6^2[0]$ & $d_{2}^{-1}$ & $h_0^7h_7[0]$ \\\cline{2-4}
 & $x_{126,10}[0]$ & $d_{3}$ & $nx_{94,8}[0]$ \\\hline\hline
\multirow{5}{*}{9} & $h_0x_{126,8}[0]$ & $d_{2}^{-1}$ & $x_{127,7}[0]$ \\\cline{2-4}
 & $h_0^7h_6^2[0]$ & $d_{2}^{-1}$ & $h_0^6h_7[0]$ \\\cline{2-4}
 & $h_1x_{125,8}[0]$ & $d_{16}$ & $?$ \\\cline{2-4}
 & $h_0x_{126,8,3}[0]$ & $d_{4}$ & $h_0x_{125,12}[0]$ \\\cline{2-4}
 & $x_{126,9}[0]$ & $d_{3}$ & $h_0^4x_{125,8}[0]$ \\\hline 
\end{tabular}}
\caption{The classical Adams spectral sequence of $S^0/\nu$ for $9 \le s \le 14$ in stem 126}
\label{Table:Cnu126}
\end{table}


However, from Table~\ref{Table:Cnu126} (obtained from \cite{LWXMachine} \cite{LWXZenodo}, and can be visualized from \cite{LinPlot}), in the classical Adams spectral sequence of $S^0/\nu$, the element  $h_1h_4x_{109,12}[0]$ is not killed by any $d_r$ for $r \leq 5$ (from the range of $9 \le s \le 14$ in stem 126). Therefore, we arrive at a contradiction.
\end{proof}


\section{Appendix: The classical Adams spectral sequence in the range $122 \le t-s \le 127, s \le 25$} \label{sec:App}

We provide a brief overview of Lin's computer program for computing Adams differentials and extensions. The program's functionality for propagating differentials and extensions relies on the following data:
\begin{itemize}
    \item The Adams $E_2$-pages of a collection of CW spectra.
    \item Maps between these $E_2$-pages.
    \item Adams $d_2$-differentials for certain CW spectra.
    \item There are three manually added differentials: $d_5 h_0^{24}h_6=h_0^2P^6d_0$ and\\
 $d_6 h_0^{55}h_7=h_0^2x_{126,60}$ in the Adams spectral sequence of $S^0$ (from the image of $J$), and $d_3 v_2^{16}=\beta^5g$ in the Adams spectral sequence of $\mathit{tmf}$, derived from power operations (Bruner--Rognes \cite{BR21}).
\end{itemize}

Detailed descriptions of these data are available in \cite{LWXZenodo}. Lin’s program extends these results by computing additional Adams differentials using tools such as the Leibniz rule, naturality, the Generalized Leibniz Rule, and the Generalized Mahowald Trick. All computed differentials and extensions are accessible via interactive plots \cite{LinPlot}.

Moreover, the proofs provided in \cite{LWXZenodo} offer more information than the interactive plots. These proofs include numerous disproofs of potential differentials, even for cases where the differentials remain unresolved. For example, consider 
$$x_{126,21}\in E_2^{21,126+21}(S^0).$$
The spectral sequence plot for $S^0$ shows that $x_{126,21}$ survives to the $E_4$-page, but the value of $d_4(x_{126,21})$ undetermined. By analyzing the proofs in \cite{LWXZenodo}, we observe that many potential values for $d_4(x_{126,21})$ have been ruled out. Consequently, we conclude:
$$d_4(x_{126,21})=x_{125,25,2}+x_{125,25}+g^4\Delta h_1g+\text{possibly }d_0^2e_0gB_4.$$

Tables~\ref{Table:S122}--\ref{Table:S127.9} present results from Lin’s program for the classical Adams spectral sequence of the sphere in the range $122 \le t-s \le 127, s \le 25$.

\begin{table}
    \centering
\scalebox{1}{\begin{tabular}{|c|l|l|l|}\hline
    $s$ & Elements & $d_r$ & value\\\hline\hline
    \multirow{2}{*}{25} & $e_0g^3\Delta h_1g$ & $d_{2}^{-1}$ & $g^2\Delta^2m$ \\\cline{2-4}
     & $d_0^3g[B_4]$ & $d_{4}$ & $d_0^4x_{65,13}$ \\\hline\hline
    \multirow{1}{*}{24} & $d_0g\Delta^2g^2$ & $d_{2}$ & $d_0e_0g^3\Delta h_2^2$ \\\hline\hline
    \multirow{2}{*}{23} & $Ph_1x_{113,18,2}$ & $d_{3}^{-1}$ & $x_{123,20}$ \\\cline{2-4}
     & $h_0^2d_0x_{108,17}$ & $d_{3}$ & $d_0^4Mg$ \\\hline\hline
    \multirow{3}{*}{22} & $e_0g^2Mg$ & $d_{3}^{-1}$ & $h_0^2h_3x_{116,16}$ \\\cline{2-4}
     & $x_{122,22}$ & $d_{3}$ & $h_0^2d_0x_{107,19}$ \\\cline{2-4}
     & $h_0d_0x_{108,17}$ & $d_{2}$ & $h_0^2d_0x_{107,18}$ \\\hline\hline
    \multirow{2}{*}{21} & $h_0^2d_0e_0x_{91,11}$ & & Permanent \\\cline{2-4}
     & $d_0x_{108,17}$ & $d_{2}$ & $d_0e_0\Delta h_2^2[B_4]+h_0d_0x_{107,18}$ \\\hline\hline
    \multirow{3}{*}{20} & $h_0^4x_{122,16}$ & $d_{2}^{-1}$ & $h_0h_3x_{116,16}$ \\\cline{2-4}
     & $g^3(C_0+h_0^6h_5^2)$ & $d_{3}$ & $g^3\Delta h_2^2n$ \\\cline{2-4}
     & $h_0d_0e_0x_{91,11}$ & $d_{2}$ & $h_0^6x_{121,16}$ \\\hline\hline
    \multirow{2}{*}{19} & $h_0^3x_{122,16}$ & $d_{2}^{-1}$ & $h_3x_{116,16}$ \\\cline{2-4}
     & $d_0e_0x_{91,11}$ & $d_{2}$ & $h_0d_0^2x_{93,12}$ \\\hline\hline
    \multirow{3}{*}{18} & $h_0^2x_{122,16}$ & $d_{3}^{-1}$ & $h_0^2x_{123,13,2}$ \\\cline{2-4}
     & $h_1x_{121,17}$ & $d_{7}^{-1}$ & $x_{123,11,2}+x_{123,11}+h_0h_6[B_4]$ \\\cline{2-4}
     & $d_0x_{108,14}$ & $d_{3}$ & $h_0d_0^2x_{93,12}+h_0^5x_{121,16}$ \\\hline\hline
    \multirow{2}{*}{17} & $h_0x_{122,16}$ & $d_{3}^{-1}$ & $h_0x_{123,13,2}$ \\\cline{2-4}
     & $g^3[H_1]$ & $d_{3}^{-1}$ & $\Delta h_2^2x_{93,8}$ \\\hline\hline
    \multirow{3}{*}{16} & $x_{122,16}+h_0x_{122,15,2}$ & $d_{3}^{-1}$ & $x_{123,13,2}$ \\\cline{2-4}
     & $\Delta h_2^2x_{92,10}$ & $d_{3}^{-1}$ & $x_{123,13}$ \\\cline{2-4}
     & $h_0x_{122,15,2}$ & & Permanent \\\hline\hline
    \multirow{2}{*}{15} & $x_{122,15}$ & $d_{3}$ & $g^3A$ \\\cline{2-4}
     & $x_{122,15,2}$ & $d_{2}$ & $h_0^2h_4x_{106,14}$ \\\hline\hline
    \multirow{1}{*}{14} & $h_0x_{122,13}$ & $d_{3}^{-1}$ & $x_{123,11,2}$ \\\hline\hline
    \multirow{3}{*}{13} & $h_0^2h_6Md_0$ & $d_{2}^{-1}$ & $x_{123,11}$ \\\cline{2-4}
     & $h_1^2x_{120,11}$ & & Permanent \\\cline{2-4}
     & $x_{122,13}$ & $d_{3}$ & $h_0h_4x_{106,14}$ \\\hline\hline
    \multirow{3}{*}{12} & $h_0h_6Md_0$ & $d_{2}^{-1}$ & $x_{123,10}$ \\\cline{2-4}
     & $h_0x_{122,11}$ & $d_{3}^{-1}$ & $h_0x_{123,8}$ \\\cline{2-4}
     & $h_5x_{91,11}$ & & Permanent \\\hline\hline
    \multirow{2}{*}{11} & $x_{122,11}+h_6Md_0$ & $d_{3}^{-1}$ & $x_{123,8}$ \\\cline{2-4}
     & $h_6Md_0$ & & Permanent \\\hline\hline
    \multirow{1}{*}{9-10} & \multicolumn{3}{c|}{}\\\hline\hline
    \multirow{1}{*}{8} & $h_1x_{121,7}$ & $d_{6}$ & $?$ \\\hline\hline
    \multirow{1}{*}{0-7} & \multicolumn{3}{c|}{}\\\hline
\end{tabular}}
\caption{The classical Adams spectral sequence of $S^0$ for $s \le 25$ in stem 122}\vspace{1cm}
\label{Table:S122}
\end{table}

\begin{table}
    \centering
\scalebox{0.95}{\begin{tabular}{|c|l|l|l|}\hline
$s$ & Elements & $d_r$ & value\\\hline\hline
\multirow{1}{*}{25} & \multicolumn{3}{c|}{}\\\hline\hline
\multirow{3}{*}{24} & $h_1Ph_1x_{113,18,2}$ & $d_{2}^{-1}$ & $x_{124,22}$ \\\cline{2-4}
 & $d_0^2\Delta h_2^2Mg$ & $d_{2}^{-1}$ & $d_0x_{110,18}$ \\\cline{2-4}
 & $d_0M\!Px_{56,10}$ & $d_{4}$ & $M\!Px_{69,18}$ \\\hline\hline
\multirow{1}{*}{23} & $g^2\Delta^2m$ & $d_{2}$ & $e_0g^3\Delta h_1g$ \\\hline\hline
\multirow{1}{*}{21-22} & \multicolumn{3}{c|}{}\\\hline\hline
\multirow{1}{*}{20} & $x_{123,20}$ & $d_{3}$ & $Ph_1x_{113,18,2}$ \\\hline\hline
\multirow{4}{*}{19} & $e_0g^2x_{66,7}+h_0^6x_{123,13,2}$ & $d_{2}^{-1}$ & $x_{124,17}$ \\\cline{2-4}
 & $h_0^6x_{123,13,2}$ & $d_{2}^{-1}$ & $h_0^3x_{124,14,2}$ \\\cline{2-4}
 & $h_0e_0x_{106,14}+h_0^2h_3x_{116,16}$ & $d_{3}^{-1}$ & $e_0x_{107,12}$ \\\cline{2-4}
 & $h_0^2h_3x_{116,16}$ & $d_{3}$ & $e_0g^2Mg$ \\\hline\hline
\multirow{3}{*}{18} & $h_0^5x_{123,13,2}$ & $d_{2}^{-1}$ & $h_0^2x_{124,14,2}$ \\\cline{2-4}
 & $e_0x_{106,14}+h_0h_3x_{116,16}$ & & Permanent \\\cline{2-4}
 & $h_0h_3x_{116,16}$ & $d_{2}$ & $h_0^4x_{122,16}$ \\\hline\hline
\multirow{4}{*}{17} & $h_0^2x_{123,15}+h_0^4x_{123,13,2}$ & $d_{2}^{-1}$ & $h_0x_{124,14}$ \\\cline{2-4}
 & $h_0^4x_{123,13,2}$ & $d_{2}^{-1}$ & $h_0x_{124,14,2}+h_0x_{124,14}$ \\\cline{2-4}
 & $d_0x_{109,13}$ & & Permanent \\\cline{2-4}
 & $h_3x_{116,16}$ & $d_{2}$ & $h_0^3x_{122,16}$ \\\hline\hline
\multirow{3}{*}{16} & $h_0x_{123,15}+h_0^3x_{123,13,2}$ & $d_{2}^{-1}$ & $x_{124,14}$ \\\cline{2-4}
 & $h_0^3x_{123,13,2}$ & $d_{2}^{-1}$ & $x_{124,14,2}+x_{124,14}$ \\\cline{2-4}
 & $h_1x_{122,15,2}$ & $d_{3}^{-1}$ & $h_4x_{109,12}$ \\\hline\hline
\multirow{3}{*}{15} & $x_{123,15}$ & $d_{4}^{-1}$ & $x_{124,11,2}+x_{124,11}$ \\\cline{2-4}
 & $h_4x_{108,14}$ & $d_{5}^{-1}$ & $h_0x_{124,9}$ \\\cline{2-4}
 & $h_0^2x_{123,13,2}$ & $d_{3}$ & $h_0^2x_{122,16}$ \\\hline\hline
\multirow{3}{*}{14} & $h_0^3x_{123,11}$ & $d_{2}^{-1}$ & $h_0x_{124,11}$ \\\cline{2-4}
 & $h_0x_{123,13,2}$ & $d_{3}$ & $h_0x_{122,16}$ \\\cline{2-4}
 & $\Delta h_2^2x_{93,8}$ & $d_{3}$ & $g^3[H_1]$ \\\hline\hline
\multirow{3}{*}{13} & $h_0^2x_{123,11}$ & $d_{2}^{-1}$ & $x_{124,11}$ \\\cline{2-4}
 & $x_{123,13,2}$ & $d_{3}$ & $x_{122,16}+h_0x_{122,15,2}$ \\\cline{2-4}
 & $x_{123,13}$ & $d_{3}$ & $\Delta h_2^2x_{92,10}$ \\\hline\hline
\multirow{3}{*}{12} & $h_0x_{123,11}+h_0^2h_6[B_4]$ & $d_{3}^{-1}$ & $x_{124,9,2}+h_0x_{124,8}$ \\\cline{2-4}
 & $x_{123,12}$ & $d_{3}^{-1}$ & $x_{124,9}+h_0x_{124,8}$ \\\cline{2-4}
 & $h_0^2h_6[B_4]$ & $d_{5}^{-1}$ & $h_6A$ \\\hline\hline
\multirow{5}{*}{11} & $h_0^2x_{123,9}$ & $d_{2}^{-1}$ & $x_{124,9}$ \\\cline{2-4}
 & $h_5x_{92,10}$ & $d_{5}^{-1}$ & $x_{124,6}$ \\\cline{2-4}
 & $x_{123,11,2}+x_{123,11}+h_0h_6[B_4]$ & $d_{7}$ & $h_1x_{121,17}$ \\\cline{2-4}
 & $x_{123,11}+h_0h_6[B_4]$ & $d_{3}$ & $h_0x_{122,13}$ \\\cline{2-4}
 & $h_0h_6[B_4]$ & $d_{2}$ & $h_0^2h_6Md_0$ \\\hline\hline
\multirow{3}{*}{10} & $h_0x_{123,9}$ & $d_{2}^{-1}$ & $x_{124,8}$ \\\cline{2-4}
 & $x_{123,10}+h_6[B_4]$ & $d_{3}^{-1}$ & $x_{124,7}$ \\\cline{2-4}
 & $h_6[B_4]$ & $d_{2}$ & $h_0h_6Md_0$ \\\hline\hline
\multirow{2}{*}{9} & $x_{123,9}+h_0x_{123,8}$ & $d_{12}$ & $?$ \\\cline{2-4}
 & $h_0x_{123,8}$ & $d_{3}$ & $h_0x_{122,11}+h_0h_6Md_0$ \\\hline\hline
\multirow{1}{*}{8} & $x_{123,8}$ & $d_{3}$ & $x_{122,11}+h_6Md_0$ \\\hline\hline
\multirow{1}{*}{0-7} & \multicolumn{3}{c|}{}\\\hline
\end{tabular}}
\caption{The classical Adams spectral sequence of $S^0$ for $s \le 25$ in stem 123}
\label{Table:S123}
\end{table}


\begin{table}
    \centering
\scalebox{0.95}{\begin{tabular}{|c|l|l|l|}\hline
$s$ & Elements & $d_r$ & value\\\hline\hline
\multirow{3}{*}{25} & $h_0^{11}x_{124,14,2}$ & $d_{2}^{-1}$ & $h_0^9x_{125,14}$ \\\cline{2-4}
 & $ix_{101,18}$ & $d_{2}$ & $d_0^2\Delta h_2^2x_{65,13}+h_0Pd_0x_{101,18}$ \\\cline{2-4}
 & $d_0e_0\Delta^3h_1g$ & $d_{2}$ & $d_0^2g^3m$ \\\hline\hline
\multirow{2}{*}{24} & $h_0^{10}x_{124,14,2}$ & $d_{2}^{-1}$ & $h_0^8x_{125,14}$ \\\cline{2-4}
 & $h_0^2d_0x_{110,18}$ & $d_{2}^{-1}$ & $h_0e_0x_{108,17}$ \\\hline\hline
\multirow{4}{*}{23} & $h_0^9x_{124,14,2}$ & $d_{2}^{-1}$ & $h_0^7x_{125,14}$ \\\cline{2-4}
 & $d_0g\Delta h_2^2[B_4]+h_0d_0x_{110,18}$ & $d_{2}^{-1}$ & $e_0x_{108,17}$ \\\cline{2-4}
 & $h_0d_0x_{110,18}$ & $d_{3}^{-1}$ & $x_{125,20}$ \\\cline{2-4}
 & $d_0x_{110,19}$ & & Permanent \\\hline\hline
\multirow{4}{*}{22} & $h_0^8x_{124,14,2}$ & $d_{2}^{-1}$ & $h_0^6x_{125,14}$ \\\cline{2-4}
 & $g^2\Delta^2t$ & $d_{3}^{-1}$ & $gx_{105,15}$ \\\cline{2-4}
 & $x_{124,22}$ & $d_{2}$ & $h_1Ph_1x_{113,18,2}$ \\\cline{2-4}
 & $d_0x_{110,18}$ & $d_{2}$ & $d_0^2\Delta h_2^2Mg$ \\\hline\hline
\multirow{2}{*}{21} & $h_0^7x_{124,14,2}$ & $d_{2}^{-1}$ & $h_0^5x_{125,14}$ \\\cline{2-4}
 & $h_0d_0^2[\Delta\Delta_1g]$ & $d_{2}^{-1}$ & $d_0gx_{91,11}$ \\\hline\hline
\multirow{2}{*}{20} & $h_0^6x_{124,14,2}$ & $d_{2}^{-1}$ & $h_0^4x_{125,14}$ \\\cline{2-4}
 & $d_0^2[\Delta\Delta_1g]$ & $d_{5}^{-1}$ & $h_1x_{124,14}$ \\\hline\hline
\multirow{2}{*}{19} & $h_0^5x_{124,14,2}$ & $d_{2}^{-1}$ & $h_0^3x_{125,14}$ \\\cline{2-4}
 & $d_0x_{110,15}$ & & Permanent \\\hline\hline
\multirow{2}{*}{18} & $h_0x_{124,17}+h_0^4x_{124,14,2}$ & $d_{2}^{-1}$ & $x_{125,16}$ \\\cline{2-4}
 & $h_0^4x_{124,14,2}$ & $d_{2}^{-1}$ & $h_0^2x_{125,14}$ \\\hline\hline
\multirow{4}{*}{17} & $h_0^3x_{124,14}$ & $d_{2}^{-1}$ & $x_{125,15}$ \\\cline{2-4}
 & $h_0^2x_{124,15}$ & $d_{2}^{-1}$ & $h_0x_{125,14}$ \\\cline{2-4}
 & $x_{124,17}$ & $d_{2}$ & $e_0g^2x_{66,7}+h_0^6x_{123,13,2}$ \\\cline{2-4}
 & $h_0^3x_{124,14,2}$ & $d_{2}$ & $h_0^6x_{123,13,2}$ \\\hline\hline
\multirow{5}{*}{16} & $h_0x_{124,15}$ & $d_{2}^{-1}$ & $x_{125,14}$ \\\cline{2-4}
 & $h_1x_{123,15}$ & $d_{3}^{-1}$ & $h_3x_{118,12}$ \\\cline{2-4}
 & $h_0^2x_{124,14}$ & & Permanent \\\cline{2-4}
 & $e_0x_{107,12}$ & $d_{3}$ & $e_0g^2x_{66,7}+h_0e_0x_{106,14}+h_0^2h_3x_{116,16}$ \\\cline{2-4}
 & $h_0^2x_{124,14,2}$ & $d_{2}$ & $h_0^5x_{123,13,2}$ \\\hline\hline
\multirow{4}{*}{15} & $x_{124,15}$ & $d_{4}^{-1}$ & $h_6x_{62,10}$ \\\cline{2-4}
 & $h_3^2x_{110,13}+h_0x_{124,14}$ & & Permanent \\\cline{2-4}
 & $h_0x_{124,14}$ & $d_{2}$ & $h_0^2x_{123,15}+h_0^4x_{123,13,2}$ \\\cline{2-4}
 & $h_0x_{124,14,2}$ & $d_{2}$ & $h_0^2x_{123,15}$ \\\hline\hline
\multirow{5}{*}{14} & $h_1x_{123,13}$ & $d_{2}^{-1}$ & $x_{125,12}$ \\\cline{2-4}
 & $h_1x_{123,13,2}$ & $d_{2}^{-1}$ & $x_{125,12,2}$ \\\cline{2-4}
 & $\Delta h_2^2x_{94,8}$ & & Permanent \\\cline{2-4}
 & $x_{124,14}$ & $d_{2}$ & $h_0x_{123,15}+h_0^3x_{123,13,2}$ \\\cline{2-4}
 & $x_{124,14,2}$ & $d_{2}$ & $h_0x_{123,15}$ \\\hline\hline
\multirow{4}{*}{13} & $h_0^5x_{124,8}$ & $d_{2}^{-1}$ & $h_0^3x_{125,8}$ \\\cline{2-4}
 & $[H_1](\Delta e_1+C_0+h_0^6h_5^2)$ & $d_{3}^{-1}$ & $x_{125,10,2}$ \\\cline{2-4}
 & $e_0\Delta h_6g$ & & Permanent \\\cline{2-4}
 & $h_4x_{109,12}$ & $d_{3}$ & $h_1x_{122,15,2}$ \\\hline
\end{tabular}}
\caption{The classical Adams spectral sequence of $S^0$ for $13 \le s \le 25$ in stem 124}
\label{Table:S124.13}
\end{table}


\begin{table}
    \centering
\scalebox{0.9}{\begin{tabular}{|c|l|l|l|}\hline 
\multirow{5}{*}{12} & $h_0x_{124,11,2}+h_0x_{124,11}$ & $d_{2}^{-1}$ & $x_{125,10}$ \\\cline{2-4}
 & $h_0^2x_{124,10,2}+h_0^4x_{124,8}$ & $d_{2}^{-1}$ & $h_0x_{125,9,2}$ \\\cline{2-4}
 & $h_0^4x_{124,8}$ & $d_{2}^{-1}$ & $h_0^2x_{125,8}$ \\\cline{2-4}
 & $h_1x_{123,11,2}$ & $d_{3}^{-1}$ & $x_{125,9}$ \\\cline{2-4}
 & $h_0x_{124,11}$ & $d_{2}$ & $h_0^3x_{123,11}$ \\\hline\hline
\multirow{5}{*}{11} & $h_0x_{124,10,2}+h_0^3x_{124,8}$ & $d_{2}^{-1}$ & $x_{125,9,2}$ \\\cline{2-4}
 & $h_0^3x_{124,8}$ & $d_{2}^{-1}$ & $h_0x_{125,8}$ \\\cline{2-4}
 & $x_{124,11,3}$ & $d_{3}^{-1}$ & $x_{125,8,2}$ \\\cline{2-4}
 & $x_{124,11,2}+x_{124,11}$ & $d_{4}$ & $x_{123,15}$ \\\cline{2-4}
 & $x_{124,11}$ & $d_{2}$ & $h_0^2x_{123,11}$ \\\hline\hline
\multirow{5}{*}{10} & $h_1x_{123,9}+h_0^2x_{124,8}$ & $d_{2}^{-1}$ & $x_{125,8}$ \\\cline{2-4}
 & $x_{124,10,2}+h_0x_{124,9}$ & $d_{4}^{-1}$ & $h_6[H_1]$ \\\cline{2-4}
 & $x_{124,10}+h_0^2x_{124,8}$ & $d_{4}^{-1}$ & $h_6[H_1]+h_0x_{125,5}$ \\\cline{2-4}
 & $h_0^2x_{124,8}$ & & Permanent \\\cline{2-4}
 & $h_0x_{124,9}$ & $d_{5}$ & $h_4x_{108,14}$ \\\hline\hline
\multirow{3}{*}{9} & $x_{124,9,2}+h_0x_{124,8}$ & $d_{3}$ & $h_0x_{123,11}+h_0^2h_6[B_4]$ \\\cline{2-4}
 & $x_{124,9}+h_0x_{124,8}$ & $d_{3}$ & $x_{123,12}$ \\\cline{2-4}
 & $h_0x_{124,8}$ & $d_{2}$ & $h_0^2x_{123,9}$ \\\hline\hline
\multirow{1}{*}{8} & $x_{124,8}$ & $d_{2}$ & $h_0x_{123,9}$ \\\hline\hline
\multirow{3}{*}{7} & $h_0x_{124,6}$ & $d_{2}^{-1}$ & $x_{125,5}$ \\\cline{2-4}
 & $h_6A$ & $d_{5}$ & $h_0^2h_6[B_4]$ \\\cline{2-4}
 & $x_{124,7}$ & $d_{3}$ & $x_{123,10}+h_6[B_4]$ \\\hline\hline
\multirow{1}{*}{6} & $x_{124,6}$ & $d_{5}$ & $h_5x_{92,10}$ \\\hline\hline
\multirow{1}{*}{0-5} & \multicolumn{3}{c|}{}\\\hline
\end{tabular}}
\caption{The classical Adams spectral sequence of $S^0$ for $s \le 12$ in stem 124}
\label{Table:S124.12}
\end{table}

\begin{table}
    \centering
\scalebox{0.9}{\begin{tabular}{|c|l|l|l|}\hline
$s$ & Elements & $d_r$ & value\\\hline\hline
\multirow{4}{*}{25} & $d_0^2e_0g[B_4]$ & $d_{4}^{-1}$ & $h_1x_{125,20}$ \\\cline{2-4}
 & $x_{125,25,2}+x_{125,25}+g^4\Delta h_1g$ & $d_{4}^{-1}$ & $x_{126,21}+\text{possibly }h_1x_{125,20}$ \\\cline{2-4}
 & $g^4\Delta h_1g$ & & Permanent \\\cline{2-4}
 & $x_{125,25}$ & $d_{2}$ & $h_0x_{124,26}$ \\\hline\hline
\multirow{1}{*}{24} & $e_0g\Delta^2g^2$ & $d_{2}$ & $d_0g^4\Delta h_2^2$ \\\hline\hline
\multirow{2}{*}{23} & $h_0^2e_0x_{108,17}$ & $d_{3}$ & $d_0^3e_0Mg$ \\\cline{2-4}
 & $h_0^9x_{125,14}$ & $d_{2}$ & $h_0^{11}x_{124,14,2}$ \\\hline\hline
\multirow{4}{*}{22} & $g^3Mg$ & $d_{4}^{-1}$ & $x_{126,18}$ \\\cline{2-4}
 & $ix_{102,15}+h_0^8x_{125,14}$ & $d_{4}^{-1}$ & $gx_{106,14}+e_0x_{109,14,2}$ \\\cline{2-4}
 & $h_0^8x_{125,14}$ & $d_{2}$ & $h_0^{10}x_{124,14,2}$ \\\cline{2-4}
 & $h_0e_0x_{108,17}$ & $d_{2}$ & $h_0^2d_0x_{110,18}$ \\\hline\hline
\multirow{3}{*}{21} & $x_{125,21}$ & $d_{4}^{-1}$ & $d_0x_{112,13}$ \\\cline{2-4}
 & $h_0^7x_{125,14}$ & $d_{2}$ & $h_0^9x_{124,14,2}$ \\\cline{2-4}
 & $e_0x_{108,17}$ & $d_{2}$ & $d_0g\Delta h_2^2[B_4]+h_0d_0x_{110,18}$ \\\hline\hline
\multirow{3}{*}{20} & $h_0d_0gx_{91,11}$ & $d_{2}^{-1}$ & $x_{126,18,2}$ \\\cline{2-4}
 & $x_{125,20}$ & $d_{3}$ & $d_0g\Delta h_2^2[B_4]$ \\\cline{2-4}
 & $h_0^6x_{125,14}$ & $d_{2}$ & $h_0^8x_{124,14,2}$ \\\hline
 \end{tabular}}
\caption{The classical Adams spectral sequence of $S^0$ for $20 \le s \le 25$ in stem 125}
\label{Table:S125.20}
\end{table}

\begin{table}
    \centering
\scalebox{0.85}{\begin{tabular}{|c|l|l|l|}\hline 
\multirow{3}{*}{19} & $gx_{105,15}$ & $d_{3}$ & $g^2\Delta^2t$ \\\cline{2-4}
 & $h_0^5x_{125,14}$ & $d_{2}$ & $h_0^7x_{124,14,2}$ \\\cline{2-4}
 & $d_0gx_{91,11}$ & $d_{2}$ & $h_0d_0^2[\Delta\Delta_1g]$ \\\hline\hline
\multirow{2}{*}{18} & $d_0^2x_{97,10}$ & $d_{5}^{-1}$ & $h_1x_{125,12,2}$ \\\cline{2-4}
 & $h_0^4x_{125,14}$ & $d_{2}$ & $h_0^6x_{124,14,2}$ \\\hline\hline
\multirow{2}{*}{17} & $h_0^2Q_2x_{68,8}$ & $d_{2}^{-1}$ & $h_0D_2x_{68,8}$ \\\cline{2-4}
 & $h_0^3x_{125,14}$ & $d_{2}$ & $h_0^5x_{124,14,2}$ \\\hline\hline
\multirow{4}{*}{16} & $h_0Q_2x_{68,8}$ & $d_{2}^{-1}$ & $D_2x_{68,8}$ \\\cline{2-4}
 & $h_1x_{124,15}$ & $d_{4}^{-1}$ & $h_0^2x_{126,10}$ \\\cline{2-4}
 & $x_{125,16}$ & $d_{2}$ & $h_0x_{124,17}+h_0^4x_{124,14,2}$ \\\cline{2-4}
 & $h_0^2x_{125,14}$ & $d_{2}$ & $h_0^4x_{124,14,2}$ \\\hline\hline
\multirow{5}{*}{15} & $h_0^3x_{125,12}$ & $d_{2}^{-1}$ & $h_0h_3x_{119,11}$ \\\cline{2-4}
 & $Q_2x_{68,8}$ & $d_{4}^{-1}$ & $h_1x_{125,10}$ \\\cline{2-4}
 & $h_1x_{124,14}$ & $d_{5}$ & $d_0^2[\Delta\Delta_1g]$ \\\cline{2-4}
 & $x_{125,15}$ & $d_{2}$ & $h_0^3x_{124,14}$ \\\cline{2-4}
 & $h_0x_{125,14}$ & $d_{2}$ & $h_0^2x_{124,15}$ \\\hline\hline
\multirow{3}{*}{14} & $h_0^2x_{125,12}$ & $d_{2}^{-1}$ & $h_3x_{119,11}$ \\\cline{2-4}
 & $h_1h_4x_{109,12}$ & & Permanent \\\cline{2-4}
 & $x_{125,14}$ & $d_{2}$ & $h_0x_{124,15}$ \\\hline\hline
\multirow{5}{*}{13} & $h_0^4x_{125,9,2}$ & $d_{2}^{-1}$ & $x_{126,11}$ \\\cline{2-4}
 & $h_0^5x_{125,8}$ & $d_{2}^{-1}$ & $h_0x_{126,10}$ \\\cline{2-4}
 & $nx_{94,8}$ & $d_{4}^{-1}$ & $h_1x_{125,8}$ \\\cline{2-4}
 & $h_0x_{125,12}$ & $d_{4}^{-1}$ & $h_1x_{125,8,2}$ \\\cline{2-4}
 & $h_3x_{118,12}$ & $d_{3}$ & $h_1x_{123,15}$ \\\hline\hline
\multirow{5}{*}{12} & $h_0h_6x_{62,10}$ & $d_{2}^{-1}$ & $x_{126,10}$ \\\cline{2-4}
 & $h_0^3x_{125,9,2}+h_0^4x_{125,8}$ & $d_{3}^{-1}$ & $x_{126,9}$ \\\cline{2-4}
 & $h_0^4x_{125,8}$ & $d_{3}^{-1}$ & $x_{126,9}+h_0x_{126,8,3}$ \\\cline{2-4}
 & $x_{125,12}$ & $d_{2}$ & $h_1x_{123,13}$ \\\cline{2-4}
 & $x_{125,12,2}$ & $d_{2}$ & $h_1x_{123,13,2}$ \\\hline\hline
\multirow{5}{*}{11} & $h_1x_{124,10,2}$ & $d_{3}^{-1}$ & $x_{126,8}$ \\\cline{2-4}
 & $h_1x_{124,10}$ & $d_{3}^{-1}$ & $x_{126,8,2}$ \\\cline{2-4}
 & $h_0^2x_{125,9,2}$ & $d_{5}$ & $?$ \\\cline{2-4}
 & $h_6x_{62,10}$ & $d_{4}$ & $x_{124,15}$ \\\cline{2-4}
 & $h_0^3x_{125,8}$ & $d_{2}$ & $h_0^5x_{124,8}$ \\\hline\hline
\multirow{5}{*}{10} & $h_0x_{125,9}$ & $d_{2}^{-1}$ & $x_{126,8,3}$ \\\cline{2-4}
 & $x_{125,10,2}$ & $d_{3}$ & $[H_1](\Delta e_1+C_0+h_0^6h_5^2)$ \\\cline{2-4}
 & $x_{125,10}$ & $d_{2}$ & $h_0x_{124,11,2}+h_0x_{124,11}$ \\\cline{2-4}
 & $h_0x_{125,9,2}$ & $d_{2}$ & $h_0^2x_{124,10,2}+h_0^4x_{124,8}$ \\\cline{2-4}
 & $h_0^2x_{125,8}$ & $d_{2}$ & $h_0^4x_{124,8}$ \\\hline\hline
\multirow{5}{*}{9} & $h_6(\Delta e_1+C_0+h_0^6h_5^2)$ & & Permanent \\\cline{2-4}
 & $h_5x_{94,8}$ & $d_{7}$ & $?$ \\\cline{2-4}
 & $x_{125,9}$ & $d_{3}$ & $h_1x_{123,11,2}$ \\\cline{2-4}
 & $x_{125,9,2}$ & $d_{2}$ & $h_0x_{124,10,2}+h_0^3x_{124,8}$ \\\cline{2-4}
 & $h_0x_{125,8}$ & $d_{2}$ & $h_0^3x_{124,8}$ \\\hline\hline
\multirow{2}{*}{8} & $x_{125,8,2}$ & $d_{3}$ & $x_{124,11,3}$ \\\cline{2-4}
 & $x_{125,8}$ & $d_{2}$ & $h_1x_{123,9}+h_0^2x_{124,8}$ \\\hline\hline
\multirow{1}{*}{7} & $h_0^2x_{125,5}$ & $d_{3}^{-1}$ & $x_{126,4}$ \\\hline\hline
\multirow{2}{*}{6} & $h_6[H_1]$ & $d_{4}$ & $x_{124,10,2}+h_0x_{124,9}$ \\\cline{2-4}
 & $h_0x_{125,5}$ & $d_{4}$ & $x_{124,10,2}+x_{124,10}+h_0x_{124,9}+h_0^2x_{124,8}$ \\\hline\hline
\multirow{1}{*}{5} & $x_{125,5}$ & $d_{2}$ & $h_0x_{124,6}$ \\\hline
\hline
\multirow{1}{*}{0-4} & \multicolumn{3}{c|}{}\\\hline
 \end{tabular}}
\caption{The classical Adams spectral sequence of $S^0$ for $s \le 19$ in stem 125}
\label{Table:S125.19}
\end{table}

\begin{table}
    \centering
\scalebox{0.85}{\begin{tabular}{|c|l|l|l|}\hline
$s$ & Elements & $d_r$ & value\\\hline\hline
\multirow{1}{*}{25} & $h_0^7x_{126,18}$ & $d_{3}^{-1}$ & $h_0^{21}h_7$ \\\hline\hline
\multirow{4}{*}{24} & $d_0e_0\Delta h_2^2Mg$ & $d_{2}^{-1}$ & $d_0x_{113,18}$ \\\cline{2-4}
 & $h_0^6x_{126,18}$ & $d_{3}^{-1}$ & $h_0^{20}h_7$ \\\cline{2-4}
 & $g^4\Delta h_2c_1$ & $d_{3}^{-1}$ & $g^3C^{\prime\prime}$ \\\cline{2-4}
 & $d_0Pd_0M^2$ & $d_{4}^{-1}$ & $d_0e_0[\Delta\Delta_1g]$ \\\hline\hline
\multirow{2}{*}{23} & $h_0^5x_{126,18}$ & $d_{3}^{-1}$ & $h_0^{19}h_7$ \\\cline{2-4}
 & $x_{126,23}$ & $d_{4}^{-1}$ & $e_0x_{110,15}$ \\\hline\hline
\multirow{2}{*}{22} & $h_0x_{126,21}+h_0^4x_{126,18}$ & $d_{3}^{-1}$ & $h_1x_{126,18,2}$ \\\cline{2-4}
 & $h_0^4x_{126,18}$ & $d_{3}^{-1}$ & $h_0^{18}h_7$ \\\hline\hline
\multirow{3}{*}{21} & $h_0^3x_{126,18}$ & $d_{3}^{-1}$ & $h_0^{17}h_7$ \\\cline{2-4}
 & $h_1x_{125,20}$ & $d_{4}$ & $d_0^2e_0g[B_4]$ \\\cline{2-4}
 & $x_{126,21}$ & $d_{4}$ & $x_{125,25,2}+x_{125,25}+g^4\Delta h_1g+\text{possibly }d_0^2e_0gB_4$ \\\hline\hline
\multirow{2}{*}{20} & $h_0^2x_{126,18}$ & $d_{3}^{-1}$ & $h_0^{16}h_7$ \\\cline{2-4}
 & $d_0x_{112,16}$ & $d_{5}^{-1}$ & $x_{127,15}$ \\\hline\hline
\multirow{2}{*}{19} & $h_0x_{126,18}$ & $d_{3}^{-1}$ & $h_0^{15}h_7$ \\\cline{2-4}
 & $g^3x_{66,7}$ & $d_{3}^{-1}$ & $gx_{107,12}$ \\\hline\hline
\multirow{4}{*}{18} & $x_{126,18}+e_0x_{109,14,2}$ & $d_{7}$ & $?$ \\\cline{2-4}
 & $e_0x_{109,14,2}$ & $d_{4}$ & $g^3Mg$ \\\cline{2-4}
 & $gx_{106,14}$ & $d_{4}$ & $ix_{102,15}+g^3Mg+h_0^8x_{125,14}$ \\\cline{2-4}
 & $x_{126,18,2}$ & $d_{2}$ & $h_0d_0gx_{91,11}$ \\\hline\hline
\multirow{4}{*}{17} & $h_0^{15}h_6^2$ & $d_{2}^{-1}$ & $h_0^{14}h_7$ \\\cline{2-4}
 & $h_1^2x_{124,15}$ & $d_{2}^{-1}$ & $h_6x_{64,14}$ \\\cline{2-4}
 & $x_{126,17}$ & $d_{8}$ & $?$ \\\cline{2-4}
 & $d_0x_{112,13}$ & $d_{4}$ & $x_{125,21}$ \\\hline\hline
\multirow{3}{*}{16} & $h_0^{14}h_6^2$ & $d_{2}^{-1}$ & $h_0^{13}h_7$ \\\cline{2-4}
 & $h_0^2D_2x_{68,8}$ & $d_{3}^{-1}$ & $x_{127,13}$ \\\cline{2-4}
 & $h_1^2x_{124,14}$ & $d_{6}^{-1}$ & $h_2x_{124,9}+h_0^2x_{127,8}$ \\\hline\hline
\multirow{2}{*}{15} & $h_0^{13}h_6^2$ & $d_{2}^{-1}$ & $h_0^{12}h_7$ \\\cline{2-4}
 & $h_0D_2x_{68,8}$ & $d_{2}$ & $h_0^2Q_2x_{68,8}$ \\\hline\hline
\multirow{4}{*}{14} & $h_0^{12}h_6^2$ & $d_{2}^{-1}$ & $h_0^{11}h_7$ \\\cline{2-4}
 & $x_{126,14}$ & $d_{4}^{-1}$ & $h_0^2x_{127,8}$ \\\cline{2-4}
 & $h_1h_3x_{118,12}$ & $d_{5}^{-1}$ & $h_1x_{126,8,2}$ \\\cline{2-4}
 & $D_2x_{68,8}$ & $d_{2}$ & $h_0Q_2x_{68,8}$ \\\hline\hline
\multirow{3}{*}{13} & $h_0^{11}h_6^2$ & $d_{2}^{-1}$ & $h_0^{10}h_7$ \\\cline{2-4}
 & $h_1x_{125,12,2}$ & $d_{5}$ & $d_0^2x_{97,10}$ \\\cline{2-4}
 & $h_0h_3x_{119,11}$ & $d_{2}$ & $h_0^3x_{125,12}$ \\\hline\hline
\multirow{5}{*}{12} & $d_1x_{94,8}$ & $d_{2}^{-1}$ & $x_{127,10}$ \\\cline{2-4}
 & $h_0x_{126,11}$ & $d_{2}^{-1}$ & $h_3x_{120,9}$ \\\cline{2-4}
 & $h_0^{10}h_6^2$ & $d_{2}^{-1}$ & $h_0^9h_7$ \\\cline{2-4}
 & $h_0^2x_{126,10}$ & $d_{4}$ & $h_1x_{124,15}$ \\\cline{2-4}
 & $h_3x_{119,11}$ & $d_{2}$ & $h_0^2x_{125,12}$ \\\hline\hline
\multirow{6}{*}{11} & $h_0^2x_{126,9}$ & $d_{2}^{-1}$ & $h_0x_{127,8}$ \\\cline{2-4}
 & $h_0^9h_6^2$ & $d_{2}^{-1}$ & $h_0^8h_7$ \\\cline{2-4}
 & $h_1x_{125,10,2}+h_1x_{125,10}$ & & Permanent \\\cline{2-4}
 & $h_1x_{125,10}$ & $d_{4}$ & $Q_2x_{68,8}$ \\\cline{2-4}
 & $x_{126,11}$ & $d_{2}$ & $h_0^4x_{125,9,2}$ \\\cline{2-4}
 & $h_0x_{126,10}$ & $d_{2}$ & $h_0^5x_{125,8}$ \\\hline
 \end{tabular}}
\caption{The classical Adams spectral sequence of $S^0$ for $11 \le s \le 25$ in stem 126}
\label{Table:S126.11}
\end{table}

\begin{table}
    \centering
\scalebox{0.85}{\begin{tabular}{|c|l|l|l|}\hline 
\multirow{5}{*}{10} & $h_0x_{126,9}$ & $d_{2}^{-1}$ & $x_{127,8}$ \\\cline{2-4}
 & $h_0^2x_{126,8}$ & $d_{2}^{-1}$ & $h_0x_{127,7}$ \\\cline{2-4}
 & $h_0^8h_6^2$ & $d_{2}^{-1}$ & $h_0^7h_7$ \\\cline{2-4}
 & $h_0^2x_{126,8,3}$ & & Permanent \\\cline{2-4}
 & $x_{126,10}$ & $d_{2}$ & $h_0h_6x_{62,10}$ \\\hline\hline
\multirow{6}{*}{9} & $h_0x_{126,8}$ & $d_{2}^{-1}$ & $x_{127,7}$ \\\cline{2-4}
 & $h_0^7h_6^2$ & $d_{2}^{-1}$ & $h_0^6h_7$ \\\cline{2-4}
 & $h_1x_{125,8}$ & $d_{4}$ & $nx_{94,8}$ \\\cline{2-4}
 & $h_1x_{125,8,2}$ & $d_{4}$ & $h_0x_{125,12}$ \\\cline{2-4}
 & $x_{126,9}$ & $d_{3}$ & $h_0^3x_{125,9,2}+h_0^4x_{125,8}$ \\\cline{2-4}
 & $h_0x_{126,8,3}$ & $d_{3}$ & $h_0^3x_{125,9,2}$ \\\hline\hline
\multirow{6}{*}{8} & $h_0^6h_6^2$ & $d_{2}^{-1}$ & $h_0^5h_7$ \\\cline{2-4}
 & $h_6(C^{\prime}+X_2)$ & $d_{17}$ & $?$ \\\cline{2-4}
 & $x_{126,8,4}+x_{126,8}$ & $d_{6}$ & $?$ \\\cline{2-4}
 & $x_{126,8}$ & $d_{3}$ & $h_1x_{124,10,2}$ \\\cline{2-4}
 & $x_{126,8,2}$ & $d_{3}$ & $h_1x_{124,10}$ \\\cline{2-4}
 & $x_{126,8,3}$ & $d_{2}$ & $h_0x_{125,9}$ \\\hline\hline
\multirow{2}{*}{7} & $h_0^5h_6^2$ & $d_{2}^{-1}$ & $h_0^4h_7$ \\\cline{2-4}
 & $h_1h_6[H_1]$ & $d_{18}$ & $?$ \\\hline\hline
\multirow{2}{*}{6} & $h_0^4h_6^2$ & $d_{2}^{-1}$ & $h_0^3h_7$ \\\cline{2-4}
 & $x_{126,6}$ & $d_{3}$ & $h_5x_{94,8} +\text{possibly }h_6(\Delta e_1 + C_0 + h_0^6h_5^2)$ \\\hline\hline
\multirow{1}{*}{5} & $h_0^3h_6^2$ & $d_{2}^{-1}$ & $h_0^2h_7$ \\\hline\hline
\multirow{2}{*}{4} & $h_0^2h_6^2$ & $d_{2}^{-1}$ & $h_0h_7$ \\\cline{2-4}
 & $x_{126,4}$ & $d_{3}$ & $h_0^2x_{125,5}$ \\\hline\hline
\multirow{1}{*}{3} & $h_0h_6^2$ & $d_{2}^{-1}$ & $h_7$ \\\hline\hline
\multirow{1}{*}{2} & $h_6^2$ & $d_{7}$ & $?$ \\\hline\hline
\multirow{1}{*}{0-1} & \multicolumn{3}{c|}{}\\\hline
\end{tabular}}
\caption{The classical Adams spectral sequence of $S^0$ for $s \le 10$ in stem 126}
\label{Table:S126.10}
\end{table}

\begin{table}
    \centering
\scalebox{0.85}{\begin{tabular}{|c|l|l|l|}\hline
$s$ & Elements & $d_r$ & value\\\hline\hline
\multirow{3}{*}{25} & $h_0^{24}h_7$ & $d_{3}$ & $h_0^{10}x_{126,18}$ \\\cline{2-4}
 & $ix_{104,18}$ & $d_{2}$ & $d_0^3x_{84,15,2}+h_0d_0x_{112,22}$ \\\cline{2-4}
 & $d_0g\Delta^3h_1g$ & $d_{2}$ & $d_0e_0g^3m$ \\\hline\hline
\multirow{3}{*}{24} & $h_0^2d_0x_{113,18}$ & $d_{2}^{-1}$ & $h_0gx_{108,17}$ \\\cline{2-4}
 & $h_1x_{126,23}$ & $d_{3}^{-1}$ & $x_{128,21}$ \\\cline{2-4}
 & $h_0^{23}h_7$ & $d_{3}$ & $h_0^9x_{126,18}$ \\\hline\hline
\multirow{4}{*}{23} & $e_0g\Delta h_2^2[B_4]+h_0d_0x_{113,18}$ & $d_{2}^{-1}$ & $gx_{108,17}$ \\\cline{2-4}
 & $h_0d_0x_{113,18}$ & $d_{4}$ & $d_0^3x_{84,15,2}$ \\\cline{2-4}
 & $h_0^{22}h_7$ & $d_{3}$ & $h_0^8x_{126,18}$ \\\cline{2-4}
 & $d_0Pd_0x_{91,11}$ & $d_{3}$ & $h_1x_{125,25}$ \\\hline\hline
\multirow{3}{*}{22} & $d_0x_{113,18,2}$ & $d_{4}^{-1}$ & $d_0e_0x_{97,10}$ \\\cline{2-4}
 & $h_0^{21}h_7$ & $d_{3}$ & $h_0^7x_{126,18}$ \\\cline{2-4}
 & $d_0x_{113,18}$ & $d_{2}$ & $d_0e_0\Delta h_2^2Mg$ \\\hline\hline
\multirow{4}{*}{21} & $h_0^6x_{127,15}$ & $d_{2}^{-1}$ & $h_0^5x_{128,14}$ \\\cline{2-4}
 & $x_{127,21}+g^3C^{\prime\prime}$ & & Permanent \\\cline{2-4}
 & $h_0^{20}h_7$ & $d_{3}$ & $h_0^6x_{126,18}$ \\\cline{2-4}
 & $g^3C^{\prime\prime}$ & $d_{3}$ & $g^4\Delta h_2c_1$ \\\hline
 \end{tabular}}
\caption{The classical Adams spectral sequence of $S^0$ for $21 \le s \le 25$ in stem 127}
\label{Table:S127.21}
\end{table}

\begin{table}
    \centering
\scalebox{0.9}{\begin{tabular}{|c|l|l|l|}\hline 
\multirow{3}{*}{20} & $h_0^5x_{127,15}$ & $d_{2}^{-1}$ & $h_0^4x_{128,14}$ \\\cline{2-4}
 & $d_0e_0[\Delta\Delta_1g]$ & $d_{4}$ & $d_0Pd_0M^2$ \\\cline{2-4}
 & $h_0^{19}h_7$ & $d_{3}$ & $h_0^5x_{126,18}$ \\\hline\hline
\multirow{5}{*}{19} & $h_0^4x_{127,15}$ & $d_{2}^{-1}$ & $h_0^3x_{128,14}$ \\\cline{2-4}
 & $h_1x_{126,18}$ & & Permanent \\\cline{2-4}
 & $e_0x_{110,15}$ & $d_{4}$ & $x_{126,23}$ \\\cline{2-4}
 & $h_1x_{126,18,2}$ & $d_{3}$ & $h_0x_{126,21}+h_0^4x_{126,18}$ \\\cline{2-4}
 & $h_0^{18}h_7$ & $d_{3}$ & $h_0^4x_{126,18}$ \\\hline\hline
\multirow{5}{*}{18} & $h_0^3x_{127,15}$ & $d_{2}^{-1}$ & $h_0^2x_{128,14}$ \\\cline{2-4}
 & $h_0^3h_6x_{64,14}$ & $d_{2}^{-1}$ & $h_0^2h_6x_{65,13}$ \\\cline{2-4}
 & $g^2\Delta h_1H_1$ & $d_{3}^{-1}$ & $gx_{108,11}$ \\\cline{2-4}
 & $h_1x_{126,17}$ & & Permanent \\\cline{2-4}
 & $h_0^{17}h_7$ & $d_{3}$ & $h_0^3x_{126,18}$ \\\hline\hline
\multirow{5}{*}{17} & $h_0^2h_2x_{124,14}$ & $d_{2}^{-1}$ & $x_{128,15}$ \\\cline{2-4}
 & $h_0^2x_{127,15}$ & $d_{2}^{-1}$ & $x_{128,15}+h_0x_{128,14}$ \\\cline{2-4}
 & $h_0^2h_6x_{64,14}$ & $d_{2}^{-1}$ & $h_0h_6x_{65,13}$ \\\cline{2-4}
 & $gx_{107,13}$ & $d_{4}^{-1}$ & $h_0h_3x_{121,11}$ \\\cline{2-4}
 & $h_0^{16}h_7$ & $d_{3}$ & $h_0^2x_{126,18}$ \\\hline\hline
\multirow{6}{*}{16} & $h_0x_{127,15}+h_0h_2x_{124,14}$ & $d_{2}^{-1}$ & $x_{128,14}$ \\\cline{2-4}
 & $h_0h_6x_{64,14}$ & $d_{2}^{-1}$ & $h_6x_{65,13}$ \\\cline{2-4}
 & $h_0h_2x_{124,14}$ & & Permanent \\\cline{2-4}
 & $x_{127,16}$ & & Permanent \\\cline{2-4}
 & $h_0^{15}h_7$ & $d_{3}$ & $h_0x_{126,18}$ \\\cline{2-4}
 & $gx_{107,12}$ & $d_{3}$ & $g^3x_{66,7}$ \\\hline\hline
\multirow{5}{*}{15} & $h_1x_{126,14}$ & $d_{2}^{-1}$ & $x_{128,13,2}$ \\\cline{2-4}
 & $h_2x_{124,14}$ & & Permanent \\\cline{2-4}
 & $x_{127,15}$ & $d_{5}$ & $d_0x_{112,16}$ \\\cline{2-4}
 & $h_0^{14}h_7$ & $d_{2}$ & $h_0^{15}h_6^2$ \\\cline{2-4}
 & $h_6x_{64,14}$ & $d_{2}$ & $h_1^2x_{124,15}$ \\\hline\hline
\multirow{3}{*}{14} & $h_0g\Delta h_6g$ & $d_{2}^{-1}$ & $x_{128,12,2}$ \\\cline{2-4}
 & $h_0h_3x_{120,12}$ & $d_{2}^{-1}$ & $h_3x_{121,11}$ \\\cline{2-4}
 & $h_0^{13}h_7$ & $d_{2}$ & $h_0^{14}h_6^2$ \\\hline\hline
\multirow{5}{*}{13} & $h_0^3x_{127,10}$ & $d_{2}^{-1}$ & $h_0x_{128,10}$ \\\cline{2-4}
 & $g\Delta h_6g$ & $d_{3}^{-1}$ & $x_{128,10,2}$ \\\cline{2-4}
 & $h_3x_{120,12}$ & $d_{4}^{-1}$ & $h_2x_{125,8,2}$ \\\cline{2-4}
 & $x_{127,13}$ & $d_{3}$ & $h_0^2D_2x_{68,8}$ \\\cline{2-4}
 & $h_0^{12}h_7$ & $d_{2}$ & $h_0^{13}h_6^2$ \\\hline\hline
\multirow{3}{*}{12} & $h_0^2x_{127,10}$ & $d_{2}^{-1}$ & $x_{128,10}$ \\\cline{2-4}
 & $h_1x_{126,11}$ & $d_{3}^{-1}$ & $h_1x_{127,8}$ \\\cline{2-4}
 & $h_0^{11}h_7$ & $d_{2}$ & $h_0^{12}h_6^2$ \\\hline\hline
\multirow{4}{*}{11} & $h_0h_3x_{120,9}$ & $d_{3}^{-1}$ & $h_3D_2h_6$ \\\cline{2-4}
 & $h_0h_2x_{124,9}$ & & Permanent \\\cline{2-4}
 & $h_0x_{127,10}$ & & Permanent \\\cline{2-4}
 & $h_0^{10}h_7$ & $d_{2}$ & $h_0^{11}h_6^2$ \\\hline\hline
\multirow{6}{*}{10} & $h_1^2x_{125,8}$ & & Permanent \\\cline{2-4}
 & $h_2x_{124,9}+h_0^2x_{127,8}$ & $d_{6}$ & $h_1^2x_{124,14}$ \\\cline{2-4}
 & $h_0^2x_{127,8}$ & $d_{4}$ & $x_{126,14}$ \\\cline{2-4}
 & $x_{127,10}$ & $d_{2}$ & $d_1x_{94,8}$ \\\cline{2-4}
 & $h_3x_{120,9}$ & $d_{2}$ & $h_0x_{126,11}$ \\\cline{2-4}
 & $h_0^9h_7$ & $d_{2}$ & $h_0^{10}h_6^2$ \\\hline
  \end{tabular}}
\caption{The classical Adams spectral sequence of $S^0$ for $10 \le s \le 20$ in stem 127}
\label{Table:S127.20}
\end{table}

\begin{table}
    \centering
\scalebox{0.95}{\begin{tabular}{|c|l|l|l|}\hline 
\multirow{5}{*}{9} & $h_0^2x_{127,7}$ & $d_{2}^{-1}$ & $h_0x_{128,6}$ \\\cline{2-4}
 & $h_1x_{126,8}$ & & Permanent \\\cline{2-4}
 & $h_1x_{126,8,2}$ & $d_{5}$ & $h_1h_3x_{118,12}$ \\\cline{2-4}
 & $h_0x_{127,8}$ & $d_{2}$ & $h_0^2x_{126,9}$ \\\cline{2-4}
 & $h_0^8h_7$ & $d_{2}$ & $h_0^9h_6^2$ \\\hline\hline
\multirow{7}{*}{8} & $h_0x_{127,7,2}+h_0x_{127,7}+h_0^2x_{127,6}$ & $d_{2}^{-1}$ & $x_{128,6}$ \\\cline{2-4}
 & $h_0^2x_{127,6}$ & $d_{2}^{-1}$ & $h_0x_{128,5}$ \\\cline{2-4}
 & $h_2h_6A$ & & Permanent \\\cline{2-4}
 & $h_2x_{124,7}$ & $d_{9}$ & $?$ \\\cline{2-4}
 & $x_{127,8}$ & $d_{2}$ & $h_0x_{126,9}$ \\\cline{2-4}
 & $h_0x_{127,7}$ & $d_{2}$ & $h_0^2x_{126,8}$ \\\cline{2-4}
 & $h_0^7h_7$ & $d_{2}$ & $h_0^8h_6^2$ \\\hline\hline
\multirow{5}{*}{7} & $h_0x_{127,6}$ & $d_{2}^{-1}$ & $x_{128,5}$ \\\cline{2-4}
 & $h_1x_{126,6}$ & $d_{10}$ & $?$ \\\cline{2-4}
 & $x_{127,7,2}+x_{127,7}$ & $d_{3}$ & $?$ \\\cline{2-4}
 & $x_{127,7}$ & $d_{2}$ & $h_0x_{126,8}$ \\\cline{2-4}
 & $h_0^6h_7$ & $d_{2}$ & $h_0^7h_6^2$ \\\hline\hline
\multirow{2}{*}{6} & $x_{127,6}$ & $d_{4}$ & $?$ \\\cline{2-4}
 & $h_0^5h_7$ & $d_{2}$ & $h_0^6h_6^2$ \\\hline\hline
\multirow{1}{*}{5} & $h_0^4h_7$ & $d_{2}$ & $h_0^5h_6^2$ \\\hline\hline
\multirow{1}{*}{4} & $h_0^3h_7$ & $d_{2}$ & $h_0^4h_6^2$ \\\hline\hline
\multirow{2}{*}{3} & $h_1h_6^2$ & $d_{14}$ & $?$ \\\cline{2-4}
 & $h_0^2h_7$ & $d_{2}$ & $h_0^3h_6^2$ \\\hline\hline
\multirow{1}{*}{2} & $h_0h_7$ & $d_{2}$ & $h_0^2h_6^2$ \\\hline\hline
\multirow{1}{*}{1} & $h_7$ & $d_{2}$ & $h_0h_6^2$ \\\hline\hline
\multirow{1}{*}{0} & \multicolumn{3}{c|}{}\\\hline
 \end{tabular}}
\caption{The classical Adams spectral sequence of $S^0$ for $s \le 9$ in stem 127}
\label{Table:S127.9}
\end{table}

\makebibliography

\hspace{1cm}\vspace{5pt}



\end{document}